\def\algbackskip{\hskip-\ALG@thistlm}
\renewcommand{\b}[1]{\mathbb{#1}} 
\newcommand{\m}[1]{\mathbf{#1}} 
\newcommand{\s}[1]{\mathscr{#1}} 
\renewcommand{\c}[1]{\mathcal{#1}} 
\newcommand{\innpr}[2]{{\left\langle {#1},{#2}\right\rangle}} 
\newcommand{\rd}{\mathrm{d}} 
\DeclareMathOperator*{\spann}{span}
\DeclareMathOperator*{\tr}{tr}
\DeclareMathOperator*{\diag}{diag}
\DeclareMathOperator*{\Cov}{Cov}
\DeclareMathOperator*{\rk}{rank}
\DeclareMathOperator*{\argmin}{\arg\min}
\DeclareMathOperator*{\argmax}{\arg\max}
\newcommand{\vertiii}[1]{{\left\vert\kern-0.25ex\left\vert\kern-0.25ex\left\vert #1 
    \right\vert\kern-0.25ex\right\vert\kern-0.25ex\right\vert}} 
\newcommand{\vertii}[1]{{\vert\kern-0.25ex\vert\kern-0.25ex\vert #1 
    \vert\kern-0.25ex\vert\kern-0.25ex\vert}} 
\theoremstyle{plain}
\newtheorem{theorem}{Theorem}[section]
\newtheorem{lemma}[theorem]{Lemma}
\newtheorem{corollary}[theorem]{Corollary}
\newtheorem{proposition}[theorem]{Proposition}
\newtheorem{definition}{Definition}[section]
\newtheorem{example}{Example}[section]
\newtheorem{remark}{Remark}[section]
\begin{document}

\pagestyle{fancy}
\fancyhead{}
\renewcommand{\headrulewidth}{0pt} 
\fancyhead[CE]{Ho Yun \& Yoav Zemel}
\fancyhead[CO]{Gaussian Optimal Transport Beyond Brenier's Theorem}

\begin{frontmatter}
\title{{\large Gaussian Optimal Transport Beyond Brenier's Theorem}}

\begin{aug}
\author[A]{\fnms{Ho} \snm{Yun}\ead[label=e1,mark]{ho.yun@epfl.ch}}
\and
\author[A]{\fnms{Yoav} \snm{Zemel}\ead[label=e2]{yoav.zemel@epfl.ch}}


\thankstext{t1}{Research supported by a Swiss National Science Foundation grant.}
\address[A]{Ecole Polytechnique F\'ed\'erale de Lausanne, \printead{e1,e2}}
\end{aug}

\begin{abstract}
We explore the geometry of the Bures-Wasserstein space for potentially degenerate Gaussian measures on a separable Hilbert space. In this general setting, the optimal transport map is formally the subgradient of a convex function that is infinite almost everywhere, rendering conventional duality-based variational methods ineffective. We overcome this analytical barrier by exploiting a constructive operator-theoretic approach. Our central result proves that the Kantorovich problem for any pair of Gaussian measures reduces to a Monge problem; that is, an optimal transport map exists in at least one direction between two measures. This reduction allows for a complete characterization with explicit formulas for all optimal (potentially unbounded) Monge transport map and Kantorovich couplings, as well as establishing their uniqueness. Furthermore, we provide a full description of the convex set of geodesics between degenerate measures, revealing a rich geometric structure where the classical McCann interpolants arise as the extreme points. We apply these findings to construct transport maps for Gaussian processes and introduce a novel framework for Wasserstein barycenters based on random Green's operators.
\end{abstract}

\begin{keyword}[class=AMS]
\kwd[Primary ]{49Q22}
\kwd[; secondary ]{60B11, 47B25, 53C22, 46N30}
\end{keyword}

\begin{keyword}
\kwd{Optimal Transport, Bures-Wasserstein space, Covariance Operators, Degenerate Gaussian Measures, Unbounded Operator, Geodesics}
\end{keyword}
\end{frontmatter}

\maketitle

\section{Introduction}
Optimal Transport (OT) has evolved into a central pillar of modern analysis, bridging probability, geometry, and partial differential equations. At the heart of this theory lies the celebrated theorem of Brenier, which asserts that for the quadratic cost, the unique optimal transport map between two probability measures on $\b{R}^{n}$ (with the source being absolutely continuous) is the gradient of a convex function \cite{villani2008optimal, villani2021topics, ambrosio2008gradient}. When restricted to the space of Gaussian measures, this geometric framework specializes to the Bures-Wasserstein geometry. Here, the transport problem between two centered Gaussian measures, $\mu=N(\m{0}, \m{A})$ and $\nu=N(\m{0}, \m{B})$, admits a symmetric positive-definite matrix $\m{T} = \m{A}^{-1/2} (\m{A}^{1/2} \m{B} \m{A}^{1/2})^{1/2} \m{A}^{-1/2}$ as the unique optimal pushforward, provided that the source covariance $\m{A}$ is non-degenerate \cite{knott1984optimal, ruschendorf1990characterization, pusz1975functional, bhatia2019bures}.
However, the classical theory encounters a fundamental barrier when the measures become degenerate or infinite-dimensional. In these regimes -- common in the study of Gaussian processes and functional data analysis \cite{dryden2016statistical, pigoli2014distances, masarotto2019procrustes, kroshnin2021statistical} -- the standard tools of convex analysis fracture \cite{rockafellar1970convex}:
\begin{enumerate}[leftmargin = *]
\item \textbf{Singularity:} When the source covariance $\m{A}$ is singular (rank-deficient), the map $\m{T}$ is no longer uniquely defined by the gradient of a convex function almost everywhere. The transport plan may split mass, and the classical duality arguments become intractable.
\item \textbf{Unboundedness:} In infinite-dimensional Hilbert spaces, optimal maps between Gaussian measures often manifest as unbounded operators (e.g., differentiation). The analysis of a pushforward becomes delicate, as the domain of the linear map may have measure zero with respect to the source Gaussian measure.
\end{enumerate}

\subsection{Contribution}
We establish a comprehensive framework for Gaussian Optimal Transport that investigates the geometry of the Bures-Wasserstein space $(\c{B}_{1}^{+}(\c{H}), \c{W}_{2})$. This space consists of trace-class covariance operators on a separable Hilbert space equipped with the 2-Wasserstein distance \cite{dowson1982frechet, olkin1982distance, cuesta1996lower, pigoli2014distances, masarotto2019procrustes}:
\begin{equation*}
    \c{W}_{2}^{2}(\m{A}, \m{B}) = \tr(\m{A}) + \tr(\m{B}) - 2 \tr (\m{A}^{1/2} \m{B} \m{A}^{1/2})^{1/2}.
\end{equation*}
Our work focuses on optimal transport between centered \emph{Gaussian} measures, with particular emphasis on the challenges posed by \emph{degeneracy} and \emph{infinite-dimensionality}. In these settings, the transport map is formally the subgradient of a convex function that is infinite almost everywhere, rendering conventional approaches rooted in Kantorovich duality or Otto's calculus intractable.

To circumvent the limitations of Brenier’s theorem, we adopt a constructive approach rooted in operator theory \cite{conway2019course, hall2013quantum, reed1980methods, riesz2012functional} and probability theory \cite{kallenberg1997foundations, billingsley2013convergence, bogachev1998gaussian}. Our analysis centers on the concept of \emph{linear reachability}. In the context of Gaussian optimal transport problem, we demonstrate that the existence of an optimal transport map is governed not by absolute continuity, rather by the dimensional inequality related to the covariance operators. By utilizing the Douglas factorization lemma and the Hilbertian Schur complement, we provide explicit solutions to the Monge and Kantorovich problems ``beyond Brenier.''
The primary tools developed to achieve these results are (i) Green's operators, (ii) the Hilbertian Schur complement, and (iii) admissible pre-pushforwards, in connection to (i) statistical shape analysis, (ii) conditional independence, and (iii) abstract Wiener spaces, respectively. Many of our results are new even in the finite-dimensional setting. Our main contributions are summarized as follows:

\paragraph{Reduction of Kantorovich to Monge} We prove that for any two centered Gaussian measures on a separable Hilbert space, the linear pre-pushforward map for the Monge problem exists if and only if the following dimensional inequality holds:
\begin{align*}
    \dim [\c{R}(\m{B}^{1/2}) \cap \overline{\c{R}(\m{A})}] \le \dim [\c{N}(\m{B}^{1/2} \m{A}^{1/2}) \cap \c{N}(\m{A})].
\end{align*}
We demonstrate that for any pair of covariances, this condition (or the same inequality with $\m{A}$ and $\m{B}$ swapped) is always satisfied. When $\c{H} = \b{R}^{n}$, the rank-nullity theorem simplifies the dimensional inequality into $\rk (\m{A}) \ge \rk (\m{B})$. Consequently, an optimal transport map always exists in at least one direction as an extension of linear pre-pushfoward map, allowing the Kantorovich problem to be reduced to a Monge problem. This result effectively completes the transport picture for degenerate Gaussians and extends to general non-centered Gaussian measures via translation \cite{panaretos2020invitation}.

\paragraph{Characterization of Transport Maps} 
To carefully address the subtleties of unbounded operators in the infinite-dimensional regime, we introduce the notion of optimal pre-pushforwards defined on dense subspaces -- specifically, the reproducing kernel Hilbert space $\c{R}(\m{A}^{1/2})$ \cite{da2006introduction, paulsen2016introduction}). While this linear map itself may not be a valid transport map on the entire space, it can always be extended to yield an optimal transport map for the Monge problem. We derive necessary and sufficient conditions for the existence of optimal pre-pushforwards and provide explicit operator formulas for them. We also emphasize that, contrary to certain oversights in the literature, proving (essential) self-adjointness is significantly more challenging than demonstrating the symmetry of an unbounded optimal pre-pushforward, particularly in $\b{R}$-Hilbert spaces \cite{conway2019course, hall2013quantum, reed1980methods}. For unbounded operators, symmetry is a \textit{strictly} weaker concept than self-adjointness.

Furthermore, we prove that the uniqueness of the optimal pre-pushforward is equivalent to the vanishing of the Schur complement, $\m{B}/\m{A} = \m{0}$. 
Standard restrictions found in the literature \cite{masarotto2019procrustes, cuesta1989notes}, such as injectivity ($\c{N}(\m{A}) = \m{0}$) or null-space inclusion ($\c{N}(\m{A}) \subseteq \c{N}(\m{B})$) are subsumed by this general uniqueness category. Notably, this condition is equivalent to the existence of a semi-positive definite (s.p.d.) pre-pushforward. In particular, any s.p.d. pre-pushforward is guaranteed to be optimal; thus, if the Schur complement does not vanish, no s.p.d. transport map exists.

\paragraph{Geometry of Geodesics} 
As a consequence of our reduction results, $(\c{B}_{1}^{+}(\c{H}), \c{W}_{2})$ is geodesically complete \cite{lee2018introduction}, in the sense that there always exists a McCann interpolant induced by deterministic Monge maps \cite{mccann1997convexity}.
In the singular setting, however, the geodesic path between measures may not be unique. We provide a full characterization of the convex set of all constant-speed geodesics connecting two Gaussian measures. We identify a geometric structure where the classical McCann interpolants are precisely the extreme points of this convex set. Additionally, we generalize the underlying Kantorovich dual formulation \cite{brenier1991polar, cuesta1989notes, knott1984optimal, ruschendorf1990characterization} to arbitrary Gaussian measures, identifying dual convex functions that generate the supremum for the dual problem via the notion of \emph{proper alignment}.

\paragraph{Application to Gaussian Processes}
Applying our results to stochastic processes, we present a constructive method for determining optimal transport maps between Gaussian processes using integral kernels \cite{kallenberg1997foundations, da2006introduction}. As a major corollary, we provide the exact value of the 2-Wasserstein distance between Integrated Brownian Motions. To the best of our knowledge, this is the first explicit result for the 2-Wasserstein distance between infinite-dimensional Gaussian measures associated with distinct differential operators.

\paragraph{Generalization to Barycenters} 
In the finite-dimensional case $\c{H} = \b{R}^{n}$, We introduce a novel framework for the Wasserstein barycenter problem \cite{agueh2011barycenters, alvarez2016fixed, zemel2019frechet} and its multi-marginal generalizations, based on the concept of random Green's matrices.

\medskip
The paper is organized as follows. \cref{sec:prelim} develops the necessary operator-theoretic preliminaries, introducing Green’s operators and the Hilbertian Schur complement. \cref{sec:fin:dim} characterizes linear reachability in finite dimensions, establishing the algebraic conditions for the existence of symmetric transport maps. \cref{sec:bary} extends these results to the Wasserstein barycenter and multicoupling problems. \cref{sec:admissible} rigorously treats the validity of unbounded pushforwards in Hilbert space. Finally, \cref{sec:hilb:reach} and \cref{sec:Gproc} present the general Hilbertian reachability theory and its application to Gaussian processes.

\section{Preliminaries}\label{sec:prelim}

\subsection{Notations}
We denote a $\b{R}$-Hilbert space by $\c{H}$, which is always assumed to be \emph{separable}. Specifically, we reserve this notation for the finite-dimensional case, and explicitly denote it as $\b{R}^{n}$ whenever possible. For any closed subspace $\c{V} \subset \c{H}$, we denote the orthogonal projection onto $\c{V}$ by $\m{\Pi}_{\c{V}}$. The symbol $^{\dagger}$ denotes the Moore-Penrose psuedoinverse. For an operator $\m{T}$ we denote its null space and its range by $\c{N}(\m{T})$ and $\c{R}(\m{T})$. 

The Banach space of bounded operators on $\c{H}$ is denoted by $\c{B}_{\infty}(\c{H})$, equipped with the operator norm $\vertiii{\cdot}_{\infty}$. For any $\m{K} \in \c{B}_{\infty}(\c{H})$, it holds that $\c{R}(\m{K}^{\dagger}) = \c{N}(\m{K})^{\perp}$, $\m{K}^{\dagger} \m{K} = \m{\Pi}_{\c{N}(\m{K})^{\perp}}$, and the closure of $\m{K} \m{K}^{\dagger}$ is given by $\m{\Pi}_{\overline{\c{R}(\m{K})}}$. 
We denote by $\c{B}_{0}(\c{H})$ the Banach space of compact operators. For $1 \le p < \infty$, the Banach space of the $p$-Schatten class operators on $\c{H}$ is defined as:
\begin{equation*}
    \c{B}_{p}(\c{H}) := \left\{ \m{K} \in \c{B}_{0}(\c{H}) : \vertiii{\m{K}}_{p} := \left( \sum_{l=1}^{\infty} \langle e_{l}, (\m{K}^{*} \m{K})^{p/2} e_{l} \rangle \right)^{1/p} < +\infty \right\}, 
\end{equation*}
where $\{e_{l}\}_{l=1}^{\infty}$ is any orthonormal basis (ONB) of $\c{H}$. We also use $\c{B}_{\infty}^{+}(\c{H})$ the Banach space of s.p.d. bounded operators, and similarly define $\c{B}_{0}^{+}(\c{H})$ or $\c{B}_{p}^{+}(\c{H})$ for $p \in [1, \infty)$.
For $\m{K} \in \c{B}_{1}(\c{H})$, its trace is defined as
\begin{equation*}
    \tr (\m{K}) := \sum_{l=1}^{\infty} \langle e_{l}, \m{K} e_{l} \rangle,
\end{equation*}
which equals to its trace norm if and only if $\m{K} \in \c{B}_{1}^{+}(\c{H})$ using the singular value decomposition, see \cref{lem:Neumann:eq}. For $p=2$, the $2$-Schatten class operators is commonly referred to as the Hilbert–Schmidt operators. Given $f,g \in \c{H}$, the elementary tensor $f \otimes g: \c{H} \rightarrow \c{H}$ is defined by $(f \otimes g) h := \langle g, h \rangle f$ \cite{hsing2015theoretical, paulsen2016introduction}. Then $\c{B}_{2}(\c{H})$ is isometric and linearly isomorphic to the tensor product space $\c{H} \otimes \c{H}$, which is the completion of the vector space spanned by elementary tensors \cite{kadison1986fundamentals}.

\subsection{Green's Operator}
The open mapping theorem states that the range of a compact operator on a Hilbert space cannot be closed unless it has a finite rank. Given this, comparing the ranges of two operators is a non-trivial task. The Douglas lemma \cite{douglas1966majorization} provides an invaluable tool for this purpose, and we rely on it extensively throughout the paper:

\begin{lemma}[Douglas]
Let $\c{H}_{1}, \c{H}_{2}, \c{H}_{3}$ be Hilbert spaces, and let $\m{A} \in \c{B}_{\infty}(\c{H}_{3}, \c{H}_{1})$ and $\m{B} \in \c{B}_{\infty}(\c{H}_{2}, \c{H}_{1})$ be bounded operators. Then the following are equivalent:
\begin{enumerate}[leftmargin = *]
    \item $\c{R}(\m{A}) \subseteq \c{R}(\m{B})$. 
    \item There exists some $c > 0$ such that $\m{A} \m{A}^{*} \preceq c^{2} \m{B} \m{B}^{*}$.
    \item There exists a bounded operator $\m{R} \in \c{B}_{\infty}(\c{H}_{3}, \c{H}_{2})$ such that $\m{A} = \m{B} \m{R}$.
\end{enumerate}
In particular, $\m{R} = \m{B}^{\dagger} \m{A}$ is a unique bounded operator satisfying $\m{A} = \m{B} \m{R}$ in part 3 with $\c{N}(\m{B}) \subset \c{N}(\m{R}^{*})$, and it holds that $\vertii{\m{B}^{\dagger} \m{A}}_{\infty} = \inf \{ c> 0 : \m{A} \m{A}^{*} \preceq c^{2} \m{B} \m{B}^{*} \}$ in part 2.
\end{lemma}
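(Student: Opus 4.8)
The plan is to prove the cyclic chain of implications $(3)\Rightarrow(2)\Rightarrow(1)\Rightarrow(3)$, and then treat the two "in particular" claims separately. The implications $(3)\Rightarrow(2)$ and $(1)\Rightarrow(3)$ are the routine ones; the genuine analytic content is in $(2)\Rightarrow(1)$, or equivalently $(1)\Rightarrow(3)$ depending on how one organizes the argument, since that is where a bounded factorizing operator must be manufactured out of an inclusion of ranges. I would front-load the key construction there.

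\textbf{Step 1: $(3)\Rightarrow(2)$.} Given $\m{A}=\m{B}\m{R}$ with $\vertiii{\m{R}}_\infty\le c$, compute $\m{A}\m{A}^*=\m{B}\m{R}\m{R}^*\m{B}^*$. Since $\m{R}\m{R}^*\preceq c^2\m{I}$ as operators on $\c{H}_2$, sandwiching by $\m{B}$ on the left and $\m{B}^*$ on the right preserves the inequality, giving $\m{A}\m{A}^*\preceq c^2\m{B}\m{B}^*$. This is a one-line estimate.

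\textbf{Step 2: $(2)\Rightarrow(3)$ (and hence $(1)$).} This is the heart of the matter. Assume $\m{A}\m{A}^*\preceq c^2\m{B}\m{B}^*$. For $h\in\c{H}_1$ one has $\|\m{A}^*h\|^2=\langle h,\m{A}\m{A}^*h\rangle\le c^2\langle h,\m{B}\m{B}^*h\rangle=c^2\|\m{B}^*h\|^2$. Hence the assignment $\m{B}^*h\mapsto\m{A}^*h$ is well-defined (if $\m{B}^*h=\m{B}^*h'$ then $\m{A}^*h=\m{A}^*h'$) and bounded by $c$ on the subspace $\c{R}(\m{B}^*)\subseteq\c{H}_2$; extend it by continuity to $\overline{\c{R}(\m{B}^*)}$ and by zero on $\overline{\c{R}(\m{B}^*)}^\perp=\c{N}(\m{B})$, obtaining $\m{S}\in\c{B}_\infty(\c{H}_2,\c{H}_1)$ with $\m{S}\m{B}^*=\m{A}^*$ and $\vertiii{\m{S}}_\infty\le c$. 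Taking adjoints gives $\m{B}\m{S}^*=\m{A}$, so $\m{R}:=\m{S}^*$ works; and then $\c{R}(\m{A})=\c{R}(\m{B}\m{R})\subseteq\c{R}(\m{B})$, which is $(1)$. Thus the cycle closes once I also note $(1)\Rightarrow$ that such a factorization exists — but that direction is exactly the delicate one, so instead I route the logic as $(1)\Rightarrow(2)$ via the closed graph theorem if a direct argument is wanted, or simply observe the cycle $(2)\Rightarrow(3)\Rightarrow(1)\Rightarrow(2)$ where $(1)\Rightarrow(2)$ uses that $\m{B}^\dagger\m{A}$ is everywhere-defined and closed (hence bounded).

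\textbf{Step 3: the uniqueness and norm claims.} For uniqueness with the side condition $\c{N}(\m{B})\subseteq\c{N}(\m{R}^*)$, note that the operator $\m{S}$ built in Step 2 annihilates $\c{N}(\m{B})=\overline{\c{R}(\m{B}^*)}^\perp$, i.e. $\c{R}(\m{S})\subseteq\overline{\c{R}(\m{B}^*)}=\c{N}(\m{B})^\perp$; dually $\c{N}(\m{B})\subseteq\c{N}(\m{S}^*)=\c{N}(\m{R}^*)$, and one identifies $\m{R}=\m{B}^\dagger\m{A}$ from $\m{B}^\dagger\m{B}=\m{\Pi}_{\c{N}(\m{B})^\perp}$ and $\m{B}^\dagger\m{A}=\m{B}^\dagger\m{B}\m{R}=\m{\Pi}_{\c{N}(\m{B})^\perp}\m{R}=\m{R}$. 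If $\m{R}'$ is any other solution then $\m{B}(\m{R}-\m{R}')=\m{0}$, so $\c{R}(\m{R}-\m{R}')\subseteq\c{N}(\m{B})$, forcing $\m{R}=\m{R}'$ under the side condition. Finally, $\vertiii{\m{B}^\dagger\m{A}}_\infty=\vertiii{\m{S}}_\infty\le c$ for every admissible $c$ gives $\vertiii{\m{B}^\dagger\m{A}}_\infty\le\inf\{c>0:\m{A}\m{A}^*\preceq c^2\m{B}\m{B}^*\}$, while Step 1 applied to $\m{R}=\m{B}^\dagger\m{A}$ shows $\m{A}\m{A}^*\preceq\vertiii{\m{B}^\dagger\m{A}}_\infty^2\m{B}\m{B}^*$, giving the reverse inequality; hence equality.

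\textbf{Main obstacle.} The only real subtlety is the well-definedness-plus-continuity construction of $\m{S}$ in Step 2: one must check that equality of $\m{B}^*h$ values forces equality of $\m{A}^*h$ values (immediate from the quadratic-form inequality applied to the difference), that the resulting map is linear, and that extending by continuity to the closure and by zero on the orthocomplement yields a globally bounded operator with the claimed norm — all standard, but this is the step that actually uses the Hilbert-space structure rather than pure algebra.
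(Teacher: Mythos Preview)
The paper does not prove this lemma at all; it is stated with a citation to Douglas's original 1966 paper and used as a black box throughout. Your proof is correct and is essentially the classical argument: the key construction is exactly your Step~2, where the quadratic-form inequality $\|\m{A}^*h\|\le c\|\m{B}^*h\|$ lets you well-define a bounded map $\m{B}^*h\mapsto\m{A}^*h$ on $\c{R}(\m{B}^*)$, extend, and take adjoints. Your treatment of the uniqueness and norm claims is also standard and complete. The only thing worth tidying is the logical routing: you announce the cycle $(3)\Rightarrow(2)\Rightarrow(1)\Rightarrow(3)$ but then actually establish $(3)\Leftrightarrow(2)$, $(3)\Rightarrow(1)$, and close via $(1)\Rightarrow(3)$ using the closed graph theorem applied to $\m{B}^\dagger\m{A}$. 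That last step is correct (since $\m{B}^\dagger$ is closed and $\m{A}$ is bounded, $\m{B}^\dagger\m{A}$ is closed and everywhere defined once $\c{R}(\m{A})\subseteq\c{R}(\m{B})\subseteq\c{D}(\m{B}^\dagger)$), but it would read more cleanly if you just stated it as a direct $(1)\Rightarrow(3)$ rather than describing it as a detour.
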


\begin{definition}[Partial Isometry]
Let $\c{H}$ be a Hilbert space. An operator $\m{U} \in \c{B}_{\infty}(\c{H})$ is called a \emph{partial isometry} if $\|\m{U} h \| = \|h \|$ for any $h \in \c{N}(\m{U})^{\perp}$. The space $\c{N}(\m{U})^{\perp}$ and $\c{R}(\m{U})$ are called the \emph{initial space} and the \emph{final space}, respectively.
\end{definition}

The final space of a partial isometry $\m{U}$ is closed. $\m{U}^{*} \m{U}$ and $\m{U} \m{U}^{*}$ are projections onto the initial and final spaces, respectively.
Also, the adjoint operator $\m{U}^{*} = \m{U}^{\dagger}$ is also a partial isometry with its initial and final spaces flipped \cite{conway2019course}.

\begin{definition}[Green's Operator]\label{def:Green's}
Let $\c{H}$ be a Hilbert space and $\m{A} \in \c{B}_{0}^{+}(\c{H})$. An operator $\m{G} \in \c{B}_{0}(\c{H})$ satisfying $\m{G} \m{G}^{*} = \m{A}$ is called a \emph{Green's operator} for $\m{A}$, denoted by $\m{G} \in \c{G}(\m{A})$. When $\c{H} = \b{R}^{n}$, we refer to $\m{G}$ as the \emph{Green's matrix} for $\m{A}$.
\end{definition}

The name ``Green's operator'' is motivated by its connection to the theory of Reproducing Kernel Hilbert Spaces (RKHSs). For a trace-class covariance operator $\m{A} \in \c{B}_{1}^{+}(\c{H})$, the space 
\begin{equation}\label{eq:Mahalanobis:RKHS}
    \b{H}(\m{A}) = (\c{R}(\m{A}^{1/2}), \innpr{\cdot}{\cdot}_{\b{H}(\m{A})}), \quad \innpr{\m{A}^{1/2} f}{\m{A}^{1/2} g}_{\b{H}(\m{A})} := \innpr{f}{g}_{\c{H}}, \quad f, g \in \c{N}(\m{A})^{\perp},
\end{equation}
forms the RKHS associated with $\m{A}$ \cite{paulsen2016introduction}. The adjoint of any Green's operator $\m{G} \in \c{G}(\m{A})$ provides a Hilbert space isomorphism between $\c{N}(\m{A})^{\perp}$ and this RKHS: $\| \m{G}^{*} f \|_{\b{H}(\m{A})} = \|f\|_{\c{N}(\m{A})^{\perp}}$. 
The Douglas lemma ensures that any Green's operator $\m{G}$ for $\m{A}$ has the same range as its principal square root; that is, $\c{R}(\m{G}) = \c{R}(\m{A}^{1/2})$. The lemma also leads to the \emph{polar decomposition}: there exists a unique partial isometry $\m{U}$ with initial space $\overline{\c{R}(\m{G}^{*})}$ (which is equivalent to $\c{N}(\m{U}) = \c{N}(\m{G})$) and final space $\overline{\c{R}(\m{G})}$ such that $\m{G} = \m{A}^{1/2} \m{U}$.

\begin{example}
When $\dim \c{H} = \infty$, the partial isometry is the correct characterization for polar decomposition, as $\dim (\c{N}(\m{G})) = \dim (\c{N}(\m{G}^{*}))$ is not necessarily true.
To illustrate this, let $\{e_{k} : k \in \b{N}\}$ be an ONB for $\c{H}$ and $\sigma_{1} \ge \sigma_{2} \ge \cdots > 0$ be a non-increasing sequence of non-zero singular values. Consider the operators:
\begin{align*}
    \m{A}^{1/2} = \sum_{k=1}^{\infty} \sigma_{k} (e_{k} \otimes e_{k}) \in \c{B}_{0}^{+}(\c{H}), \quad
    \m{G} = \sum_{k=1}^{\infty} \sigma_{k} (e_{k} \otimes e_{2k}) \in \c{G}(\m{A}).
\end{align*}
In this case, the polar decomposition $\m{G} = \m{A}^{1/2} \m{U}$ implies that $\m{U}$ is not injective. Hence, $\m{U}$ cannot be unitary or even an isometry.
\end{example}

\begin{lemma}\label{lem:Green's:part:iso}
Let $\m{A} \in \c{B}_{\infty}^{+}(\c{H})$ and $\m{G} \in \c{G}(\m{A})$. 
\begin{enumerate}[leftmargin = *]
\item Given $\tilde{\m{G}} \in \c{G}(\m{A})$, $\m{U} = \m{G}^{\dagger} \tilde{\m{G}} \in \c{B}_{\infty}(\c{H})$ is a unique partial isometry  with initial space $\c{N}(\tilde{\m{G}})^{\perp}$, and final space $\c{N}(\m{G})^{\perp}$ such that $\tilde{\m{G}} = \m{G} \m{U}$.
\item If $\m{U}$ is a partial isometry whose final space contains $\c{N}(\m{G})^{\perp}$, then $\tilde{\m{G}} = \m{G} \m{U} \in \c{G}(\m{A})$.
\end{enumerate}
Consequently, the space of Green's operators of $\m{A}$ can be characterized as
\begin{align*}
    \c{G}(\m{A}) &= \{ \m{G} \m{U} : \m{U} \text{ is a partial isometry with final space including } \c{N}(\m{G})^{\perp} \} \\
    &= \{ \m{G} \m{U} : \m{U} \text{ is a partial isometry with final space } \c{N}(\m{G})^{\perp} \}.
\end{align*}
\end{lemma}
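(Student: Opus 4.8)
The plan is to deduce everything from the Douglas lemma and the basic pseudoinverse identities recalled in the Notations subsection, treating parts 1 and 2 as the two halves of a factorization dictionary and then reading off the final displayed characterization. For part 1, since $\tilde{\m{G}} \tilde{\m{G}}^{*} = \m{A} = \m{G} \m{G}^{*}$, the Douglas lemma (with the roles of the two operators interchanged to get both inclusions $\c{R}(\tilde{\m{G}}) \subseteq \c{R}(\m{G})$ and $\c{R}(\m{G}) \subseteq \c{R}(\tilde{\m{G}})$) yields a bounded $\m{U} = \m{G}^{\dagger}\tilde{\m{G}}$ with $\tilde{\m{G}} = \m{G}\m{U}$ and $\c{N}(\m{G}) \subseteq \c{N}(\m{U}^{*})$. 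To identify $\m{U}$ as a partial isometry, I would compute $\m{U}^{*}\m{U} = \tilde{\m{G}}^{*}(\m{G}^{\dagger})^{*}\m{G}^{\dagger}\tilde{\m{G}}$ and use $(\m{G}^{\dagger})^{*}\m{G}^{\dagger} = (\m{G}\m{G}^{*})^{\dagger} = \m{A}^{\dagger}$ together with $\tilde{\m{G}}^{*}\m{A}^{\dagger}\tilde{\m{G}} = \m{\Pi}_{\overline{\c{R}(\tilde{\m{G}}^{*})}} = \m{\Pi}_{\c{N}(\tilde{\m{G}})^{\perp}}$ — this last identity following from $\tilde{\m{G}}^{*}(\tilde{\m{G}}\tilde{\m{G}}^{*})^{\dagger}\tilde{\m{G}} = \tilde{\m{G}}^{\dagger}\tilde{\m{G}}$ via the standard formula $\m{K}^{\dagger} = \m{K}^{*}(\m{K}\m{K}^{*})^{\dagger}$. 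Hence $\m{U}^{*}\m{U}$ is the projection onto $\c{N}(\tilde{\m{G}})^{\perp}$, so $\m{U}$ is a partial isometry with initial space $\c{N}(\tilde{\m{G}})^{\perp}$; symmetrically (or directly, since $\m{U}\m{U}^{*}$ is the projection onto the final space and $\c{N}(\m{U}) = \c{N}(\tilde{\m{G}})$ forces $\c{R}(\m{U}) = \overline{\c{R}(\m{U})}$ to have the right dimension), its final space is $\c{N}(\m{G})^{\perp}$; uniqueness is exactly the Douglas uniqueness clause.

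For part 2, suppose $\m{U}$ is a partial isometry whose final space $\c{R}(\m{U})$ contains $\c{N}(\m{G})^{\perp} = \overline{\c{R}(\m{G}^{*})}$. Then $\tilde{\m{G}}\tilde{\m{G}}^{*} = \m{G}\m{U}\m{U}^{*}\m{G}^{*}$, and $\m{U}\m{U}^{*} = \m{\Pi}_{\c{R}(\m{U})}$ is a projection onto a subspace containing $\overline{\c{R}(\m{G}^{*})}$; since $\m{G}^{*} = \m{\Pi}_{\overline{\c{R}(\m{G}^{*})}}\m{G}^{*}$, we get $\m{U}\m{U}^{*}\m{G}^{*} = \m{G}^{*}$, whence $\tilde{\m{G}}\tilde{\m{G}}^{*} = \m{G}\m{G}^{*} = \m{A}$; compactness of $\tilde{\m{G}} = \m{G}\m{U}$ is immediate from compactness of $\m{G}$, so $\tilde{\m{G}} \in \c{G}(\m{A})$. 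For the concluding two-line display: the inclusion $\supseteq$ in the first line is part 2; the inclusion $\subseteq$ is part 1, which exhibits every $\tilde{\m{G}} \in \c{G}(\m{A})$ as $\m{G}\m{U}$ with $\m{U}$ a partial isometry of final space \emph{exactly} $\c{N}(\m{G})^{\perp}$ (not merely containing it), so the first set is contained in the second set; and the second set is trivially contained in the first, giving equality of all three.

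I do not expect a serious obstacle here; the one point requiring care is the final-space claim in part 1 and the compatibility of the ``contains'' versus ``equals'' phrasings in the displayed characterization. The potential subtlety — and the reason the example preceding the lemma is included — is that in infinite dimensions a partial isometry need not be surjective even when $\c{N}(\m{G})^{\perp}$ is all of $\c{H}$ is false; rather, one must not conflate ``final space contains $\c{N}(\m{G})^{\perp}$'' with ``$\m{U}$ is an isometry.'' Concretely, I would double-check that when $\c{R}(\m{U}) \supsetneq \c{N}(\m{G})^{\perp}$ strictly, the product $\m{G}\m{U}$ still lies in $\c{G}(\m{A})$ (it does, by the projection argument above, because $\m{G}$ annihilates the extra directions), so the first description is genuinely larger as a description of $\m{U}$'s but yields the same set of operators $\tilde{\m{G}}$ — which is precisely what the chain of equalities asserts. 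Everything else is bookkeeping with the identities $\m{K}^{\dagger}\m{K} = \m{\Pi}_{\c{N}(\m{K})^{\perp}}$, $\overline{\m{K}\m{K}^{\dagger}} = \m{\Pi}_{\overline{\c{R}(\m{K})}}$, and $\m{K}^{\dagger} = \m{K}^{*}(\m{K}\m{K}^{*})^{\dagger}$.
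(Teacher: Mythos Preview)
Your treatment of part~2 and the concluding two-line characterization is correct and essentially identical to the paper's. The issue is in part~1, specifically the computation of $\m{U}^{*}\m{U}$.

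You write $\m{U}^{*}\m{U} = \tilde{\m{G}}^{*}(\m{G}^{\dagger})^{*}\m{G}^{\dagger}\tilde{\m{G}}$ and then invoke $(\m{G}^{\dagger})^{*}\m{G}^{\dagger} = \m{A}^{\dagger}$ and $\tilde{\m{G}}^{*}\m{A}^{\dagger}\tilde{\m{G}} = \tilde{\m{G}}^{\dagger}\tilde{\m{G}}$ via $\m{K}^{\dagger} = \m{K}^{*}(\m{K}\m{K}^{*})^{\dagger}$. These identities are standard \emph{for matrices and closed-range operators}, but here $\m{G}$ need not have closed range (indeed $\c{R}(\m{G}) = \c{R}(\m{A}^{1/2})$ is not closed whenever $\m{A}$ is compact of infinite rank), so $\m{G}^{\dagger}$ and $\m{A}^{\dagger}$ are genuinely unbounded. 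Concretely: $\m{A}^{\dagger}\tilde{\m{G}}$ is not even defined on all of $\c{H}$, because $\c{R}(\tilde{\m{G}}) = \c{R}(\m{A}^{1/2})$ while $\c{D}(\m{A}^{\dagger}) = \c{R}(\m{A}) \oplus \c{N}(\m{A})$, and $\c{R}(\m{A}^{1/2}) \supsetneq \c{R}(\m{A})$ in general. Likewise, although $\m{U} = \m{G}^{\dagger}\tilde{\m{G}}$ is bounded (by Douglas), its bounded adjoint $\m{U}^{*}$ is only an \emph{extension} of $\tilde{\m{G}}^{*}(\m{G}^{\dagger})^{*}$, so you cannot pass from one to the other freely. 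The formula $\m{K}^{\dagger} = \m{K}^{*}(\m{K}\m{K}^{*})^{\dagger}$ has the same defect: the right-hand side has strictly smaller domain than the left when $\c{R}(\m{K})$ is not closed.

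The paper sidesteps all of this by switching to $\m{U}^{*}$ and staying entirely within bounded-operator algebra. From $\tilde{\m{G}} = \m{G}\m{U}$ (all bounded) one takes the ordinary adjoint to get $\tilde{\m{G}}^{*} = \m{U}^{*}\m{G}^{*}$, whence for $h = \m{G}^{*}f \in \c{R}(\m{G}^{*})$,
\[
\|\m{U}^{*}h\|^{2} = \|\tilde{\m{G}}^{*}f\|^{2} = \langle f, \m{A} f\rangle = \|\m{G}^{*}f\|^{2} = \|h\|^{2}.
\]
Combined with $\c{R}(\m{U}) \subseteq \c{R}(\m{G}^{\dagger}) = \c{N}(\m{G})^{\perp}$, this pins down $\m{U}^{*}$ as a partial isometry with initial space exactly $\c{N}(\m{G})^{\perp}$; the initial space of $\m{U}$ then follows from $\c{N}(\m{U}) = \c{N}(\tilde{\m{G}})$ (your observation here is fine). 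Your uniqueness argument via the Douglas clause is correct and in fact cleaner than the paper's $\m{V}$-construction. The fix for your plan is thus small but essential: replace the pseudoinverse algebra by the one-line norm computation on $\c{R}(\m{G}^{*})$ for $\m{U}^{*}$.
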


\begin{proof}[Proof of \cref{lem:Green's:part:iso}] \quad
\begin{enumerate}[leftmargin=*]
\item Note that $\m{U} = \m{G}^{\dagger} \tilde{\m{G}} \in \c{B}_{\infty}(\c{H})$ and $\m{G} \m{U} = \m{G} \m{G}^{\dagger} \tilde{\m{G}} = \tilde{\m{G}}$, since $\c{R}(\tilde{\m{G}}) = \c{R}(\m{G})$ by the Douglas lemma. Also, for any $h = \m{G}^{*} f \in \c{R}(\m{G}^{*})$, we have
\begin{equation*}
    \|\m{U}^{*} h\|^{2} = \|\m{U}^{*} \m{G}^{*} f\|^{2} = \|\tilde{\m{G}}^{*} f\|^{2} = \|\m{G}^{*} f\|^{2} = \|h\|^{2},
\end{equation*}
hence by continuity, $\m{U}^{*}$ is a partial isometry with initial space including $\overline{\c{R}(\m{G}^{*})} = \c{N}(\m{G})^{\perp}$. On the other hand, its initial space satisfies $\c{N}(\m{U}^{*})^{\perp} = \c{R}(\m{U}) \subset \c{R}(\m{G}^{\dagger}) = \c{N}(\m{G})^{\perp}$, i.e. $\c{N}(\m{G})^{\perp}$ is the initial space of $\m{U}^{*}$.

Now, consider $\m{V}: \c{R}(\m{G}^{*}) \to \c{R}(\tilde{\m{G}}^{*}), \m{G}^{*} f \mapsto \tilde{\m{G}}^{*} f$, which is a well-defined isometry. Then, we can extend $\m{V}$ to be a partial isometry with initial space $\c{N}(\m{G})^{\perp}$ and final space $\c{N}(\tilde{\m{G}})^{\perp}$. However, this yields that
\begin{equation*}
    \m{V} \m{G}^{*} f = \tilde{\m{G}}^{*} f = \m{U}^{*} \m{G}^{*} f,
\end{equation*}
thus $\m{V}$ and $\m{U}^{*}$ agrees on a dense subset of their initial spaces, i.e. $\m{V} = \m{U}^{*}$. This shows the uniqueness of $\m{U}$ as a partial isometry  with initial space $\c{N}(\tilde{\m{G}})^{\perp}$, and final space $\c{N}(\m{G})^{\perp}$.

\item Since $\m{U} \m{U}^{*}$ is the projection onto the final space, we have $\tilde{\m{G}} \tilde{\m{G}}^{*} = \m{G} (\m{U} \m{U}^{*} \m{G}^{*}) = \m{G} \m{G}^{*} = \m{A}$.
\end{enumerate}
\end{proof}

Indeed, when $\c{H} = \b{R}^{n}$, $\m{U} \in \b{R}^{n \times n}$ could be chosen as an orthogonal matrix. The flexibility in choosing a Green's operator offers significant practical advantages. From a numerical perspective, finding the principal square root $\m{A}^{1/2}$ or its inverse often requires methods like eigenvalue decomposition or Newton-Schulz iteration, which can be computationally intensive and numerically sensitive to small spectral gaps \cite{nocedal1999numerical, golub2013matrix}. In contrast, finding a Green's operator can be far more efficient. In the finite-dimensional setting, a prime example of a Green's matrix is the Cholesky factor $\m{L}$ from the decomposition $\m{A} = \m{L}\m{L}^{\top}$, which is known to be faster and more stable to compute. However, this flexibility is also valuable for theoretical developments, particularly in stochastic processes or finding the Wasserstein barycenter. A specific choice of a Green's operator may be more physically meaningful than the abstract square root (see examples in \cref{sec:Gproc,sec:bary}). Thus, the ability to choose a convenient $\m{G}$ preserves theoretical consistency while enabling simpler proofs and more efficient, stable computations.

\subsection{Hilbertian Schur Complement}\label{ssec:gen:Schur}

The Schur complement is a fundamental concept in the study of conditional covariance, Gaussian graphical models, and residual regression, as it isolates the Gaussian structure that remains after conditioning. A main goal of this paper is to demonstrate that the Schur complement is a key component for understanding the Bures-Wasserstein geometry between two covariance operators $\m{A}, \m{B} \in \c{B}_{1}^{+}(\c{H})$.
This subsection generalizes the Schur complement to an infinite-dimensional Hilbert space, namely $\m{B} / \m{A}$. Crucially, we reveal that $\m{B} / \m{A}$ depends only on the null space $\c{N}(\m{A})$, rather than on the specific operator $\m{A}$ itself.

We begin by recalling the Schur complement for a finite-dimensional square matrix $\m{B} \in \b{R}^{n \times n}$. Decomposing the space $\b{R}^n$ into $\b{R}^{n_{1}} \oplus \b{R}^{n-n_{1}}$, we can write $\m{B}$ as a block matrix:
\begin{equation*}
    \m{B} = \begin{pmatrix}
    \m{B}_{11} & \m{B}_{12} \\
    \m{B}_{21} & \m{B}_{22}
    \end{pmatrix}
    \    :
    \begin{array}{c}
    \b{R}^{n_{1}} \\
    \oplus \\
    \b{R}^{n-n_{1}}
    \end{array}
    \rightarrow
    \begin{array}{c}
    \b{R}^{n_{1}} \\
    \oplus \\
    \b{R}^{n-n_{1}}
    \end{array}.
\end{equation*}
When the block $\m{B}_{11}$ is invertible, the Schur complement of $\m{B}_{22}$ with respect to $\m{B}_{11}$ is defined as $\m{B} / \m{B}_{11} := \m{B}_{22} - \m{B}_{21} \m{B}_{11}^{-1} \m{B}_{12}$. 
For a non-invertible $\m{B}_{11}$, we replace its inverse with $\m{B}_{11}^{\dagger}$, which leads to the following block LDU decompositions of $\m{B}$:
\begin{align}
    &\begin{pmatrix}
        \m{B}_{11} & \m{0} \\
        \m{0} & \m{B}_{22} - \m{B}_{21} \m{B}_{11}^{\dagger} \m{B}_{12}
    \end{pmatrix} = \begin{pmatrix}
        \m{I}_{11} & \m{0} \\
        - \m{B}_{21} \m{B}_{11}^{\dagger} & \m{I}_{22}
    \end{pmatrix}
    \begin{pmatrix}
        \m{B}_{11} & \m{B}_{12} \\
        \m{B}_{21} & \m{B}_{22}
    \end{pmatrix} \begin{pmatrix}
        \m{I}_{11} & - \m{B}_{11}^{\dagger} \m{B}_{12}  \\
        \m{0} & \m{I}_{22}
    \end{pmatrix}, \label{eq:LDU1} \\
    &\begin{pmatrix}
        \m{B}_{11} & \m{B}_{12} \\
        \m{B}_{21} & \m{B}_{22}
    \end{pmatrix} = \begin{pmatrix}
        \m{I}_{11} & \m{0} \\
        \m{B}_{21} \m{B}_{11}^{\dagger} & \m{I}_{22}
    \end{pmatrix}
    \begin{pmatrix}
        \m{B}_{11} & \m{0} \\
        \m{0} & \m{B}_{22} - \m{B}_{21} \m{B}_{11}^{\dagger} \m{B}_{12}
    \end{pmatrix} \begin{pmatrix}
        \m{I}_{11} & \m{B}_{11}^{\dagger} \m{B}_{12}  \\
        \m{0} & \m{I}_{22}
    \end{pmatrix}, \label{eq:LDU2}
\end{align}
These decompositions hold provided the range conditions $\c{R}(\m{B}_{12}) \subset \c{R}(\m{B}_{11})$ and $\c{R}(\m{B}_{21}^{\top}) \subset \c{R}(\m{B}_{11})$ are met, which ensures $\m{B}_{12} = \m{B}_{11} \m{B}_{11}^{\dagger} \m{B}_{12}$ and $\m{B}_{21} = \m{B}_{21} \m{B}_{11}^{\dagger} \m{B}_{11}$. Notably, if $\m{B} \succeq \m{0}$, these range conditions are always satisfied due to Douglas's lemma, and the rank of the Schur complement is given by
\begin{equation*}
    \rk (\m{B}_{22} - \m{B}_{21} \m{B}_{11}^{\dagger} \m{B}_{12}) = \rk (\m{B}) - \rk (\m{B}_{11}).
\end{equation*}

We now extend this framework to a Hilbert space $\c{H}$. Let $\m{A} \in \c{B}_{0}^{+}(\c{H})$ and $\m{B} \in \c{B}_{0}(\c{H})$ be symmetric. To define the Schur complement of $\m{B}$ with respect to $\m{A}$, we use the orthogonal decomposition of $\c{H}$ induced by $\m{A}$:
\begin{equation*}
    \c{H} = \c{H}_{1} \oplus \c{H}_{2}, \quad \c{H}_{1} := \overline{\c{R}(\m{A})}, \quad \c{H}_{2} := \c{N}(\m{A}).
\end{equation*}
Let $\m{\Pi}_{1}$ and $\m{\Pi}_{2}$ be the orthogonal projections onto $\c{H}_{1}$ and $\c{H}_{2}$, respectively. 
Relative to this decomposition, $\m{A}$ and $\m{B}$ can be expressed as block operators:
\begin{align*}
    \m{A} =
    \begin{pmatrix}
    \m{A}_{11} & \m{0} \\
    \m{0} & \m{0}
    \end{pmatrix},
    \
    \m{B} =
    \begin{pmatrix}
    \m{B}_{11} & \m{B}_{12} \\
    \m{B}_{21} & \m{B}_{22}
    \end{pmatrix}
    \    :
    \begin{array}{c}
    \c{H}_{1} \\
    \oplus \\
    \c{H}_{2}
    \end{array}
    \rightarrow
    \begin{array}{c}
    \c{H}_{1} \\
    \oplus \\
    \c{H}_{2}
    \end{array},
\end{align*}
where $\m{A}_{11} := \m{A} \mid_{\c{H}_{1}}$ is an injective operator, and the block components of $\m{B}$ are given by $\m{B}_{ij} := \m{\Pi}_{i} \m{B} \mid_{\c{H}_{j}}: \c{H}_{j} \to \c{H}_{i}$ for $i, j = 1, 2$.
Provided that the range condition $\c{R}(\m{B}_{12}) \subset \c{R}(\m{B}_{11}^{1/2})$ is satisfied, we define the $\m{A}$-\emph{Schur complement} of $\m{B}$ as the operator on $\c{H}_{2}$ as follows:
\begin{equation}\label{eq:gen:Schur}
    \m{B} / \m{A} := \m{B}_{22} - (\m{B}_{11}^{\dagger/2} \m{B}_{12})^{*} (\m{B}_{11}^{\dagger/2} \m{B}_{12}): \c{H}_{2} \to \c{H}_{2},
\end{equation}
With this definition, we obtain the $\m{A}$-block LDU decomposition for $\m{B}$:
\begin{align}\label{eq:inf:LDU}
    \m{B} = \begin{pmatrix}
        \m{B}_{11} & \m{B}_{12} \\
        \m{B}_{21} & \m{B}_{22}
    \end{pmatrix} = \begin{pmatrix}
        \m{B}_{11}^{1/2} & \m{0} \\
        (\m{B}_{11}^{\dagger/2} \m{B}_{12})^{*} & \m{\Pi}_{2}
    \end{pmatrix}
    \begin{pmatrix}
        \m{\Pi}_{1} & \m{0} \\
        \m{0} & \m{B}/\m{A}
    \end{pmatrix} \begin{pmatrix}
        \m{B}_{11}^{1/2} & \m{B}_{11}^{\dagger/2} \m{B}_{12}  \\
        \m{0} & \m{\Pi}_{2}
    \end{pmatrix}.
\end{align}
In standard notation, one would write $\m{B} / \m{B}_{11}$, but we will shortly show that this is equivalent to $\m{B} / \m{I}_{11}$ as long as $\m{B}_{11}$ is invertible.

\begin{lemma}\label{lem:Schur:spd}
Let $\m{A} \in \c{B}_{0}^{+}(\c{H})$ be a compact s.p.d. operator and $\m{B} \in \c{B}_{0}(\c{H})$ a compact symmetric operator. Then, $\m{B} \in \c{B}_{0}^{+}(\c{H})$ if and only if
\begin{equation*} 
    \c{R}(\m{B}_{12}) \subset \c{R}(\m{B}_{11}^{1/2}), \quad \m{B}_{11} \in \c{B}_{0}^{+}(\c{H}_{1}), \quad \m{B} / \m{A} \in \c{B}_{0}^{+}(\c{H}_{2}).
\end{equation*}
\end{lemma}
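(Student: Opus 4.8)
The plan is to extract both implications from the block $\m{A}$-LDU factorization \cref{eq:inf:LDU}. Write $\m{B} = \m{L}\m{D}\m{L}^{*}$, where $\m{L}$ is the lower-triangular block operator appearing in \cref{eq:inf:LDU} (diagonal blocks $\m{B}_{11}^{1/2}$ and $\m{\Pi}_{2}$, lower-left block $(\m{B}_{11}^{\dagger/2}\m{B}_{12})^{*}$) and $\m{D} = \diag(\m{\Pi}_{1}, \m{B}/\m{A})$. Observe first that making sense of $\m{L}$ and of this factorization already forces two of the three conditions: $\m{B}_{11} \in \c{B}_{0}^{+}(\c{H}_{1})$ is needed for $\m{B}_{11}^{1/2}$ and $\m{B}_{11}^{\dagger/2}$ to exist at all, and the range condition $\c{R}(\m{B}_{12}) \subseteq \c{R}(\m{B}_{11}^{1/2})$ is exactly what the Douglas lemma requires in order to conclude that $\m{B}_{11}^{\dagger/2}\m{B}_{12} = (\m{B}_{11}^{1/2})^{\dagger}\m{B}_{12}$ is \emph{bounded}, so that $\m{L} \in \c{B}_{\infty}(\c{H})$ and the right-hand side of \cref{eq:inf:LDU} is a genuine bounded-operator product.

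For the ``if'' direction I would argue as follows. Granting the three conditions, $\m{L}$ is bounded and \cref{eq:inf:LDU} yields $\m{B} = \m{L}\m{D}\m{L}^{*}$. Since $\m{\Pi}_{1} \succeq \m{0}$ trivially and $\m{B}/\m{A} \succeq \m{0}$ by hypothesis, $\m{D} \succeq \m{0}$; with the bounded square root $\m{D}^{1/2} = \diag(\m{\Pi}_{1}, (\m{B}/\m{A})^{1/2})$ this gives $\m{B} = (\m{L}\m{D}^{1/2})(\m{L}\m{D}^{1/2})^{*} \succeq \m{0}$. Together with the standing hypothesis that $\m{B}$ is compact, this is $\m{B} \in \c{B}_{0}^{+}(\c{H})$.

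For the ``only if'' direction, assume $\m{B} \in \c{B}_{0}^{+}(\c{H})$ and set $\m{C} := \m{B}^{1/2} \in \c{B}_{0}^{+}(\c{H})$, $\m{C}_{i} := \m{\Pi}_{i}\m{C}$ for $i = 1, 2$. From $\m{B} = \m{C}\m{C}^{*}$ one reads off $\m{B}_{11} = \m{C}_{1}\m{C}_{1}^{*}$, a compression of a compact positive operator, hence in $\c{B}_{0}^{+}(\c{H}_{1})$, and $\m{B}_{12} = \m{C}_{1}\m{C}_{2}^{*}$ (with domains restricted to $\c{H}_{2}$), so that $\c{R}(\m{B}_{12}) \subseteq \c{R}(\m{C}_{1}) = \c{R}(\m{B}_{11}^{1/2})$, the equality by the Douglas lemma since $\m{C}_{1}\m{C}_{1}^{*} = \m{B}_{11}$. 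This gives the first two conditions and, in particular, makes $\m{B}/\m{A}$ well defined. For the third, apply the Douglas lemma once more: $\m{R} := (\m{B}_{11}^{1/2})^{\dagger}\m{C}_{1}$ is bounded with $\m{C}_{1} = \m{B}_{11}^{1/2}\m{R}$, and it is a partial isometry (polar decomposition, as in \cref{lem:Green's:part:iso}), so $\m{R}^{*}\m{R}$ is the orthogonal projection onto $\overline{\c{R}(\m{C}_{1}^{*})}$. Substituting $\m{B}_{11}^{\dagger/2}\m{B}_{12} = \m{R}\m{C}_{2}^{*}$ and $\m{B}_{22} = \m{C}_{2}\m{C}_{2}^{*}$ into the definition \cref{eq:gen:Schur} gives $\m{B}/\m{A} = \m{C}_{2}\m{C}_{2}^{*} - \m{C}_{2}\m{R}^{*}\m{R}\m{C}_{2}^{*} = \m{C}_{2}(\m{I} - \m{R}^{*}\m{R})\m{C}_{2}^{*}$, which is positive because $\m{I} - \m{R}^{*}\m{R}$ is an orthogonal projection, and compact because $\m{C}_{2}$ is; hence $\m{B}/\m{A} \in \c{B}_{0}^{+}(\c{H}_{2})$.

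I do not expect a serious conceptual obstacle: the content is the Douglas lemma plus careful bookkeeping. The one point that genuinely needs care -- and the reason the finite-dimensional argument does not transfer verbatim -- is that $\m{B}_{11}^{\dagger}$ and $\m{B}_{11}^{\dagger/2}$ are unbounded as soon as $\m{B}_{11}$ has infinite rank, so every occurrence of a pseudoinverse must be shown, via an appropriate range inclusion together with the Douglas lemma, to combine with a neighbouring operator into a bounded one; only then are the block identities above legitimate operator identities on all of $\c{H}$ rather than formal manipulations on a dense domain.
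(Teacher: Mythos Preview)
Your proposal is correct and follows essentially the same route as the paper. The paper's proof is a two-line sketch: the ``if'' direction is the LDU identity \eqref{eq:inf:LDU}, exactly as you do, and the ``only if'' direction is declared ``immediate from the Douglas lemma and the Baker theorem.'' Your argument via $\m{C} = \m{B}^{1/2}$ and the polar factor $\m{R} = \m{B}_{11}^{\dagger/2}\m{C}_{1}$ is precisely the content of Baker's theorem unpacked by hand, so you have supplied the details the paper leaves to citation; the final identity $\m{B}/\m{A} = \m{C}_{2}(\m{I}-\m{R}^{*}\m{R})\m{C}_{2}^{*}$ with $\m{R}^{*}\m{R}$ a projection is exactly what Baker gives.
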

\begin{proof}[Proof of \cref{lem:Schur:spd}]
\begin{enumerate}
    \item [($\Rightarrow$)] Immediate from the Douglas lemma \cite{douglas1966majorization} and the Baker theorem \cite{baker1970mutual, hsing2015theoretical}.
    \item [($\Leftarrow$)] Immediate from the $\m{A}$-block LDU decomposition in \eqref{eq:inf:LDU}.
\end{enumerate}
\end{proof}

The following lemma formalizes the utility of the Green's operator by providing an alternative characterization of the $\m{A}$-Schur complement.

\begin{lemma}\label{lem:Schur:op}
Let $\c{H}$ be a Hilbert space, and $\m{A}, \m{B} \in \c{B}_{0}^{+}(\c{H})$. Let $\m{G}_{11} \in \c{G}(\m{A}_{11})$ be a Green's operator, and denote its trivial extension to $\c{H}$ by
\begin{align*}
    \m{G} =
    \begin{pmatrix}
    \m{G}_{11} & \m{0} \\
    \m{0} & \m{0}
    \end{pmatrix}
    \    :
    \begin{array}{c}
    \c{H}_{1} \\
    \oplus \\
    \c{H}_{2}
    \end{array}
    \rightarrow
    \begin{array}{c}
    \c{H}_{1} \\
    \oplus \\
    \c{H}_{2}
    \end{array}.
\end{align*}
Then the following statements hold:
\begin{enumerate}[leftmargin = *]
    \item $\m{G}_{11}^{*} \in \c{B}_{0}(\c{H}_{1})$ is injective, and its extension $\m{G} \in \c{B}_{0}(\c{H})$ is a Green's operator for $\m{A}$.
    \item On $\c{H}_{1}$, $\m{G}_{11}^{*} \m{B}_{11} \m{G}_{11} = \m{G}^{*} \m{B} \m{G}$, hence $\c{N}(\m{B}_{11}^{1/2} \m{G}_{11}) = \c{N}(\m{B}^{1/2} \m{G}) \cap \c{H}_{1}$.
    \item The Schur complement $\m{B}/\m{A}$ can also be expressed as:
    \begin{equation*}
        \m{B}/\m{A} = \m{B}_{22} - [(\m{G}_{11}^{*} \m{B}_{11} \m{G}_{11})^{\dagger/2} (\m{G}_{11}^{*} \m{B}_{12})]^{*} [(\m{G}_{11}^{*} \m{B}_{11} \m{G}_{11})^{\dagger/2} (\m{G}_{11}^{*} \m{B}_{12})],
    \end{equation*}
    for any $\m{G}_{11} \in \c{G}(\m{A}_{11})$. 
    \item The operator $\m{B}^{1/2} \m{\Pi}_{\c{N}(\m{G}^{*} \m{B}^{1/2})} \m{B}^{1/2}$ is block-diagonal, where its restriction to $\c{H}_{2}$ is the $\m{A}$-Schur complement:
    \begin{equation*}
        \m{B}^{1/2} \m{\Pi}_{\c{N}(\m{G}^{*} \m{B}^{1/2})} \m{B}^{1/2} =
        \begin{cases}
            \m{0}, &\quad \text{on } \c{H}_{1}, \\
            \m{B}/\m{A}, &\quad \text{on } \c{H}_{2},
        \end{cases}
    \end{equation*}
    hence $\c{R} [(\m{B}/\m{A})^{1/2}] = \c{R}(\m{B}^{1/2}) \cap \c{H}_{2}$.
\end{enumerate}
\end{lemma}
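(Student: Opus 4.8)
The plan is to dispatch parts (1)--(2) by direct block algebra, to reduce part (3) to a single polar-decomposition identity, and to obtain the block form in part (4) from a least-squares reading of the projection $\m{\Pi}_{\c{N}(\m{G}^{*}\m{B}^{1/2})}$, after which the range identity will fall out of the Douglas lemma. \emph{Parts (1)--(2).} Since $\m{A}_{11}$ is injective and $\m{G}_{11}\m{G}_{11}^{*} = \m{A}_{11}$, any $f$ with $\m{G}_{11}^{*}f = 0$ satisfies $\m{A}_{11}f = \m{G}_{11}\m{G}_{11}^{*}f = 0$, so $f = 0$; thus $\m{G}_{11}^{*}$ is injective. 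The trivial extension $\m{G}$ is compact and $\m{G}\m{G}^{*} = \diag(\m{G}_{11}\m{G}_{11}^{*},\m{0}) = \diag(\m{A}_{11},\m{0}) = \m{A}$, giving $\m{G}\in\c{G}(\m{A})$. Multiplying out the block forms yields $\m{G}^{*}\m{B}\m{G} = \diag(\m{G}_{11}^{*}\m{B}_{11}\m{G}_{11},\m{0})$, which is the first claim of (2) and also shows $\c{N}(\m{G}^{*}\m{B}\m{G}) = \c{N}(\m{G}_{11}^{*}\m{B}_{11}\m{G}_{11})\oplus\c{H}_{2}$. Combining this with $\c{N}(\m{T}^{*}\m{T}) = \c{N}(\m{T})$, applied to $\m{T} = \m{B}_{11}^{1/2}\m{G}_{11}$ and to $\m{T} = \m{B}^{1/2}\m{G}$, gives $\c{N}(\m{B}^{1/2}\m{G})\cap\c{H}_{1} = \c{N}(\m{G}_{11}^{*}\m{B}_{11}\m{G}_{11}) = \c{N}(\m{B}_{11}^{1/2}\m{G}_{11})$.

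\emph{Part (3).} Put $\m{C} := \m{B}_{11}^{\dagger/2}\m{B}_{12}$. By the range condition (valid since $\m{B}\succeq\m{0}$, by \cref{lem:Schur:spd}) and the Douglas lemma, $\m{C}$ is bounded, $\m{B}_{12} = \m{B}_{11}^{1/2}\m{C}$, and $\c{R}(\m{C})\subseteq\c{R}(\m{B}_{11}^{\dagger/2}) = \overline{\c{R}(\m{B}_{11}^{1/2})}$; by \eqref{eq:gen:Schur} the left-hand side equals $\m{B}_{22} - \m{C}^{*}\m{C}$. Set $\m{M} := \m{B}_{11}^{1/2}\m{G}_{11}$, so that $\m{G}_{11}^{*}\m{B}_{11}\m{G}_{11} = \m{M}^{*}\m{M}$ and $\m{G}_{11}^{*}\m{B}_{12} = \m{M}^{*}\m{C}$; hence the operator subtracted in the formula of (3) is $\m{C}'^{*}\m{C}'$ with $\m{C}' := (\m{M}^{*}\m{M})^{\dagger/2}\m{M}^{*}\m{C}$. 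Using the polar decomposition $\m{M} = \m{V}\,|\m{M}|$ (with $|\m{M}| = (\m{M}^{*}\m{M})^{1/2}$ and $\m{V}$ the partial isometry of initial space $\c{N}(\m{M})^{\perp}$ and final space $\overline{\c{R}(\m{M})}$), the key step is $(\m{M}^{*}\m{M})^{\dagger/2}\m{M}^{*} = |\m{M}|^{\dagger}|\m{M}|\,\m{V}^{*} = \m{\Pi}_{\c{N}(\m{M})^{\perp}}\m{V}^{*} = \m{V}^{*}$, since $|\m{M}|^{\dagger}|\m{M}|$ is the \emph{bounded} projection $\m{\Pi}_{\c{N}(\m{M})^{\perp}}$ and $\c{R}(\m{V}^{*}) = \c{N}(\m{M})^{\perp}$. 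Thus $\m{C}' = \m{V}^{*}\m{C}$ and $\m{C}'^{*}\m{C}' = \m{C}^{*}(\m{V}\m{V}^{*})\m{C} = \m{C}^{*}\m{\Pi}_{\overline{\c{R}(\m{M})}}\m{C}$. Finally $\overline{\c{R}(\m{M})} = \overline{\m{B}_{11}^{1/2}\,\c{R}(\m{G}_{11})} = \overline{\c{R}(\m{B}_{11}^{1/2})}$, because $\c{R}(\m{G}_{11}) = \c{R}(\m{A}_{11}^{1/2})$ is dense in $\c{H}_{1}$ by injectivity of $\m{A}_{11}$ and $\m{B}_{11}^{1/2}$ is continuous; since $\c{R}(\m{C})\subseteq\overline{\c{R}(\m{B}_{11}^{1/2})}$ we get $\m{\Pi}_{\overline{\c{R}(\m{M})}}\m{C} = \m{C}$, so $\m{C}'^{*}\m{C}' = \m{C}^{*}\m{C}$ and the two expressions agree (the value being visibly independent of $\m{G}_{11}$).

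\emph{Part (4).} Since $\c{R}(\m{B}^{1/2}\m{G}) = \m{B}^{1/2}\,\c{R}(\m{A}^{1/2})$ with $\c{R}(\m{A}^{1/2})$ dense in $\c{H}_{1}$ and contained in it, $\overline{\c{R}(\m{B}^{1/2}\m{G})} = \overline{\m{B}^{1/2}(\c{H}_{1})}$, whose orthogonal complement is $\c{N} := \c{N}(\m{G}^{*}\m{B}^{1/2})$. Write $\m{\Pi} := \m{\Pi}_{\c{N}}$ and $\m{X} := \m{B}^{1/2}\m{\Pi}\m{B}^{1/2}\succeq\m{0}$. For $h_{1}\in\c{H}_{1}$ one has $\m{B}^{1/2}h_{1}\in\m{B}^{1/2}(\c{H}_{1})\subseteq\c{N}^{\perp}$, so $\m{\Pi}\m{B}^{1/2}h_{1} = 0$, i.e.\ $\m{X}\m{\Pi}_{1} = \m{0}$; taking adjoints $\m{\Pi}_{1}\m{X} = \m{0}$ as well, so $\m{X}$ is block-diagonal with vanishing $\c{H}_{1}$-block. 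For the $\c{H}_{2}$-block, fix $h_{2}\in\c{H}_{2}$: then $\langle h_{2},\m{X}h_{2}\rangle = \|\m{\Pi}\m{B}^{1/2}h_{2}\|^{2}$ is the squared distance from $\m{B}^{1/2}h_{2}$ to $\overline{\m{B}^{1/2}(\c{H}_{1})}$, hence equals $\inf_{h_{1}\in\c{H}_{1}}\|\m{B}^{1/2}(h_{2}-h_{1})\|^{2}$. Expanding this norm via the symmetry of $\m{B}$, the block decomposition, and $\m{B}_{12}h_{2} = \m{B}_{11}^{1/2}\m{C}h_{2}$ gives $\|\m{B}^{1/2}(h_{2}-h_{1})\|^{2} = \|\m{B}_{11}^{1/2}h_{1} - \m{C}h_{2}\|^{2} + \langle h_{2},\m{B}_{22}h_{2}\rangle - \|\m{C}h_{2}\|^{2}$, and because $\m{C}h_{2}\in\overline{\c{R}(\m{B}_{11}^{1/2})}$ the infimum over $h_{1}$ of the first term is $0$; therefore $\langle h_{2},\m{X}h_{2}\rangle = \langle h_{2},(\m{B}_{22}-\m{C}^{*}\m{C})h_{2}\rangle = \langle h_{2},(\m{B}/\m{A})h_{2}\rangle$. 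Equality of the quadratic forms of these self-adjoint operators on $\c{H}_{2}$ forces $\m{X}|_{\c{H}_{2}} = \m{B}/\m{A}$, which is the displayed block form. For the concluding range identity, $\m{X}$ is then the trivial extension of $\m{B}/\m{A}$ (so $\m{X}^{1/2}$ is that of $(\m{B}/\m{A})^{1/2}$) and $\m{X} = (\m{B}^{1/2}\m{\Pi})(\m{B}^{1/2}\m{\Pi})^{*} = \m{X}^{1/2}(\m{X}^{1/2})^{*}$, so by the Douglas lemma $\c{R}(\m{B}^{1/2}\m{\Pi}) = \c{R}(\m{X}^{1/2}) = \c{R}((\m{B}/\m{A})^{1/2})$; on the other hand $\c{R}(\m{B}^{1/2}\m{\Pi}) = \m{B}^{1/2}(\c{N})$, and since $\c{N}(\m{G}^{*}) = \c{R}(\m{G})^{\perp} = \overline{\c{R}(\m{A}^{1/2})}^{\perp} = \c{N}(\m{A}) = \c{H}_{2}$ we get $\c{N} = (\m{B}^{1/2})^{-1}(\c{H}_{2})$, whence $\m{B}^{1/2}(\c{N}) = \c{R}(\m{B}^{1/2})\cap\c{H}_{2}$. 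Combining the two chains yields $\c{R}((\m{B}/\m{A})^{1/2}) = \c{R}(\m{B}^{1/2})\cap\c{H}_{2}$.

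\emph{Main obstacle.} The delicate point is part (3): when $\c{R}(\m{B}_{11}^{1/2})$ is not closed, $(\m{M}^{*}\m{M})^{\dagger/2}$ is unbounded, so $(\m{M}^{*}\m{M})^{\dagger/2}\m{M}^{*}\m{C}$ cannot be rearranged by naive operator algebra; routing through the polar decomposition keeps every intermediate operator bounded, and the density of $\c{R}(\m{G}_{11})$ in $\c{H}_{1}$ is precisely what lets the leftover projection $\m{\Pi}_{\overline{\c{R}(\m{M})}}$ be absorbed. The variational identity underlying (4) demands the same care with non-closed ranges; the remaining steps are routine.
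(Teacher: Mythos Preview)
Your proof is correct. Parts (1)--(2) match the paper almost verbatim. The differences are in parts (3) and (4).

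For part (3), both you and the paper reduce the claim to showing that inserting the projection $\m{\Pi}_{\overline{\c{R}(\m{B}_{11}^{1/2}\m{G}_{11})}}$ between $\m{C}^{*}$ and $\m{C}$ is harmless, using $\c{R}(\m{C})\subseteq\overline{\c{R}(\m{B}_{11}^{1/2})}$ together with density of $\c{R}(\m{G}_{11})$ in $\c{H}_{1}$. The difference is in how this projection is produced: you get it cleanly from the polar decomposition $\m{M}=\m{V}|\m{M}|$ via the identity $(\m{M}^{*}\m{M})^{\dagger/2}\m{M}^{*}=\m{V}^{*}$, while the paper introduces the Baker correlation operator $\m{R}_{12}$ (so that $\m{B}_{12}=\m{B}_{11}^{1/2}\m{R}_{12}\m{B}_{22}^{1/2}$) and then expands $\m{\Pi}_{\overline{\c{R}(\m{Q}_{11})}}=\overline{\m{Q}_{11}(\m{Q}_{11}^{*}\m{Q}_{11})^{\dagger}\m{Q}_{11}^{*}}$ by hand. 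Your route is shorter and avoids the extra factorization lemma.

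For part (4), the paper proceeds purely algebraically: it rewrites $\m{B}_{11}^{\dagger/2}\m{B}_{12}$ in terms of $\m{B}^{1/2}\m{\Pi}_{1}$ and $\m{B}^{1/2}\m{\Pi}_{2}$, collapses the resulting expression to $(\m{B}^{1/2}\m{\Pi}_{2})^{*}[\m{I}-\m{\Pi}_{\overline{\c{R}(\m{B}^{1/2}\m{\Pi}_{1})}}](\m{B}^{1/2}\m{\Pi}_{2})$, and then identifies this with $\m{B}^{1/2}\m{\Pi}_{\c{N}(\m{G}^{*}\m{B}^{1/2})}\m{B}^{1/2}$. Your variational argument --- reading $\langle h_{2},\m{X}h_{2}\rangle$ as the squared distance from $\m{B}^{1/2}h_{2}$ to $\overline{\m{B}^{1/2}(\c{H}_{1})}$ and completing the square to expose $\m{B}/\m{A}$ --- is genuinely different and more conceptual; it makes the least-squares interpretation of the Schur complement explicit and handles the non-closed-range issue transparently. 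The concluding range identity is derived the same way in both proofs (Douglas lemma plus $\c{N}(\m{G}^{*})=\c{H}_{2}$).
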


\begin{proof}[Proof of \cref{lem:Schur:op}] \quad
\begin{enumerate}[leftmargin=*]
\item Since $\c{N}(\m{G}_{11}^{*}) = \c{N}(\m{A}_{11}) = \{0\}$, thus $\m{G}_{11}^{*}$ is injective. Then, $\m{G} \m{G}^{*} = \m{A}$ is trivial by direct calculation.
\item Note that $\c{N}(\m{B}_{11}^{1/2} \m{G}_{11}) = \c{N}(\m{G}_{11}^{*} \m{B}_{11} \m{G}_{11})$. On $\c{H}_{1}$,
\begin{equation*}
    \m{G}_{11}^{*} \m{B}_{11} \m{G}_{11} = (\m{\Pi}_{1} \m{G}^{*} \m{\Pi}_{1}) (\m{\Pi}_{1} \m{B} \m{\Pi}_{1}) (\m{\Pi}_{1} \m{G} \m{\Pi}_{1}) = \m{G}^{*} \m{B} \m{G},
\end{equation*}
hence $h \in \c{N}(\m{B}_{11}^{1/2} \m{G}_{11}) \subset \c{H}_{1}$ if and only if $h \in \c{N}(\m{B}^{1/2} \m{G}) \cap \c{H}_{1}$.

\item Let $\m{Q}_{11} := \m{B}_{11}^{1/2} \m{G}_{11} \in \c{B}_{0}(\c{H}_{1})$. Also, since $\m{B} \in \c{B}_{0}^{+}(\c{H})$, there exists some $\m{R}_{12} \in \c{B}_{\infty}(\c{H}_{2}, \c{H}_{1})$ with $\vertii{\m{R}_{12}}_{\infty} \le 1$ such that $\m{B}_{12} = \m{B}_{11}^{1/2} \m{R}_{12} \m{B}_{22}^{1/2}$ and $\m{R}_{12} \m{B}_{22}^{1/2} = \m{B}_{11}^{\dagger/2} \m{B}_{12}$ \cite{baker1970mutual}. Because
\begin{equation*}
    \c{R}(\m{B}_{11}^{\dagger/2} \m{B}_{12}) \subset \c{R}(\m{B}_{11}^{\dagger/2}) = \c{N}(\m{B}_{11}^{1/2})^{\perp} = \c{N}(\m{G}_{11}^{*} \m{B}_{11}^{1/2})^{\perp} = \c{N}(\m{Q}_{11}^{*})^{\perp} = \overline{\c{R}(\m{Q}_{11})},
\end{equation*}
we get
\begin{align*}
    \m{B}/\m{A} &= (\m{B}/\m{A}) + (\m{B}_{11}^{\dagger/2} \m{B}_{21})^{*} (\m{I} - \m{\Pi}_{\overline{\c{R}(\m{Q}_{11})}}) \m{B}_{11}^{\dagger/2} \m{B}_{12} \\
    &= \m{B}_{22} - (\m{B}_{11}^{\dagger/2} \m{B}_{21})^{*} \m{\Pi}_{\overline{\c{R}(\m{Q}_{11})}} \m{B}_{11}^{\dagger/2} \m{B}_{12},
\end{align*}
which is a trivial extension of
\begin{align*}
    &\m{B}_{22}- \m{B}_{22}^{1/2} \m{R}_{12}^{*} \m{Q}_{11} (\m{Q}_{11}^{*} \m{Q}_{11})^{\dagger} \m{Q}_{11}^{*} \m{R}_{12} \m{B}_{22}^{1/2} \\
    &= \m{B}_{22}- (\m{B}_{11}^{1/2} \m{R}_{12} \m{B}_{22}^{1/2})^{*} \m{G}_{11} (\m{Q}_{11}^{*} \m{Q}_{11})^{\dagger} \m{G}_{11}^{*} (\m{B}_{11}^{1/2} \m{R}_{12} \m{B}_{22}^{1/2}) \\
    &= \m{B}_{22} - (\m{B}_{21} \m{G}_{11}) (\m{G}_{11}^{*} \m{B}_{11} \m{G}_{11})^{\dagger} (\m{G}_{11}^{*} \m{B}_{12}).
\end{align*}
From the last line, it becomes clear that its trivial extension $\m{B}/\m{A}$ to the entire space $\c{H}$ can also be written as
\begin{align*}
    \m{B}/\m{A} = \m{B}_{22} - [(\m{G}_{11}^{*} \m{B}_{11} \m{G}_{11})^{\dagger/2} (\m{G}_{11}^{*} \m{B}_{12})]^{*} [(\m{G}_{11}^{*} \m{B}_{11} \m{G}_{11})^{\dagger/2} (\m{G}_{11}^{*} \m{B}_{12})].
\end{align*}

\item Using $\m{B}_{ij} = \m{\Pi}_{i} \m{B} \m{\Pi}_{j}$, we get
\begin{align*}
    \m{B}_{11}^{\dagger/2} \m{B}_{12} = (\m{\Pi}_{1} \m{B} \m{\Pi}_{1})^{\dagger/2} (\m{\Pi}_{1} \m{B} \m{\Pi}_{2}) = [(\m{\Pi}_{1} \m{B} \m{\Pi}_{1})^{\dagger/2} (\m{B}^{1/2} \m{\Pi}_{1})^{*}]] (\m{B}^{1/2} \m{\Pi}_{2}).
\end{align*}
Consequently,
\begin{align*}
    \m{B}/\m{A} &= \m{B}_{22} - (\m{B}_{11}^{\dagger/2} \m{B}_{12})^{*} (\m{B}_{11}^{\dagger/2} \m{B}_{12}) \\
    &= (\m{B}^{1/2} \m{\Pi}_{2})^{*} [\m{I} - [(\m{\Pi}_{1} \m{B} \m{\Pi}_{1})^{\dagger/2} (\m{B}^{1/2} \m{\Pi}_{1})^{*}]^{*} [(\m{\Pi}_{1} \m{B} \m{\Pi}_{1})^{\dagger/2} (\m{B}^{1/2} \m{\Pi}_{1})^{*}]] (\m{B}^{1/2} \m{\Pi}_{2}) \\
    &= (\m{B}^{1/2} \m{\Pi}_{2})^{*} [\m{I} - \m{\Pi}_{\overline{\c{R}(\m{B}^{1/2} \m{\Pi}_{1})}}] (\m{B}^{1/2} \m{\Pi}_{2}) \\
    &= (\m{B}^{1/2} \m{\Pi}_{1} + \m{B}^{1/2} \m{\Pi}_{2})^{*} [\m{I} - \m{\Pi}_{\overline{\c{R}(\m{B}^{1/2} \m{\Pi}_{1})}}] (\m{B}^{1/2} \m{\Pi}_{1} + \m{B}^{1/2} \m{\Pi}_{2}) \\
    &= \m{B}^{1/2} [\m{I} - \m{\Pi}_{\overline{\c{R}(\m{B}^{1/2} \m{\Pi}_{1})}}] \m{B}^{1/2} 
    = \m{B}^{1/2} \m{\Pi}_{\c{N}(\m{\Pi}_{1} \m{B}^{1/2})} \m{B}^{1/2} 
    = \m{B}^{1/2} \m{\Pi}_{\c{N}(\m{G}^{*} \m{B}^{1/2})} \m{B}^{1/2}, 
\end{align*}
where we used $\c{N}(\m{\Pi}_{1}) = \c{N}(\m{A}) = \c{N}(\m{G}^{*})$ in the last equality. On $\c{H}_{1} = \c{H}_{2}^{\perp} = \c{N}(\m{G}^{*})^{\perp}$, one can easily show that $\m{\Pi}_{\c{N}(\m{G}^{*} \m{B}^{1/2})} \m{B}^{1/2} = \m{0}$, hence $\m{B}^{1/2} \m{\Pi}_{\c{N}(\m{G}^{*} \m{B}^{1/2})} \m{B}^{1/2} = \m{0}$.
Applying the Douglas lemma, we get
\begin{align*}
    \c{R}[(\m{B}/\m{A})^{1/2}] =
    \c{R}( \m{B}^{1/2}\m{\Pi}_{\c{N}(\m{G}^{*} \m{B}^{1/2})} ) = \m{B}^{1/2} [\c{N}(\m{G}^{*} \m{B}^{1/2})].
\end{align*}
Note that $h \in \c{R}[(\m{B}/\m{A})^{1/2}]$ if and only if there exists some $f \in \c{N}(\m{G}^{*} \m{B}^{1/2})$ such that $h = \m{B}^{1/2} f$, which is equivalent to $h \in \c{R}(\m{B}^{1/2}) \cap \c{N}(\m{G}^{*}) = \c{R}(\m{B}^{1/2}) \cap \c{H}_{2}$.
\end{enumerate}
\end{proof}

The $\m{A}$-Schur complement is a fundamentally algebraic construct, as it depends solely on the null space of $\m{A}$ rather than the geometry induced by its range. Consequently, if two operators $\m{A}, \tilde{\m{A}} \in \c{B}_{0}^{+}(\c{H})$ share the same null space, their corresponding Schur complements are identical, regardless of any other differences between them. The equivalent characterizations provided in \cref{lem:Schur:op} are therefore crucial for the development of our theory.


\begin{corollary}\label{cor:Schur:rank}
Let $\m{A} \in \b{R}^{n \times n}$ be a s.p.d. matrix and $\m{B} \in \b{R}^{n \times n}$ a symmetric matrix with $\c{R}(\m{B}_{12}) \subset \c{R}(\m{B}_{11})$. Then,
\begin{equation*}
    \rk(\m{B}/\m{A}) = \rk(\m{B}) - \rk(\m{G}^{*} \m{B} \m{G}),
\end{equation*}
independent of the choice of a Green's matrix $\m{G} \in \c{G}(\m{A})$.
\end{corollary}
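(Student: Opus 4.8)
The plan is to split the claimed identity into two essentially independent pieces: a rank formula for $\m{G}^{*}\m{B}\m{G}$ (which also delivers the asserted independence of the choice of $\m{G}$), and the classical Guttman/Haynsworth rank additivity for the Schur complement, read off from the block $\m{L}\m{D}\m{U}$ factorization already recorded in the paper.

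\textbf{Step 1: reduce $\rk(\m{G}^{*}\m{B}\m{G})$ to $\rk(\m{B}_{11})$.} I would first prove that $\rk(\m{G}^{*}\m{B}\m{G}) = \rk(\m{B}_{11})$ for \emph{every} $\m{G}\in\c{G}(\m{A})$, where $\m{B}_{11}=\m{\Pi}_{1}\m{B}\m{\Pi}_{1}$ with $\c{H}_{1}=\c{R}(\m{A})$; since $\m{B}_{11}$ has nothing to do with $\m{G}$, this immediately gives the claimed independence. By the Douglas lemma, $\c{R}(\m{G})=\c{R}(\m{A}^{1/2})=\c{H}_{1}$ and $\c{N}(\m{G}^{*})=\c{N}(\m{G}\m{G}^{*})=\c{N}(\m{A})=\c{H}_{2}$, so $\m{G}=\m{\Pi}_{1}\m{G}$ and $\m{G}^{*}=\m{G}^{*}\m{\Pi}_{1}$, whence $\m{G}^{*}\m{B}\m{G}=\m{G}^{*}(\m{\Pi}_{1}\m{B}\m{\Pi}_{1})\m{G}$. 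Because $\m{G}$ maps $\b{R}^{n}$ onto $\c{H}_{1}$, the range of $(\m{\Pi}_{1}\m{B}\m{\Pi}_{1})\m{G}$ equals $\c{R}(\m{B}_{11})\subseteq\c{H}_{1}$; and $\m{G}^{*}$ is injective on $\c{H}_{1}$ because $\c{N}(\m{G}^{*})=\c{H}_{2}$ meets $\c{H}_{1}$ trivially, so left multiplication by $\m{G}^{*}$ preserves rank. Hence $\rk(\m{G}^{*}\m{B}\m{G})=\rk(\m{B}_{11})$. (For the trivial extension of a Green's matrix of $\m{A}_{11}$ this is also immediate from \cref{lem:Schur:op}(2) and the invertibility of $\m{G}_{11}$, and \cref{lem:Green's:part:iso} reduces a general $\m{G}$ to that case; but the direct argument above avoids any case distinction.)

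\textbf{Step 2: Guttman rank additivity.} Next I would show $\rk(\m{B})=\rk(\m{B}_{11})+\rk(\m{B}/\m{A})$. Since $\m{B}$ is symmetric we have $\m{B}_{21}=\m{B}_{12}^{\top}$, so the hypothesis $\c{R}(\m{B}_{12})\subset\c{R}(\m{B}_{11})$ also yields $\c{R}(\m{B}_{21}^{\top})\subset\c{R}(\m{B}_{11})$; both range conditions required for the factorization \eqref{eq:LDU2} (equivalently \eqref{eq:inf:LDU}) relative to the splitting $\b{R}^{n}=\c{H}_{1}\oplus\c{H}_{2}$ are therefore satisfied, with $\m{B}/\m{A}$ appearing as the $(2,2)$ diagonal block. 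In that factorization the two outer factors are unit block-triangular, hence invertible, so $\rk(\m{B})$ equals the rank of the middle block-diagonal matrix $\diag(\m{B}_{11},\,\m{B}/\m{A})$, which is $\rk(\m{B}_{11})+\rk(\m{B}/\m{A})$ — exactly the rank identity the paper already states for $\m{B}\succeq\m{0}$, whose proof uses only the range conditions and thus goes through verbatim here. Combining the two steps gives $\rk(\m{B}/\m{A})=\rk(\m{B})-\rk(\m{B}_{11})=\rk(\m{B})-\rk(\m{G}^{*}\m{B}\m{G})$, with the right-hand side manifestly independent of $\m{G}$.

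\textbf{Main obstacle.} Step 2 is routine linear algebra once the symmetry of $\m{B}$ is used to check the second range condition. The only point that needs care is in Step 1: a general $\m{G}\in\c{G}(\m{A})$ need not be the trivial extension of a Green's matrix of $\m{A}_{11}$ — its $(1,2)$ block can be nonzero — so \cref{lem:Schur:op}(2) cannot be quoted directly; the rank-preservation argument built on $\c{R}(\m{G})=\c{H}_{1}$ and $\c{N}(\m{G}^{*})=\c{H}_{2}$ is what makes the statement hold uniformly over $\c{G}(\m{A})$.
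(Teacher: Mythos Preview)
Your proof is correct and close to the paper's own argument, but organized differently. The paper forms the auxiliary matrix $\m C=\diag(\m G_{11}^{*},\m I_{22})\,\m B\,\diag(\m G_{11},\m I_{22})$ for the trivial-extension Green's matrix, applies the block LDU factorization \eqref{eq:LDU2} to $\m C$, and invokes \cref{lem:Schur:op}(3) to identify $\m C/\m I_{11}=\m B/\m A$; this yields $\rk(\m B)=\rk(\m C)=\rk(\m C_{11})+\rk(\m B/\m A)$ with $\m C_{11}=\m G_{11}^{*}\m B_{11}\m G_{11}=\m G^{*}\m B\m G$. You instead apply the LDU directly to $\m B$ (your Step~2) and separately establish $\rk(\m G^{*}\m B\m G)=\rk(\m B_{11})$ for \emph{every} $\m G\in\c G(\m A)$ via the range/kernel argument (your Step~1). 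Your route is slightly more elementary: it avoids the detour through $\m C$ and the identification lemma, and it handles arbitrary $\m G$ uniformly rather than specializing to the trivial extension and leaving the general case to unitary equivalence. The paper's route, in return, recycles machinery already built in \cref{lem:Schur:op} and keeps all Schur-complement manipulations in one place.
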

\begin{proof}[Proof of \cref{cor:Schur:rank}]
Define the s.p.d. block matrix
\begin{align*}
    \m{C} :&= \begin{pmatrix}
        \m{G}_{11}^{*} & \m{0} \\
        \m{0} & \m{I}_{22}
    \end{pmatrix}
    \m{B} \begin{pmatrix}
        \m{G}_{11} & \m{0} \\
        \m{0} & \m{I}_{22}
    \end{pmatrix} = \begin{pmatrix}
        \m{G}_{11}^{*} \m{B}_{11} \m{G}_{11} & \m{G}_{11}^{*} \m{B}_{12} \\
        \m{B}_{21} \m{G}_{11} & \m{B}_{22}
    \end{pmatrix} \\
    &= \begin{pmatrix}
        \m{I} & \m{0} \\
        \m{B}_{21} \m{G}_{11} (\m{G}_{11}^{*} \m{B}_{11} \m{G}_{11})^{\dagger} & \m{I}
    \end{pmatrix}
    \begin{pmatrix}
        \m{C}_{11} & \m{0} \\
        \m{0} & \m{B}/\m{A}
    \end{pmatrix} \begin{pmatrix}
        \m{I} & (\m{G}_{11}^{*} \m{B}_{11} \m{G}_{11})^{\dagger} \m{G}_{11}^{*} \m{B}_{12} \\
        \m{0} & \m{I}
    \end{pmatrix}
\end{align*}
where we used $\m{C}/\m{I}_{11} = \m{B}/\m{I}_{11} = \m{B}/\m{A}$ from part 3 in \cref{lem:Schur:op}. Therefore,
\begin{align*}
    \rk(\m{B}) = \rk(\m{C}) &= \rk(\m{C}_{11}) + \rk (\m{B}/\m{A}) \\
    &= \rk(\m{G}_{11}^{*} \m{B}_{11} \m{G}_{11}) + \rk (\m{B}/\m{A})
    = \rk(\m{G}^{*} \m{B} \m{G}) + \rk (\m{B}/\m{A}).
\end{align*}
\end{proof}

\section{Euclidean Reachability}\label{sec:fin:dim}
This section defines and characterizes the concept of linear reachability between two covariance matrices in a finite-dimensional Euclidean space to address the Monge problem \cite{villani2008optimal, panaretos2020invitation, ambrosio2008gradient}.

\begin{definition}[Reachability in Finite Dimension]\label{def:reachable}
Let $\m{A}, \m{B} \in \b{R}^{n \times n}$ be s.p.d. matrices. We say that $\m{B}$ is linearly reachable from $\m{A}$, denoted $\m{A} \to \m{B}$, if there exists a matrix $\m{T} \in \b{R}^{n \times n}$ satisfying:
\begin{enumerate}
    \item [] \textbf{Transportability :} $\m{T} \m{A} \m{T}^{*} = \m{B}$.
    \item [] \textbf{Optimality :} $\tr (\m{A} \m{T}) = \tr [(\m{G}^{*} \m{B} \m{G})^{1/2}]$ for some (and thus any) Green's matrix $\m{G} \in \c{G}(\m{A})$. 
\end{enumerate}
Such a matrix $\m{T}$ is called an Optimal Transport (OT) matrix.
\end{definition}

Note that the set of all matrices of the form $\{\m{G}^{*} \m{B} \m{G} \mid \m{G} \in \b{R}^{n \times n}, \m{G} \m{G}^{*} = \m{A} \}$ consists of unitarily equivalent matrices due to the polar decomposition, meaning the optimality condition is not affected by the choice of $\m{G} \in \c{G}(\m{A})$. Furthermore, because the transportability condition implies that $(\m{G}^{*} \m{T} \m{G})(\m{G}^{*} \m{T} \m{G})^{*} = \m{G}^{*} \m{B} \m{G}$,
the optimality condition can be simplified to require either $\m{G}^{*} \m{T} \m{G} = (\m{G}^{*} \m{B} \m{G})^{1/2}$ or $\m{G}^{*} \m{T} \m{G} \succeq \m{0}$ due to \cref{lem:Neumann:eq}.
The following theorem shows that $\m{A} \to \m{B}$ if and only if $\rk(\m{A}) \geq \rk(\m{B})$. The theorem also provides a complete characterization of the set of all OT matrices that map $\m{A}$ to $\m{B}$ in this case.

\begin{theorem}[Reachability]\label{thm:sol:sym}
Let $\m{A}, \m{B} \in \b{R}^{n \times n}$ be s.p.d. matrices. 
\begin{enumerate}[leftmargin = *]
\item \textbf{Invertible Case:} If $\m{A} \succ \m{0}$, then $\m{A} \to \m{B}$ and the OT matrix is unique:
\begin{equation}\label{eq:woronowicz}
    \m{S}_{\m{A} \to \m{B}} := (\m{G}^{*})^{-1} (\m{G}^{*} \m{B} \m{G})^{1/2} \m{G}^{-1} \succeq \m{0}.
\end{equation}
While \eqref{eq:woronowicz} is independent of the choice of Green's matrix $\m{G} \in \c{G}(\m{A})$, setting $\m{G} = \m{A}^{1/2}$ yields the well-known Pusz–Woronowicz formula \cite{pusz1975functional}.
\item \textbf{Singular Case:} If $\m{A}$ is non-invertible with $\rk (\m{A}) < n$, we use the block decomposition based on the orthogonal subspaces $\c{H}_{1} = \c{R}(\m{A})$ and $\c{H}_{2} = \c{N}(\m{A})$:
\begin{equation*}
    \m{A} = \begin{pmatrix}
    \m{A}_{11} & \m{0} \\
    \m{0} & \m{0}
    \end{pmatrix}, \quad
    \m{B} = \begin{pmatrix}
    \m{B}_{11} & \m{B}_{12} \\
    \m{B}_{21} & \m{B}_{22}
    \end{pmatrix}, \quad
    \m{T} = \begin{pmatrix}
    \m{T}_{11} & \m{T}_{12} \\
    \m{T}_{21} & \m{T}_{22}
    \end{pmatrix}
    \    :
    \begin{array}{c}
    \c{H}_{1} \\
    \oplus \\
    \c{H}_{2}
    \end{array}
    \rightarrow
    \begin{array}{c}
    \c{H}_{1} \\
    \oplus \\
    \c{H}_{2}
    \end{array},
\end{equation*}
$\m{A} \to \m{B}$ if and only if $\rk(\m{A}) \geq \rk(\m{B})$. In this case, $\m{T} \in \b{R}^{n \times n}$ is an OT matrix if and only if
\begin{align*}
    \m{T} =
    \begin{pmatrix}
    (\m{G}_{11}^{*})^{-1} (\m{G}_{11}^{*} \m{B}_{11} \m{G}_{11})^{1/2} \m{G}_{11}^{-1} & \m{T}_{12} \\
    [\m{B}_{21} \m{G}_{11} (\m{G}_{11}^{*} \m{B}_{11} \m{G}_{11})^{\dagger/2} + (\m{B}/\m{A})^{1/2} \m{U}_{12}^{*}] \m{G}_{11}^{-1} & \m{T}_{22}
    \end{pmatrix} 
\end{align*}
where: 
\begin{itemize}[leftmargin = *]
    \item $\m{T}_{12} : \c{H}_{2} \to \c{H}_{1}$ and $\m{T}_{22} : \c{H}_{2} \to \c{H}_{2}$ are arbitrary submatrices.
    \item $\m{G}_{11} : \c{H}_{1} \to \c{H}_{1}$ is any Green's submatrix for $\m{A}_{11}$, hence invertible,
    \item $\m{U}_{12} : \c{H}_{2} \to \c{H}_{1}$ is a partial isometry mapping from $\c{R}(\m{B}/\m{A})$ to a subspace $\c{V}$ of $\c{N}(\m{B}_{11}^{1/2} \m{G}_{11})$ with the same dimension, i.e., $\m{U}_{12}^{*} \m{U}_{12} = \m{\Pi}_{\c{R}(\m{B}/\m{A})}$ and $\m{U}_{12} \m{U}_{12}^{*} = \m{\Pi}_{\c{V}}$.
\end{itemize}
This construction is independent of the choice of Green's matrix $\m{G}_{11} \in \c{G}(\m{A}_{11})$. Additionally, $\m{T} \in \b{R}^{n \times n}$ can always be made to be symmetric via $\m{T}_{12} = \m{T}_{21}^{*}$ and $\m{T}_{22} = \m{T}_{22}^{*}$.
\end{enumerate}
\end{theorem}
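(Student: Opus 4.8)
The plan is to reduce, in both cases, the pair of defining conditions (transportability and optimality) to the single operator equation $\m{G}^{*}\m{T}\m{G} = (\m{G}^{*}\m{B}\m{G})^{1/2}$ and then read off $\m{T}$. For the invertible case, I fix any $\m{G}\in\c{G}(\m{A})$, which is invertible, and set $\m{N}:=\m{G}^{*}\m{T}\m{G}$. Then transportability $\m{T}\m{A}\m{T}^{*}=\m{B}$ is equivalent to $\m{N}\m{N}^{*}=\m{G}^{*}\m{B}\m{G}$, and since $\tr(\m{A}\m{T})=\tr(\m{G}\m{G}^{*}\m{T})=\tr(\m{N})$, optimality becomes $\tr(\m{N})=\tr[(\m{G}^{*}\m{B}\m{G})^{1/2}]$. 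Thus $\m{N}$ is a Green's matrix of $\m{G}^{*}\m{B}\m{G}$ attaining the extremal trace, so \cref{lem:Neumann:eq} (the equality case of the trace bound, valid even when $\m{G}^{*}\m{B}\m{G}$ is singular) forces $\m{N}=(\m{G}^{*}\m{B}\m{G})^{1/2}$; hence $\m{T}=(\m{G}^{*})^{-1}(\m{G}^{*}\m{B}\m{G})^{1/2}\m{G}^{-1}$ is the unique OT matrix, it is manifestly s.p.d., and existence (i.e.\ $\m{A}\to\m{B}$) follows. Independence of the Green's matrix is checked using \cref{lem:Green's:part:iso}: replacing $\m{G}$ by $\m{G}\m{O}$ with $\m{O}$ orthogonal and using $(\m{O}^{*}\m{M}\m{O})^{1/2}=\m{O}^{*}\m{M}^{1/2}\m{O}$ leaves the formula unchanged.

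For the singular case, necessity of $\rk(\m{A})\ge\rk(\m{B})$ is immediate, since $\m{B}=\m{T}\m{A}^{1/2}(\m{T}\m{A}^{1/2})^{*}$ gives $\rk(\m{B})=\rk(\m{T}\m{A}^{1/2})\le\rk(\m{A}^{1/2})=\rk(\m{A})$. For the converse and the characterization I work in the block decomposition along $\c{H}_{1}=\c{R}(\m{A})$, $\c{H}_{2}=\c{N}(\m{A})$, taking the trivial extension $\m{G}=\m{G}_{11}\oplus\m{0}$ with $\m{G}_{11}\in\c{G}(\m{A}_{11})$ invertible, which is a Green's matrix for $\m{A}$ by part 1 of \cref{lem:Schur:op}. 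A direct block computation gives $\tr(\m{A}\m{T})=\tr(\m{A}_{11}\m{T}_{11})$ and shows $\m{T}\m{A}\m{T}^{*}=\m{B}$ is equivalent to the system $\m{T}_{11}\m{A}_{11}\m{T}_{11}^{*}=\m{B}_{11}$, $\m{T}_{11}\m{A}_{11}\m{T}_{21}^{*}=\m{B}_{12}$, $\m{T}_{21}\m{A}_{11}\m{T}_{21}^{*}=\m{B}_{22}$, with $\m{T}_{12}$ and $\m{T}_{22}$ entirely unconstrained. Exactly as above, transportability together with optimality is equivalent to $\m{G}^{*}\m{T}\m{G}=(\m{G}^{*}\m{B}\m{G})^{1/2}$, which with $\m{G}=\m{G}_{11}\oplus\m{0}$ pins down $\m{T}_{11}=(\m{G}_{11}^{*})^{-1}(\m{G}_{11}^{*}\m{B}_{11}\m{G}_{11})^{1/2}\m{G}_{11}^{-1}$, the stated $(1,1)$ entry.

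It remains to solve the coupled equations for $\m{T}_{21}$, which I expect to be the main obstacle. Substituting $\m{S}_{21}:=\m{T}_{21}\m{G}_{11}$, the marginal equation becomes $\m{S}_{21}\m{S}_{21}^{*}=\m{B}_{22}$ and the coupling equation becomes $(\m{G}_{11}^{*}\m{B}_{11}\m{G}_{11})^{1/2}\m{S}_{21}^{*}=\m{G}_{11}^{*}\m{B}_{12}$. Writing $\m{Q}_{11}:=\m{B}_{11}^{1/2}\m{G}_{11}$ with polar decomposition $\m{Q}_{11}=\m{W}|\m{Q}_{11}|$ and using $\m{B}_{12}=\m{B}_{11}^{1/2}\m{B}_{11}^{\dagger/2}\m{B}_{12}$ (the Douglas lemma, as $\m{B}\succeq\m{0}$), the coupling equation reads $|\m{Q}_{11}|(\m{S}_{21}^{*}-\m{W}^{*}\m{B}_{11}^{\dagger/2}\m{B}_{12})=\m{0}$, so its solutions are exactly $\m{S}_{21}^{*}=\m{W}^{*}\m{B}_{11}^{\dagger/2}\m{B}_{12}+\m{E}$ with $\c{R}(\m{E})\subseteq\c{N}(\m{Q}_{11})$. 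Since $\m{W}$ annihilates $\c{N}(\m{Q}_{11})$, the cross terms in $\m{S}_{21}\m{S}_{21}^{*}$ vanish, and invoking the Green's-operator expression for the Schur complement in part 3 of \cref{lem:Schur:op} one obtains $\m{S}_{21}\m{S}_{21}^{*}=(\m{B}_{22}-\m{B}/\m{A})+\m{E}^{*}\m{E}$; hence the marginal equation is equivalent to $\m{E}^{*}\m{E}=\m{B}/\m{A}$. Polar-decomposing $\m{E}=\m{U}_{12}(\m{B}/\m{A})^{1/2}$ parametrizes the solutions by partial isometries $\m{U}_{12}$ with initial space $\c{R}(\m{B}/\m{A})$ and final space a subspace of $\c{N}(\m{Q}_{11})=\c{N}(\m{B}_{11}^{1/2}\m{G}_{11})$; substituting back, and using $\m{B}_{21}\m{G}_{11}(\m{G}_{11}^{*}\m{B}_{11}\m{G}_{11})^{\dagger/2}=\m{B}_{21}\m{G}_{11}|\m{Q}_{11}|^{\dagger}=(\m{B}_{11}^{\dagger/2}\m{B}_{12})^{*}\m{W}$, reproduces the claimed formula for $\m{T}_{21}$.

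Finally, a partial isometry $\m{U}_{12}$ as above exists iff $\rk(\m{B}/\m{A})\le\dim\c{N}(\m{Q}_{11})$; by \cref{cor:Schur:rank} applied to $\m{G}=\m{G}_{11}\oplus\m{0}$ one has $\rk(\m{B}/\m{A})=\rk(\m{B})-\rk(\m{B}_{11})$, while $\dim\c{N}(\m{Q}_{11})=\rk(\m{A})-\rk(\m{B}_{11})$ by rank-nullity (as $\m{G}_{11}$ is invertible), so solvability is equivalent to $\rk(\m{A})\ge\rk(\m{B})$; combined with the necessity already shown, $\m{A}\to\m{B}\iff\rk(\m{A})\ge\rk(\m{B})$. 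Since every step above is an equivalence, the displayed family is exactly the set of OT matrices; choosing $\m{T}_{12}=\m{T}_{21}^{*}$ and any symmetric $\m{T}_{22}$ makes $\m{T}$ symmetric (note $\m{T}_{11}$ is already s.p.d.); and because the set of OT matrices is defined intrinsically, the parametrization is independent of the choice of $\m{G}_{11}\in\c{G}(\m{A}_{11})$.
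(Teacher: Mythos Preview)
Your proof is correct and follows essentially the same approach as the paper's. Both reduce to the substitution $\m{M}_{21}=\m{T}_{21}\m{G}_{11}$ (your $\m{S}_{21}$), solve the linear coupling equation modulo $\c{N}(\m{B}_{11}^{1/2}\m{G}_{11})$, reduce the remaining quadratic equation to $\m{E}^{*}\m{E}=\m{B}/\m{A}$, and close with the dimension count $\rk(\m{B}/\m{A})\le\dim\c{N}(\m{B}_{11}^{1/2}\m{G}_{11})\iff\rk(\m{B})\le\rk(\m{A})$; the only cosmetic difference is that you carry the polar decomposition $\m{Q}_{11}=\m{W}|\m{Q}_{11}|$ explicitly, whereas the paper writes the particular solution directly as $(\m{G}_{11}^{*}\m{B}_{11}\m{G}_{11})^{\dagger/2}(\m{G}_{11}^{*}\m{B}_{12})$ and packages the existence step as \cref{lem:Schur:root}.
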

\begin{proof}[Proof of \cref{thm:sol:sym}]
Denote $n_{1} := \rk (\m{A}) \le n$. First, it is clear that if $\m{A} \rightarrow \m{B}$, then $\rk (\m{A}) \ge \rk (\m{B})$. Conversely, assuming $\rk (\m{A}) \ge \rk (\m{B})$, we prove that $\m{A} \rightarrow \m{B}$.

\begin{enumerate}[leftmargin=*]
\item If $n_{1} = n$, i.e. $\m{A} \succ \m{0}$, note that $\rk (\m{G}^{*}) = \rk (\m{G}) = n$ as $\c{N}(\m{G}^{*}) = \c{N}(\m{A})$, so $\m{G}^{*}$ and $\m{G}$ are both full-rank. It is straightforward that $\m{S}_{\m{A} \to \m{B}}$ is an OT matrix, and is independent of the choice of $\{\m{G} \in \b{R}^{n \times n}  \mid \m{G} \m{G}^{*} = \m{A} \}$, as $\m{G}^{*} \m{B} \m{G}$ are unitarily equivalent.

To show the uniqueness, let $\m{T}$ be any transport matrix from $\m{A}$ to $\m{B}$. Then, rewriting the transport condition gives:
\begin{align*}
    \m{T} \m{A} \m{T}^{*} = \m{B} \ \Leftrightarrow \ (\m{G}^{*} \m{T} \m{G}) (\m{G}^{*} \m{T} \m{G})^{*} = \m{G}^{*} \m{B} \m{G}.
\end{align*}
Applying \cref{lem:Neumann:eq}, a version of the von-Neumann trace inequality \cite{horn2012matrix}, we have
\begin{equation*}
    \tr (\m{A} \m{T}) = \tr (\m{G}^{*} \m{T} \m{G}) \le \tr [(\m{G}^{*} \m{B} \m{G})^{1/2}]
\end{equation*}
and equality holds if and only if $\m{G}^{*} \m{T} \m{G} = (\m{G}^{*} \m{B} \m{G})^{1/2}$, i.e. $\m{T} = \m{S}_{\m{A} \to \m{B}}$ as $\m{G}^{*}$ and $\m{G}$ are both full-rank. This establishes the uniqueness of the OT matrix.

\item Let $n_{1} < n$. Note that for any unitary matrix $\m{U} \in \b{R}^{n \times n}$, the transformation
\begin{equation*}
    \m{T} \m{A} \m{T}^{*} = \m{B} \quad \Longleftrightarrow \quad (\m{U}^{*} \m{T} \m{U}) (\m{U}^{*} \m{A} \m{U}) (\m{U}^{*} \m{T}^{*} \m{U}) = \m{U}^{*} \m{B} \m{U}
\end{equation*}
preserves the defining property for the optimality of transport matrices. In this regard, we may decompose $\m{A}$, $\m{T}$, and $\m{B}$ in block form as:
\begin{equation}\label{eq:2blocks:need}
    \m{A} = \begin{pmatrix}
    \m{A}_{11} & \m{0} \\
    \m{0} & \m{0}
    \end{pmatrix}, \quad
    \m{T} = \begin{pmatrix}
    \m{T}_{11} & \m{T}_{12} \\
    \m{T}_{21} & \m{T}_{22}
    \end{pmatrix}, \quad 
    \m{B} = \begin{pmatrix}
    \m{B}_{11} & \m{B}_{12} \\
    \m{B}_{21} & \m{B}_{22}
    \end{pmatrix},
\end{equation}
where $\m{A}_{11} \in \b{R}^{n_{1} \times n_{1}}$ is p.d.. Note that  $\tr(\m A\m T)=\tr (\m A_{11}\m T_{11})$ and $\m T\m A\m T^*$ do not depend on $\m T_{12}$ and $\m T_{22}$. Choose a Green's matrix $\m{G}_{11} \in \b{R}^{n_{1} \times n_{1}}$ for $\m{A}_{11}$.
As the optimality condition only depends on $\m{T}_{11}$ in $\m{T}$,  we can fix 
\begin{equation*}
    \m{T}_{11} = \m{S}_{\m{A}_{11} \to \m{B}_{11}} = (\m{G}_{11}^{*})^{-1} (\m{G}_{11}^{*} \m{B}_{11} \m{G}_{11})^{1/2} \m{G}_{11}^{-1},
\end{equation*}
which is the only choice to be optimal from part 1. Hence, $\m{A} \rightarrow \m{B}$ if and only if there exists a $\m{T}_{21} \in \b{R}^{(n-n_{1}) \times n_{1}}$ satisfying the transportability condition given by the following system of block equations:
\begin{align*}
    &(\m{G}_{11}^{*})^{-1} (\m{G}_{11}^{*} \m{B}_{11} \m{G}_{11})^{1/2} (\m{T}_{21} \m{G}_{11})^{*} = \m{S}_{\m{A}_{11} \to \m{B}_{11}} \m{A}_{11} \m{T}_{21}^{*} = \m{B}_{12} ,\\
    &(\m{T}_{21} \m{G}_{11}) (\m{T}_{21} \m{G}_{11})^{*} = \m{T}_{21} \m{A}_{11} \m{T}_{21}^{*} = \m{B}_{22}.
\end{align*}

To simplify this system, we introduce a change of variables.
Define the matrix $\m{M}_{21} := \m{T}_{21} \m{G}_{11} \in \b{R}^{(n-n_{1}) \times n_{1}}$. The existence of $\m{T}_{21}$ is equivalent to the existence of $\m{M}_{21}$, which must satisfy the transformed system:
\begin{align}\label{eq:reach:rk}
    (\m{G}_{11}^{*} \m{B}_{11} \m{G}_{11})^{1/2} \m{M}_{21}^{*} = \m{G}_{11}^{*} \m{B}_{12} \quad \text{and} \quad \m{M}_{21} \m{M}_{21}^{*} = \m{B}_{22}.
\end{align}
Using the Douglas lemma, observe that
\begin{align*}
    \c{R}(\m{B}_{12}) \subseteq \c{R}(\m{B}_{11}^{1/2}) \quad &\Longrightarrow \quad \c{R}(\m{G}_{11}^{*} \m{B}_{12}) \subseteq \c{R}(\m{G}_{11}^{*} \m{B}_{11}^{1/2}) = \c{R}((\m{G}_{11}^{*} \m{B}_{11} \m{G}_{11})^{1/2}),
\end{align*}
so the first equation in \eqref{eq:reach:rk} always has a solution, characterized by
\begin{equation}\label{eq:sol:decomp}
    \m{M}_{21}^{*} = (\m{G}_{11}^{*} \m{B}_{11} \m{G}_{11})^{\dagger/2} (\m{G}_{11}^{*} \m{B}_{12}) + \m{N}_{12}, \quad \c{R}(\m{N}_{12}) \subset \c{N}((\m{G}_{11}^{*} \m{B}_{11} \m{G}_{11})^{1/2}).
\end{equation}
Substituting $\m{N}_{12}$ into the second equation of \eqref{eq:reach:rk}, we obtain from part 3 in \cref{lem:Schur:op} that:
\begin{align}\label{eq:sol:resi}
    \m{N}_{12}^{*} \m{N}_{12} = \m{B}_{22} - (\m{B}_{21} \m{G}_{11}) (\m{G}_{11}^{*} \m{B}_{11} \m{G}_{11})^{\dagger} (\m{G}_{11}^{*} \m{B}_{12}) = \m{B}/\m{A} \succeq \m{0}.
\end{align}
Define the s.p.d. block matrix
\begin{align*}
    \m{C} :&= \begin{pmatrix}
        \m{G}_{11}^{*} & \m{0} \\
        \m{0} & \m{I}_{22}
    \end{pmatrix}
    \m{B} \begin{pmatrix}
        \m{G}_{11} & \m{0} \\
        \m{0} & \m{I}_{22}
    \end{pmatrix} = \begin{pmatrix}
        \m{G}_{11}^{*} \m{B}_{11} \m{G}_{11} & \m{G}_{11}^{*} \m{B}_{12} \\
        \m{B}_{21} \m{G}_{11} & \m{B}_{22}
    \end{pmatrix} \\
    &= \begin{pmatrix}
        \m{I} & \m{0} \\
        \m{B}_{21} \m{G}_{11} (\m{G}_{11}^{*} \m{B}_{11} \m{G}_{11})^{\dagger} & \m{I}
    \end{pmatrix}
    \begin{pmatrix}
        \m{C}_{11} & \m{0} \\
        \m{0} & \m{B}/\m{A}
    \end{pmatrix} \begin{pmatrix}
        \m{I} & (\m{G}_{11}^{*} \m{B}_{11} \m{G}_{11})^{\dagger} \m{G}_{11}^{*} \m{B}_{12} \\
        \m{0} & \m{I}
    \end{pmatrix}
\end{align*}
where the last equality holds as in \eqref{eq:LDU2} due to the Douglas lemma and $\m{C}/\m{I}_{11} = \m{B}/\m{I}_{11} = \m{B}/\m{A}$ from part 3 in \cref{lem:Schur:op}.
Since $\rk (\m{C}) = \rk (\m{B}) \le n_{1}$, applying \cref{lem:Schur:root} guarantees the existence of such $\m{N}_{12}$, i.e. $\m{A} \rightarrow \m{B}$. Note that $\m{N}_{12}$ is characterized by $\m{N}_{12} = \m{U}_{12} (\m{B}/\m{A})^{1/2}$,
where $\m{U}_{12}$ is a partial unitary matrix satisfying $\m{U}_{12}^{*} \m{U}_{12} = \m{\Pi}_{\c{R}(\m{B}/\m{A})}$ and $\m{U}_{12} \m{U}_{12}^{*} = \m{\Pi}_{\c{V}}$ for some $\c{V} \subset \c{N}(\m{C}_{11}) = \c{N}(\m{B}_{11}^{1/2} \m{G}_{11})$ of dimension $\rk(\m{B}/\m{A})$. Plugging these results back into $\m{T}_{21}$, we obtain 
\begin{align*}
    \m{T}_{21} &= [(\m{G}_{11}^{*} \m{B}_{11} \m{G}_{11})^{\dagger/2} \m{G}_{11}^{*} \m{B}_{12} + \m{U}_{12} (\m{B}/\m{A})^{1/2}]^{*} \m{G}_{11}^{-1} \\
    &= [\m{B}_{21} \m{G}_{11} (\m{G}_{11}^{*} \m{B}_{11} \m{G}_{11})^{\dagger/2} + (\m{B}/\m{A})^{1/2} \m{U}_{12}^{*}] \m{G}_{11}^{-1}. 
\end{align*}
Finally, we can always make an OT matrix $\m{T}$ to be symmetric by taking $\m{T}_{12} := \m{T}_{21}^{*}$ and $\m{T}_{22}$ be any symmetric submatrix.
\end{enumerate}
\end{proof}

\begin{remark}
\cref{thm:sol:sym} reveals that the \emph{reachability} relation $\to$ defines a \emph{total preorder} on the set of s.p.d. matrices, determined entirely by their rank. For any s.p.d. matrices $\m{A}, \m{B}, \m{C} \in \b{R}^{n \times n}$:
\begin{enumerate}[leftmargin = *]
    \item (Reflexivity) $\m{A} \rightarrow \m{A}$, since $\rk(\m{A}) \ge \rk(\m{A})$ is always true.
    \item (Transitivity) If $\m{A} \rightarrow \m{B}$ and $\m{B} \rightarrow \m{C}$, then $\m{A} \rightarrow \m{C}$.
    \item (Strong Connectivity) For any pair $\m{A}, \m{B}$, either $\m{A} \rightarrow \m{B}$ or $\m{B} \rightarrow \m{A}$. 
\end{enumerate} 
\end{remark}

We now examine the evolution of rank along a geodesic path that arises from an OT matrix $\m{T}$ from $\m{A} \rightarrow \m{B}$. , namely the \emph{McCann interpolant}:
\begin{equation}\label{eq:McCann:int}
    \m{\Gamma}_{t} := [(1-t) \m{I} + t \m{T}] \m{A} [(1-t) \m{I} + t \m{T}]^{*}, \quad t \in [0, 1].
\end{equation}
A direct calculation show that $\m{A} \rightarrow \m{\Gamma}_{t}$ with an OT matrix $[(1-t) \m{I} + t \m{T}]$, and 
\begin{align*}
    \c{W}_{2}(\m{A}, \m{\Gamma}_{t}) = t \c{W}_{2}(\m{A}, \m{B}), \quad \c{W}_{2}(\m{\Gamma}_{t}, \m{B}) = (1-t) \c{W}_{2}(\m{A}, \m{B}).
\end{align*}
A direct consequence of the reachability condition is that rank must be non-increasing in $t \in [0, 1]$. As we will show later, the geodesic path is generally not unique unless stringent conditions hold. While this might suggest that rank evolution could vary, the following theorem reveals a universal property: regardless of which OT matrix is chosen, the rank along its corresponding McCann interpolant remains constant, except for the endpoint $\m{B}$:

\begin{theorem}[Constant Rank along Interpolant]\label{thm:geo:Schur}
Let $\m{A}, \m{B} \in \b{R}^{n \times n}$ be s.p.d. matrices with $\rk(\m{A}) \geq \rk(\m{B})$ so that $\m{A} \to \m{B}$. Consider a McCann interpolant in \eqref{eq:McCann:int}, where $\m{T}$ is any OT matrix from $\m{A}$ to $\m{B}$. Then for any $t \in [0, 1)$, the $\m{A}$-Schur complement vanishes, i.e., $\m{\Gamma}_{t}/\m{A} = \m{0}$, which implies that the rank is constant for the entire open interval, i.e., $\rk(\m{\Gamma}_{t}) = \rk (\m{A})$.
\end{theorem}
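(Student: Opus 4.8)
The plan is to translate the vanishing of the Schur complement into a statement about ranges, and then to exploit the fact, forced by \cref{thm:sol:sym}, that the $(1,1)$-block of \emph{every} optimal transport matrix is semi-positive definite. Write $\m{S}_{t} := (1-t)\m{I} + t\m{T}$, so $\m{\Gamma}_{t} = \m{S}_{t}\m{A}\m{S}_{t}^{*}$, and recall $\c{H}_{1} = \c{R}(\m{A})$, $\c{H}_{2} = \c{N}(\m{A})$. Since $\m{\Gamma}_{t} \succeq \m{0}$, the $\m{A}$-Schur complement $\m{\Gamma}_{t}/\m{A}$ is well defined and semi-positive definite (e.g.\ by \cref{lem:Schur:spd}), so $\m{\Gamma}_{t}/\m{A} = \m{0}$ is equivalent to $\c{R}[(\m{\Gamma}_{t}/\m{A})^{1/2}] = \{0\}$. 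By part 4 of \cref{lem:Schur:op} this range equals $\c{R}(\m{\Gamma}_{t}^{1/2}) \cap \c{N}(\m{A})$; writing $\m{\Gamma}_{t} = (\m{S}_{t}\m{A}^{1/2})(\m{S}_{t}\m{A}^{1/2})^{*}$ and applying the Douglas lemma gives $\c{R}(\m{\Gamma}_{t}^{1/2}) = \c{R}(\m{S}_{t}\m{A}^{1/2}) = \m{S}_{t}(\c{R}(\m{A}^{1/2})) = \m{S}_{t}(\c{H}_{1})$, since $\c{R}(\m{A}^{1/2}) = \c{H}_{1}$ in finite dimension. So the first claim reduces to showing $\m{S}_{t}(\c{H}_{1}) \cap \c{H}_{2} = \{0\}$ for $t \in [0,1)$ (when $\m{A} \succ \m{0}$ this is vacuous, as $\c{H}_{2} = \{0\}$).

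The crux is a short linear-algebra observation. Take $v \in \c{H}_{1}$ with $\m{S}_{t}v \in \c{H}_{2} = \c{N}(\m{A})$ and apply $\m{\Pi}_{1}$: since $v \in \c{H}_{1}$, one gets $\m{\Pi}_{1}\m{S}_{t}v = (1-t)v + t\,\m{\Pi}_{1}\m{T}v = [(1-t)\m{I}_{11} + t\m{T}_{11}]v$, where $\m{T}_{11} = \m{\Pi}_{1}\m{T}\mid_{\c{H}_{1}}$. By \cref{thm:sol:sym}, for every OT matrix $\m{T}$ we have $\m{T}_{11} = \m{S}_{\m{A}_{11}\to\m{B}_{11}} = (\m{G}_{11}^{*})^{-1}(\m{G}_{11}^{*}\m{B}_{11}\m{G}_{11})^{1/2}\m{G}_{11}^{-1}$, which is semi-positive definite, being a congruence of a positive square root. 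Hence for $t \in [0,1)$ we have $1-t > 0$ and $t\m{T}_{11} \succeq \m{0}$, so $(1-t)\m{I}_{11} + t\m{T}_{11} \succeq (1-t)\m{I}_{11} \succ \m{0}$ is invertible on $\c{H}_{1}$; therefore $v = 0$. This yields $\m{S}_{t}(\c{H}_{1}) \cap \c{H}_{2} = \{0\}$, i.e.\ $\m{\Gamma}_{t}/\m{A} = \m{0}$.

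The rank statement falls out of the same estimate: taking $v \in \c{H}_{1}$ with $\m{S}_{t}v = 0$ forces $\m{\Pi}_{1}\m{S}_{t}v = 0$ and hence $v = 0$, so $\m{S}_{t}$ is injective on $\c{H}_{1}$; therefore $\rk(\m{\Gamma}_{t}) = \dim\c{R}(\m{\Gamma}_{t}^{1/2}) = \dim\m{S}_{t}(\c{H}_{1}) = \dim\c{H}_{1} = \rk(\m{A})$. (Alternatively, once $\m{\Gamma}_{t}/\m{A} = \m{0}$ is known one may invoke \cref{cor:Schur:rank} together with $\m{G}^{*}\m{\Gamma}_{t}\m{G} = (\m{G}^{*}\m{S}_{t}\m{G})(\m{G}^{*}\m{S}_{t}\m{G})^{*}$.) I do not expect a genuine obstacle here: the only delicate points are keeping the block decomposition consistent against $\c{H}_{1} = \c{R}(\m{A})$, $\c{H}_{2} = \c{N}(\m{A})$, and using that the semi-positive definiteness of $\m{T}_{11}$ holds for \emph{every} optimal $\m{T}$ (not just a symmetric representative), which is exactly what the singular case of \cref{thm:sol:sym} supplies.
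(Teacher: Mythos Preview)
Your argument is correct. Both proofs hinge on the same key fact from \cref{thm:sol:sym}: the $(1,1)$-block $\m{T}_{11}$ of any OT matrix is s.p.d., so $(1-t)\m{I}_{11}+t\m{T}_{11}\succ\m{0}$ for $t\in[0,1)$. Where you diverge from the paper is in \emph{how} this invertibility is used. The paper computes the three blocks $(\m{\Gamma}_t)_{11},(\m{\Gamma}_t)_{21},(\m{\Gamma}_t)_{22}$ explicitly (in terms of $\m{T}_{11},\m{T}_{21},\m{G}_{11}$), plugs them into the defining formula $\m{\Gamma}_t/\m{A}=(\m{\Gamma}_t)_{22}-(\m{\Gamma}_t)_{21}(\m{\Gamma}_t)_{11}^{\dagger}(\m{\Gamma}_t)_{12}$, and watches the expression collapse to $t^{2}(\m{T}_{21}\m{G}_{11})[\m{I}_{11}-\m{I}_{11}](\m{T}_{21}\m{G}_{11})^{*}=\m{0}$; the rank claim is then read off \cref{cor:Schur:rank}. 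You instead invoke part 4 of \cref{lem:Schur:op} to turn the vanishing of the Schur complement into the range condition $\m{S}_t(\c{H}_1)\cap\c{H}_2=\{0\}$, and dispatch that with a one-line projection argument. Your route is shorter and more geometric, avoiding the block algebra entirely; the paper's explicit computation, on the other hand, is what later gets recycled in \cref{thm:ext:geo} and \cref{prop:ext:geo:inf} to produce the formula $\m{\Gamma}_t^{\m{N}_{12}}/\m{A}=t^{2}(\m{B}/\m{A}-\m{N}_{12}^{*}\m{N}_{12})$ for general Kantorovich geodesics, so there is a downstream payoff to doing the calculation by hand.
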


\begin{proof}[Proof of \cref{thm:geo:Schur}]
We may decompose $\m{A}$, $\m{B}$, $\m{T}$, and $\m{\Gamma}_{t}$ in block form as in \eqref{eq:2blocks:need}. Note that
\begin{align*}
    &(\m{\Gamma}_{t})_{11} = [((1-t) \m{I}_{11} + t \m{T}_{11}) \m{G}_{11}] [((1-t) \m{I}_{11} + t \m{T}_{11}) \m{G}_{11}]^{*}, \\
    &(\m{\Gamma}_{t})_{21} = t^{2} \m{B}_{21} + t(1-t) \m{T}_{21} \m{A}_{11} = t \m{T}_{21} \m{G}_{11} \m{G}_{11}^{*} ((1-t) \m{I}_{11} + t \m{T}_{11}), \\
    &(\m{\Gamma}_{t})_{22} = t^{2} \m{B}_{22} = t^{2} \m{T}_{21} \m{G}_{11} \m{G}_{11}^{*} \m{T}_{21}^{*},
\end{align*}
where $\m{G}_{11} \in \b{R}^{n_{1} \times n_{1}}$ is a Green's submatrix for $\m{A}_{11}$. 
Note that if $t < 1$, then $\m{X}_{11} := (1-t) \m{I}_{11} + t \m{T}_{11} \succ \m{0}$ is invertible since $\m{T}_{11} \succeq \m{0}$. Hence, we obtain
\begin{align*}
    \m{\Gamma}_{t}/\m{A} &= (\m{\Gamma}_{t})_{22} - (\m{\Gamma}_{t})_{21} (\m{\Gamma}_{t})_{11}^{\dagger} (\m{\Gamma}_{t})_{21}^{*} \\
    &= t^{2} (\m{T}_{21} \m{G}_{11}) \left[ \m{I}_{11} - \m{G}_{11}^{*} \m{X}_{11} \m{X}_{11}^{-1} (\m{G}_{11} \m{G}_{11}^{*})^{\dagger} \m{X}_{11}^{-1} \m{X}_{11} \m{G}_{11} \right] (\m{T}_{21} \m{G}_{11})^{*} \\
    &= t^{2} (\m{T}_{21} \m{G}_{11}) \left[ \m{I}_{11} - \m{G}_{11}^{*} (\m{G}_{11} \m{G}_{11}^{*})^{-1} \m{G}_{11} \right] (\m{T}_{21} \m{G}_{11})^{*} 
    = t^{2} (\m{T}_{21} \m{G}_{11}) \left[ \m{I}_{11} - \m{I}_{11} \right] (\m{T}_{21} \m{G}_{11})^{*} = \m{0},
\end{align*}
since $\m{G}_{11}$ is invertible.
Therefore, using \cref{cor:Schur:rank}, we conclude 
\begin{align*}
    \rk(\m{\Gamma}_{t}) = \rk (\m{G}^{*} \m{\Gamma}_{t} \m{G}) &= \rk (\m{G}_{11}^{*} (\m{\Gamma}_{t})_{11} \m{G}_{11}) \\
    &= \rk (\m{G}_{11}^{*} ((1-t) \m{I}_{11} + t \m{T}_{11}) \m{G}_{11}) =\rk (\m{G}_{11}) = \rk (\m{A}).
\end{align*}
\end{proof}

In the classical Monge-Kantorovich problem with a quadratic cost, if the source measure is absolutely continuous, the unique optimal transport (OT) map is the gradient of a convex function \cite{brenier1991polar, cuesta1989notes, ruschendorf1990characterization}. For Gaussian measures, this implies that when the source covariance matrix $\m{A}$ is invertible, the OT matrix is necessarily s.p.d. \cite{villani2021topics, panaretos2020invitation}. However, \cref{thm:sol:sym} establishes the existence of an OT matrix even when $\m{A}$ is rank-deficient. This raises a crucial question for the singular case: \emph{When does an s.p.d. OT matrix exist on $\b{R}^{n}$?}
The following theorem answers this question by establishing a set of equivalent conditions. These equivalences except for the first condition also hold in the infinite-dimensional setting (\cref{thm:spd:inf}).

\begin{theorem}[SPD Reachability]\label{thm:spd:sol:sym}
Let $\m{A}, \m{B} \in \b{R}^{n \times n}$ be s.p.d. matrices. Define the set of s.p.d. transport matrices as:
\begin{equation*}
    \c{S}_{\m{A} \to \m{B}}^{+} := \{\m{T} \in \b{R}^{n \times n} \mid \m{T} \succeq \m{0}, \, \m{T} \m{A} \m{T}^{*} = \m{B} \}.
\end{equation*}
The following statements are equivalent:
\begin{enumerate}[leftmargin = *]
    \item $\c{S}_{\m{A} \to \m{B}}^{+} \neq \emptyset$.
    \item The OT matrix from $\m{A}$ to $\m{B}$ is $N(0, \m{A})$-a.s. unique.
    \item $\m{B}/\m{A} = \m{0}$.
    \item $\c{R} (\m{B}) = \c{R}(\m{B} \m{A})$.
    \item $ \c{R}(\m{B}) \cap \c{N}(\m{A}) = \{ \m{0} \}$.
\end{enumerate}

Moreover, when any of these conditions hold, any s.p.d. transport matrix in $\c{S}_{\m{A} \to \m{B}}^{+}$ is an OT matrix. A canonical choice for this matrix, which has minimal rank of $\rk (\m{B}^{1/2} \m{A}^{1/2})$, can be constructed as:
\begin{small}
\begin{align}\label{eq:gen:pusz:fin}
    \m{S}_{\m{A} \to \m{B}} :=
    \begin{pmatrix}
    (\m{G}_{11}^{*})^{-1} (\m{G}_{11}^{*} \m{B}_{11} \m{G}_{11})^{1/2} \m{G}_{11}^{-1} & (\m{G}_{11}^{*})^{-1} (\m{G}_{11}^{*} \m{B}_{11} \m{G}_{11})^{\dagger/2} \m{G}_{11}^{*} \m{B}_{12} \\
    \m{B}_{21} \m{G}_{11} (\m{G}_{11}^{*} \m{B}_{11} \m{G}_{11})^{\dagger/2} \m{G}_{11}^{-1} & \m{B}_{21} \m{G}_{11} (\m{G}_{11}^{*} \m{B}_{11} \m{G}_{11})^{2 \dagger} \m{G}_{11}^{*}  \m{B}_{12}
    \end{pmatrix}
    \    :
    \begin{array}{c}
    \c{H}_{1} \\
    \oplus \\
    \c{H}_{2}
    \end{array}
    \rightarrow
    \begin{array}{c}
    \c{H}_{1} \\
    \oplus \\
    \c{H}_{2}
    \end{array}.
\end{align}    
\end{small}
This construction is independent of the choice of Green's matrix $\m{G}_{11} \in \c{G}(\m{A}_{11})$.
\end{theorem}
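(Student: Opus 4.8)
The plan is to prove the five equivalences in two independent blocks --- the purely algebraic trio $(3)\Leftrightarrow(4)\Leftrightarrow(5)$ and the transport-theoretic pair $(2)\Leftrightarrow(3)$ --- and then to close a loop via $(3)\Rightarrow(1)\Rightarrow(5)$, after which both ``moreover'' claims follow from the same constructions. We may assume $\rk(\m{A})\ge\rk(\m{B})$ (so that $\m{A}\to\m{B}$ by \cref{thm:sol:sym}); in the complementary case $\c{S}^{+}_{\m{A}\to\m{B}}=\emptyset$, no OT matrix exists at all, and $\rk(\m{B}\m{A})<\rk(\m{B})$, $\c{R}(\m{B})\cap\c{N}(\m{A})\neq\{\m{0}\}$, $\m{B}/\m{A}\neq\m{0}$, so all of $(1)$--$(5)$ fail. \emph{Algebraic trio.} Fix a Green's matrix $\m{G}\in\c{G}(\m{A})$, so $\c{R}(\m{G})=\c{R}(\m{A}^{1/2})=\c{H}_{1}$; a dimension count (using $\c{N}(\m{B}^{1/2})=\c{N}(\m{B})$) gives $\rk(\m{G}^{*}\m{B}\m{G})=\rk(\m{B}^{1/2}\m{G})=\dim\m{B}^{1/2}(\c{H}_{1})=\rk(\m{B}^{1/2}\m{A}^{1/2})=\rk(\m{B}\m{A})$. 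By \cref{cor:Schur:rank}, $\rk(\m{B}/\m{A})=\rk(\m{B})-\rk(\m{B}\m{A})$; since $\c{R}(\m{B}\m{A})\subseteq\c{R}(\m{B})$ always, $\m{B}/\m{A}=\m{0}\Leftrightarrow\rk(\m{B}\m{A})=\rk(\m{B})\Leftrightarrow\c{R}(\m{B}\m{A})=\c{R}(\m{B})$, which is $(3)\Leftrightarrow(4)$. For $(3)\Leftrightarrow(5)$ invoke part 4 of \cref{lem:Schur:op}: $\c{R}[(\m{B}/\m{A})^{1/2}]=\c{R}(\m{B}^{1/2})\cap\c{H}_{2}=\c{R}(\m{B})\cap\c{N}(\m{A})$ in finite dimensions, so $\m{B}/\m{A}=\m{0}$ iff this intersection is trivial.

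\emph{Uniqueness equivalence.} Since $N(\m{0},\m{A})$ is supported on $\c{H}_{1}=\c{R}(\m{A})$, two OT matrices agree $N(\m{0},\m{A})$-a.s.\ iff they agree on $\c{H}_{1}$, i.e.\ iff their first block-columns $(\m{T}_{11},\m{T}_{21})$ coincide. By \cref{thm:sol:sym}, $\m{T}_{11}$ is uniquely forced, while $\m{T}_{21}$ runs over $[\m{B}_{21}\m{G}_{11}(\m{G}_{11}^{*}\m{B}_{11}\m{G}_{11})^{\dagger/2}+(\m{B}/\m{A})^{1/2}\m{U}_{12}^{*}]\m{G}_{11}^{-1}$ as the admissible partial isometry $\m{U}_{12}$ varies. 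If $\m{B}/\m{A}=\m{0}$ the extra term is absent and $\m{T}_{21}$ is unique, giving $(3)\Rightarrow(2)$. Conversely, if $\m{B}/\m{A}\neq\m{0}$, both $\m{U}_{12}$ and $-\m{U}_{12}$ are admissible, and since $\c{R}(\m{U}_{12}^{*})=\c{R}(\m{B}/\m{A})$ and $(\m{B}/\m{A})^{1/2}$ is injective on $\c{R}(\m{B}/\m{A})$, the two choices yield distinct $\m{T}_{21}$, so the OT matrix is not a.s.\ unique; this is $(2)\Rightarrow(3)$.

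\emph{Closing the loop and the ``moreover''.} For $(3)\Rightarrow(1)$, start from the symmetric OT matrix of \cref{thm:sol:sym} with $\m{B}/\m{A}=\m{0}$: there $\m{T}_{11}=\m{S}_{\m{A}_{11}\to\m{B}_{11}}\succeq\m{0}$, and $\m{T}_{12}=\m{T}_{21}^{*}$ satisfies $\c{R}(\m{T}_{12})\subseteq(\m{G}_{11}^{*})^{-1}\c{R}(\m{G}_{11}^{*}\m{B}_{11}\m{G}_{11})=\c{R}(\m{T}_{11})$; taking the minimal positive completion $\m{T}_{22}:=\m{T}_{21}\m{T}_{11}^{\dagger}\m{T}_{12}$ makes $\m{T}=\m{P}^{*}\m{T}_{11}\m{P}\succeq\m{0}$ with $\m{P}=(\m{I}_{11}\ \ \m{T}_{11}^{\dagger}\m{T}_{12})$, hence $\m{T}\in\c{S}^{+}_{\m{A}\to\m{B}}$; a direct block computation (using $\m{B}/\m{A}=\m{0}$ for the lower-right block, and $\c{R}(\m{G}_{11}^{*}\m{B}_{12})\subseteq\c{R}(\m{G}_{11}^{*}\m{B}_{11}\m{G}_{11})$ from Douglas and $\m{B}\succeq\m{0}$ elsewhere) identifies this $\m{T}$ with $\m{S}_{\m{A}\to\m{B}}$ of \eqref{eq:gen:pusz:fin}, independently of $\m{G}_{11}$. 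For $(1)\Rightarrow(5)$: let $\m{T}\succeq\m{0}$ with $\m{T}\m{A}\m{T}=\m{B}$ and $v\in\c{R}(\m{B})\cap\c{N}(\m{A})$; since $\c{R}(\m{B})=\c{R}(\m{T}\m{A}\m{T})=\m{T}(\c{R}(\m{A}\m{T}))\subseteq\m{T}(\c{H}_{1})$, write $v=\m{T}u$ with $u\in\c{H}_{1}$; as $v\in\c{H}_{2}=\c{H}_{1}^{\perp}$, the $\c{H}_{1}$-component $\m{T}_{11}u$ of $\m{T}u$ vanishes, so $\langle u,\m{T}u\rangle=\langle u,\m{T}_{11}u\rangle=0$, and positivity of $\m{T}$ forces $\m{T}u=\m{0}$, i.e.\ $v=\m{0}$. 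This closes $(1)\Leftrightarrow(5)\Leftrightarrow(3)\Leftrightarrow(4)\Leftrightarrow(2)$. Finally, for any $\m{T}\in\c{S}^{+}_{\m{A}\to\m{B}}$ one has $\m{G}^{*}\m{T}\m{G}=(\m{T}^{1/2}\m{G})^{*}(\m{T}^{1/2}\m{G})\succeq\m{0}$ and $(\m{G}^{*}\m{T}\m{G})^{2}=\m{G}^{*}\m{T}\m{A}\m{T}\m{G}=\m{G}^{*}\m{B}\m{G}$, whence $\m{G}^{*}\m{T}\m{G}=(\m{G}^{*}\m{B}\m{G})^{1/2}$ and $\tr(\m{A}\m{T})=\tr(\m{G}^{*}\m{T}\m{G})=\tr[(\m{G}^{*}\m{B}\m{G})^{1/2}]$, so $\m{T}$ is automatically an OT matrix; and $\rk(\m{S}_{\m{A}\to\m{B}})=\rk(\m{T}_{11})=\rk(\m{G}^{*}\m{B}\m{G})=\rk(\m{B}^{1/2}\m{A}^{1/2})$ (from $\m{T}=\m{P}^{*}\m{T}_{11}\m{P}$ with $\c{R}(\m{P})=\c{H}_{1}$ and \cref{cor:Schur:rank}), which is minimal since $\rk(\m{T})\ge\rk(\m{T}\m{A}\m{T}^{*})=\rk(\m{B})=\rk(\m{B}^{1/2}\m{A}^{1/2})$ for every transport matrix.

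\emph{Main obstacle.} The delicate work is concentrated in the last paragraph. The implication $(1)\Rightarrow(5)$ relies on the small but not entirely obvious fact that, for $\m{T}\succeq\m{0}$, a vector $u\in\c{H}_{1}$ whose image $\m{T}u$ has vanishing $\c{H}_{1}$-component already lies in $\c{N}(\m{T})$ (extracted from $\langle u,\m{T}u\rangle=0$), together with getting the range inclusion $\c{R}(\m{B})\subseteq\m{T}(\c{H}_{1})$ exactly right. The most computation-heavy step is checking that the minimal positive completion coincides with the closed form \eqref{eq:gen:pusz:fin} --- that its lower-right block is forced, that it is positive semi-definite, and that it attains the minimal rank --- where the pseudoinverse identity $\m{X}^{1/2}\m{X}^{\dagger/2}=\m{\Pi}_{\c{R}(\m{X})}$ for $\m{X}=\m{G}_{11}^{*}\m{B}_{11}\m{G}_{11}$ and the Douglas inclusion $\c{R}(\m{G}_{11}^{*}\m{B}_{12})\subseteq\c{R}(\m{X})$ are the main tools.
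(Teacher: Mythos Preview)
Your proof is correct and closes all the claimed implications, but it departs from the paper's route in two places worth noting.

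For $(1)\Rightarrow(5)$ you give a short, self-contained argument: writing $v=\m{T}u$ with $u\in\c{H}_{1}$ and observing $\langle u,\m{T}u\rangle=\langle u,\m{\Pi}_{1}\m{T}u\rangle=0$, positivity of $\m{T}$ kills $v$. The paper instead proves $(1)\Rightarrow(3)$ by invoking the full block characterization of OT matrices from \cref{thm:sol:sym}: it shows that $\m{S}\succeq\m{0}$ forces $\c{N}(\m{S}_{11})\subset\c{N}(\m{S}_{21})$, hence $\c{R}[(\m{S}_{21}\m{G}_{11})^{*}]\subset\c{N}(\m{B}_{11}^{1/2}\m{G}_{11})^{\perp}$, while the $\m{U}_{12}(\m{B}/\m{A})^{1/2}$ term of \cref{thm:sol:sym} has range in $\c{N}(\m{B}_{11}^{1/2}\m{G}_{11})$; the two range constraints collapse $\m{B}/\m{A}$ to zero. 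Your argument is cleaner and independent of \cref{thm:sol:sym}; the paper's argument has the virtue of exposing exactly which piece of the OT parametrization is obstructed by positivity.

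For $(3)\Rightarrow(1)$ you exhibit positivity directly through the factorization $\m{T}=\m{P}^{*}\m{T}_{11}\m{P}$ (the minimal positive completion), while the paper verifies $\m{S}/\m{A}=\m{0}$ for the candidate \eqref{eq:gen:pusz:fin} and appeals to \cref{lem:Schur:spd}. Both yield the same matrix---your $\m{T}_{22}=\m{T}_{21}\m{T}_{11}^{\dagger}\m{T}_{12}$ is precisely what makes the $\m{A}$-Schur complement vanish---but your factorization makes the rank count $\rk(\m{S}_{\m{A}\to\m{B}})=\rk(\m{T}_{11})$ immediate, whereas the paper leaves the minimality claim implicit. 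The algebraic trio, the equivalence $(2)\Leftrightarrow(3)$, and the optimality of any s.p.d.\ transport matrix are handled the same way in both proofs.

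One small caution: your treatment of the complementary case $\rk(\m{A})<\rk(\m{B})$ asserts that condition~(2) fails, but ``the OT matrix is a.s.\ unique'' is arguably vacuously true when no OT matrix exists. The paper sidesteps this by working inside the block decomposition (implicitly $\rk(\m{A})\ge\rk(\m{B})$), so neither proof is fully explicit here; the point is harmless for the equivalence but worth a sentence if you want to be airtight.
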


\begin{proof}[Proof of \cref{thm:spd:sol:sym}]
First, if a transport matrix $\m{T}$ from $\m{A}$ to $\m{B}$ is s.p.d., then it is optimal due to \cref{lem:Neumann:eq} and $\m{G}^{*} \m{T} \m{G}$ being s.p.d.. Also, it is straightforward due to the polar decomposition that $\m{S}_{\m{A} \to \m{B}}$ in \eqref{eq:gen:pusz:fin} does not depend on the choice of Green's matrix $\m{G}_{11} \in \b{R}^{n_{1} \times n_{1}}$. We show the rest of the statements. Following the proof of \Cref{thm:sol:sym}, we consider $\m{A}$ in block form as:
\begin{equation*}
    \m{A} = \begin{pmatrix}
    \m{A}_{11} & \m{0} \\
    \m{0} & \m{0}
    \end{pmatrix},
\end{equation*}
where $\m{A}_{11} \succ \m{0}$ is invertible.

\begin{itemize}[leftmargin=*]
\item [($4 \Leftrightarrow 5)$] $\c{R} (\m{B}) = \c{R}(\m{B} \m{A}) \ \Leftrightarrow \ \c{N} (\m{B}) = \c{N}(\m{A} \m{B}) \ \Leftrightarrow \ \c{R}(\m{B}) \cap \c{N}(\m{A}) = \{ \m{0} \}$.
\item [($3 \Leftrightarrow 5)$] Since $\c{R}(\m{B}) = \c{R}(\m{B}^{1/2})$, this is a direct consequence of part 4 in \cref{lem:Schur:op}.
\item [($2 \Leftrightarrow 3)$] Due to \cref{thm:sol:sym}, the OT matrix $\m{T}$ is $N(0, \m{A})$-a.s. unique if and only if $\m{T}_{21}$ is uniquely determined, which in turn happens if and only if $\m{B}/\m{A} = 0$, otherwise $-\m{U}_{12} \in \b{R}^{n_{1} \times (n-n_{1})}$ is also a partial unitary submatrix satisfying $\m{U}_{12}^{*} \m{U}_{12} = \m{\Pi}_{\c{R}(\m{B}/\m{A})}$ and $\m{U}_{12} \m{U}_{12}^{*} = \m{\Pi}_{\mathcal{V}}$.

\item [($3 \Rightarrow 1)$] From the proof of \cref{thm:sol:sym}, $\m{S} := \m{S}_{\m{A} \to \m{B}}$ in \eqref{eq:gen:pusz:fin} is an OT matrix, hence it remains to show that this is a s.p.d. matrix. Fix $\m{v}_{1} \in \c{N}(\m{S}_{11})$. Then,
\begin{equation*}
    \m{G}_{11}^{-1} \m{v}_{1} \in \c{N}[(\m{G}_{11}^{*} \m{B}_{11} \m{G}_{11})^{1/2}] \quad \Rightarrow \quad (\m{G}_{11}^{*} \m{B}_{11} \m{G}_{11})^{\dagger/2} \m{G}_{11}^{-1} \m{v}_{1} = 0 \quad \Rightarrow \quad \m{v}_{1} \in \c{N}(\m{S}_{21}).
\end{equation*}
Thus, $\c{N}(\m{S}_{21}) \supset \c{N}(\m{S}_{11})$, or equivalently, $\c{R}(\m{S}_{12}) \subset \c{R}(\m{S}_{11}^{1/2})$, and thus, $\m{S}/\m{A}$ is well-defined. Now, applying part 3 in \cref{lem:Schur:op} yields
\begin{align*}
    \m{S}/\m{A} &= \m{S}_{22} - (\m{S}_{21} \m{G}_{11}) (\m{G}_{11}^{*} \m{S}_{11} \m{G}_{11})^{\dagger} (\m{G}_{11}^{*} \m{S}_{12}) \\
    &= \m{S}_{22} - (\m{B}_{21} \m{G}_{11}) (\m{G}_{11}^{*} \m{B}_{11} \m{G}_{11})^{2\dagger} (\m{G}_{11}^{*} \m{B}_{12}) = \m{0}.
\end{align*}
Hence, $\m{S} \succeq \m{0}$ from \cref{lem:Schur:spd}.

\item [($1 \Rightarrow 3)$] We may assume that $\m{A}$ is not injective, otherwise there is nothing to prove. Let $\m{S}$ be a s.p.d. OT matrix. From the proof of \cref{thm:sol:sym}, $\m{S}_{11}$ must take the form
\begin{equation*}
    \m{S}_{11} = \m{S}_{\m{A}_{11} \to \m{B}_{11}} = (\m{G}_{11}^{*})^{-1} (\m{G}_{11}^{*} \m{B}_{11} \m{G}_{11})^{1/2} \m{G}_{11}^{-1}.
\end{equation*}
Also, since $\m{S} \succeq \m{0}$, we get from \cref{lem:Schur:spd} that
\begin{equation*}
    \c{N}(\m{S}_{11}^{1/2}) = \c{R}(\m{S}_{11}^{1/2})^{\perp} \subset \c{R}(\m{S}_{12})^{\perp} = \c{N}(\m{S}_{21}),
\end{equation*}
which leads to
\begin{align*}
    \c{N}(\m{B}_{11}^{1/2} \m{G}_{11}) = \c{N}(\m{G}_{11}^{*} \m{B}_{11} \m{G}_{11}) = \c{N} (\m{G}_{11}^{*} \m{S}_{11} \m{G}_{11}) = \c{N}(\m{S}_{11}^{1/2} \m{G}_{11}) \subset \c{N}(\m{S}_{21} \m{G}_{11}),
\end{align*}
or equivalently, $\c{R}[(\m{S}_{21} \m{G}_{11})^{*}] \subset \c{N}(\m{B}_{11}^{1/2} \m{G}_{11})^{\perp}$. Due to \eqref{eq:sol:decomp}, there exists a partial unitary matrix $\m{U}_{12}$ from $\c{R}(\m{B}/\m{A})$ to $\mathcal{V} \subset \c{N}(\m{B}_{11}^{1/2} \m{G}_{11})$ such that
\begin{equation*}
    (\m{S}_{21} \m{G}_{11})^{*} - (\m{G}_{11}^{*} \m{B}_{11} \m{G}_{11})^{\dagger/2} (\m{G}_{11}^{*} \m{B}_{12}) = \m{U}_{12} (\m{B}/\m{A})^{1/2}.
\end{equation*}
However, the range of the left hand side is included in $\c{N}(\m{B}_{11}^{1/2} \m{G}_{11})^{\perp} = \c{N}(\m{G}_{11}^{*} \m{B}_{11} \m{G}_{11})^{\perp}$, whereas the range of the left hand side belongs to $\mathcal{V} \subset \c{N}(\m{B}_{11}^{1/2} \m{G}_{11})$. Hence, we have
\begin{equation*}
    \c{R}(\m{U}_{12}) = \c{R}(\m{U}_{12} (\m{B}/\m{A})^{1/2}) = \{0\},
\end{equation*}
i.e. $\m{B}/\m{A} =0$.
\end{itemize}
Finally, the canonical choice of minimal rank given in \eqref{eq:gen:pusz:fin} follows from the part ($3 \Rightarrow 1)$.
\end{proof}

The preceding theorems provide a complete picture of reachability in this setting. \cref{thm:sol:sym} shows that an OT matrix from $\m{A}$ to $\m{B}$ exists if and only if $\rk(\m{A}) \geq \rk(\m{B})$. \cref{thm:spd:sol:sym} further reveals that if any s.p.d. matrix $\m{T}$ satisfies the transport equation $\m{T}\m{A}\m{T}^*=\m{B}$, then it is automatically an optimal solution. Even more, it guarantees that these solutions are unique in an almost-sure sense, meaning that they are all equivalent on the support $\c{R}(\m{A})$ of the source measure $N(0, \m{A})$. The only requirement to verify this uniqueness is the condition $ \c{R}(\m{B}) \cap \c{N}(\m{A}) = \{ \m{0} \}$, which entails $\rk(\m{A}) \geq \rk(\m{B})$. When this condition holds for a singular $\m{A}$, the proof of the part ($3 \Rightarrow 1)$ in \cref{thm:spd:sol:sym} fully characterizes the set of all s.p.d. transport matrices:
\begin{equation*}
    \c{S}_{\m{A} \to \m{B}}^{+} = \left\{
    \m{S} = \m{S}_{\m{A} \to \m{B}} + 
    \begin{pmatrix}
    \m{0} & \m{0} \\
    \m{0} & \m{S}_{22}
    \end{pmatrix}
    \    :
    \begin{array}{c}
    \c{H}_{1} \\
    \oplus \\
    \c{H}_{2}
    \end{array}
    \rightarrow
    \begin{array}{c}
    \c{H}_{1} \\
    \oplus \\
    \c{H}_{2}
    \end{array} \vert \ \m{S}_{22} \succeq \m{0} \right\},
\end{equation*}
where $\m{S}_{\m{A} \to \m{B}}$ is defined in \eqref{eq:gen:pusz:fin}. The arbitrary component $\m{S}_{22}$ on the null space of $\m{A}$ represents the degrees of freedom that do not affect optimality, which is precisely the $\m{A}$-Schur complement of the resulting matrix $\m{S}$, i.e., $\m{S}/\m{A} = \m{S}_{22}$.

\begin{example}\label{ex:spd:ex}
Consider the following rank-deficient covariance matrices:
\begin{equation*}
    \m{A} = \begin{pmatrix}
    4 & 0 & 0 \\
    0 & 1 & 0 \\
    0 & 0 & 0
    \end{pmatrix}, \quad \m{B} = \begin{pmatrix}
    0 & 0 & 0 \\
    0 & 4 & 2 \\
    0 & 2 & 1
    \end{pmatrix}, \quad \m{C} = \begin{pmatrix}
    0 & 0 & 0 \\
    0 & 0 & 0 \\
    0 & 0 & 1
    \end{pmatrix}.
\end{equation*}

\begin{enumerate}[leftmargin = *]
\item \textbf{$\m{A} \to \m{B}$ :} Here, $\c{R}(\m{B}) = \spann \{(0, 2, 1)^{\top}\}$ intersects trivially with $\c{N}(\m{A}) = \spann \{\m{e}_{3}\}$. According to \cref{thm:spd:sol:sym}, this guarantees the existence of an s.p.d. OT matrix, which has the following form:
\begin{equation*}
    \m{T}_{\m{A} \to \m{B}} = \begin{pmatrix}
    0 & 0 & * \\
    0 & 2 & * \\
    0 & 1 & *
    \end{pmatrix},
\end{equation*}
making all such maps $N(0, \m{A})$-a.s. equivalent. The canonical s.p.d. OT matrix of minimal rank in \eqref{eq:gen:pusz:fin} is given by
\begin{equation*}
    \m{S}_{\m{A} \to \m{B}} = \begin{pmatrix}
    0 & 0 & 0 \\
    0 & 2 & 1 \\
    0 & 1 & 1/2
    \end{pmatrix}.
\end{equation*}

\item \textbf{$\m{A} \to \m{C}$ :} In this case, $\c{R}(\m{C}) = \mathrm{span}\{\m{e}_{3}\}$ has a non-trivial intersection with $\c{N}(\m{A}) = \mathrm{span}\{\m{e}_{3}\}$, and \cref{thm:spd:sol:sym} predicts that no s.p.d. OT matrix exists and the solution is not $N(0, \m{A})$-a.s. unique. Indeed, the general form of an OT matrix takes the form:
\begin{equation*}
    \m{T}_{\m{A} \to \m{C}}^{\theta} = \begin{pmatrix}
    0 & 0 & * \\
    0 & 0 & * \\
    (\cos \theta)/2 & \sin \theta & *
    \end{pmatrix}, \quad \theta \in [0, 2 \pi).
\end{equation*}
where the dependence on $\theta$ confirms the non-uniqueness, and \cref{lem:spd:complete} verifies that they do not admit an s.p.d. completion. 
The associated McCann interpolant is given by
\begin{equation*}
    \m{\Gamma}_{t}^{\theta} = \begin{pmatrix}
    4(1-t)^{2} & 0 & 2t (1-t) \cos \theta  \\
    0 & (1-t)^{2} & t (1-t) \sin \theta \\
    2t (1-t) \cos \theta & t (1-t) \sin \theta & t^{2}
    \end{pmatrix}, \quad t \in [0, 1].
\end{equation*}
For any $t \in [0, 1)$, its range is
\begin{equation*}
    \c{R}(\m{\Gamma}_{t}^{\theta}) = \mathrm{span} \left\{ 
    \begin{pmatrix}
        2(1-t) \\ 0 \\ t \cos \theta
    \end{pmatrix}, \begin{pmatrix}
        0  \\ (1-t) \\ t \sin \theta
    \end{pmatrix} \right\}.
\end{equation*}
Therefore, $\c{R}(\m{\Gamma}_{t}^{\theta}) \cap \c{N}(\m{A}) = \{\m{0}\}$, or equivalently, $\m{\Gamma}_{t}^{\theta}/\m{A} = \m{0}$ by \cref{thm:spd:sol:sym}. This confirms \cref{thm:geo:Schur}: $\rk(\m{\Gamma}_{t}^{\theta}) = \rk (\m{A}) = 2$.
\end{enumerate}
\end{example}

\begin{example}\label{ex:rk:goes:up}
Consider the following rank-deficient covariance matrices:
\begin{equation*}
    \m{A} = \begin{pmatrix}
    1 & 0 \\
    0 & 0
    \end{pmatrix}, \quad \m{B} = \begin{pmatrix}
    0 & 0 \\
    0 & 1
    \end{pmatrix}.
\end{equation*}
Since $\rk (\m{A}) = \rk (\m{B}) = 1$, both $\m{A} \to \m{B}$ and $\m{B} \to \m{A}$ are possible due to \cref{thm:sol:sym}. The OT matrices takes the form:
\begin{equation*}
    \m{T}_{\m{A} \to \m{B}}^{\pm} = \begin{pmatrix}
    0 & * \\
    \pm 1 & *
    \end{pmatrix}, \quad \m{T}_{\m{B} \to \m{A}}^{\pm} = \begin{pmatrix}
    * & \pm 1 \\
    * & 0
    \end{pmatrix}.
\end{equation*}
Neither of these matrices allows for an s.p.d. completion, as predicted by \cref{thm:spd:sol:sym}: $\c{R}(\m{B}) \cap \c{N}(\m{A}) = \spann \{\m{e}_{2}\}$ and $\c{R}(\m{A}) \cap \c{N}(\m{B}) = \mathrm{span}\{\m{e}_{1}\}$.
There are only two possible McCann interpolants from $\m{A}$ to $\m{B}$ (or \textit{vice versa}) constructed from these OT matrices:
\begin{equation*}
    \m{\Gamma}_{t}^{\pm} = \begin{pmatrix}
    (1-t)^{2} & \pm t (1-t)  \\
    \pm t (1-t) & t^{2}
    \end{pmatrix}, \quad t \in [0, 1].
\end{equation*}
Consistent with \cref{thm:geo:Schur}, the rank of these matrices is constantly $1$ along the geodesic. While these are the only possible \textit{Monge} geodesics (McCann interpolants), other \textit{Kantorovich} geodesics exist. In \cref{ex:midpoint:simple}, we show that $\m{C} = \m{I}/4$ is a midpoint between $\m{A}$ and $\m{B}$. 
Because $\rk (\m{C}) = 2 > 1 = \rk (\m{A}) = \rk (\m{B})$, we get $\m{C} \to \m{A}$ and $\m{C} \to \m{B}$. Hence, we can construct a geodesic with $\m{\Gamma}_{0} = \m{A}, \m{\Gamma}_{1/2} = \m{C}, \m{\Gamma}_{1} = \m{B}$, while there exists no single OT matrix $\m{T}_{\m{A} \to \m{B}}$ or $\m{T}_{\m{B} \to \m{A}}$ that describes this geodesic.
\end{example}

\section{Wasserstein Barycenters}\label{sec:bary}
Given a s.p.d. matrix $\m{A} \in \b{R}^{n \times n}$ and $\m{G} \in \c{G}(\m{A})$, the SVD decomposition leads to
\begin{align}\label{eq:Green's:right:inv}
    \c{G}(\m{A}) &= \{ \m{G} \m{U} : \m{U} \in O(n) \}.
\end{align}

\begin{lemma}\label{lem:opt:algn}
Let $\m{A}_{1}, \m{A}_{2}  \in \b{R}^{n \times n}$ be s.p.d. matrices. Then, for any $\m{G}_{1} \in \c{G}(\m{A}_{1})$, there exists some $\m{G}_{2} \in \c{G}(\m{A}_{2})$ such that
\begin{align*}
    \m{G}_{1}^{*} \m{G}_{2} \succeq \m{0} \quad \Leftrightarrow \quad \tr (\m{G}_{1}^{*} \m{G}_{2}) = \tr [(\m{G}_{1}^{*} \m{A}_{2} \m{G}_{1})^{1/2}] = \tr [(\m{A}_{1}^{1/2} \m{A}_{2} \m{A}_{1}^{1/2})^{1/2}].
\end{align*}
\end{lemma}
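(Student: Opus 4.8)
The plan is to reduce the statement to \cref{lem:Neumann:eq} (the von~Neumann trace inequality) together with the orthogonal-orbit description of Green's matrices in \eqref{eq:Green's:right:inv}.

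First I would dispose of the last equality, which holds \emph{unconditionally} for every $\m{G}_{1} \in \c{G}(\m{A}_{1})$. Using the polar decomposition of a Green's matrix one can write $\m{G}_{1} = \m{A}_{1}^{1/2} \m{U}_{1}$ with $\m{U}_{1}$ a partial isometry which, since $\dim \c{H} = n < \infty$, extends to an orthogonal matrix; hence $\m{G}_{1}^{*} \m{A}_{2} \m{G}_{1} = \m{U}_{1}^{*} (\m{A}_{1}^{1/2} \m{A}_{2} \m{A}_{1}^{1/2}) \m{U}_{1}$, so the two operators are orthogonally conjugate and their principal square roots have equal trace. (Equivalently, both traces equal the sum of the singular values of $\m{A}_{2}^{1/2} \m{G}_{1}$, resp.\ of $\m{A}_{2}^{1/2} \m{A}_{1}^{1/2}$, and these agree because $(\m{A}_{2}^{1/2} \m{G}_{1})(\m{A}_{2}^{1/2} \m{G}_{1})^{*} = \m{A}_{2}^{1/2} \m{A}_{1} \m{A}_{2}^{1/2} = (\m{A}_{2}^{1/2} \m{A}_{1}^{1/2})(\m{A}_{2}^{1/2} \m{A}_{1}^{1/2})^{*}$.) With this in hand the asserted equivalence collapses to $\m{G}_{1}^{*} \m{G}_{2} \succeq \m{0} \Leftrightarrow \tr(\m{G}_{1}^{*} \m{G}_{2}) = \tr[(\m{G}_{1}^{*} \m{A}_{2} \m{G}_{1})^{1/2}]$.

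The key identity driving the rest is that, for \emph{any} $\m{G}_{2} \in \c{G}(\m{A}_{2})$, $(\m{G}_{1}^{*} \m{G}_{2})(\m{G}_{1}^{*} \m{G}_{2})^{*} = \m{G}_{1}^{*} \m{G}_{2} \m{G}_{2}^{*} \m{G}_{1} = \m{G}_{1}^{*} \m{A}_{2} \m{G}_{1}$. Applying \cref{lem:Neumann:eq} to $\m{X} = \m{G}_{1}^{*} \m{G}_{2}$ gives $\tr(\m{G}_{1}^{*} \m{G}_{2}) \le \tr[(\m{G}_{1}^{*} \m{A}_{2} \m{G}_{1})^{1/2}]$ with equality if and only if $\m{G}_{1}^{*} \m{G}_{2} = (\m{G}_{1}^{*} \m{A}_{2} \m{G}_{1})^{1/2}$. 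This settles both directions: if $\m{G}_{1}^{*} \m{G}_{2} \succeq \m{0}$, then using $(\m{G}_{1}^{*} \m{G}_{2})(\m{G}_{1}^{*} \m{G}_{2})^{*} = \m{G}_{1}^{*} \m{A}_{2} \m{G}_{1}$ and uniqueness of the positive semidefinite square root forces $\m{G}_{1}^{*} \m{G}_{2} = (\m{G}_{1}^{*} \m{A}_{2} \m{G}_{1})^{1/2}$, hence the trace equality; conversely, the equality case of \cref{lem:Neumann:eq} directly yields $\m{G}_{1}^{*} \m{G}_{2} = (\m{G}_{1}^{*} \m{A}_{2} \m{G}_{1})^{1/2} \succeq \m{0}$.

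It remains to exhibit one $\m{G}_{2} \in \c{G}(\m{A}_{2})$ for which $\m{G}_{1}^{*} \m{G}_{2} \succeq \m{0}$, for which I would run the orthogonal Procrustes construction: fix $\m{G}_{2}^{(0)} := \m{A}_{2}^{1/2} \in \c{G}(\m{A}_{2})$, take an SVD $\m{G}_{1}^{*} \m{G}_{2}^{(0)} = \m{W} \m{\Sigma} \m{V}^{*}$, and set $\m{U} := \m{V} \m{W}^{*} \in O(n)$ and $\m{G}_{2} := \m{G}_{2}^{(0)} \m{U}$, which belongs to $\c{G}(\m{A}_{2})$ by \eqref{eq:Green's:right:inv}; then $\m{G}_{1}^{*} \m{G}_{2} = \m{W} \m{\Sigma} \m{W}^{*} \succeq \m{0}$. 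I do not anticipate a serious obstacle here; the two points needing care are that the backward implication genuinely relies on the \emph{sharp} equality case of \cref{lem:Neumann:eq} --- a priori $\m{G}_{1}^{*} \m{G}_{2}$ is not even symmetric, and it is the trace-norm identity that upgrades trace-maximality to positivity --- and that possible rank-deficiency of $\m{A}_{1}$ or $\m{A}_{2}$ is harmless, since $\c{G}(\m{A}_{i})$ is still a full orthogonal orbit and \cref{lem:Neumann:eq} does not assume invertibility.
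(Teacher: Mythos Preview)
Your proposal is correct and follows essentially the same route as the paper: establish $\tr[(\m{G}_{1}^{*}\m{A}_{2}\m{G}_{1})^{1/2}] = \tr[(\m{A}_{1}^{1/2}\m{A}_{2}\m{A}_{1}^{1/2})^{1/2}]$ via the polar decomposition $\m{G}_{1}=\m{A}_{1}^{1/2}\m{U}_{1}$ with $\m{U}_{1}\in O(n)$, and then produce $\m{G}_{2}$ by an SVD/Procrustes rotation of $\m{A}_{2}^{1/2}$. The paper's proof is terser---it records the trace identity and the SVD construction but does not spell out the equivalence, which you correctly extract from the equality case of \cref{lem:Neumann:eq} combined with $(\m{G}_{1}^{*}\m{G}_{2})(\m{G}_{1}^{*}\m{G}_{2})^{*}=\m{G}_{1}^{*}\m{A}_{2}\m{G}_{1}$.
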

\begin{proof}[Proof of \cref{lem:opt:algn}]
Given $\m{G}_{1} \in \c{G}(\m{A}_{1})$, there exists a unitary matrix $\m{U}_{1} \in \b{R}^{n \times n}$ such that $\m{G}_{1} = \m{A}_{1}^{1/2} \m{U}_{1}$. Then,
\begin{align*}
    (\m{G}_{1}^{*} \m{A}_{2} \m{G}_{1})^{1/2} = \m{U}_{1}^{*} (\m{A}_{1}^{1/2} \m{A}_{2} \m{A}_{1}^{1/2})^{1/2} \m{U}_{1} \quad \Rightarrow \quad \tr [(\m{G}_{1}^{*} \m{A}_{2} \m{G}_{1})^{1/2}] = \tr [(\m{A}_{1}^{1/2} \m{A}_{2} \m{A}_{1}^{1/2})^{1/2}].
\end{align*}
Considering the SVD decomposition $\m{G}_{1}^{*} \m{A}_{2}^{1/2} = \m{U} \m{D} \m{V}^{*}$, $\m{G}_{2} := \m{A}_{2}^{1/2} \m{V}^{*}$ is a desired solution.
\end{proof}

In this section, we show that the Wasserstein barycenter of the random covariance matrix is equivalent to maximizing the Hilbert-Schmidt norm of the associated random Green's matrix, in favor of shape analysis. Let $\s{A}: (\Omega, \s{F}, \b{P}) \to \b{R}^{n \times n}$ be a random covariance matrix, i.e., $\s{A}(\omega) \succeq \m{0}$ for any $\omega \in \Omega$, satisfying
\begin{align*}
    \b{E} \vertiii{\s{A}}_{1} =  \int_{\Omega} \vertiii{\s{A}(\omega)}_{1} \rd \b{P}(\omega) < +\infty,
\end{align*}
so that $\b{E} \c{W}_{2}^{2}(\s{A}, \m{B})$ is well-defined for any s.p.d. matrices $\m{B} \in \b{R}^{n \times n}$: $\b{E} \c{W}_{2}^{2}(\s{A}, \m{B}) \le 2 \b{E} \vertiii{\s{A}}_{1} + 2 \vertiii{\m{B}}_{1} < \infty$. In this case, its \emph{Wasserstein barycenter} denoted by $\hat{\m{A}} \in \b{R}^{n \times n}$, is an s.p.d. matrix that is a solution to the following minimization problem:
\begin{equation*}
    \b{E} \c{W}_{2}^{2}(\hat{\m{A}}, \s{A}) = \inf_{\m{B} \in \b{R}^{n \times n}, \ \m{B} \succeq \m{0}}  \b{E} \c{W}_{2}^{2}(\m{B}, \s{A}).
\end{equation*}
We are only formulating at the population level, but this includes the weighted empirical version if we consider $\s{A}$ having only finite number of values, say $\{\m{A}_{1}, \cdots, \m{A}_{m}\}$.

\subsection{Random Green's Matrix}\label{ssec:rand:Green}

\begin{definition}\label{def:opt:aligned}
Let $\s{A}: (\Omega, \s{F}, \b{P}) \to \b{R}^{n \times n}$ be a random covariance matrix with $\b{E} \vertiii{\s{A}}_{1} < +\infty$. We then say that $\s{G}: (\Omega, \s{F}, \b{P}) \to \b{R}^{n \times n}$ is a \emph{random Green's matrix} associated to $\s{A}$ if $\s{G}(\omega) \s{G}(\omega)^{*} = \s{A}(\omega)$ for $\b{P}$-a.s.. We abuse the notation $\c{G}(\s{A})$ to denote the space of random Green's matrices with respect to $\s{A}$.  Additionally, $\s{G} \in \c{G}(\s{A})$ is said to be
\begin{enumerate}[leftmargin = *]
    \item \emph{properly aligned} if $(\b{E} \s{G})^{*} \s{G}(\omega) \succeq \m{0}$ (or equivalently, $(\b{E} \s{G})^{*} \s{G}(\omega) = (\b{E} \s{G}^{*} \s{A}(\omega) \b{E} \s{G})^{1/2}$) holds $\b{P}$-a.s..
    \item \emph{optimal} if $\vertii{\b{E} \s{G}}_{2} = \max_{\s{S} \in \c{G}(\s{A})} \vertii{\b{E} \s{S}}_{2}$.
\end{enumerate}
\end{definition}

Note that the expectation $\b{E} \s{G} \in \b{R}^{n \times n}$ of the random Green's matrix is well-defined via Bochner integral with $(\b{E} \vertii{\s{G}}_{2})^{2} \le \b{E} \vertii{\s{G}}_{2}^{2} = \b{E} \vertii{\s{A}}_{1} < \infty$. 
We also remark both the proper alignment and the optimality are right unitary invariant as in \eqref{eq:Green's:right:inv}: for any unitary operator $\m{U} \in O(n)$, $\s{G} \in \c{G}(\s{A})$ is properly aligned (resp. optimal) if and only if $\s{G} \m{U} \in \c{G}(\s{A})$ as well:
\begin{align}\label{eq:opt:align:unitary}
    (\s{G}(\omega) \m{U}) (\s{G}(\omega) \m{U})^{*} = \s{G}(\omega) \s{G}(\omega)^{*}, \quad
    \b{E}(\s{G} \m{U})^{*} \s{G}(\omega) \m{U} = \m{U}^{*} (\b{E} \s{G})^{*} \s{G}(\omega) \m{U}.
\end{align}

\begin{proposition}\label{cor:bary:eq:sol}
Let $\s{A}: (\Omega, \s{F}, \b{P}) \to \b{R}^{n \times n}$ be a random covariance matrix with $\b{E} \vertiii{\s{A}}_{1} < +\infty$.
\begin{enumerate}[leftmargin = *]
\item For any $\m{M} \in \b{R}^{n \times n}$, $\b{E} [\vertii{(\m{M}^{*} \s{A} \m{M})^{1/2}}_{1}] < + \infty$.
\item If the random Green's matrix $\s{G} \in \c{G}(\s{A})$ is properly aligned, then $\b{E} \s{G}$ is a solution to
\begin{align}\label{eq:Knott:eqn}
    \m{M}^{*} \m{M} = \b{E} [(\m{M}^{*} \s{A} \m{M})^{1/2}].
\end{align}
\item If $\m{M} \in \b{R}^{n \times n}$ satisfies $\m{M}^{*} \m{M} = \b{E} [(\m{M}^{*} \s{A} \m{M})^{1/2}]$ and $\c{N}(\m{M}^{*}) \subset \c{N}(\s{A}(\omega))$ holds $\b{P}$-a.s., then there exists a properly aligned $\s{G} \in \c{G}(\s{A})$ such that $\m{M} = \b{E} \s{G}$. 
\end{enumerate}
\end{proposition}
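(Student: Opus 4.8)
The plan is to handle the three parts separately, with essentially all of the real work concentrated in part 3; parts 1 and 2 are short computations that feed into it.

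\textbf{Part 1.} Since $\m{M}^{*}\s{A}\m{M}\succeq\m{0}$ we have $\vertii{(\m{M}^{*}\s{A}\m{M})^{1/2}}_{1}=\tr[(\m{M}^{*}\s{A}\m{M})^{1/2}]$, and because $(\m{M}^{*}\s{A}^{1/2})(\m{M}^{*}\s{A}^{1/2})^{*}=\m{M}^{*}\s{A}\m{M}$ the operators $(\m{M}^{*}\s{A}\m{M})^{1/2}$ and $\m{M}^{*}\s{A}^{1/2}$ share the same nonzero singular values, so $\tr[(\m{M}^{*}\s{A}\m{M})^{1/2}]=\vertii{\m{M}^{*}\s{A}^{1/2}}_{1}$. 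I would then bound $\vertii{\m{M}^{*}\s{A}^{1/2}}_{1}\le\vertiii{\m{M}}_{\infty}\vertii{\s{A}^{1/2}}_{1}=\vertiii{\m{M}}_{\infty}\tr[\s{A}^{1/2}]$ and use $\sqrt{\lambda}\le 1+\lambda$ on the eigenvalues to get $\tr[\s{A}^{1/2}]\le n+\tr[\s{A}]$. Taking expectations, $\b{E}[\vertii{(\m{M}^{*}\s{A}\m{M})^{1/2}}_{1}]\le\vertiii{\m{M}}_{\infty}(n+\b{E}\vertiii{\s{A}}_{1})<\infty$.

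\textbf{Part 2.} Put $\m{M}:=\b{E}\s{G}$, a well-defined Bochner integral as noted after \cref{def:opt:aligned}. Pulling the constant operator $(\b{E}\s{G})^{*}$ inside the integral gives $\m{M}^{*}\m{M}=(\b{E}\s{G})^{*}\b{E}\s{G}=\b{E}[(\b{E}\s{G})^{*}\s{G}]$. Proper alignment yields $(\b{E}\s{G})^{*}\s{G}(\omega)=(\b{E}\s{G}^{*}\s{A}(\omega)\b{E}\s{G})^{1/2}=(\m{M}^{*}\s{A}(\omega)\m{M})^{1/2}$ $\b{P}$-a.s., and part 1 makes the integrand integrable, so $\m{M}^{*}\m{M}=\b{E}[(\m{M}^{*}\s{A}\m{M})^{1/2}]$, i.e. $\m{M}$ solves \eqref{eq:Knott:eqn}.

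\textbf{Part 3.} The hypothesis $\c{N}(\m{M}^{*})\subseteq\c{N}(\s{A}(\omega))$ is equivalent (finite dimension, $\s{A}(\omega)$ symmetric) to $\c{R}(\s{A}(\omega))\subseteq\c{R}(\m{M})$ $\b{P}$-a.s., which also forces $\m{\Pi}_{\c{R}(\m{M})}\s{A}(\omega)\m{\Pi}_{\c{R}(\m{M})}=\s{A}(\omega)$ and $\c{R}((\m{M}^{*}\s{A}(\omega)\m{M})^{1/2})=\c{R}(\m{M}^{*}\s{A}(\omega)\m{M})\subseteq\c{R}(\m{M}^{*})$. I would then define explicitly
\begin{equation*}
    \s{G}(\omega):=(\m{M}^{*})^{\dagger}(\m{M}^{*}\s{A}(\omega)\m{M})^{1/2},
\end{equation*}
which is automatically a Borel-measurable function of $\s{A}(\omega)$ (both $\m{X}\mapsto\m{X}^{\dagger}$ and $\m{X}\mapsto\m{X}^{1/2}$ are Borel on matrices). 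Using the identities $\m{M}^{*}(\m{M}^{*})^{\dagger}=\m{\Pi}_{\c{R}(\m{M}^{*})}$ and $(\m{M}^{*})^{\dagger}\m{M}^{*}=\m{\Pi}_{\c{R}(\m{M})}=\m{M}\m{M}^{\dagger}$, a direct computation gives $\s{G}(\omega)\s{G}(\omega)^{*}=\m{\Pi}_{\c{R}(\m{M})}\s{A}(\omega)\m{\Pi}_{\c{R}(\m{M})}=\s{A}(\omega)$, so $\s{G}\in\c{G}(\s{A})$, together with $\m{M}^{*}\s{G}(\omega)=(\m{M}^{*}\s{A}(\omega)\m{M})^{1/2}\succeq\m{0}$. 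It then remains to identify $\b{E}\s{G}$: integrating $\m{M}^{*}\s{G}(\omega)$ and invoking the defining relation of $\m{M}$ gives $\m{M}^{*}\b{E}\s{G}=\b{E}[(\m{M}^{*}\s{A}\m{M})^{1/2}]=\m{M}^{*}\m{M}$, hence $\c{R}(\b{E}\s{G}-\m{M})\subseteq\c{N}(\m{M}^{*})$; on the other hand $\c{R}(\s{G}(\omega))=\c{R}(\s{A}(\omega))\subseteq\c{R}(\m{M})$ forces $\c{R}(\b{E}\s{G})\subseteq\c{R}(\m{M})$, so $\c{R}(\b{E}\s{G}-\m{M})\subseteq\c{R}(\m{M})\cap\c{N}(\m{M}^{*})=\{\m{0}\}$ and $\b{E}\s{G}=\m{M}$. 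With $\b{E}\s{G}=\m{M}$ in hand, the positivity $\m{M}^{*}\s{G}(\omega)=(\m{M}^{*}\s{A}(\omega)\m{M})^{1/2}\succeq\m{0}$ is precisely proper alignment of $\s{G}$.

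\textbf{Main obstacle.} The subtle point lies in part 3: one must exhibit a random Green's matrix that is simultaneously measurable in $\omega$ and whose expectation is \emph{exactly} $\m{M}$, not merely equal to $\m{M}$ after composition with $\m{M}^{*}$. The explicit Pusz--Woronowicz-type formula for $\s{G}(\omega)$ settles measurability at no cost, while the null-space hypothesis on $\m{M}$ is exactly what pins down the range of $\b{E}\s{G}$ and closes the gap between $\m{M}^{*}\b{E}\s{G}=\m{M}^{*}\m{M}$ and $\b{E}\s{G}=\m{M}$. Everything else reduces to routine linear algebra with orthogonal projections, plus the Bochner-integral bounds established in part 1.
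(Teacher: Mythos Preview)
Your proof is correct and follows essentially the same route as the paper: the same explicit construction $\s{G}(\omega)=(\m{M}^{*})^{\dagger}(\m{M}^{*}\s{A}(\omega)\m{M})^{1/2}$ in part 3, the same range/null-space argument to upgrade $\m{M}^{*}\b{E}\s{G}=\m{M}^{*}\m{M}$ to $\b{E}\s{G}=\m{M}$, and the same one-line computation in part 2. The only cosmetic difference is in part 1, where the paper bounds $\vertii{\m{M}^{*}\s{A}^{1/2}}_{1}\le\vertiii{\m{M}}_{2}\,\b{E}\vertii{\s{A}^{1/2}}_{2}\le\vertiii{\m{M}}_{2}\sqrt{\b{E}\vertii{\s{A}}_{1}}$ via Cauchy--Schwarz (a dimension-free estimate), whereas your $\sqrt{\lambda}\le 1+\lambda$ trick introduces the dimension $n$ but is equally valid here.
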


\begin{proof}[Proof of \cref{cor:bary:eq:sol}] \quad
\begin{enumerate}[leftmargin=*]
\item By the Cauchy-Schwarz inequality, we get
\begin{equation*}
    \b{E} [\vertii{(\m{M}^{*} \s{A} \m{M})^{1/2}}_{1}] = \b{E} [\vertii{\m{M}^{*} \s{A}^{1/2}}_{1}]
    \le \vertiii{\m{M}}_{2} \b{E} [ \vertii{\s{A}^{1/2}}_{2}] \le \vertiii{\m{M}}_{2} \sqrt{\b{E} [ \vertii{\s{A}}_{1}]} < + \infty.
\end{equation*}

\item $(\b{E} \s{G})^{*} (\b{E} \s{G}) = \b{E}[(\b{E} \s{G})^{*} \s{G}] = \b{E} [(\b{E} \s{G}^{*} \s{A} \b{E} \s{G})^{1/2}]$.

\item Define a random matrix $\s{G}: (\Omega, \s{F}, \b{P}) \to \b{R}^{n \times n}$ by $\s{G}(\omega) := (\m{M}^{*})^{\dagger} (\m{M}^{*} \s{A}(\omega) \m{M}^{*})^{1/2}$. Then, whenever $\c{N}(\m{M}^{*}) \subset \c{N}(\s{A}(\omega))$ holds, we have 
\begin{align*}
    \s{G}(\omega) \s{G}(\omega)^{*} = [(\m{M}^{*})^{\dagger} \m{M}^{*}] \s{A}(\omega) \overline{\m{M} \m{M}^{\dagger}} = \m{\Pi}_{\overline{\c{R}(\m{M})}} \s{A}(\omega) \m{\Pi}_{\overline{\c{R}(\m{M})}} = \s{A}(\omega),
\end{align*}
hence, $\s{G} \in \c{G}(\s{A})$. Note that 
\begin{equation*}
    \c{R}[\s{G}(\omega)] \subset \c{R}((\m{M}^{*})^{\dagger}) = \c{N}(\m{M}^{*})^{\perp} \, \Rightarrow \, \c{R}[\b{E}\s{G}] \subset \c{N}(\m{M}^{*})^{\perp} \, \Rightarrow \, \c{R}[\m{M} - \b{E}\s{G}] \subset \c{N}(\m{M}^{*})^{\perp},
\end{equation*}
while
\begin{align*}
    \m{M}^{*} \m{M} = \b{E} [(\m{M}^{*} \s{A} \m{M})^{1/2}] = \m{M}^{*} \b{E} \s{G} \quad \Leftrightarrow \quad \c{R}[\m{M} - \b{E}\s{G}] \subset \c{N}(\m{M}^{*}).
\end{align*}
This yields that $\c{R}[\m{M} - \b{E}\s{G}] \subset \c{N}(\m{M}^{*})^{\perp} \cap \c{N}(\m{M}^{*}) = \{0\}$, i.e., $\m{M} = \b{E} \s{G}$. Accordingly,
\begin{align*}
    (\b{E} \s{G})^{*} \s{G}(\omega) = \m{M}^{*} \s{G}(\omega) = (\m{M}^{*} \s{A}(\omega) \m{M}^{*})^{1/2} \succeq \m{0}, \quad \b{P}-\text{a.s.},
\end{align*}
which shows that $\s{G} \in \c{G}(\s{A})$ is properly aligned.
\end{enumerate}
\end{proof}

In the following theorem, we will shortly see that any Green's matrix $\m{M}$ of a Wasserstein barycenter $\hat{\m{A}}$ satisfies \eqref{eq:Knott:eqn}. In this regard, observe the peculiar order of the (LHS) in \eqref{eq:Knott:eqn} while $\hat{\m{A}} = \m{M} \m{M}^{*}$. However, the proper alignment is unitarily invariant, so by taking $\m{M} = \hat{\m{A}}^{1/2}$, we recover a well-known fixed-point equation, which is a necessity condition of the barycenter \cite{knott1984optimal,agueh2011barycenters,ruschendorf2002n,masarotto2019procrustes,kroshnin2021statistical,alvarez2016fixed}:
\begin{align}\label{eq:knott:neces}
    \hat{\m{A}} = \b{E}[(\hat{\m{A}}^{1/2} \s{A} \hat{\m{A}}^{1/2})^{1/2}].
\end{align}

To discuss the Wasserstein barycenter problem, it is convenient to introduce the following risk: for $\m{M} \in \b{R}^{n \times n}$ and $\s{G} \in \c{G}(\s{A})$, we consider
\begin{align*}
    &L(\m{M}, \s{G}) := \b{E} \vertii{\m{M} - \s{G}}_{2}^{2} = \vertii{\m{M}}_{2}^{2} - 2 \tr(\m{M}^{*} \b{E} \s{G}) + \b{E} \vertii{\s{A}}_{1}, \\
    &L(\m{M}) := \min_{\s{G} \in \c{G}(\s{A})} L(\m{M}, \s{G}) = \b{E} \c{W}_{2}^{2}(\m{M} \m{M}^{*}, \s{A}).
\end{align*}
The minimum is achieved due to \cref{lem:opt:algn}. The following statements are straightforward:
\begin{itemize}
    \item $L(\m{M}, \s{G}) = L(\b{E} \s{G}, \s{G}) + \vertii{\m{M} - \b{E} \s{G}}_{2}^{2}$.
    \item $L(\b{E} \s{G}) = L(\b{E} \s{G}, \s{G})$ if and only if $\s{G} \in \c{G}(\s{A})$ is properly aligned.
\end{itemize}
Again, observe the right unitary invariance $L(\m{M}) = L(\m{M} \m{U})$ for any $\m{U} \in O(n)$, thus the Wasserstein barycenter problem can be formulated as:
\begin{equation*}
    \hat{\m{A}} = \m{M} \m{M}^{*}, \quad \m{M} \in \argmin_{\m{G} \in \b{R}^{n \times n}} L(\m{G}),
\end{equation*}
and the right unitary evolution of $\m{M}$ does not alter the resulting barycenter. Now, we fully characterize the barycenter problem:

\begin{theorem}\label{thm:char:barycenter}
Let $\s{A}: (\Omega, \s{F}, \b{P}) \to \b{R}^{n \times n}$ be a random covariance matrix with $\b{E} \vertiii{\s{A}}_{1} < +\infty$.
\begin{enumerate}[leftmargin = *]
\item $\hat{\m{A}} \succeq \m{0}$ is a barycenter of $\s{A}$ if and only if $\hat{\m{A}} = (\b{E} \s{G}) (\b{E} \s{G})^{*}$ for some optimal random Green's matrix $\s{G} \in \c{G}(\s{A})$. Consequently, if the optimal random Green's matrix does not exist, neither does the barycenter.
\item Any optimal random Green's matrix is properly aligned.
\end{enumerate}
\end{theorem}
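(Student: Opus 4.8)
The plan is to reduce both parts to a single identity relating the barycentric risk to the maximal Hilbert--Schmidt norm of a random Green's matrix, namely
\begin{equation*}
    \inf_{\m{M} \in \b{R}^{n \times n}} L(\m{M}) = \b{E}\vertii{\s{A}}_{1} - \sup_{\s{G} \in \c{G}(\s{A})} \vertii{\b{E}\s{G}}_{2}^{2},
\end{equation*}
together with a matching characterization of which $\m{M}$ attain the left-hand infimum. To establish it I would start from the two itemized facts recorded just before the theorem. From $L(\m{M}, \s{G}) = L(\b{E}\s{G}, \s{G}) + \vertii{\m{M} - \b{E}\s{G}}_{2}^{2}$, together with $L(\b{E}\s{G}, \s{G}) = \b{E}\vertii{\s{A}}_{1} - \vertii{\b{E}\s{G}}_{2}^{2}$ (obtained by substituting $\m{M} = \b{E}\s{G}$ into the expanded form of $L(\m{M},\s{G})$), it is immediate that for each fixed $\s{G}$ the functional $\m{M} \mapsto L(\m{M}, \s{G})$ is minimized exactly at $\m{M} = \b{E}\s{G}$, with minimal value $\b{E}\vertii{\s{A}}_{1} - \vertii{\b{E}\s{G}}_{2}^{2}$. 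Since $L(\m{M}) = \min_{\s{G}} L(\m{M}, \s{G})$ and the two infima may be exchanged, the displayed value identity follows; moreover $\m{M}$ attains $\inf L$ if and only if $\m{M} = \b{E}\s{G}$ for some $\s{G}$ attaining the supremum on the right.

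For Part 1 the direction $(\Leftarrow)$ is then immediate: given an optimal $\s{G}$, set $\m{M} := \b{E}\s{G}$, so that $L(\m{M}) \le L(\m{M},\s{G}) = \b{E}\vertii{\s{A}}_{1} - \vertii{\b{E}\s{G}}_{2}^{2} = \inf L$, whence $\m{M}$ minimizes $L$ and $\hat{\m{A}} = \m{M}\m{M}^{*} = (\b{E}\s{G})(\b{E}\s{G})^{*}$ is a barycenter. For $(\Rightarrow)$, given a barycenter $\hat{\m{A}}$, I would use $\{\m{M}\m{M}^{*}:\m{M}\in\b{R}^{n\times n}\} = \{\m{C}:\m{C}\succeq\m{0}\}$ and $L(\m{M}) = \b{E}\c{W}_{2}^{2}(\m{M}\m{M}^{*}, \s{A})$ to see that any $\m{M}\in\c{G}(\hat{\m{A}})$ (e.g.\ $\m{M}=\hat{\m{A}}^{1/2}$) minimizes $L$; then pick $\s{G}_{0}\in\c{G}(\s{A})$ attaining $L(\m{M}) = \min_{\s{G}}L(\m{M},\s{G})$ and chain
\begin{equation*}
    \inf L = L(\m{M}) = L(\m{M},\s{G}_{0}) = \vertii{\m{M}-\b{E}\s{G}_{0}}_{2}^{2} + \b{E}\vertii{\s{A}}_{1} - \vertii{\b{E}\s{G}_{0}}_{2}^{2} \ge \b{E}\vertii{\s{A}}_{1} - \sup_{\s{S}}\vertii{\b{E}\s{S}}_{2}^{2} = \inf L,
\end{equation*}
which forces $\m{M}=\b{E}\s{G}_{0}$ and $\vertii{\b{E}\s{G}_{0}}_{2}=\sup_{\s{S}}\vertii{\b{E}\s{S}}_{2}$; thus $\s{G}_{0}$ is an optimal random Green's matrix and $\hat{\m{A}}=(\b{E}\s{G}_{0})(\b{E}\s{G}_{0})^{*}$. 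The final ``consequently'' clause is the contrapositive of this implication.

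For Part 2, take an optimal $\s{G}$ and set $\m{M} := \b{E}\s{G}$; by Part 1$(\Leftarrow)$ this $\m{M}$ minimizes $L$, so $L(\m{M}) = \b{E}\vertii{\s{A}}_{1} - \vertii{\b{E}\s{G}}_{2}^{2} = L(\m{M},\s{G})$, i.e.\ $L(\b{E}\s{G}) = L(\b{E}\s{G},\s{G})$, which is exactly the proper-alignment criterion in the second itemized fact preceding the theorem. The only step that genuinely requires care is the attainment (and measurability) of the minimizing Green's matrix $\s{G}_{0}$ used in Part 1$(\Rightarrow)$: here I would invoke \cref{lem:opt:algn} $\omega$-by-$\omega$ to align $\s{G}_{0}(\omega)$ optimally against $\m{M}$, supplemented by a measurable-selection argument to keep $\s{G}_{0}$ a bona fide random Green's matrix; this is precisely the point already asserted in ``the minimum is achieved due to \cref{lem:opt:algn}.'' Everything else is bookkeeping with the quadratic decomposition of $L$, which is in hand.
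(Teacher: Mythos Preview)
Your proof is correct and follows essentially the same approach as the paper's: both rely on the quadratic decomposition $L(\m{M},\s{G}) = L(\b{E}\s{G},\s{G}) + \vertii{\m{M}-\b{E}\s{G}}_{2}^{2}$, the attainment of $L(\m{M}) = \min_{\s{G}} L(\m{M},\s{G})$ via \cref{lem:opt:algn}, and the equivalence between proper alignment and $L(\b{E}\s{G}) = L(\b{E}\s{G},\s{G})$. You frame the argument through the scalar identity $\inf_{\m{M}} L(\m{M}) = \b{E}\vertii{\s{A}}_{1} - \sup_{\s{G}}\vertii{\b{E}\s{G}}_{2}^{2}$ and then read off both directions, whereas the paper chains the inequalities directly without isolating that identity; the logical content is identical, and your explicit acknowledgment of the measurable-selection point (which the paper absorbs into ``the minimum is achieved due to \cref{lem:opt:algn}'') is appropriate.
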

\begin{proof}[Proof of \cref{thm:char:barycenter}] \quad
\begin{enumerate}
\item [1. ($\Rightarrow$)] Let $\s{G} \in \c{G}(\s{A})$ be an optimal random Green's matrix. For any $\m{G} \in \b{R}^{n \times n}$ and the random Green's matrix $\s{S} \in \c{G}(\s{A})$,
\begin{align*}
    L(\m{G}, \s{S}) &= L(\b{E} \s{S}, \s{S}) + \vertii{\m{G} - \b{E} \s{S}}_{2}^{2} \ge L(\b{E} \s{S}, \s{S}) = -\vertii{\b{E} \s{S}}_{2}^{2} + \b{E} \vertii{\s{A}}_{1} \\
    &\ge - \vertii{\b{E} \s{G}}_{2}^{2} + \b{E} \vertii{\s{A}}_{1} = L(\b{E} \s{G}, \s{G}) \ge L(\b{E} \s{G}),
\end{align*}
hence $L(\m{G}) = \inf_{\s{S} \in \c{G}(\s{A})} L(\m{G}, \s{S}) \ge L(\b{E} \s{G})$. Therefore, $\hat{\m{A}} = (\b{E} \s{G}) (\b{E} \s{G})^{*}$ is a barycenter.

\item [1. ($\Leftarrow$)] Conversely, let $\hat{\m{A}}$ be a barycenter. Because of \cref{lem:opt:algn}, there exists some $\s{G} \in \c{G}(\s{A})$ such that $L(\hat{\m{A}}^{1/2}, \s{G}) = L(\hat{\m{A}}^{1/2})$. Then,
\begin{align*}
    L(\hat{\m{A}}^{1/2}) = L(\hat{\m{A}}^{1/2}, \s{G}) &= L(\b{E} \s{G}, \s{G}) + \vertii{\hat{\m{A}}^{1/2} - \b{E} \s{G}}_{2}^{2} \ge L(\b{E} \s{G}) + \vertii{\hat{\m{A}}^{1/2} - \b{E} \s{G}}_{2}^{2} \\
    &\ge L(\hat{\m{A}}^{1/2}) + \vertii{\hat{\m{A}}^{1/2} - \b{E} \s{G}}_{2}^{2},
\end{align*}
thus $\hat{\m{A}}^{1/2} = \b{E} \s{G}$ and $L(\b{E} \s{G}, \s{G}) = L(\b{E} \s{G})$, i.e., $\s{G} \in \c{G}(\s{A})$ is properly aligned. Then, the optimality of $\s{G}$ follows from [1. ($\Rightarrow$)].

\item [2.] From [1. ($\Rightarrow$)], we have
\begin{align*}
    L(\b{E} \s{G}) = \inf_{\s{S} \in \c{G}(\s{A})} L(\b{E} \s{G}, \s{S}) \ge L(\b{E} \s{G}, \s{G}) \ge L(\b{E} \s{G}),
\end{align*}
hence $L(\b{E} \s{G}, \s{G}) = L(\b{E} \s{G})$, i.e., $\s{G} \in \c{G}(\s{A})$ is properly aligned. 
\end{enumerate}
\end{proof}

\begin{remark}\label{rmk:inf:bary:loss}
As an immediate consequence, we have $\hat{\m{A}} = (\b{E} \s{G}) (\b{E} \s{G})^{*} \preceq \b{E}[\s{G} \s{G}^{*}] = \b{E} \s{A}$ due to the Cauchy-Schwarz inequality \cite{bhatia2019bures}. Also, the Mincowski determinant inequality yields the result in \cite{agueh2011barycenters}:
\begin{align*}
    &\det[\hat{\m{A}}]^{\frac{1}{2n}} = \det [\b{E} \s{G}]^{1/n} \ge \b{E}[\det (\s{G})^{1/n}] = \b{E}[\det (\s{A})^{\frac{1}{2n}}].
\end{align*}
A careful inspection of the proof of \cref{thm:char:barycenter} reveals that the Fr\'{e}chet variance is given by
\begin{equation*}
    \inf_{\m{B} \in \b{R}^{n \times n}, \ \m{B} \succeq \m{0}}  \b{E} \c{W}_{2}^{2}(\m{B}, \s{A}) 
    = - \sup_{\s{S} \in \c{G}(\s{A})} \vertii{\b{E} \s{S}}_{2}^{2} + \b{E} \vertii{\s{A}}_{1},
\end{equation*}
regardless of the existence of the barycenter. 
Additionally, it is well-known that the set of barycenters is convex due to the weak convexity \cite{panaretos2020invitation,alvarez2011uniqueness}:
\begin{align*}
    (1-t) \b{E} \c{W}_{2}^{2}(\s{A}, \hat{\m{A}}_{0}) + t \b{E} \c{W}_{2}^{2}(\s{A}, \hat{\m{A}}_{1}) &\ge \b{E} \c{W}_{2}^{2}(\s{A}, (1-t) N(0,\hat{\m{A}}_{0}) + t N(0,\hat{\m{A}}_{1})) \\
    &\ge \b{E} \c{W}_{2}^{2}(\s{A}, (1-t) \hat{\m{A}}_{0} + t \hat{\m{A}}_{1}), \quad t \in (0, 1).
\end{align*}
On the other hand, the set of optimal random Green's matrix is never closed under the convex combination, except for the trivial case $\s{G}_{0} = \s{G}_{1} \in \c{G}(\s{A})$:
\begin{align*}
    \vertii{(1-t) \s{G}_{0} + t \s{G}_{1}}_{2}^{2} = (1-t) \vertii{\s{G}_{0}}_{2}^{2} + t \vertii{\s{G}_{1}}_{2}^{2} - t(1-t) \vertii{\s{G}_{0} - \s{G}_{1}}_{2}^{2}, \quad t \in (0, 1).
\end{align*}
\end{remark}

The existence of a weighted barycenter is guaranteed when a random covariance matrix $\s{A}$ only takes finite values, say $\m{A}_{i}$ with weights $p_{i}$ for $i= 1, \cdots, m$. This is because the expected trace is finite, $\b{E} \vertiii{\s{A}}_{1} = \sum_{i=1}^{m} p_{i} \tr (\m{A}_{i}) < +\infty$, and the optimal random Gram matrix exists due to the compactness of $\b{R}^{n \times n}$. 

\begin{corollary}\label{cor:bary:injec}
Let $\m{A}_{1}, \cdots, \m{A}_{m}  \in \b{R}^{n \times n}$ be s.p.d. matrices, and $p_{1}, \cdots, p_{m} > 0$ be the weights with $p_{1} + \cdots + p_{m} = 1$. Let $\hat{\m{A}} \in \argmin_{\m{B} \succeq \m{0}}  \sum_{i=1}^{m} p_{i} \c{W}_{2}^{2}(\m{A}_{i}, \m{B})$ be a weighted barycenter. Then it holds that
\begin{align*}
    \c{R} (\m{A}_{i}) \cap \c{N}(\hat{\m{A}}) = \{0\} \quad \Leftrightarrow \quad \m{A}_{i}/\hat{\m{A}} = \m{0}, \quad i = 1, \cdots, m.
\end{align*}
In particular, $\rk(\hat{\m{A}}) \ge \max_{1 \le i \le m} \rk(\m{A}_{i})$, thus if at least one of $\m{A}_{1}, \cdots, \m{A}_{m}$ is injective, then $\hat{\m{A}}$ is also injective \cite{agueh2011barycenters}.
\end{corollary}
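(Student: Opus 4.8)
The plan for \cref{cor:bary:injec} is as follows. The equivalence $\c{R}(\m{A}_{i})\cap\c{N}(\hat{\m{A}})=\{\m{0}\}\Leftrightarrow\m{A}_{i}/\hat{\m{A}}=\m{0}$ is immediate: it is exactly the equivalence of conditions~3 and~5 in \cref{thm:spd:sol:sym} applied with source $\hat{\m{A}}$ and target $\m{A}_{i}$ (recall $\c{R}(\m{A}_{i})=\c{R}(\m{A}_{i}^{1/2})$). Hence the real task is to show $\c{R}(\m{A}_{i})\cap\c{N}(\hat{\m{A}})=\{\m{0}\}$ for every $i$; once that is in hand, the rest is bookkeeping, since a trivial intersection forces $\rk(\m{A}_{i})+(n-\rk(\hat{\m{A}}))=\dim[\c{R}(\m{A}_{i})]+\dim[\c{N}(\hat{\m{A}})]=\dim[\c{R}(\m{A}_{i})+\c{N}(\hat{\m{A}})]\le n$, i.e.\ $\rk(\hat{\m{A}})\ge\rk(\m{A}_{i})$, and $\rk(\m{A}_{i})=n$ then forces $\rk(\hat{\m{A}})=n$.

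I would prove $\c{R}(\m{A}_{i})\cap\c{N}(\hat{\m{A}})=\{\m{0}\}$ by contradiction, by perturbing the barycenter. Suppose a unit vector $v$ lies in $\c{R}(\m{A}_{i})\cap\c{N}(\hat{\m{A}})$, and take the competitor $\m{B}_{\epsilon}:=\hat{\m{A}}+\epsilon^{2}\,vv^{\top}\succeq\m{0}$, so that $\tr(\m{B}_{\epsilon})=\tr(\hat{\m{A}})+\epsilon^{2}$ and, with $\m{Z}_{j}:=\m{A}_{j}^{1/2}\hat{\m{A}}\m{A}_{j}^{1/2}$ and $u_{j}:=\m{A}_{j}^{1/2}v$, one has $\m{A}_{j}^{1/2}\m{B}_{\epsilon}\m{A}_{j}^{1/2}=\m{Z}_{j}+\epsilon^{2}u_{j}u_{j}^{\top}$ for each $j$. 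The key algebraic step, which I would carry out first, is that $u_{i}\notin\c{R}(\m{Z}_{i})$, equivalently $\m{\Pi}_{\c{N}(\m{Z}_{i})}u_{i}\neq\m{0}$. Indeed, $\m{Z}_{i}=(\m{A}_{i}^{1/2}\hat{\m{A}}^{1/2})(\m{A}_{i}^{1/2}\hat{\m{A}}^{1/2})^{*}$, so the Douglas lemma gives $\c{R}(\m{Z}_{i})=\c{R}(\m{A}_{i}^{1/2}\hat{\m{A}}^{1/2})$; if one had $\m{A}_{i}^{1/2}v=\m{A}_{i}^{1/2}\hat{\m{A}}^{1/2}y$ for some $y$, then $v-\hat{\m{A}}^{1/2}y\in\c{N}(\m{A}_{i}^{1/2})=\c{N}(\m{A}_{i})$, so $v\in\c{R}(\hat{\m{A}}^{1/2})+\c{N}(\m{A}_{i})$; pairing $v$ with such a decomposition and using $v\perp\c{R}(\hat{\m{A}}^{1/2})$ (because $v\in\c{N}(\hat{\m{A}})=\c{N}(\hat{\m{A}}^{1/2})$) together with $v\perp\c{N}(\m{A}_{i})$ (because $v\in\c{R}(\m{A}_{i})$) would force $\|v\|^{2}=0$, a contradiction.

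Next I would quantify the change in cost. Since $t\mapsto\sqrt{t}$ is operator monotone and $\m{Z}_{j}+\epsilon^{2}u_{j}u_{j}^{\top}\succeq\m{Z}_{j}$, each difference $\tr[(\m{Z}_{j}+\epsilon^{2}u_{j}u_{j}^{\top})^{1/2}]-\tr(\m{Z}_{j}^{1/2})$ is nonnegative; and for $j=i$, because $u_{i}$ has a nonzero component in $\c{N}(\m{Z}_{i})$, the rank-one positive update adds exactly one new positive eigenvalue to $\m{Z}_{i}$. Using the identity $\prod_{\lambda>0}\lambda(\m{Z}+ww^{\top})=\|\m{\Pi}_{\c{N}(\m{Z})}w\|^{2}\prod_{\lambda>0}\lambda(\m{Z})$ for $w$ with $\m{\Pi}_{\c{N}(\m{Z})}w\neq\m{0}$, together with continuity of the positive eigenvalues, this new eigenvalue equals $\epsilon^{2}\|\m{\Pi}_{\c{N}(\m{Z}_{i})}u_{i}\|^{2}(1+o(1))$, so $\tr[(\m{Z}_{i}+\epsilon^{2}u_{i}u_{i}^{\top})^{1/2}]-\tr(\m{Z}_{i}^{1/2})=\epsilon\,\|\m{\Pi}_{\c{N}(\m{Z}_{i})}u_{i}\|+o(\epsilon)$. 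Combining these, using $\sum_{j}p_{j}=1$ and the defining formula for $\c{W}_{2}^{2}$,
\begin{align*}
    \sum_{j=1}^{m}p_{j}\Big(\c{W}_{2}^{2}(\m{A}_{j},\m{B}_{\epsilon})-\c{W}_{2}^{2}(\m{A}_{j},\hat{\m{A}})\Big)
    &=\epsilon^{2}-2\sum_{j=1}^{m}p_{j}\Big(\tr[(\m{Z}_{j}+\epsilon^{2}u_{j}u_{j}^{\top})^{1/2}]-\tr(\m{Z}_{j}^{1/2})\Big)\\
    &\le\epsilon^{2}-2p_{i}\,\|\m{\Pi}_{\c{N}(\m{Z}_{i})}u_{i}\|\,\epsilon+o(\epsilon),
\end{align*}
which is strictly negative for all sufficiently small $\epsilon>0$ because $p_{i}>0$. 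This contradicts the minimality of $\hat{\m{A}}$, so $\c{R}(\m{A}_{i})\cap\c{N}(\hat{\m{A}})=\{\m{0}\}$, and the corollary follows from the first paragraph.

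The step I expect to demand the most care is the eigenvalue asymptotics: one must confirm that the rank-one positive perturbation $\epsilon^{2}u_{i}u_{i}^{\top}$, with $u_{i}$ not orthogonal to $\c{N}(\m{Z}_{i})$, genuinely produces a new eigenvalue of exact order $\epsilon^{2}$ (and not smaller), so that the increase of $\tr[(\cdot)^{1/2}]$ is of order $\epsilon$ and outweighs the $\epsilon^{2}$ increase in trace. This can be settled either through the determinant identity above or via first-order degenerate perturbation theory; the remaining ingredients --- the subspace computation of the second paragraph, operator monotonicity of $\tr[(\cdot)^{1/2}]$, and the defining optimality of the barycenter --- are routine.
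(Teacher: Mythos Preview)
Your argument is correct. The equivalence is indeed an instance of \cref{thm:spd:sol:sym} (conditions 3 and 5), and your perturbation analysis is sound: the key algebraic point that $u_{i}=\m{A}_{i}^{1/2}v\notin\c{R}(\m{Z}_{i})$ is established cleanly, the determinant identity $\det\big((\m{Z}_{i}+\epsilon^{2}u_{i}u_{i}^{\top})|_{V}\big)=\epsilon^{2}\|\m{\Pi}_{\c{N}(\m{Z}_{i})}u_{i}\|^{2}\prod_{k}\lambda_{k}$ on the $(r{+}1)$-dimensional invariant subspace $V=\c{R}(\m{Z}_{i})\oplus\spann(u_{i}^{\perp})$ is exactly right, and interlacing gives the $\epsilon$-scale gain in $\tr[(\cdot)^{1/2}]$ that beats the $\epsilon^{2}$ increase in trace.

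That said, the paper follows a genuinely different and shorter route, leveraging the Green's-matrix machinery of \cref{thm:char:barycenter}: the barycenter is written as $\hat{\m{A}}=\hat{\m{G}}\hat{\m{G}}^{*}$ with $\hat{\m{G}}=\sum_{j}p_{j}\m{G}_{j}$ for optimal $\m{G}_{j}\in\c{G}(\m{A}_{j})$. Optimality in $\m{G}_{1}$ alone forces $(\sum_{j\neq 1}p_{j}\m{G}_{j})^{*}\m{G}_{1}\succeq\m{0}$ (\cref{lem:opt:algn}); then for $\m{c}=\m{G}_{1}\m{d}\in\c{R}(\m{G}_{1})\cap\c{N}(\hat{\m{G}}^{*})$ one has $0=\m{d}^{*}\hat{\m{G}}^{*}\m{G}_{1}\m{d}\ge p_{1}\|\m{c}\|^{2}$, so $\m{c}=\m{0}$. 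Your approach is more self-contained --- it uses nothing beyond the variational definition of the barycenter and elementary eigenvalue perturbation --- and would transplant to other settings where the Green's-matrix characterization is unavailable. The paper's proof, by contrast, is essentially two lines once \cref{thm:char:barycenter} is in hand, and it highlights that the conclusion is really a consequence of proper alignment rather than a separate variational computation.
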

\begin{proof}[Proof of \cref{cor:bary:injec}]
Without loss of generality, let $i = 1$. Then,
due to \cref{thm:char:barycenter}, there exists an optimal Green matrices $\m{G}_{j} \in \c{G}(\m{A}_{j})$ for $j=1, \cdots, m$ such that $\hat{\m{G}} = \sum_{j=1}^{m} p_{j} \m{G}_{j} \in \c{G}(\m{A})$. It is trivial that $(\sum_{j=2}^{m} p_{j} \m{G}_{j} )^{*} \m{G}_{1} \succeq \m{0}$ since:
\begin{align*}
    \vertiii{\hat{\m{G}}}_{2}^{2} = \vertiii{\sum_{j=2}^{m} p_{j} \m{G}_{j} }_{2}^{2} + p_{1}^{2} \text{tr}(\m{A}_{1}) + 2 p_{1} \cdot \text{tr} [(\sum_{j=2}^{m} p_{j} \m{G}_{j} )^{*} \m{G}_{1}], \quad \m{G}_{1} \in \c{G}(\m{A}_{1}).
\end{align*}
Let $\m{c} \in \c{R} (\m{A}_{1}) \cap \c{N}(\hat{\m{A}}) = \c{R}(\m{G}_{1}) \cap \c{N}(\hat{\m{G}}^{*})$. Then, there exists some $\m{d} \in \b{R}^{n}$ such that $\m{c} = \m{G}_{1} \m{d}$ and
\begin{align*}
    0 = \m{d}^{*} \hat{\m{G}}^{*} \m{c} = \m{d}^{*} \hat{\m{G}}^{*} \m{G}_{1} \m{d} = \m{d}^{*} \left(\sum_{j=2}^{m} p_{j} \m{G}_{j} \right)^{*} \m{G}_{1} \m{d} + p_{1} \m{c}^{*} \m{c} \ge p_{1} \|\m{c}\|^{2} \quad \Rightarrow \quad \m{c} = \m{0}.
\end{align*}
Consequently, we get $\c{R} (\m{A}_{1}) \cap \c{N}(\hat{\m{A}}) = \{0\}$, i.e., $\m{A}_{1}/\hat{\m{A}} = \m{0}$ due to \cref{lem:Schur:op}.
\end{proof}

In the meantime, proper alignment is a weaker notion than optimality, hence \eqref{eq:knott:neces} cannot characterize the barycenters in general:

\begin{example}
Consider $\Omega = \{1, 2\}$, and $\b{P}({1}) = 1-t, \b{P}({2}) = t$ for some $t \in (0, 1)$, 
and let 
\begin{equation*}
    \m{A}_{1} = \begin{pmatrix}
    a_{1} & 0 & 0 \\
    0 & a_{2} & 0 \\
    0 & 0 & 0
    \end{pmatrix}, \quad \m{A}_{2} = \begin{pmatrix}
    0 & 0 & 0 \\
    0 & b_{2} & 0 \\
    0 & 0 & b_{3}
    \end{pmatrix}, \quad a_{1}, a_{2}, b_{2}, b_{3} > 0, \quad (1-t) \sqrt{a_{2}} = \sqrt{b_{2}}.
\end{equation*}
Then, one can check that $\m{G}_{1} = \mathrm{diag} (\sqrt{a_{1}}, \sqrt{a_{2}}, 0)$ and $\m{G}_{2} = \mathrm{diag} (0, - \sqrt{b_{2}}, \sqrt{b_{3}})$ are properly aligned while $\m{G}_{1}^{*} \m{G}_{2} \succeq \m{0}$ is violated, thus they are not optimal. Accordingly, the induced covariance $\hat{\m{A}} = \mathrm{diag} ((1-t)^{2} a_{1}, 0, t^{2} b_{3})$, is not a barycenter while satisfying \eqref{eq:knott:neces}.
\end{example}

\begin{corollary}\label{cor:ortho:Green's}
Let $\m{A}_{1}, \cdots, \m{A}_{m}  \in \b{R}^{n \times n}$ be s.p.d. matrices, and $p_{1}, \cdots, p_{m} > 0$ be the weights with $p_{1} + \cdots + p_{m} = 1$. If $\m{A}_{i} \m{A}_{j} = \m{0}$ for any $1 \le i < j \le m$, then the set of weighted Wasserstein barycenters are given by
\begin{equation*}
    \argmin_{\m{B} \in \b{R}^{n \times n}, \ \m{B} \succeq \m{0}}  \sum_{i=1}^{m} p_{i} \c{W}_{2}^{2}(\m{A}_{i}, \m{B}) = \left\{ \hat{\m{G}} \hat{\m{G}}^{*} : \hat{\m{G}} = \sum_{i=1}^{m} p_{i} \m{G}_{i}, \, \m{G}_{i} \in \c{G}(\m{A}_{i}), \, i =1, \cdots, m \right\}.
\end{equation*}
\end{corollary}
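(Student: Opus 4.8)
The plan is to deduce this corollary directly from \cref{thm:char:barycenter} by showing that, under the orthogonality hypothesis, \emph{every} random Green's matrix is optimal. Since here $\Omega=\{1,\dots,m\}$ with $\b{P}(\{i\})=p_i>0$, a random Green's matrix $\s{G}\in\c{G}(\s{A})$ is exactly an arbitrary choice $\m{G}_i\in\c{G}(\m{A}_i)$ for each $i$, and its expectation is $\b{E}\s{G}=\sum_{i=1}^m p_i\m{G}_i=:\hat{\m{G}}$. So \cref{thm:char:barycenter}(1) already says that $\hat{\m{A}}$ is a barycenter if and only if $\hat{\m{A}}=\hat{\m{G}}\hat{\m{G}}^{*}$ for some $\hat{\m{G}}=\sum_i p_i\m{G}_i$ with $\m{G}_i\in\c{G}(\m{A}_i)$ \emph{that is optimal}; the whole task is to remove the word ``optimal''.

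To do that I would first observe that $\m{A}_i\m{A}_j=\m{0}$ for $i\neq j$ (symmetry makes the relation hold in both orders) forces $\c{R}(\m{A}_j)\subseteq\c{N}(\m{A}_i)=\c{R}(\m{A}_i)^{\perp}$, i.e.\ the ranges of $\m{A}_1,\dots,\m{A}_m$ are mutually orthogonal. Since any $\m{G}_i\in\c{G}(\m{A}_i)$ has $\c{R}(\m{G}_i)=\c{R}(\m{A}_i^{1/2})=\c{R}(\m{A}_i)$ by the Douglas lemma, the ranges $\c{R}(\m{G}_i)$ are mutually orthogonal as well. Expanding the Hilbert--Schmidt norm in an ONB $\{e_l\}$ then gives $\tr(\m{G}_i^{*}\m{G}_j)=\sum_l\langle \m{G}_ie_l,\m{G}_je_l\rangle=0$ for $i\neq j$, hence
\begin{equation*}
    \vertii{\hat{\m{G}}}_{2}^{2}=\sum_{i,j}p_ip_j\tr(\m{G}_i^{*}\m{G}_j)=\sum_{i=1}^m p_i^{2}\,\tr(\m{A}_i),
\end{equation*}
which is independent of the choices $\m{G}_i$. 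Consequently $\vertii{\b{E}\s{G}}_2$ is the same for all $\s{G}\in\c{G}(\s{A})$, so every random Green's matrix attains the maximum in \cref{def:opt:aligned} and is optimal.

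Both inclusions now follow immediately: if $\hat{\m{A}}$ is a barycenter, \cref{thm:char:barycenter}(1) writes it as $(\b{E}\s{G})(\b{E}\s{G})^{*}=\hat{\m{G}}\hat{\m{G}}^{*}$ with $\hat{\m{G}}=\sum_i p_i\m{G}_i$, which is of the asserted form; conversely, for any $\m{G}_i\in\c{G}(\m{A}_i)$ the associated $\s{G}$ is optimal by the previous step, so $\hat{\m{G}}\hat{\m{G}}^{*}$ is a barycenter, again by \cref{thm:char:barycenter}(1). (Both sides are nonempty, e.g.\ using Cholesky factors for the $\m{G}_i$.) I do not expect a real obstacle here: the one point that makes the proof work is recognizing that orthogonality of the ranges kills the cross terms $\tr(\m{G}_i^{*}\m{G}_j)$, rendering $\vertii{\b{E}\s{G}}_2$ constant and hence the optimality constraint vacuous, after which the claim is just \cref{thm:char:barycenter}(1) transcribed to the finite, discrete setting.
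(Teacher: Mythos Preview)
Your proposal is correct and follows essentially the same route as the paper: both show that $\m{A}_i\m{A}_j=\m{0}$ forces $\c{R}(\m{G}_j)\subset\c{N}(\m{G}_i^{*})$ for any Green's matrices, hence $\m{G}_i^{*}\m{G}_j=\m{0}$ (or, as you phrase it, $\tr(\m{G}_i^{*}\m{G}_j)=0$), which makes $\vertii{\hat{\m{G}}}_2$ constant over all choices and renders every random Green's matrix optimal, after which \cref{thm:char:barycenter} applies directly. Your write-up is in fact slightly more explicit than the paper's, which jumps from $\m{G}_j^{*}\m{G}_i=\m{0}$ straight to ``optimal'' without spelling out the constant-norm observation.
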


\begin{proof}[Proof of \cref{cor:ortho:Green's}]
Let $\{\m{G}_{i} \in \c{G}(\m{A}_{i}): i = 1, \cdots, m\}$ be an arbitrary collection of Green's matrices. Then, for any $1 \le i \neq j \le m$, we have
\begin{align*}
    \m{A}_{j} \m{A}_{i} = \m{0}
    \quad &\Leftrightarrow \quad \m{A}_{i} \m{A}_{j} = \m{0} \quad \Leftrightarrow \quad \c{R}(\m{A}_{j})\subset \c{N}(\m{A}_{i}) \quad \Leftrightarrow \quad \overline{\c{R}(\m{A}_{j})} \subset \c{N}(\m{A}_{i}) \\
    &\Leftrightarrow \quad \c{R}(\m{G}_{j}) = \c{R}(\m{A}_{j}^{1/2}) \subset \c{N}(\m{A}_{i}^{1/2}) = \c{N}(\m{G}_{i}^{*}) \quad \Leftrightarrow \quad \m{G}_{j}^{*} \m{G}_{i} = \m{0},
\end{align*}
i.e., $\{\m{G}_{i} \in \c{G}(\m{A}_{i}): i = 1, \cdots, m\}$ is optimal. Then, the results follow immediately from \cref{thm:char:barycenter}.
\end{proof}

More generally, if the covariance matrices can be grouped into several subspaces over which they act on, we can compute the barycenter by computing sub-barycenters, and we refer to \cref{cor:hier:bary} in the appendix.

\begin{example}\label{ex:midpoint:simple}
Recall \cref{ex:rk:goes:up}, where $\m{A} = \diag (1, 0)$ and $\m{B} = \diag (0, 1)$.
Since $\m{A} \m{B} = \m{0}$, we can take any Green's matrices to construct a weighted barycenter between $\m{A}$ and $\m{B}$ due to \cref{cor:ortho:Green's}. We may fix $\m{G} = \m{A}^{1/2} \in \c{G}(\m{A})$ due to right unitary invariance. Let $\m{M} = \m{B}^{1/2} \m{U} \in \c{G}(\m{B})$ where
\begin{equation*}
    \m{U} := \begin{pmatrix}
        c & \pm s \\
        s & \mp c
    \end{pmatrix} \in \mathrm{O}(2), \quad c^{2} + s^{2} = 1.
\end{equation*}
If we denote $\m{G}_{t} = (1-t) \m{G} + t \m{M}$, we get
\begin{align*}
    \m{G}_{t} \m{G}_{t}^{*} = \begin{pmatrix}
        (1-t) & 0 \\
        t s & \mp t c
    \end{pmatrix} \begin{pmatrix}
        (1-t) & t s \\
        0 & \mp t c
    \end{pmatrix} = \begin{pmatrix}
        (1-t)^{2} & t(1-t)s \\
        t(1-t)s & t^{2}
    \end{pmatrix},
\end{align*}
hence 
\begin{align*}
    \argmin_{\m{C} \in \b{R}^{n \times n}, \ \m{C} \succeq \m{0}}  (1-t) \c{W}_{2}^{2}(\m{A}, \m{C}) + t \c{W}_{2}^{2}(\m{B}, \m{C}) = \left\{ \begin{pmatrix}
        (1-t)^{2} & t(1-t)s \\
        t(1-t)s & t^{2}
    \end{pmatrix} : |s| \le 1 \right\},
\end{align*}
which is the convex hull of two McCann's interpolants $\m{\Gamma}_{t}^{+}, \m{\Gamma}_{t}^{-}$ at time $t \in (0, 1)$ in \cref{ex:rk:goes:up}. We will see in \cref{ssec:kanto} that this is not a coincidence.
\end{example}

\subsection{Multicoupling Problem}\label{ssec:multi:trans}
We generalize the multi-marginal optimal transport problem \cite{panaretos2020invitation, agueh2011barycenters} into the infinite version. 

\begin{definition}[Random Kernel]
Let $\s{A}: (\Omega, \s{F}, \b{P}) \to \b{R}^{n \times n}$ be a random covariance matrix with $\b{E} \vertiii{\s{A}}_{1} < +\infty$. A random matrix-valued kernel \cite{paulsen2016introduction} for $\s{A}$ is a measurable function $\s{A}^{\times}: \Omega \times \Omega \to \b{R}^{n \times n}$ that satisfies:
\begin{itemize}
\item (Diagonal Constraint) For all $\omega \in \Omega$, $\s{A}^{\times}(\omega, \omega) = \s{A}(\omega)$.
\item (Symmetry) For all $\omega, \omega' \in \Omega$, $\s{A}^{\times}(\omega, \omega') = \s{A}^{\times}(\omega', \omega)$.
\item (s.p.d.) For any $\omega_{1}, \cdots, \omega_{m} \in \Omega$ and $\m{v}_{1}, \cdots, \m{v}_{m} \in \b{R}^{n}$, we have $\sum_{i, j = 1}^{m} \innpr{\m{v}_{i}}{\s{A}^{\times}(\omega_{i}, \omega_{j}) \m{v}_{j}} \ge 0$. 
\end{itemize}
We denote the space of all such measurable kernels by $\c{K}^{\times}(\s{A})$.
\end{definition}

Note that for any $\s{A}^{\times} \in \c{K}^{\times}(\s{A})$, its expectation in the product measure space
\begin{align*}
    \widehat{\s{A}^{\times}} := \int_{\Omega \times \Omega} \s{A}^{\times}(\omega, \omega') \rd \b{P}(\omega) \b{P}(\omega') \succeq \m{0},
\end{align*}
is well-defined and s.p.d. through limiting arguments. Additionally, observe that
\begin{align*}
    \vertii{\widehat{\s{A}^{\times}}}_{1} = \sum_{k=1}^{n} \innpr{\m{e}_{k}}{\widehat{\s{A}^{\times}} \m{e}_{k}} &\le \sum_{k=1}^{n} \int_{\Omega \times \Omega} \left( \innpr{\m{e}_{k}}{\s{A}(\omega) \m{e}_{k}} \innpr{\m{e}_{k}}{\s{A}(\omega') \m{e}_{k}} \right)^{1/2} \rd \b{P}(\omega) \b{P}(\omega') \\
    &= \sum_{k=1}^{n} \left( \int_{\Omega} \innpr{\m{e}_{k}}{\s{A}(\omega) \m{e}_{k}}^{1/2} \right)^{2} \le \b{E} \vertii{\s{A}}_{1}.
\end{align*}
Our goal is then to maximize its trace: $\sup \{\vertii{\widehat{\s{A}^{\times}}}_{1} : \s{A}^{\times} \in \c{K}^{\times}(\s{A}) \}$. From our construction, the correspondence between the barycenter problem and the multicoupling problem becomes clear.

\begin{theorem}\label{thm:multi:trans}
Let $\s{A}: (\Omega, \s{F}, \b{P}) \to \b{R}^{n \times n}$ be a random covariance matrix with $\b{E} \vertiii{\s{A}}_{1} < +\infty$. Then, 
\begin{align*}
    \s{A}^{\times} \in \argmax_{\s{B}^{\times} \in \c{K}^{\times}(\s{A})} \vertii{\widehat{\s{B}^{\times}}}_{1},
\end{align*}
if and only if the random matrix-valued kernel $\s{A}^{\times}: \Omega \times \Omega \to \b{R}^{n \times n}$ satisfies
\begin{align*}
    \int_{\Omega} \s{A}^{\times}(\omega, \omega') \rd \b{P}(\omega') = \int_{\Omega} \s{G}(\omega)  \s{G}(\omega')^{*} \rd \b{P}(\omega') \in \b{R}^{n \times n}, \quad \b{P}-\text{a.s.},
\end{align*}
for some optimal random Green's matrix $\s{G} \in \c{G}(\s{A})$. Consequently, if the optimal random Green's matrix does not exist, neither does the barycenter. If exists, $\s{A}^{\times}(\omega, \omega') = \s{G}(\omega)  \s{G}(\omega')^{*}$ is an optimal random kernel.
\end{theorem}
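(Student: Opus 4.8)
The plan is to reduce the multicoupling problem to the Wasserstein barycenter problem already solved in \cref{thm:char:barycenter,rmk:inf:bary:loss}, via the observation that the ``partial average'' of a random kernel behaves exactly like a cross-covariance with the doubly-averaged operator. Given $\s{A}^{\times}\in\c{K}^{\times}(\s{A})$, write $\hat{\m{A}}^{\times}:=\widehat{\s{A}^{\times}}\succeq\m{0}$ and $\m{N}(\omega):=\int_{\Omega}\s{A}^{\times}(\omega,\omega')\rd\b{P}(\omega')$; the $m=2$ positivity bound $\vertiii{\s{A}^{\times}(\omega,\omega')}_{\infty}\le(\vertiii{\s{A}(\omega)}_{\infty}\vertiii{\s{A}(\omega')}_{\infty})^{1/2}$ shows $\m{N}$ is measurable, Bochner-integrable, and $\b{E}\m{N}=\hat{\m{A}}^{\times}$. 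The first step is to show that the block operator $\begin{pmatrix}\hat{\m{A}}^{\times}&\m{N}(\omega)^{*}\\\m{N}(\omega)&\s{A}(\omega)\end{pmatrix}$ is s.p.d.\ for $\b{P}$-a.e.\ $\omega$. This follows by testing the positivity axiom on a sample $\omega_{1},\dots,\omega_{m}$ drawn i.i.d.\ from $\b{P}$ together with the extra point $\omega$, using the vectors $\tfrac{1}{m}u$ at the $\omega_{k}$ and $v$ at $\omega$, then taking expectations and letting $m\to\infty$ (the symmetry axiom makes the cross terms combine correctly, and the $m=2$ bound controls integrability). By the Douglas lemma and the standard trace inequality for s.p.d.\ block operators, this yields $\tr\m{N}(\omega)\le\tr[((\hat{\m{A}}^{\times})^{1/2}\s{A}(\omega)(\hat{\m{A}}^{\times})^{1/2})^{1/2}]$ for a.e.\ $\omega$.

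Integrating this bound gives $\tr\hat{\m{A}}^{\times}=\b{E}\tr\m{N}(\omega)\le\b{E}\tr[((\hat{\m{A}}^{\times})^{1/2}\s{A}(\omega)(\hat{\m{A}}^{\times})^{1/2})^{1/2}]$, which rearranges to $\b{E}\c{W}_{2}^{2}(\hat{\m{A}}^{\times},\s{A})\le\b{E}\vertii{\s{A}}_{1}-\vertii{\hat{\m{A}}^{\times}}_{1}$. Combining with the Fr\'echet-variance identity of \cref{rmk:inf:bary:loss}, namely $\inf_{\m{B}\succeq\m{0}}\b{E}\c{W}_{2}^{2}(\m{B},\s{A})=\b{E}\vertii{\s{A}}_{1}-\sup_{\s{S}\in\c{G}(\s{A})}\vertii{\b{E}\s{S}}_{2}^{2}$, we obtain $\vertii{\hat{\m{A}}^{\times}}_{1}\le\sup_{\s{S}\in\c{G}(\s{A})}\vertii{\b{E}\s{S}}_{2}^{2}$, with equality if and only if $\hat{\m{A}}^{\times}$ is a Wasserstein barycenter of $\s{A}$. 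For the reverse inequality, observe that for any $\s{G}\in\c{G}(\s{A})$ the kernel $\s{A}^{\times}_{\s{G}}(\omega,\omega'):=\s{G}(\omega)\s{G}(\omega')^{*}$ lies in $\c{K}^{\times}(\s{A})$: the diagonal and positivity axioms are immediate, and the symmetry axiom holds automatically since $\s{A}^{\times}_{\s{G}}(\omega',\omega)^{*}=(\s{G}(\omega')\s{G}(\omega)^{*})^{*}=\s{G}(\omega)\s{G}(\omega')^{*}$; moreover $\widehat{\s{A}^{\times}_{\s{G}}}=(\b{E}\s{G})(\b{E}\s{G})^{*}$, so $\vertii{\widehat{\s{A}^{\times}_{\s{G}}}}_{1}=\vertii{\b{E}\s{G}}_{2}^{2}$. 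Hence $\sup_{\s{B}^{\times}\in\c{K}^{\times}(\s{A})}\vertii{\widehat{\s{B}^{\times}}}_{1}=\sup_{\s{S}\in\c{G}(\s{A})}\vertii{\b{E}\s{S}}_{2}^{2}$. By \cref{thm:char:barycenter} this common supremum is attained precisely when an optimal random Green's matrix exists, and if none exists then no $\s{A}^{\times}$ attains it either, since a maximizer $\s{A}^{\times}$ would make $\hat{\m{A}}^{\times}$ a barycenter, contradicting \cref{thm:char:barycenter}; this is the stated consequence that absent an optimal random Green's matrix there is no maximizer, and when an optimal $\s{G}$ exists, $\s{A}^{\times}_{\s{G}}$ is a maximizer.

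It remains to characterize the maximizers. One direction is immediate: if $\s{A}^{\times}\in\c{K}^{\times}(\s{A})$ satisfies $\m{N}(\omega)=\s{G}(\omega)(\b{E}\s{G})^{*}$ a.e.\ for an optimal $\s{G}$, then $\hat{\m{A}}^{\times}=\b{E}\m{N}=(\b{E}\s{G})(\b{E}\s{G})^{*}$, whose trace equals $\vertii{\b{E}\s{G}}_{2}^{2}=\sup$, so $\s{A}^{\times}$ is a maximizer. For the converse, let $\s{A}^{\times}$ be a maximizer; then $\hat{\m{A}}:=\hat{\m{A}}^{\times}$ is a barycenter, and chasing equality through the previous paragraph forces $2\b{E}\tr[(\hat{\m{A}}^{1/2}\s{A}(\omega)\hat{\m{A}}^{1/2})^{1/2}]=2\tr\hat{\m{A}}$, hence equality a.e.\ in the pointwise trace bound of the first paragraph. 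Using that $\s{A}(\omega)/\hat{\m{A}}=\m{0}$ a.e.\ for a barycenter --- equivalently $\c{R}(\s{A}(\omega))\cap\c{N}(\hat{\m{A}})=\{\m{0}\}$ by \cref{cor:bary:injec} and \cref{lem:Schur:op}, which forces $\c{R}(\s{A}(\omega)^{1/2})=\c{R}(\s{A}(\omega)^{1/2}\hat{\m{A}}^{1/2})$ --- the equality case in the Douglas factorization of the block operator leaves no free component in the associated contraction and pins down $\m{N}(\omega)$ uniquely in the form $\m{N}(\omega)=\s{A}(\omega)^{1/2}\m{V}(\omega)\hat{\m{A}}^{1/2}$, where $\m{V}(\omega)$ is the partial isometry in the polar decomposition of $\s{A}(\omega)^{1/2}\hat{\m{A}}^{1/2}$. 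Setting $\s{G}(\omega):=\m{N}(\omega)\hat{\m{A}}^{\dagger/2}$ one verifies directly, using the range identity above, that $\s{G}(\omega)\s{G}(\omega)^{*}=\s{A}(\omega)^{1/2}\m{V}(\omega)\m{\Pi}_{\c{R}(\hat{\m{A}})}\m{V}(\omega)^{*}\s{A}(\omega)^{1/2}=\s{A}(\omega)$ and $\b{E}\s{G}=\hat{\m{A}}\hat{\m{A}}^{\dagger/2}=\hat{\m{A}}^{1/2}$, so $\s{G}\in\c{G}(\s{A})$ is optimal and $\m{N}(\omega)=\s{G}(\omega)\hat{\m{A}}^{1/2}=\s{G}(\omega)(\b{E}\s{G})^{*}$. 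Finally, for any optimal $\s{G}$ the kernel $\s{A}^{\times}(\omega,\omega')=\s{G}(\omega)\s{G}(\omega')^{*}$ lies in $\c{K}^{\times}(\s{A})$ by the second paragraph, with $\widehat{\s{A}^{\times}}=(\b{E}\s{G})(\b{E}\s{G})^{*}$ equal to the barycenter, hence maximal trace --- yielding the asserted optimal random kernel.

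The two delicate points are the passage in the first paragraph from the finitely-supported positivity axiom to the integrated $2\times2$ block inequality (a routine but careful limiting and integrability argument, especially for non-atomic $\Omega$), and, in the last step, the rigidity of the equality case: I expect the main obstacle to be verifying that the equality case of the trace bound, together with the barycenter property $\s{A}(\omega)/\hat{\m{A}}=\m{0}$, genuinely forces $\m{N}(\omega)$ to coincide with the Gaussian optimal cross-covariance, and extending the finite-$\Omega$ statement $\c{R}(\s{A}(\omega))\cap\c{N}(\hat{\m{A}})=\{\m{0}\}$ of \cref{cor:bary:injec} to the general probability space used here.
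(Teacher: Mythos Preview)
Your proposal follows essentially the same route as the paper's proof: both establish the $2\times 2$ block positivity of $\begin{pmatrix}\s{A}(\omega)&\m{N}(\omega)\\\m{N}(\omega)^{*}&\widehat{\s{A}^{\times}}\end{pmatrix}$, deduce $L^{\times}(\s{A}^{\times})\ge\b{E}\c{W}_{2}^{2}(\widehat{\s{A}^{\times}},\s{A})$, match it against the Fr\'echet variance via Green's-matrix kernels, and then chase equality for a maximizer to pin down $\m{N}(\omega)$.

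The one genuine difference is in the extraction of $\s{G}$ from a maximizer. The paper constructs $\s{G}$ \emph{first}, via \cref{lem:opt:algn}, as the (measurable) solution of $(\widehat{\s{A}^{\times}})^{1/2}\s{G}(\omega)=[(\widehat{\s{A}^{\times}})^{1/2}\s{A}(\omega)(\widehat{\s{A}^{\times}})^{1/2}]^{1/2}$, shows this $\s{G}$ is optimal (via the proof of \cref{thm:char:barycenter}), and then reads off $\m{N}(\omega)=\s{G}(\omega)(\b{E}\s{G})^{*}$ from the pointwise equality in \eqref{eq:rand:ker:opt2}. You instead read $\s{G}(\omega):=\m{N}(\omega)\hat{\m{A}}^{\dagger/2}$ off the cross-covariance directly. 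Both routes ultimately need uniqueness of the optimal cross-covariance between $\hat{\m{A}}$ and $\s{A}(\omega)$ --- equivalently $\s{A}(\omega)/\hat{\m{A}}=\m{0}$ a.e.\ --- to close the argument; the paper leaves this implicit, whereas you make it explicit via \cref{cor:bary:injec}. Your worry about extending \cref{cor:bary:injec} to non-atomic $\Omega$ is legitimate (the paper's proof faces the same issue but does not address it); the paper's construction of $\s{G}$ via \cref{lem:opt:algn} has the minor advantage that measurability is automatic, which your polar-decomposition approach would still need to verify.
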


\begin{proof}[Proof of \cref{thm:multi:trans}]
For $\s{A}^{\times} \in \c{K}^{\times}(\s{A})$, consider the following risk:
\begin{align*}
    L^{\times}(\s{A}^{\times}) := \frac{1}{2} \int_{\Omega \times \Omega} \tr [\s{A}(\omega) + \s{A}(\omega') - 2\s{A}^{\times}(\omega, \omega')] \rd \b{P}(\omega) \b{P}(\omega') = - \vertii{\widehat{\s{A}^{\times}}}_{1} + \b{E} \vertii{\s{A}}_{1}  \ge 0.
\end{align*}

We claim that
\begin{align*}
    \inf_{\s{B}^{\times} \in \c{K}^{\times}(\s{A})} L^{\times}(\s{B}^{\times}) = \inf_{\m{B} \in \b{R}^{n \times n}, \ \m{B} \succeq \m{0}}  \b{E} \c{W}_{2}^{2}(\m{B}, \s{A}).
\end{align*}

\begin{itemize}[leftmargin = *]
\item [($\ge$)] Given $\s{B}^{\times} \in \c{K}^{\times}(\s{A})$, denote $\tilde{\s{B}}^{\times}(\omega) := \int_{\Omega} \s{B}^{\times}(\omega, \omega') \rd \b{P}(\omega') \in \b{R}^{n \times n}$ for $\omega \in \Omega$, then the block matrix
\begin{align*}
    \begin{pmatrix}
    \s{A}(\omega) & \tilde{\s{B}}^{\times}(\omega) \\
    [\tilde{\s{B}}^{\times}(\omega)]^{*} & \widehat{\s{B}}^{\times}
    \end{pmatrix} \succeq \m{0},
\end{align*}
hence,
\begin{align*}
    L^{\times}(\s{B}^{\times}) = \int_{\Omega} \tr[\s{A}(\omega) - 2 \tilde{\s{B}}^{\times}(\omega) + \widehat{\s{B}}^{\times}] \rd \b{P}(\omega) \ge \int_{\Omega} \c{W}_{2}^{2}(\widehat{\s{B}}^{\times}, \s{A}(\omega)) \rd \b{P}(\omega) = \b{E} \c{W}_{2}^{2}(\widehat{\s{B}}^{\times}, \s{A}).
\end{align*}
Therefore,
\begin{align*}
    \inf_{\s{B}^{\times} \in \c{K}^{\times}(\s{A})} L^{\times}(\s{B}^{\times}) \ge \inf_{\s{B}^{\times} \in \c{K}^{\times}(\s{A})} \b{E} \c{W}_{2}^{2}(\widehat{\s{B}}^{\times}, \s{A}) \ge \inf_{\m{B} \in \b{R}^{n \times n}, \ \m{B} \succeq \m{0}}  \b{E} \c{W}_{2}^{2}(\m{B}, \s{A}).
\end{align*}
\item [($\le$)] Let $\{\s{S}_{k} \in \c{G}(\s{A}) : k \in \b{N}\}$ be a sequence of random Green's matrices such that
\begin{align*}
    \lim_{k \to \infty} \vertii{\b{E} \s{S}_{k}}_{2}^{2} = \sup_{\s{S} \in \c{G}(\s{A})} \vertii{\b{E} \s{S}}_{2}.
\end{align*}
Accordingly, define the sequence of random kernels $\s{B}_{k}^{\times} \in \c{K}^{\times}(\s{A})$ by $\s{B}_{k}^{\times}(\omega, \omega') = \s{S}_{k}(\omega) \s{S}_{k}(\omega')^{*}$. Then, we have $\widehat{\s{B}^{\times}} = [\b{E} \s{S}_{k}] [\b{E} \s{S}_{k}]^{*}$, hence
\begin{align*}
    \lim_{k \to \infty} L^{\times}(\s{B}^{\times}) = - \lim_{k \to \infty} \vertii{\b{E} \s{S}_{k}}_{2}^{2} + \b{E} \vertii{\s{A}}_{1} = \inf_{\m{B} \in \b{R}^{n \times n}, \ \m{B} \succeq \m{0}}  \b{E} \c{W}_{2}^{2}(\m{B}, \s{A}),
\end{align*}
due to \cref{rmk:inf:bary:loss}. Therefore,
\begin{align*}
    \inf_{\s{B}^{\times} \in \c{K}^{\times}(\s{A})} L^{\times}(\s{B}^{\times}) \le \lim_{k \to \infty} L^{\times}(\s{B}^{\times}) = \inf_{\m{B} \in \b{R}^{n \times n}, \ \m{B} \succeq \m{0}}  \b{E} \c{W}_{2}^{2}(\m{B}, \s{A}).         
\end{align*}
\end{itemize}

Consider a solution to
\begin{align*}
    \s{A}^{\times} \in \argmax_{\s{B}^{\times} \in \c{K}^{\times}(\s{A})} \vertii{\widehat{\s{B}^{\times}}}_{1} = \argmin_{\s{B}^{\times} \in \c{K}^{\times}(\s{A})} L^{\times}(\s{B}^{\times}).
\end{align*}
Because of \cref{lem:opt:algn}, there exists some $\s{G} \in \c{G}(\s{A})$ such that $\c{W}_{2}^{2}(\widehat{\s{A}}^{\times}, \s{A}(\omega)) = \vertii{(\widehat{\s{A}}^{\times})^{1/2} - \s{G}(\omega)}_{2}^{2}$ for all $\omega \in \Omega$. Then, 
\begin{align*}
    \inf_{\m{G} \in \b{R}^{n \times n}} L(\m{G}) = L^{\times}(\s{A}^{\times}) \ge \int_{\Omega} \c{W}_{2}^{2}(\widehat{\s{A}}^{\times}, \s{A}(\omega)) \rd \b{P}(\omega) = \b{E} \c{W}_{2}^{2}(\widehat{\s{A}}^{\times}, \s{A}) \ge \inf_{\m{G} \in \b{R}^{n \times n}} L(\m{G}),
\end{align*}
so we have
\begin{align}
    &\b{E} \c{W}_{2}^{2}(\widehat{\s{A}}^{\times}, \s{A}) = \inf_{\m{G} \in \b{R}^{n \times n}} L(\m{G}), \label{eq:rand:ker:opt1} \\
    &\tr[\s{A}(\omega) - 2 \tilde{\s{A}}^{\times}(\omega) + \widehat{\s{A}}^{\times}] = \vertii{(\widehat{\s{A}}^{\times})^{1/2} - \s{G}(\omega)}_{2}^{2}, \quad \b{P}-\text{a.s.}. \label{eq:rand:ker:opt2}
\end{align}
Then, from the proof of \cref{thm:char:barycenter}, \eqref{eq:rand:ker:opt1} yields that $\s{G} \in \c{G}(\s{A})$ is an optimal random Green's matrix and $(\widehat{\s{A}}^{\times})^{1/2} = \b{E} \s{G}$. Consequently, \eqref{eq:rand:ker:opt2} yields that $\tilde{\s{A}}^{\times}(\omega) = \s{G}(\omega) (\b{E} \s{G})^{*}$ holds $\b{P}$-a.s.,
or equivalently,
\begin{align*}
    \int_{\Omega} \s{A}^{\times}(\omega, \omega') \rd \b{P}(\omega') = \int_{\Omega} \s{G}(\omega)  \s{G}(\omega')^{*} \rd \b{P}(\omega'), \quad \b{P}-\text{a.s.}
\end{align*}
\end{proof}

\subsection{Kantorovich Problem}\label{ssec:kanto}
Consider $\Omega = \{0, 1\}$ with $\b{P}({0}) = 1-t, \b{P}({1}) = t$ for some $t \in  [0, 1]$, so the weighted barycenter is defined as
\begin{equation*}
    \m{\Gamma}_{t} := \argmin_{\m{C} \in \b{R}^{n \times n}, \ \m{C} \succeq \m{0}}  (1-t) \c{W}_{2}^{2}(\m{A}, \m{C}) + t \c{W}_{2}^{2}(\m{B}, \m{C}). 
\end{equation*}
Obviously, $\m{\Gamma}_{0} = \m{A}$ and $\m{\Gamma}_{1} = \m{B}$. Because $\vertii{\m{G}_{t}}_{2}^{2} = (1-t)^{2} \tr(\m{A}) + 2 t(1-t) \tr(\m{G}^{*} \m{M}) + t^{2} \tr(\m{B})$, the optimality condition for Green's matrices reduces to:
\begin{align}\label{eq:bary:char:n2}
    \m{G} \in \s{G}(\m{A}), \ \m{M} \in \s{G}(\m{B}), \ \m{G}^{*} \m{M} \succeq \m{0},
\end{align}
which does not depend on $t \in [0, 1]$.
The set of weighted barycenters is determined by
\begin{align}\label{eq:Kanto:geo}
    \c{K}_{t}(\m{A}, \m{B}) := \left\{ \m{\Gamma}_{t} = \m{G}_{t} \m{G}_{t}^{*} : \m{G}_{t} = (1-t) \m{G} + t \m{M}, \, \m{M} \in \s{G}(\m{B}), \ \m{G}^{*} \m{M} = (\m{G}^{*} \m{B} \m{G})^{1/2} \right\},
\end{align}
which is independent on the choice of $\m{G} \in \s{G}(\m{A})$ due to right unitary invariance, so one may fix it. When a Monge solution exists (i.e., $\m{A} \to \m{B}$ with an OT matrix $\m{T}$), setting $\m{M} = \m{T} \m{G}$ provides a valid Kantorovich solution, and the barycenter $\m{\Gamma}_{t}$ in \eqref{eq:Kanto:geo} recovers McCann's interpolant. From \cref{thm:multi:trans}, we note that a coupling of $N(\m{0}, \m{A})$ and $N(\m{0}, \m{B})$ is optimal for the Kantorovich problem if and only if its covariance matrix has the form
\begin{align}\label{eq:Kanto:block}
    \begin{pmatrix}
    \m{A} & \m{G} \m{M}^{*} \\
    \m{M} \m{G}^{*} & \m{B}
    \end{pmatrix} \in \b{R}^{2n \times 2n},
\end{align}
where the Green's matrix $\m{M} \in \b{R}^{n \times n}$ is a solution to \eqref{eq:bary:char:n2}.

\begin{theorem}[Kantorovich Problem]\label{thm:Kanto:char}
Let $\m{A}, \m{B}  \in \b{R}^{n \times n}$ be s.p.d. matrices with $\rk(\m{A}) \geq \rk(\m{B})$. We decompose $\m{A}, \m{B}$ in the block form as in 
\cref{thm:sol:sym}, and let
\begin{align*}
    \m{G} =
    \begin{pmatrix}
    \m{G}_{11} & \m{0} \\
    \m{0} & \m{0}
    \end{pmatrix}
    \    :
    \begin{array}{c}
    \c{H}_{1} \\
    \oplus \\
    \c{H}_{2}
    \end{array}
    \rightarrow
    \begin{array}{c}
    \c{H}_{1} \\
    \oplus \\
    \c{H}_{2}
    \end{array}, \quad \m{G}_{11} \in \c{G}(\m{A}_{11}).
\end{align*}
Then, the set of all matrices $\m{M} \in \s{G}(\m{B})$ satisfying \eqref{eq:bary:char:n2} is given by
\begin{align*}
    \m{M} =
    \begin{pmatrix}
    (\m{G}_{11}^{*})^{-1} (\m{G}_{11}^{*} \m{B}_{11} \m{G}_{11})^{1/2} & \m{0} \\
    \m{B}_{21} \m{G}_{11} (\m{G}_{11}^{*} \m{B}_{11} \m{G}_{11})^{\dagger/2} + \m{N}_{12}^{*} & \m{M}_{22}
    \end{pmatrix}
    \    :
    \begin{array}{c}
    \c{H}_{1} \\
    \oplus \\
    \c{H}_{2}
    \end{array}
    \rightarrow
    \begin{array}{c}
    \c{H}_{1} \\
    \oplus \\
    \c{H}_{2}
    \end{array} ,
\end{align*}
where: 
\begin{itemize}[leftmargin = *]
    \item $\m{N}_{12}  : \c{H}_{2} \to \c{H}_{1}$ satisfies $\c{R}(\m{N}_{12}) \subset \c{N}(\m{B}_{11}^{1/2} \m{G}_{11})$ and $\m{N}_{12}^{*} \m{N}_{12} \preceq \m{B}/\m{A}$.
    \item $\m{M}_{22}  : \c{H}_{2} \to \c{H}_{2}$ satisfies $\m{M}_{22} \m{M}_{22}^{*} = \m{B}/\m{A} - \m{N}_{12}^{*} \m{N}_{12}$.
\end{itemize}
A solution corresponds to a Monge map (i.e., $\m{M} = \m{T}\m{G}$ for some OT matrix $\m{T}$) if and only if one chooses $\m{M}_{22} = \m{0}$ (i.e., $\m{N}_{12}^{*} \m{N}_{12} = \m{B}/\m{A}$).
\end{theorem}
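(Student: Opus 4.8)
The plan is to characterize the matrices $\m{M}\in\c{G}(\m{B})$ satisfying \eqref{eq:bary:char:n2} directly in the block decomposition, and then to match the outcome against the OT-matrix formula of \cref{thm:sol:sym}. Write $\m{M}=\begin{pmatrix}\m{M}_{11}&\m{M}_{12}\\ \m{M}_{21}&\m{M}_{22}\end{pmatrix}$ relative to $\c{H}_{1}\oplus\c{H}_{2}$. The starting point is the observation that, as soon as $\m{M}\m{M}^{*}=\m{B}$, the symmetric matrix $\m{X}:=\m{G}^{*}\m{M}$ satisfies $\m{X}\m{X}^{*}=\m{G}^{*}\m{B}\m{G}$, so the constraint $\m{X}\succeq\m{0}$ is \emph{equivalent} to $\m{X}=(\m{G}^{*}\m{B}\m{G})^{1/2}$ by uniqueness of the positive square root. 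Since $\m{G}$ is block-diagonal and $\m{G}_{11}$ is invertible ($\m{A}_{11}\succ\m{0}$), this pins down $\m{M}_{11}=(\m{G}_{11}^{*})^{-1}(\m{G}_{11}^{*}\m{B}_{11}\m{G}_{11})^{1/2}$ and $\m{M}_{12}=\m{0}$; conversely these two identities together with $\m{M}\m{M}^{*}=\m{B}$ make $\m{G}^{*}\m{M}$ block-diagonal with positive semidefinite top-left block, hence imply \eqref{eq:bary:char:n2}. So the task reduces to solving $\m{M}\m{M}^{*}=\m{B}$ for $\m{M}_{21},\m{M}_{22}$ once $\m{M}_{11}$ is fixed and $\m{M}_{12}=\m{0}$.

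The $(1,1)$ block of $\m{M}\m{M}^{*}=\m{B}$ then holds automatically, while the $(2,1)$ block reads $\m{M}_{21}\m{M}_{11}^{*}=\m{B}_{21}$; right-multiplying by $\m{G}_{11}$ and using $\m{M}_{11}^{*}\m{G}_{11}=(\m{G}_{11}^{*}\m{B}_{11}\m{G}_{11})^{1/2}$ turns it into $(\m{G}_{11}^{*}\m{B}_{11}\m{G}_{11})^{1/2}\m{M}_{21}^{*}=\m{G}_{11}^{*}\m{B}_{12}$, which is precisely equation \eqref{eq:reach:rk} from the proof of \cref{thm:sol:sym}. Its solvability follows from $\c{R}(\m{B}_{12})\subset\c{R}(\m{B}_{11}^{1/2})$ (automatic since $\m{B}\succeq\m{0}$, via the Douglas and Baker lemmas), and, exactly as in \eqref{eq:sol:decomp}, the general solution is $\m{M}_{21}^{*}=(\m{G}_{11}^{*}\m{B}_{11}\m{G}_{11})^{\dagger/2}(\m{G}_{11}^{*}\m{B}_{12})+\m{N}_{12}$ with $\c{R}(\m{N}_{12})\subset\c{N}((\m{G}_{11}^{*}\m{B}_{11}\m{G}_{11})^{1/2})=\c{N}(\m{B}_{11}^{1/2}\m{G}_{11})$, giving the stated $\m{M}_{21}$. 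For the $(2,2)$ block I would expand $\m{M}_{21}\m{M}_{21}^{*}$: the two cross terms vanish because $\c{N}((\m{G}_{11}^{*}\m{B}_{11}\m{G}_{11})^{\dagger/2})=\c{N}((\m{G}_{11}^{*}\m{B}_{11}\m{G}_{11})^{1/2})\supseteq\c{R}(\m{N}_{12})$, so $\m{M}_{21}\m{M}_{21}^{*}=\m{B}_{21}\m{G}_{11}(\m{G}_{11}^{*}\m{B}_{11}\m{G}_{11})^{\dagger}\m{G}_{11}^{*}\m{B}_{12}+\m{N}_{12}^{*}\m{N}_{12}=(\m{B}_{22}-\m{B}/\m{A})+\m{N}_{12}^{*}\m{N}_{12}$ by part 3 of \cref{lem:Schur:op}. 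Hence $\m{M}_{22}\m{M}_{22}^{*}=\m{B}_{22}-\m{M}_{21}\m{M}_{21}^{*}=\m{B}/\m{A}-\m{N}_{12}^{*}\m{N}_{12}$, which is solvable precisely when $\m{N}_{12}^{*}\m{N}_{12}\preceq\m{B}/\m{A}$, and then $\m{M}_{22}$ ranges over $\c{G}(\m{B}/\m{A}-\m{N}_{12}^{*}\m{N}_{12})$. Reading these equalities in reverse shows that any $\m{M}$ of the announced form indeed satisfies $\m{M}\m{M}^{*}=\m{B}$, which closes the parametrization.

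For the Monge dichotomy, if $\m{T}$ is an OT matrix $\m{A}\to\m{B}$ then $\m{M}:=\m{T}\m{G}$ obeys $\m{M}\m{M}^{*}=\m{T}\m{A}\m{T}^{*}=\m{B}$ and $\m{G}^{*}\m{M}=\m{G}^{*}\m{T}\m{G}\succeq\m{0}$ (the simplified optimality criterion recorded after \cref{def:reachable}), so $\m{M}$ lies in the set above; conversely, since $\m{G}_{11}$ is invertible, the equation $\m{M}=\m{T}\m{G}$ fixes exactly the first block column of $\m{T}$, namely $\m{T}_{11}=\m{M}_{11}\m{G}_{11}^{-1}$ and $\m{T}_{21}=\m{M}_{21}\m{G}_{11}^{-1}$, leaving $\m{T}_{12},\m{T}_{22}$ free. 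Comparing $\m{M}_{21}$ with the $(2,1)$ entry of the OT-matrix formula in \cref{thm:sol:sym}, $\m{M}$ is of the form $\m{T}\m{G}$ for an OT matrix $\m{T}$ if and only if $\m{N}_{12}^{*}=(\m{B}/\m{A})^{1/2}\m{U}_{12}^{*}$ for a partial isometry $\m{U}_{12}$ with $\m{U}_{12}^{*}\m{U}_{12}=\m{\Pi}_{\c{R}(\m{B}/\m{A})}$ and $\c{R}(\m{U}_{12})\subseteq\c{N}(\m{B}_{11}^{1/2}\m{G}_{11})$. By polar decomposition such a factorization of $\m{N}_{12}$ exists iff $\m{N}_{12}^{*}\m{N}_{12}=\m{B}/\m{A}$, i.e.\ iff $\m{M}_{22}\m{M}_{22}^{*}=\m{0}$, i.e.\ $\m{M}_{22}=\m{0}$. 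The only place the standing hypothesis $\rk(\m{A})\ge\rk(\m{B})$ enters is to ensure $\c{N}(\m{B}_{11}^{1/2}\m{G}_{11})$ is large enough to host $\c{R}(\m{U}_{12})$: by \cref{cor:Schur:rank}, $\dim\c{N}(\m{B}_{11}^{1/2}\m{G}_{11})=\rk(\m{A})-\rk(\m{G}^{*}\m{B}\m{G})=\rk(\m{A})-\rk(\m{B})+\rk(\m{B}/\m{A})\ge\rk(\m{B}/\m{A})$.

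The main obstacle I anticipate is the Monge half of the argument: aligning the free parameter $\m{N}_{12}$ here with the partial-isometry parameter $\m{U}_{12}$ of \cref{thm:sol:sym} in both directions — in particular verifying that when $\m{M}_{22}=\m{0}$ one can genuinely complete the data to a bona fide OT matrix $\m{T}$ with $\m{T}\m{G}=\m{M}$ — together with the dimension bookkeeping; the remainder is a careful but essentially routine chase of pseudoinverse identities and range inclusions of the type already carried out in \cref{thm:sol:sym} and \cref{lem:Schur:op}.
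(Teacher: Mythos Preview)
Your proposal is correct and follows essentially the same approach as the paper's proof: both decompose $\m{M}$ in blocks, use $\m{G}^{*}\m{M}\succeq\m{0}\Leftrightarrow\m{G}^{*}\m{M}=(\m{G}^{*}\m{B}\m{G})^{1/2}$ to pin down $\m{M}_{11}$ and $\m{M}_{12}=\m{0}$, and then solve the remaining block equations of $\m{M}\m{M}^{*}=\m{B}$ exactly as in the proof of \cref{thm:sol:sym}, with the Schur complement entering via \cref{lem:Schur:op}. Your handling of the Monge dichotomy is a bit more explicit than the paper's (you spell out the polar-decomposition matching with $\m{U}_{12}$ and the dimension count via \cref{cor:Schur:rank}, whereas the paper simply observes that $\m{M}_{22}=\m{0}$ collapses the $(2,2)$ equation to \eqref{eq:sol:resi}), but the underlying argument is the same.
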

\begin{proof}[Proof of \cref{thm:Kanto:char}]
Decompose $\m{M}$ as:
\begin{equation*}
    \m{M} = \begin{pmatrix}
    \m{M}_{11} & \m{M}_{12} \\
    \m{M}_{21} & \m{M}_{22}
    \end{pmatrix}.
\end{equation*}
Then $\m{G}^{*} \m{M} \succeq \m{0}$ if and only if
\begin{align*}
    \begin{pmatrix}
    \m{G}_{11}^{*} \m{M}_{11} & \m{G}_{11}^{*} \m{M}_{12} \\
    \m{0} & \m{0}
    \end{pmatrix} =
    \m{G}^{*} \m{M} = (\m{G}^{*} \m{B} \m{G})^{1/2} = \begin{pmatrix}
    (\m{G}_{11}^{*} \m{B}_{11} \m{G}_{11})^{1/2} & \m{0} \\
    \m{0} & \m{0}
    \end{pmatrix},
\end{align*}
which happens if and only if $\m{M}_{11} = (\m{G}_{11}^{*})^{-1} (\m{G}_{11}^{*} \m{B}_{11} \m{G}_{11})^{1/2}$ and $\m{M}_{12} = \m{0}$. With these values, the condition $\m{M} \in \c{G}(\m{B})$ becomes
\begin{align*}
    \begin{pmatrix}
    \m{B}_{11} & \m{M}_{11} \m{M}_{21}^{*} \\
    \m{M}_{21} \m{M}_{11}^{*} & \m{M}_{22} \m{M}_{22}^{*} + \m{M}_{21} \m{M}_{21}^{*}
    \end{pmatrix} =
    \m{M} \m{M}^{*} = \m{B} = \begin{pmatrix}
    \m{B}_{11} & \m{B}_{12} \\
    \m{B}_{21} & \m{B}_{22}
    \end{pmatrix},
\end{align*}
which is equivalent to the following system of block equations:
\begin{align}
    &(\m{G}_{11}^{*})^{-1} (\m{G}_{11}^{*} \m{B}_{11} \m{G}_{11})^{1/2} \m{M}_{21}^{*} = \m{B}_{12}, \label{eq:reach:rk:1} \\
    &\m{M}_{22} \m{M}_{22}^{*} + \m{M}_{21} \m{M}_{21}^{*} = \m{B}_{22}. \label{eq:reach:rk:2}
\end{align}
As in the proof of \cref{thm:sol:sym},  \eqref{eq:reach:rk:1} always have a solution characterized by
\begin{equation*}
    \m{M}_{21}^{*} = (\m{G}_{11}^{*} \m{B}_{11} \m{G}_{11})^{\dagger/2} (\m{G}_{11}^{*} \m{B}_{12}) + \m{N}_{12}, \quad \c{R}(\m{N}_{12}) \subset \c{N}((\m{G}_{11}^{*} \m{B}_{11} \m{G}_{11})^{1/2}) = \c{N}(\m{B}_{11}^{1/2} \m{G}_{11}).
\end{equation*}
Substituting $\m{N}_{12}$ into \eqref{eq:reach:rk:2}, we obtain from part 3 in \cref{lem:Schur:op} that:
\begin{align*}
    \m{M}_{22} \m{M}_{22}^{*} + \m{N}_{12}^{*} \m{N}_{12} = \m{B}_{22} - (\m{B}_{21} \m{G}_{11}) (\m{G}_{11}^{*} \m{B}_{11} \m{G}_{11})^{\dagger} (\m{G}_{11}^{*} \m{B}_{12}) = \m{B}/\m{A} \succeq \m{0}.
\end{align*}
Finally, if $\m{M}_{22} = \m{0}$, the above equation becomes precisely \eqref{eq:sol:resi} in the proof of \cref{thm:sol:sym}. Therefore, $\m{M}_{22} = \m{0}$ if and only if there exists some OT matrix $\m{T} \in \b{R}^{n \times n}$ such that $\m{M} = \m{T} \m{G}$.
\end{proof}

\begin{remark}\label{rmk:linear:kanto}
The off-diagonal block, $\m{G} \m{M}^{*}$, of the  covariance for the optimal Kantorovich coupling in \eqref{eq:Kanto:block} has the form
\begin{equation*}
    \m{G} \m{M}^{*} = \begin{pmatrix}
    \m{G}_{11} (\m{G}_{11}^{*} \m{B}_{11} \m{G}_{11})^{1/2} \m{G}_{11}^{-1}  & \m{G}_{11} (\m{G}_{11}^{*} \m{B}_{11} \m{G}_{11})^{\dagger/2} \m{G}_{11} \m{B}_{12}  + \m{G}_{11} \m{N}_{12} \\
    \m{0} & \m{0}
    \end{pmatrix}
    \    :
    \begin{array}{c}
    \c{H}_{1} \\
    \oplus \\
    \c{H}_{2}
    \end{array}
    \rightarrow
    \c{H}_{1}.
\end{equation*}
This calculation reveals two key properties. First, $\m{G} \m{M}^{*}$ is independent of the particular choice of $\m{G}_{11} \in \c{G}(\m{A}_{11})$ and $\m{M}_{22} \in \c{G}(\m{B}/\m{A} - \m{N}_{12}^{*} \m{N}_{12})$. Second, this implies that the diagonal blocks of any barycenter $\m{\Gamma}_{t} \in \c{K}_{t}(\m{A}, \m{B})$ are fixed:
\begin{align*}
     &(\m{\Gamma}_{t})_{11} = (1-t)^{2} \m{A}_{11} + t(1-t) [\m{G}_{11} (\m{G}_{11}^{*} \m{B}_{11} \m{G}_{11})^{1/2} \m{G}_{11}^{-1} + \m{G}_{11}^{-1} (\m{G}_{11}^{*} \m{B}_{11} \m{G}_{11})^{1/2} \m{G}_{11}] + t^{2} \m{B}_{11}, \\
     &(\m{\Gamma}_{t})_{22} = t^{2} \m{B}_{22}, \\
     &(\m{\Gamma}_{t})_{12} = t(1-t) [\m{G}_{11} (\m{G}_{11}^{*} \m{B}_{11} \m{G}_{11})^{\dagger/2} \m{G}_{11} \m{B}_{12}  + \m{G}_{11} \m{N}_{12}] + t^{2} \m{B}_{12}, \quad (\m{\Gamma}_{t})_{21} = (\m{\Gamma}_{t})_{12}^{*}.
\end{align*}
The entire geometry of the set of barycenters is determined by the choice of the submatrix $\m{N}_{12}$, subject to the conditions in \cref{thm:Kanto:char}. To emphasize this dependence, we sometimes write the barycenter as $\m{\Gamma}_{t}^{\m{N}_{12}}$ in the rest of this subsection.
\end{remark}

This observation reveals the geometric structure of the set of barycenters, generalizing \cref{thm:geo:Schur}.

\begin{theorem}[Extreme Geodesics]\label{thm:ext:geo}
Let $\m{A}, \m{B}  \in \b{R}^{n \times n}$ be s.p.d. matrices with $\rk(\m{A}) \geq \rk(\m{B})$, and let $t \in (0, 1)$. The set of barycenters $\c{K}_{t}(\m{A}, \m{B})$ has the following geometric properties:
\begin{enumerate}[leftmargin = *]
\item $\m{\Gamma}_{t}$ is an extreme point of $\c{K}_{t}(\m{A}, \m{B})$ if and only if it is a McCann interpolant at time $t$.
\item $\c{K}_{t}(\m{A}, \m{B})$ is the closed convex hull of the set of all McCann interpolants at time $t$.
\item For any barycenter $\m{\Gamma}_{t}^{\m{N}_{12}} \in \c{K}_{t}(\m{A}, \m{B})$, we have
\begin{align*}
    \m{\Gamma}_{t}^{\m{N}_{12}}/\m{A} = t^{2} (\m{B}/\m{A} - \m{N}_{12}^{*} \m{N}_{12}), \quad \rk(\m{\Gamma}_{t}^{\m{N}_{12}}) = \rk(\m{A}) + \rk (\m{\Gamma}_{t}^{\m{N}_{12}}/\m{A}).
\end{align*}
\end{enumerate}
Consequently, the following statements are equivalent:
\begin{enumerate}[leftmargin = *]
\item [a.] $\m{\Gamma}_{t}$ is a McCann interpolant at time $t$.
\item [b.] Its $\m{A}$-Schur complement vanishes: $\m{\Gamma}_{t}/\m{A} = \m{0}$.
\item [c.] Its rank is preserved: $\rk(\m{\Gamma}_{t}) = \rk(\m{A})$.
\end{enumerate}
\end{theorem}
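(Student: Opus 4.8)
The plan is to push the whole statement onto the parameter space of the matrices $\m{N}_{12}$ that index $\c{K}_t(\m{A},\m{B})$ in \cref{thm:Kanto:char}. Fix $\m{G}_{11}\in\c{G}(\m{A}_{11})$, write $\c{W}:=\c{N}(\m{B}_{11}^{1/2}\m{G}_{11})$, and set
\[
\c{D}:=\bigl\{\m{N}_{12}\colon\c{H}_2\to\c{H}_1 \ \bigm|\ \c{R}(\m{N}_{12})\subseteq\c{W},\ \m{N}_{12}^{*}\m{N}_{12}\preceq\m{B}/\m{A}\bigr\}.
\]
By \cref{thm:Kanto:char} together with \cref{rmk:linear:kanto} (the blocks of $\m{\Gamma}_t^{\m{N}_{12}}$ depend only on $\m{N}_{12}$, not on the residual block $\m{M}_{22}$) the assignment $\Phi\colon\m{N}_{12}\mapsto\m{\Gamma}_t^{\m{N}_{12}}$ is a well-defined surjection $\c{D}\to\c{K}_t(\m{A},\m{B})$; and the formula for $(\m{\Gamma}_t^{\m{N}_{12}})_{12}$ in \cref{rmk:linear:kanto} shows $\Phi$ is affine with linear part $\m{N}_{12}\mapsto t(1-t)\m{G}_{11}\m{N}_{12}$, which is injective for $t\in(0,1)$ since $\m{G}_{11}$ is invertible. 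Hence $\Phi$ is an affine \emph{bijection} between the compact convex set $\c{D}$ and $\c{K}_t(\m{A},\m{B})$, so it carries extreme points to extreme points and (closed) convex hulls to (closed) convex hulls, and the entire theorem reduces to facts about $\c{D}$ plus one computation.

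I would settle item (3) first, since the equivalences (a)$\,\Leftrightarrow\,$(b)$\,\Leftrightarrow\,$(c) then fall out. Write $\m{\Gamma}_t^{\m{N}_{12}}=\m{G}_t\m{G}_t^{*}$ with $\m{G}_t=(1-t)\m{G}+t\m{M}$ and $\m{M}$ as in \cref{thm:Kanto:char}. The $(1,1)$-block of $\m{G}_t$ is $\m{X}:=(1-t)\m{G}_{11}+t\m{M}_{11}$, and $\m{G}_{11}^{*}\m{X}=(1-t)\m{G}_{11}^{*}\m{G}_{11}+t(\m{G}_{11}^{*}\m{B}_{11}\m{G}_{11})^{1/2}$ is positive definite for $t<1$ (a positive definite plus a positive semidefinite self-adjoint term), so $\m{X}$ is invertible. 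A direct block multiplication gives $(\m{\Gamma}_t^{\m{N}_{12}})_{11}=\m{X}\m{X}^{*}$, $(\m{\Gamma}_t^{\m{N}_{12}})_{21}=t\m{M}_{21}\m{X}^{*}$, $(\m{\Gamma}_t^{\m{N}_{12}})_{22}=t^{2}\m{M}_{21}\m{M}_{21}^{*}+t^{2}\m{M}_{22}\m{M}_{22}^{*}$; invertibility of $\m{X}$ gives $\m{X}^{*}(\m{X}\m{X}^{*})^{-1}\m{X}=\m{I}_{11}$, so the invertible-pivot Schur complement collapses to $\m{\Gamma}_t^{\m{N}_{12}}/\m{A}=t^{2}\m{M}_{22}\m{M}_{22}^{*}=t^{2}(\m{B}/\m{A}-\m{N}_{12}^{*}\m{N}_{12})$, using $\m{M}_{22}\m{M}_{22}^{*}=\m{B}/\m{A}-\m{N}_{12}^{*}\m{N}_{12}$. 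For the rank identity, apply \cref{cor:Schur:rank} with $\m{G}=\diag(\m{G}_{11},\m{0})\in\c{G}(\m{A})$: $\m{G}^{*}\m{\Gamma}_t^{\m{N}_{12}}\m{G}=\diag(\m{G}_{11}^{*}\m{X}\m{X}^{*}\m{G}_{11},\m{0})$ has rank $\rk(\m{A})$ (as $\m{G}_{11},\m{X}$ are invertible), whence $\rk(\m{\Gamma}_t^{\m{N}_{12}})=\rk(\m{A})+\rk(\m{\Gamma}_t^{\m{N}_{12}}/\m{A})$.

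For (1) and (2) I need the extreme points of $\c{D}$. By \cref{thm:Kanto:char} the McCann interpolants at time $t$ are exactly the $\Phi(\m{N}_{12})$ with $\m{M}_{22}=\m{0}$, i.e.\ with $\m{N}_{12}^{*}\m{N}_{12}=\m{B}/\m{A}$. Such an $\m{N}_{12}$ is extreme in $\c{D}$: if $\m{N}_{12}=\tfrac12(\m{N}'+\m{N}'')$ with $\m{N}',\m{N}''\in\c{D}$, the parallelogram identity gives, for every $x$, $\tfrac14\|(\m{N}'-\m{N}'')x\|^{2}=\tfrac12\langle x,(\m{N}')^{*}\m{N}'x\rangle+\tfrac12\langle x,(\m{N}'')^{*}\m{N}''x\rangle-\langle x,(\m{B}/\m{A})x\rangle\le 0$, hence $\m{N}'=\m{N}''$. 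For the converse I would reduce to an operator ball: every $\m{N}_{12}\in\c{D}$ annihilates $\c{N}(\m{B}/\m{A})$ (because $\m{N}_{12}^{*}\m{N}_{12}\preceq\m{B}/\m{A}$), and $\m{N}_{12}\mapsto\m{N}_{12}(\m{B}/\m{A})^{\dagger/2}$ is a linear bijection of $\c{D}$ onto the operator-norm unit ball of the space of maps $\c{R}(\m{B}/\m{A})\to\c{W}$, sending $\{\m{N}_{12}^{*}\m{N}_{12}=\m{B}/\m{A}\}$ onto the isometries; here $\dim\c{R}(\m{B}/\m{A})\le\dim\c{W}$ by \cref{cor:Schur:rank} and the hypothesis $\rk(\m{A})\ge\rk(\m{B})$. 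A contraction that is not isometric has a singular value $<1$, which one perturbs by $\pm\epsilon$ to exhibit it as a nontrivial midpoint of two contractions; so the non-isometric points are not extreme. This proves (1); then (2) follows from Minkowski's theorem (the compact convex $\c{D}$ is the convex hull, hence closed convex hull, of its extreme points) transported through $\Phi$.

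Finally (3) gives the equivalences: for $t\in(0,1)$, $\m{\Gamma}_t^{\m{N}_{12}}/\m{A}=\m{0}\Leftrightarrow\m{N}_{12}^{*}\m{N}_{12}=\m{B}/\m{A}\Leftrightarrow\m{\Gamma}_t^{\m{N}_{12}}$ is a McCann interpolant (by \cref{thm:Kanto:char}), while $\rk(\m{\Gamma}_t^{\m{N}_{12}})=\rk(\m{A})\Leftrightarrow\rk(\m{\Gamma}_t^{\m{N}_{12}}/\m{A})=0\Leftrightarrow\m{\Gamma}_t^{\m{N}_{12}}/\m{A}=\m{0}$ since $\m{\Gamma}_t^{\m{N}_{12}}/\m{A}\succeq\m{0}$. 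I expect the main obstacle to be the extreme-point analysis of $\c{D}$ — cleanly reducing it to a genuine operator-norm unit ball (handling both the range constraint into $\c{W}$ and the null directions of $\m{B}/\m{A}$) and then using the classical fact that the extreme points of the unit ball of $\c{B}_\infty(\b{R}^{r},\c{W})$ with $r\le\dim\c{W}$ are precisely the isometries; the remainder is bookkeeping with the block identities of \cref{thm:Kanto:char,rmk:linear:kanto} and the Schur/rank calculus of \cref{lem:Schur:op,cor:Schur:rank}.
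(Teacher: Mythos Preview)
Your proof is correct and follows the same overall architecture as the paper: parametrize $\c{K}_t(\m{A},\m{B})$ by the closed convex set $\c{D}$ of admissible $\m{N}_{12}$, identify the McCann interpolants with the stratum $\m{N}_{12}^*\m{N}_{12}=\m{B}/\m{A}$, transfer extreme points and convex hulls through the affine bijection $\Phi$, and compute the Schur complement and rank of $\m{\Gamma}_t^{\m{N}_{12}}$ from the block formulas using the invertibility of $\m{X}$. The one substantive difference is your treatment of the implication ``not McCann $\Rightarrow$ not extreme'': you change variables via $\m{N}_{12}\mapsto\m{N}_{12}(\m{B}/\m{A})^{\dagger/2}$ to identify $\c{D}$ with the operator-norm unit ball of linear maps $\c{R}(\m{B}/\m{A})\to\c{W}$ and invoke the classical fact that its extreme points are precisely the isometries (valid since $\rk(\m{B}/\m{A})\le\dim\c{W}$), whereas the paper works directly in $\c{D}$ and constructs explicit rank-one perturbations $\m{N}_{12}\pm\varepsilon\,\m{v}\m{w}^{*}$ in two cases according to whether $\c{R}(\m{N}_{12})\subsetneq\c{W}$ or $\c{R}(\m{N}_{12})=\c{W}$. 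Your reduction is cleaner and more conceptual; the paper's argument is more self-contained and avoids appealing to the unit-ball characterization.
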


\begin{proof}[Proof of \cref{thm:ext:geo}] \quad
\begin{enumerate}[leftmargin = *]
\item We first show that the set given by
\begin{align*}
    \c{A} := \left\{\m{N}_{12}  : \c{H}_{2} \to \c{H}_{1} \mid \c{R}(\m{N}_{12}) \subset \c{N}(\m{B}_{11}^{1/2} \m{G}_{11}), \ \m{N}_{12}^{*} \m{N}_{12} \preceq \m{B}/\m{A} \right\},
\end{align*}
is a closed convex set. The closedness of $\c{A}$ is trivial. To show convexity, consider $\m{P}_{\lambda} = (1-\lambda) \m{P}_{0} + \lambda \m{P}_{1}$, where $\m{P}_{0}, \m{P}_{1} \in \c{A}$ and $\lambda \in (0, 1)$. We have $\c{R}(\m{P}_{\lambda}) \subset \c{R}(\m{P}_{0}) + \c{R}(\m{P}_{1}) \subset \c{N}(\m{B}_{11}^{1/2} \m{G}_{11})$. Also, observe $[(1-\lambda) \m{P}_{0}^{*} \m{P}_{0} + \lambda \m{P}_{1}^{*} \m{P}_{1}] - \m{P}_{\lambda}^{*} \m{P}_{\lambda} = \lambda (1 - \lambda) (\m{P}_{0} - \m{P}_{1})^{*} (\m{P}_{0} - \m{P}_{1}) \succeq \m{0}$ with equality if and only if $\m{P}_{0} = \m{P}_{1}$, which leads to
\begin{align}\label{eq:displ:cvx:Schur}
    \m{B}/\m{A} \succeq (1-\lambda) \m{P}_{0}^{*} \m{P}_{0} + \lambda \m{P}_{1}^{*} \m{P}_{1} \succeq \m{P}_{\lambda}^{*} \m{P}_{\lambda}.
\end{align}
Thus, $\m{P}_{\lambda} \in \c{A}$. 

We claim that the set of extreme points $\c{E}$ of $\c{A}$ is given by 
\begin{align*}
    \c{E} = \left\{\m{N}_{12}  : \c{H}_{2} \to \c{H}_{1} \mid \c{R}(\m{N}_{12}) \subset \c{N}(\m{B}_{11}^{1/2} \m{G}_{11}), \ \m{N}_{12}^{*} \m{N}_{12} = \m{B}/\m{A} \right\}.
\end{align*}
It is trivial that $\c{E} \supseteq \text{(RHS)}$ since the equality in \eqref{eq:displ:cvx:Schur} holds if and only if $\m{P}_{0} = \m{P}_{1} \in \c{E}$.
To show the other direction, we may assume $\m{B}/\m{A} \ne \m{0}$, otherwise trivial. Suppose $\m{N}_{12} \in \c{A}$ while $\m{N}_{12}^{*} \m{N}_{12} \ne \m{B}/\m{A}$. Then, there exists a non-zero vector $\m{w} \in \c{H}_{2}$ such that $\m{w} \m{w}^{*} \preceq \m{B}/\m{A} - \m{N}_{12}^{*} \m{N}_{12}$.

\begin{itemize}
\item [(Case 1)] \underline{$\c{R}(\m{N}_{12}) \subsetneq \c{N}(\m{B}_{11}^{1/2} \m{G}_{11})$}. 

Fix a unit vector $\m{v} \in \c{N}(\m{B}_{11}^{1/2} \m{G}_{11}) \cap \c{R}(\m{N}_{12})^{\perp}$. Define $\m{R}_{12} = \m{v} \m{w}^{*} : \c{H}_{2} \to \c{H}_{1}$. Then, we have $\m{N}_{12}^{*} \m{R}_{12} = \m{0}$, thus
\begin{align*}
    (\m{N}_{12} \pm \m{R}_{12})^{*} (\m{N}_{12} \pm \m{R}_{12}) = \m{N}_{12}^{*} \m{N}_{12} + \m{w} \m{w}^{*} \preceq \m{B}/\m{A}.
\end{align*}
Therefore, we have $(\m{N}_{12} + \m{R}_{12}), (\m{N}_{12} - \m{R}_{12}) \in \c{A}$, i.e., $\m{N}_{12}$ is not an extreme point of $\c{A}$.

\item [(Case 2)] \underline{$\c{R}(\m{N}_{12}) = \c{N}(\m{B}_{11}^{1/2} \m{G}_{11})$}. 

Note from \cref{cor:Schur:rank} that
\begin{align*}
    \rk(\m{N}_{12}^{*}) = \dim \c{N}(\m{B}_{11}^{1/2} \m{G}_{11}) &= \rk(\m{A}) - \rk[(\m{G}_{11}^{*} \m{B}_{11} \m{G}_{11})^{1/2}] \\
    &\ge \rk(\m{B}) - \rk[(\m{G}_{11}^{*} \m{B}_{11} \m{G}_{11})^{1/2}] = \rk(\m{B}/\m{A}) \ge \rk(\m{N}_{12}^{*}).
\end{align*}
Since $\c{R}(\m{N}_{12}^{*}) \subset \c{R}(\m{B}/\m{A})$, this entails that $\c{R}(\m{N}_{12}^{*}) = \c{R}(\m{B}/\m{A})$ and the restriction of the adjoint $\m{N}_{12}^{*} : \c{N}(\m{B}_{11}^{1/2} \m{G}_{11}) \to \c{R}(\m{B}/\m{A})$ is invertible. Therefore, there exists some $\m{v} \in \c{N}(\m{B}_{11}^{1/2} \m{G}_{11})$ such that $\m{N}_{12}^{*} \m{v} = \m{w}$. Choose $\varepsilon > 0$ such that $(2 \varepsilon + \varepsilon^{2} \|\m{v}\|^{2}) \le 1$. Consequently, we obtain
\begin{align*}
    (\m{N}_{12} + \varepsilon \m{v} \m{w}^{*})^{*} (\m{N}_{12} + \varepsilon \m{v} \m{w}^{*}) = \m{N}_{12}^{*} \m{N}_{12} + (2 \varepsilon + \varepsilon^{2} \|\m{v}\|^{2}) \m{w} \m{w}^{*} \preceq \m{B}/\m{A}, \\
    (\m{N}_{12} - \varepsilon \m{v} \m{w}^{*})^{*} (\m{N}_{12} - \varepsilon \m{v} \m{w}^{*}) = \m{N}_{12}^{*} \m{N}_{12} + (-2 \varepsilon + \varepsilon^{2} \|\m{v}\|^{2}) \m{w} \m{w}^{*} \preceq \m{B}/\m{A}.
\end{align*}
Finally, we have $(\m{N}_{12} + \varepsilon \m{v} \m{w}^{*}), (\m{N}_{12} - \varepsilon \m{v} \m{w}^{*}) \in \c{A}$, i.e., $\m{N}_{12}$ is not an extreme point of $\c{A}$.
\end{itemize}
Then, from \cref{rmk:linear:kanto}, it follows that $\c{K}_{t}(\m{A}, \m{B})$ is the closed convex set. It also follows that $\m{\Gamma}_{t} \in \c{K}_{t}(\m{A}, \m{B})$ is an extreme point if and only if $\m{N}_{12} \in \c{E}$, which is equivalent to the fact that $\m{\Gamma}_{t}$ is a McCann interpolant at time $t$ due to \cref{thm:Kanto:char}.
\item Direct implication of the Krein-Milman theorem \cite{conway2019course}.
\end{enumerate}

The part 3 will be shown while establishing the second set of equivalence relations:
\begin{enumerate}
\item [$(a \Rightarrow b)$] See \cref{thm:geo:Schur}.
\item [$(b \Rightarrow a)$] We prove it by contradiction. Suppose $\m{\Gamma}_{t}/\m{A} = \m{0}$ while $\m{\Gamma}_{t} \in \c{K}_{t}(\m{A}, \m{B})$ is not a McCann interpolant. From \cref{thm:Kanto:char}, we get $\m{M}_{22} \ne \m{0}$. Using the block decomposition as in the proof of \cref{thm:Kanto:char}, we obtain
\begin{align*}
    (\m{\Gamma}_{t})_{11} &= [(1-t) \m{G}_{11} + t \m{M}_{11}] [(1-t) \m{G}_{11} + t \m{M}_{11}]^{*}, \\
    (\m{\Gamma}_{t})_{12} &= t(1-t) \m{G}_{11} \m{M}_{21}^{*} + t^{2} \m{M}_{11} \m{M}_{21}^{*}
    = [(1-t) \m{G}_{11} + t \m{M}_{11}][t \m{M}_{21}]^{*} \\
    (\m{\Gamma}_{t})_{22} &= t^{2} \m{B}_{22} = t^{2}[\m{M}_{22} \m{M}_{22}^{*} + \m{M}_{21} \m{M}_{21}^{*}].
\end{align*}
To ease the notation, let us denote 
\begin{align*}
    \m{X}_{11} := [(1-t) \m{G}_{11} + t \m{M}_{11}] = [(1-t) \m{I}_{11} + t (\m{G}_{11}^{*})^{-1} (\m{G}_{11}^{*} \m{B}_{11} \m{G}_{11})^{1/2} \m{G}_{11}^{-1}] \m{G}_{11}, 
\end{align*}
which is injective. Then, the $\m{A}$-Schur complement of $\m{\Gamma}_{t}$ is given by
\begin{align*}
    \m{\Gamma}_{t}/\m{A} &= (\m{\Gamma}_{t})_{22} - (\m{\Gamma}_{t})_{21} (\m{\Gamma}_{t})_{11}^{\dagger} (\m{\Gamma}_{t})_{12} \\
    &= t^{2} \m{M}_{22} \m{M}_{22}^{*} + t^{2} \m{M}_{21} \left[ \m{I}_{11} - \m{X}_{11}^{*} (\m{X}_{11} \m{X}_{11}^{*})^{\dagger} \m{X}_{11}  \right] \m{M}_{21}^{*} \\
    &= t^{2} \m{M}_{22} \m{M}_{22}^{*} + t^{2} \m{M}_{21} \m{\Pi}_{\c{N}(\m{X}_{11})} \m{M}_{21}^{*} = t^{2} \m{M}_{22} = t^{2} [\m{B}/\m{A} - \m{N}_{12}^{*} \m{N}_{12}],
\end{align*}
hence it does not vanish, which is a contradiction.
\item [$(b \Leftrightarrow c)$] For $\m{\Gamma}_{t} \in \c{K}_{t}(\m{A}, \m{B})$, $(\m{\Gamma}_{t})_{11}$ is full-rank since $(\m{\Gamma}_{t})_{11} \succeq (1-t)^{*} \m{A}_{11}$. Hence,
\begin{align*}
    \rk (\m{G}^{*} \m{\Gamma}_{t} \m{G}) = \rk (\m{G}_{11}^{*} (\m{\Gamma}_{t})_{11} \m{G}_{11})
    = \rk ((\m{\Gamma}_{t})_{11}) = \rk (\m{G}_{11}) = \rk (\m{A}),
\end{align*}
Then, \cref{cor:Schur:rank} concludes
\begin{align*}
    \rk(\m{\Gamma}_{t}) = \rk (\m{G}^{*} \m{\Gamma}_{t} \m{G}) + \rk(\m{\Gamma}_{t}/\m{A}) = \rk (\m{A}) + \rk(\m{\Gamma}_{t}/\m{A}),
\end{align*}
and the equivalence becomes trivial.
\end{enumerate}
\end{proof}

We remark that
\begin{align*}
    \rk(\m{A}) \le \rk(\m{\Gamma}_{t}) \le \rk(\m{A}) + \rk(\m{B}/\m{A}) = \rk(\m{A}) + \rk(\m{B}) - \rk(\m{B}^{1/2} \m{A}^{1/2}).
\end{align*}
The geometry of the barycenter set also provides another perspective on s.p.d. reachability in \cref{thm:spd:sol:sym}, connecting it to the uniqueness of the Kantorovich barycenter.

\begin{corollary}[SPD Reachability Revisited]\label{thm:spd:revise}
Let $\m{A}, \m{B}  \in \b{R}^{n \times n}$ be s.p.d. matrices. For any $t \in (0, 1)$, the set of barycenters $\c{K}_{t}(\m{A}, \m{B})$ is a singleton if and only if the equivalent conditions in \cref{thm:spd:sol:sym} hold.
Furthermore, this unique barycenter is the McCann interpolant at time $t$.
\end{corollary}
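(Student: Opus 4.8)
The plan is to read the answer off the explicit parametrisation of $\c{K}_{t}(\m{A},\m{B})$ already established in \cref{thm:Kanto:char,thm:ext:geo}. Throughout I work under the standing hypothesis $\rk(\m{A})\ge\rk(\m{B})$ of this subsection; this is essentially free for the ``if'' direction, since condition (5) of \cref{thm:spd:sol:sym}, namely $\c{R}(\m{B})\cap\c{N}(\m{A})=\{\m{0}\}$, already forces $\rk(\m{A})\ge\rk(\m{B})$ by a dimension count, and for the ``only if'' direction it is the regime in which $\c{K}_{t}(\m{A},\m{B})$ is characterised. Recall from \cref{rmk:linear:kanto} that every $\m{\Gamma}_{t}\in\c{K}_{t}(\m{A},\m{B})$ equals $\m{\Gamma}_{t}^{\m{N}_{12}}$ for a single block parameter $\m{N}_{12}$ subject to $\c{R}(\m{N}_{12})\subset\c{N}(\m{B}_{11}^{1/2}\m{G}_{11})$ and $\m{N}_{12}^{*}\m{N}_{12}\preceq\m{B}/\m{A}$, the dependence being only through $\m{N}_{12}$, and from \cref{thm:ext:geo}(3) that $\m{\Gamma}_{t}^{\m{N}_{12}}/\m{A}=t^{2}(\m{B}/\m{A}-\m{N}_{12}^{*}\m{N}_{12})$. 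The key point is that, since $t\neq 0$, the $\m{A}$-Schur complement of a barycenter recovers $\m{N}_{12}^{*}\m{N}_{12}$, so barycenters whose $\m{N}_{12}^{*}\m{N}_{12}$ differ are themselves distinct.

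For the ``if'' direction, suppose the equivalent conditions of \cref{thm:spd:sol:sym} hold, so $\m{B}/\m{A}=\m{0}$. Then the admissibility constraint $\m{N}_{12}^{*}\m{N}_{12}\preceq\m{0}$ forces $\m{N}_{12}=\m{0}$, and $\m{M}_{22}\m{M}_{22}^{*}=\m{B}/\m{A}-\m{N}_{12}^{*}\m{N}_{12}=\m{0}$ forces $\m{M}_{22}=\m{0}$; by \cref{rmk:linear:kanto} this pins down every block of $\m{\Gamma}_{t}$, so $\c{K}_{t}(\m{A},\m{B})=\{\m{\Gamma}_{t}^{\m{0}}\}$ is a singleton. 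Since the sole parameter $\m{N}_{12}=\m{0}$ satisfies $\m{N}_{12}^{*}\m{N}_{12}=\m{B}/\m{A}$, the final assertion of \cref{thm:Kanto:char} (equivalently $\m{\Gamma}_{t}^{\m{0}}/\m{A}=\m{0}$ together with the equivalence of (a) and (b) in \cref{thm:ext:geo}) identifies $\m{\Gamma}_{t}^{\m{0}}$ as the McCann interpolant at time $t$, which proves the ``furthermore''. One may equally observe that a one-point convex set is its own unique extreme point, and by \cref{thm:ext:geo}(1) the extreme points of $\c{K}_{t}(\m{A},\m{B})$ are precisely the McCann interpolants.

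For the ``only if'' direction I argue the contrapositive: assume the conditions of \cref{thm:spd:sol:sym} fail, i.e.\ $\m{B}/\m{A}\neq\m{0}$. The choice $\m{N}_{12}=\m{0}$ is admissible and yields $\m{\Gamma}_{t}^{\m{0}}\in\c{K}_{t}(\m{A},\m{B})$ with $\m{\Gamma}_{t}^{\m{0}}/\m{A}=t^{2}\,\m{B}/\m{A}\neq\m{0}$. On the other hand \cref{thm:sol:sym} supplies an OT matrix $\m{T}$ from $\m{A}$ to $\m{B}$, whose McCann interpolant $\m{\Gamma}_{t}^{\mathrm{M}}$ lies in $\c{K}_{t}(\m{A},\m{B})$ and satisfies $\m{\Gamma}_{t}^{\mathrm{M}}/\m{A}=\m{0}$ by \cref{thm:geo:Schur}. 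Their $\m{A}$-Schur complements differ, hence $\m{\Gamma}_{t}^{\m{0}}\neq\m{\Gamma}_{t}^{\mathrm{M}}$ and $\c{K}_{t}(\m{A},\m{B})$ is not a singleton. Combining the two implications gives the stated equivalence, and the last sentence of the corollary was already obtained in the ``if'' direction.

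There is no genuine obstacle: the structural work — the Schur-complement/LDU machinery, convexity of the parameter set, and the identification of McCann interpolants with extreme points — is all in \cref{thm:Kanto:char,thm:ext:geo}, so the corollary is a short bookkeeping consequence. The single point to use carefully is that $\m{N}_{12}^{*}\m{N}_{12}$ is an honest invariant of the barycenter, namely $t^{-2}$ times its $\m{A}$-Schur complement; that is exactly \cref{thm:ext:geo}(3), so nothing new is required.
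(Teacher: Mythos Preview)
Your proof is correct and follows the same route as the paper, which simply records the corollary as ``trivial from \cref{thm:Kanto:char,thm:ext:geo}'' without spelling out any details. You have made explicit precisely the two observations that are implicit in that citation: that $\m{B}/\m{A}=\m{0}$ collapses the parameter set of \cref{thm:Kanto:char} to a single admissible $\m{N}_{12}$, and that $\m{B}/\m{A}\neq\m{0}$ produces at least two barycenters distinguishable by their $\m{A}$-Schur complements via \cref{thm:ext:geo}(3); your handling of the standing rank hypothesis is also more careful than the paper's.
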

\begin{proof}[Proof of \cref{thm:spd:revise}]
Trivial from \cref{thm:Kanto:char,thm:ext:geo}.
\end{proof}

The preceding discussion focused on the set of barycenters at a fixed time $t$. To form a geodesic path from $\m{A}$ to $\m{B}$, one must consistently select the free parameter $\m{N}_{12}$ for all $t \in (0,1)$. Each valid choice of $\m{N}_{12}$ in \cref{thm:Kanto:char} defines a unique geodesic. We call the collection of all such paths the \emph{Kantorovich geodesics}.

\begin{theorem}[Kantorovich Geodesics]\label{thm:kanto:geo}
Let $\m{A}, \m{B}  \in \b{R}^{n \times n}$ be s.p.d. matrices with $\rk(\m{A}) \geq \rk(\m{B})$. 
\begin{enumerate}[leftmargin = *]
\item Any path $\m{\Gamma}^{\m{N}_{12}}: [0, 1] \to \b{R}^{n \times n}, \, t \mapsto \m{\Gamma}_{t}^{\m{N}_{12}}$ constructed by fixing a choice of $\m{N}_{12}$ in \cref{thm:Kanto:char} generates a constant-speed geodesic, that is, $\c{W}_{2}(\m{\Gamma}_{s}^{\m{N}_{12}}, \m{\Gamma}_{t}^{\m{N}_{12}}) = (t-s) \c{W}_{2}(\m{A}, \m{B})$ for any $0 \le t \le s \le 1$.
\item Conversely, any constant-speed geodesic connecting $\m{A}$ and $\m{B}$ is of this form.
\item The rank of $\m{\Gamma}^{\m{N}_{12}}$ is constant for $t \in (0, 1)$ and is given by $\rk(\m{A}) + \rk (\m{B}/\m{A} - \m{N}_{12}^{*} \m{N}_{12})$.
\end{enumerate}
\end{theorem}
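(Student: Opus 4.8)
The plan is to treat the three parts in increasing order of difficulty, using \cref{thm:Kanto:char,rmk:linear:kanto,thm:ext:geo} as black boxes, together with the elementary fact that if $\m{K}_{1}\m{K}_{1}^{*} = \m{C}_{1}$, $\m{K}_{2}\m{K}_{2}^{*} = \m{C}_{2}$ and $\m{K}_{1}^{*}\m{K}_{2} \succeq \m{0}$, then $\m{K}_{1}^{*}\m{K}_{2}$ is the positive square root of $\m{K}_{1}^{*}\m{C}_{2}\m{K}_{1}$, so $\tr(\m{K}_{1}^{*}\m{K}_{2}) = \tr((\m{C}_{1}^{1/2}\m{C}_{2}\m{C}_{1}^{1/2})^{1/2})$ by \cref{lem:opt:algn} and hence $\c{W}_{2}^{2}(\m{C}_{1},\m{C}_{2}) = \vertiii{\m{K}_{1} - \m{K}_{2}}_{2}^{2}$; conversely the von Neumann trace inequality (\cref{lem:Neumann:eq}) gives $\tr(\m{K}_{1}^{*}\m{K}_{2}) \le \tr((\m{C}_{1}^{1/2}\m{C}_{2}\m{C}_{1}^{1/2})^{1/2})$ always, with equality forcing $\m{K}_{1}^{*}\m{K}_{2} \succeq \m{0}$. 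Given this, Part 3 is immediate: \cref{thm:ext:geo} already yields $\m{\Gamma}_{t}^{\m{N}_{12}}/\m{A} = t^{2}(\m{B}/\m{A} - \m{N}_{12}^{*}\m{N}_{12})$ and $\rk(\m{\Gamma}_{t}^{\m{N}_{12}}) = \rk(\m{A}) + \rk(\m{\Gamma}_{t}^{\m{N}_{12}}/\m{A})$ for $t \in (0,1)$, and since $t^{2} > 0$ the second summand equals $\rk(\m{B}/\m{A} - \m{N}_{12}^{*}\m{N}_{12})$, independent of $t$.

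For Part 1, fix $\m{G} \in \c{G}(\m{A})$; by \cref{thm:Kanto:char,rmk:linear:kanto} one may write $\m{\Gamma}_{t}^{\m{N}_{12}} = \m{G}_{t}\m{G}_{t}^{*}$ with $\m{G}_{t} := (1-t)\m{G} + t\m{M}$, where $\m{M} \in \c{G}(\m{B})$ is any solution of \eqref{eq:bary:char:n2} attached to $\m{N}_{12}$ (the free block $\m{M}_{22}$ is immaterial, since $\m{G}\m{M}^{*}$ and hence $\m{G}_{t}\m{G}_{t}^{*}$ do not depend on it). For $0 \le s \le t \le 1$ one has $\m{G}_{s} - \m{G}_{t} = (t-s)(\m{G} - \m{M})$, while $\m{G}_{s}^{*}\m{G}_{t}$ is a sum with nonnegative coefficients of $\m{G}^{*}\m{G}$, $\m{M}^{*}\m{M}$ and $\m{G}^{*}\m{M} = \m{M}^{*}\m{G} = (\m{G}^{*}\m{B}\m{G})^{1/2}$, hence s.p.d. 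Applying the fact above with $(\m{K}_{1},\m{K}_{2}) = (\m{G}_{s},\m{G}_{t})$ gives $\c{W}_{2}^{2}(\m{\Gamma}_{s}^{\m{N}_{12}},\m{\Gamma}_{t}^{\m{N}_{12}}) = (t-s)^{2}\vertiii{\m{G} - \m{M}}_{2}^{2}$, and with $(\m{K}_{1},\m{K}_{2}) = (\m{G},\m{M})$ it gives $\vertiii{\m{G} - \m{M}}_{2}^{2} = \c{W}_{2}^{2}(\m{A},\m{B})$, which is the constant-speed property.

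For Part 2, since the Bures--Wasserstein distance coincides with the $\c{W}_{2}$-distance between the corresponding centered Gaussians, a constant-speed geodesic $(\m{\Gamma}_{t})$ of covariance matrices induces a constant-speed $\c{W}_{2}$-geodesic $t \mapsto N(\m{0},\m{\Gamma}_{t})$ in $(\c{P}_{2}(\b{R}^{n}),\c{W}_{2})$. By the displacement-interpolation description of geodesics in $\c{P}_{2}(\b{R}^{n})$ \cite{ambrosio2008gradient,villani2008optimal}, it has the form $N(\m{0},\m{\Gamma}_{t}) = ((1-t)\pi_{1} + t\pi_{2})_{\#}\gamma$ for some optimal coupling $\gamma$ of $N(\m{0},\m{A})$ and $N(\m{0},\m{B})$; since $\gamma$ (equivalently, its Gaussianisation, which has the same cost and marginals) is optimal, \eqref{eq:Kanto:block} forces the cross-covariance block of $\gamma$ to equal $\m{G}\m{M}^{*}$ for some $\m{M} \in \c{G}(\m{B})$ solving \eqref{eq:bary:char:n2} (relative to the fixed $\m{G}$ above). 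Taking second moments then gives $\m{\Gamma}_{t} = (1-t)^{2}\m{A} + t(1-t)(\m{G}\m{M}^{*} + \m{M}\m{G}^{*}) + t^{2}\m{B} = ((1-t)\m{G} + t\m{M})((1-t)\m{G} + t\m{M})^{*}$, which by \cref{thm:Kanto:char,rmk:linear:kanto} is exactly $\m{\Gamma}_{t}^{\m{N}_{12}}$ for the value of $\m{N}_{12}$ determined by $\m{M}$; by Part 3 its rank is constant on $(0,1)$.

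The main obstacle is Part 2: one must justify that invoking the general structure theorem for $\c{W}_{2}$-geodesics is legitimate in this degenerate regime (the geodesic need not be ``deterministic'' a priori, and the plan $\gamma$ need not be Gaussian --- only its covariance is used), and check that the parametrisation redundancy between $\m{N}_{12}$ and $\m{M}_{22}$ does not obstruct recovering the path. A fully self-contained alternative would instead first show $\m{\Gamma}_{t} \in \c{K}_{t}(\m{A},\m{B})$ for each $t$ directly from the triangle inequality --- namely that $(1-t)\c{W}_{2}^{2}(\m{A},\m{\Gamma}_{t}) + t\c{W}_{2}^{2}(\m{\Gamma}_{t},\m{B})$ attains its minimal value $t(1-t)\c{W}_{2}^{2}(\m{A},\m{B})$ --- and then upgrade the a priori $t$-dependent parameter $\m{N}_{12}(t)$ to a single $\m{N}_{12}$ by a dyadic midpoint recursion built on \cref{thm:ext:geo}; establishing that $t$-independence is the delicate step of that route.
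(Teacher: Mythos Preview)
Your proofs of Parts 1 and 3 coincide with the paper's. For Part 2 you take a genuinely different route: you embed into $\c{P}_{2}(\b{R}^{n})$, invoke the general displacement-interpolation characterisation of $\c{W}_{2}$-geodesics, and then use the Gaussianisation observation (the quadratic cost depends only on second moments, so replacing $\gamma$ by the centered Gaussian with the same covariance preserves optimality and marginals) to land on \eqref{eq:Kanto:block}. This is correct; the displacement interpolation theorem requires no regularity on the endpoints, and your reduction to the Gaussian case is exactly what is needed.

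The paper argues internally, without appealing to the abstract structure theorem. It first shows $\tilde{\m{\Gamma}}_{t} \in \c{K}_{t}(\m{A},\m{B})$ for each $t$ via the Cauchy--Schwarz/sectional-curvature inequality (as you anticipate in your closing paragraph). But then, rather than any dyadic recursion, it fixes a single $t_{0} \in (0,1)$, uses \cref{cor:bary:injec} to see that the \emph{reverse} Schur complement $\m{A}/\m{\Gamma}_{t_{0}}^{\m{N}_{12}}$ vanishes, and concludes from \cref{thm:spd:revise} that the geodesic from $\m{\Gamma}_{t_{0}}^{\m{N}_{12}}$ back to $\m{A}$ (and forward to $\m{B}$) is \emph{unique}. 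Since both $\tilde{\m{\Gamma}}$ and $\m{\Gamma}^{\m{N}_{12}}$, suitably restricted, are such geodesics by Part 1, they must coincide on all of $[0,1]$. Your approach is shorter but imports machinery from outside the paper's framework; the paper's uniqueness-from-one-point trick is fully self-contained, sidesteps the $t$-independence issue you flag, and carries over verbatim to the infinite-dimensional setting (\cref{thm:kanto:geo:inf}).
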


\begin{proof}[Proof of \cref{thm:kanto:geo}] \quad
\begin{enumerate}[leftmargin = *]
\item Since $\m{M}^{*} \m{G} = \m{G}^{*} \m{M} \succeq \m{0}$, note that, for any $0 \le t \le s \le 1$, 
\begin{align*}
    \m{G}_{t}^{*} \m{G}_{s} &= \left( [(1-t) \m{G} + t \m{M}]^{*} [(1-s) \m{G} + s \m{M}] \right) \\
    &= (1-t)(1-s) \m{G}^{*} \m{G} + ts \tr (\m{B}) + [(1-t)s + (1-s)t] (\m{G}^{*} \m{M}) = \m{G}_{s}^{*} \m{G}_{t} \succeq \m{0}.
\end{align*}
By \cref{lem:opt:algn}, we obtain
\begin{align*}
    \c{W}_{2}^{2}(\m{\Gamma}_{t}, \m{\Gamma}_{s}) &= \tr(\m{\Gamma}_{t}) + \tr(\m{\Gamma}_{t}) - 2 \tr [(\m{\Gamma}_{t}^{1/2} \m{\Gamma}_{s} \m{\Gamma}_{t}^{1/2})^{1/2}] \\
    &= \tr(\m{G}_{t}^{*} \m{G}_{t}) + \tr(\m{G}_{s}^{*} \m{G}_{s}) - 2
    \tr(\m{G}_{t}^{*} \m{G}_{s}) 
    = \tr([\m{G}_{t} - \m{G}_{s}]^{*} [\m{G}_{t} - \m{G}_{s}]) \\
    &= (t-s)^{2} \tr([\m{G} - \m{M}]^{*} [\m{G} - \m{M}]) = (t-s)^{2} \c{W}_{2}^{2}(\m{A}, \m{B}).
\end{align*}
\item Let $\tilde{\m{\Gamma}}: [0, 1] \to \b{R}^{n \times n}$ be a constant-speed geodesic connecting $\m{A}$ and $\m{B}$. Fix $t \in (0, 1)$ and we claim that $\tilde{\m{\Gamma}}_{t} \in \c{K}_{t}(\m{A}, \m{B})$. For completeness, we provide the proof of the well-known sectional curvature inequality of the 2-Wasserstein distance \cite{villani2008optimal, panaretos2020invitation,yun2023exponential}. By the Cauchy-Schwarz inequality, we get
\begin{align*}
    (1-t) \c{W}_{2}^{2}(\m{A}, \m{C}) + t \c{W}_{2}^{2}(\m{B}, \m{C}) \ge 
    \left(\frac{1}{1-t} + \frac{1}{t} \right)^{-1} (\c{W}_{2}(\m{A}, \m{C}) + \c{W}_{2}(\m{B}, \m{C}))^{2} \ge
    t (1-t) \c{W}_{2}^{2}(\m{A}, \m{B}),
\end{align*}
and the equality holds if and only if $t^{-1} \c{W}_{2}(\m{A}, \m{C}) = (1-t)^{-1} \c{W}_{2}(\m{B}, \m{C}) = \c{W}_{2}(\m{A}, \m{B})$. This shows that $\tilde{\m{\Gamma}}_{t} \in \c{K}_{t}(\m{A}, \m{B})$, i.e., $\tilde{\m{\Gamma}}_{t} = \m{\Gamma}_{t}^{\m{N}_{12}}$ for some $\m{N}_{12}$ given in \cref{thm:Kanto:char}. 

It remains to show that $\tilde{\m{\Gamma}}_{t} = \m{\Gamma}_{t}^{\m{N}_{12}}$ for all $t \in (0, 1)$. Note that $\m{M} \in \c{G}(\m{B}) $ associated with $\m{N}_{12}$ satisfies $\m{G}^{*} \m{M} \succeq \m{0}$. Then, we obtain from \cref{cor:bary:injec} that the reverse Schur complement vanishes: $\m{A}/\m{\Gamma}_{t}^{\m{N}_{12}} = \m{0}$. Due to \cref{thm:spd:revise} and part 1,  $\m{\Gamma}^{\m{N}_{12}}: [t, 0] \to \b{R}^{n \times n}$ (reversed time) is the unique constant-speed geodesic connecting $\m{\Gamma}_{t}^{\m{N}_{12}}$ and $\m{A}$. Repeating the proof, we get that $\m{\Gamma}^{\m{N}_{12}}: [t, 1] \to \b{R}^{n \times n}$ is the unique constant-speed geodesic connecting $\m{\Gamma}_{t}^{\m{N}_{12}}$ and $\m{B}$. 
\item This follows directly from \cref{thm:ext:geo}.
\end{enumerate}
\end{proof}

\begin{example}
Recall s.p.d. matrices $\m{A} = \diag(4, 1, 0)$ and $\m{C} = \diag(0, 0 , 1)$ in \cref{ex:spd:ex}. From our earlier calculation  in \cref{ex:spd:ex}, all the Kantorovich geodesics connecting $\m{A}$ and $\m{C}$ are given by
\begin{equation*}
    \m{\Gamma}: [0, 1] \to \b{R}^{3 \times 3}, \, t \mapsto \m{\Gamma}_{t} = \begin{pmatrix}
    4(1-t)^{2} & 0 & 2t (1-t) x  \\
    0 & (1-t)^{2} & t (1-t) y \\
    2t (1-t) x & t (1-t) y & t^{2}
    \end{pmatrix}, \quad x^{2} + y^{2} \le 1,
\end{equation*}
and they are McCann interpolants if and only if $x^{2} + y^{2} = 1$. Also, as predicted by \cref{thm:kanto:geo}, we have
\begin{align*}
    \rk(\m{\Gamma}_{t}) = \begin{cases}
        2 &, \quad x^{2} + y^{2} = 1, \\
        3 &, \quad x^{2} + y^{2} < 1,
    \end{cases} \quad t \in (0, 1).
\end{align*}
\end{example}

\subsection{Barycenter Algorithm}
Let $\m{A}_{1}, \cdots, \m{A}_{m}  \in \b{R}^{n \times n}$ be s.p.d. matrices, and $p_{1}, \cdots, p_{m} > 0$ be the weights with $p_{1} + \cdots + p_{m} = 1$. Recall from \cref{thm:char:barycenter} that finding the barycenter amounts to solving the maximization problem: 
\begin{equation*}
    \max \{ \vertii{\hat{\m{G}}}_{2}^{2} : \hat{\m{G}} = \sum_{i=1}^{m} p_{i} \m{G}_{i}, \, \m{G}_{i} \in \c{G}(\m{A}_{i}), \, i =1, \cdots, m \}.
\end{equation*}
We outline an efficient iterative algorithm based on block coordinate ascent to solve this problem, different to \cite{bhatia2019bures,pigoli2014distances,masarotto2019procrustes}. A detailed analysis of its performance is beyond the scope of this paper, and be a future topic. 

In each main iteration, we cycle through $i = 1, 2, \cdots, m$ and update each Green's operator $\m{G}_{i} \in \c{G}(\m{A}_{i})$ to maximize the objective function while holding all other operators $\m{G}_{j}$ (for $j \ne i$) fixed. To derive the update rule for a specific $\m{G}_{i}$, let $\hat{\m{G}}_{-i} := \sum_{j \ne i} p_{j} \m{G}_{j}$. The objective can then be written as:
\begin{align*}
    \vertii{p_{i} \m{G}_{i} + \hat{\m{G}}_{-i} }_{2}^{2} = 2 p_{i} \tr [ (\hat{\m{G}}_{-i})^{*} \m{G}_{i} ] + p_{i}^{2} \tr(\m{A}_{i}) + \vertii{\hat{\m{G}}_{-i}}_{2}^{2},
\end{align*}
which is maximized if and only if $\m{G}_{i}^{\text{new}}$ is the solution to 
\begin{align*}
    \m{G}_{i} \in \c{G}(\m{A}_{i}), \quad (\hat{\m{G}}_{-i})^{*} \m{G}_{i} = [ (\hat{\m{G}}_{-i})^{*} \m{A}_{i} \hat{\m{G}}_{-i} ]^{1/2},
\end{align*}
which can easily be solved either (i) using the (possibly rank-deficient) SVD decomposition of $(\hat{\m{G}}_{-i})^{*} \m{A}_{i}^{1/2}$ outlined in \cref{alg:wba}, or (ii) \eqref{eq:bary:char:n2} using \cref{thm:Kanto:char}. By construction, each update increases (or leaves unchanged) the value of the objective function, ensuring that the algorithm is monotonically convergent, particularly to the global solution provided uniqueness. Also, one may incorporate a natural stopping criterion: the process can be terminated when the increase in the objective function is negligible:
\begin{equation*}
    \vertii{p_{i} \m{G}_{i}^{\text{new}} + \hat{\m{G}}_{-i}}_{2}^{2} - \vertii{p_{i} \m{G}_{i}^{\text{old}} + \hat{\m{G}}_{-i} }_{2}^{2} = 2 p_{i} \tr \left[ (\hat{\m{G}}_{-i})^{*} (\m{G}_{i}^{\text{new}} - \m{G}_{i}^{\text{old}})  \right] \ge 0
\end{equation*}

\begin{algorithm}[h!]
\caption{Weighted Wasserstein Barycenter Algorithm}
\label{alg:wba}
\begin{algorithmic}[1]
\Require
    \Statex A set of s.p.d. matrices $\m{A}_{1}, \dots, \m{A}_{m} \in \b{R}^{n \times n}$.
    \Statex A set of weights $p_{1}, \dots, p_{m} > 0$ with $\sum_{i=1}^{m} p_{i} = 1$.
    \Statex Maximum iteration counts $\text{max\_iter} \in \b{N}$, and tolerance $\text{tol} > 0$.
\Ensure
    \Statex The weighted Wasserstein barycenter $\hat{\m{A}} \in \b{R}^{n \times n}$.

\Statex
\Statex \Comment{\textit{--- Initialization ---}}
\For{$i = 1 \to m$}
    \State Solve for $\m{G}_{i}$ in $\m{G}_{i} \m{G}_{i}^{*} = \m{A}_{i}$ \Comment{e.g., via Cholesky decomposition}
\EndFor
\State $\hat{\m{G}} \gets \sum_{i=1}^{m} p_i \m{G}_i$ \Comment{Initialize the weighted average of Green's matrices}

\Statex
\Statex \Comment{\textit{--- Main Iteration ---}}
\For{$k = 1 \to \text{max\_iter}$}
    \For{$i = 1 \to m$}
        \State $\m{G}_i^{\text{old}} \gets \m{G}_i$
        \State $\hat{\m{G}} \gets \hat{\m{G}} - p_i \m{G}_i^{\text{old}}$ \Comment{Temporarily remove component $i$}
        
        \State Solve for $\m{G}_i$ in: $\m{G}_i \m{G}_i^{*} = \m{A}_i$,  $\hat{\m{G}}^* \m{G}_i = (\hat{\m{G}}^* \m{A}_i \hat{\m{G}})^{1/2}$ \Comment{e.g., Golub-Kahan Bidiagonalization \cite{golub2013matrix}}
        
        \State $\hat{\m{G}} \gets \hat{\m{G}} + p_i \m{G}_i$ \Comment{Re-incorporate the updated component}

        \If{$\tr[\hat{\m{G}}^* (\m{G}_i - \m{G}_i^{\text{old}})] < \text{tol}$}
            \State \textbf{return} $\hat{\m{G}} \hat{\m{G}}^*$ \Comment{Early stopping on convergence}
        \EndIf
    \EndFor
\EndFor

\State \textbf{return} $\hat{\m{G}} \hat{\m{G}}^*$ \Comment{Return result after max iterations}
\end{algorithmic}
\end{algorithm}

\section{Validity of Pushforward}\label{sec:admissible}
Henceforth, we assume $\c{H}$ is an infinite dimensional separable $\b{R}$-Hilbert space. 
Extending the concept of reachability between Gaussian measures to this setting introduces two fundamental challenges: defining suitable operator domains and accommodating unbounded transport maps.

Consider a Gaussian random element $X$ in $\c{H}$ with a covariance operator $\m{A}: \c{H} \rightarrow \c{H}$. It is well established that the covariance operator of a Gaussian element must be of trace class. Conversely, for any trace-class operator $\m{A} \in \c{B}_{1}^{+}(\c{H})$, there exists a corresponding Gaussian measure \cite{da2006introduction}. We can therefore associate a centered Gaussian measure with its covariance operator $\m{A} \in \c{B}_{1}^{+}(\c{H})$. Since $\m{A}$ is compact, the open mapping theorem prevents it from being bijective \cite{conway2019course}. This theorem also establishes the equivalence of the following statements:
\begin{itemize}
    \item $\rk (\m{A}) < \infty$, i.e. $\m{A}$ has a finite-dimensional range.
    \item $\c{R}(\m{A})$ is a closed subspace in $\c{H}$.
    \item Equality holds in $\c{R}(\m{A}) \subseteq \c{R}(\m{A}^{1/2})$.
\end{itemize}

The compactness of $\m{A}$ introduces a subtlety concerning the support of the associated Gaussian measure.
A key property is that $\b{P}(X \in \overline{\c{R}(\m{A})})=1$ \cite{hsing2015theoretical}. However, if $\m{A}$ has an infinite rank, the Karhunen-Lo\`{e}ve expansion and the Picard criterion show that $\b{P}(X \in \c{R}(\m{A}^{1/2}))=0$ \cite{bogachev1998gaussian, hairer2009introduction}. A classical example is Brownian motion (BM) in $\c{H} = L_{2} [0, 1]$, where $\c{R}(\m{A}^{1/2})$ is the Cameron-Martin space (see \cref{ex:BM2BB}) \cite{nualart2006malliavin, paulsen2016introduction, da2006introduction}. 
This disparity raises a dilemma: \emph{if we wish to apply an unbounded transport operator $\m{T}$, its domain may not be the entire space. What, then, should its essential domain be?}

A second challenge arises from the counterintuitive nature of unbounded operators in this context. Let $\{e_{i} : i \in \b{N} \}$ be an ONB for $\c{H}$, and consider the covariance operators
\begin{equation*}
    \m{A} = \sum_{n = 1}^{\infty} n^{-a} e_{n} \otimes e_{n}, \quad \m{B} = \sum_{n = 1}^{\infty} n^{-b} e_{n} \otimes e_{n},
\end{equation*}
where $a, b > 1$ ensures both operators are trace-class. A natural candidate for the operator that pushes forward the measure $N_{\c{H}}(\m{0}, \m{A})$ to $N_{\c{H}}(\m{0}, \m{B})$ would be
\begin{equation*}
    \m{T} = \sum_{n = 1}^{\infty} n^{(a-b)/2} e_{n} \otimes e_{n}.
\end{equation*}
Note that this operator is unbounded when $a > b > 1$, and in fact, the series defining $\m{T}$ fails to converge even in the weak operator topology, meaning it is not a well-defined operator on $\overline{\c{R}(\m{A})}$. On the other hand, this map cannot be arbitrarily unbounded. For instance, if we attempted to pushforward to a measure where $b \le 1$ (e.g., white noise with $b=0$), the resulting measure could not exist in $\c{H}$ as $\m{B}$ would no longer be trace-class. The theory of abstract Wiener spaces demonstrates that the Hilbert space must be expanded to a larger Banach space to accommodate such measures \cite{gross1967abstract, da2006introduction}.

These challenges show that in infinite dimensions, one \emph{cannot} simply assert that $\m{T}$ is optimal merely because it satisfies certain linear equations as in \cref{def:reachable}. Before tackling the Monge-Kantorovich problem, we must first address a more fundamental question: \emph{If a possibly unbounded operator $\m{T}$ pushes forward one Gaussian measure to another, what essential conditions must it satisfy?}

\subsection{Unbounded Operators}
This subsection provides a brief overview of unbounded operators, which are central to our analysis. Although the theory is often developed for $\b{C}$-Hilbert space to leverage their richer spectral theory \cite{conway2019course, hall2013quantum, reed1980methods, riesz2012functional, kato2013perturbation}, our work is set in $\b{R}$-Hilbert spaces, which introduces certain technical nuances that we address here.

\begin{definition}
Let $\c{H}_{1}, \c{H}$ be Hilbert spaces. 
\begin{enumerate}[leftmargin = *]
\item An operator $\m{T}: \c{D} (\m{T}) \subseteq \c{H}_{1} \rightarrow \c{H}$ is called \emph{unbounded} if it is
\begin{itemize}[leftmargin = *]
    \item [] \textbf{Densely defined :} Its domain $\c{D} (\m{T})$ is a dense subspace of $\c{H}_{1}$.
    \item [] \textbf{Linearity :} For any $f, g \in \c{D} (\m{T})$ and $\alpha, \beta \in \b{R}$, $\m{T}(\alpha f + \beta g) = \alpha \m{T}(f) +\beta \m{T} (g)$.
\end{itemize}
We denote the space of unbounded operators from $\c{H}_{1}$ to $\c{H}$ by $\c{D}(\c{H}_{1}, \c{H})$, or simply $\c{D}(\c{H})$ when $\c{H}_{1} = \c{H}$. An operator $\m{T} \in \c{D}(\c{H}_{1}, \c{H})$ is called \emph{bounded} if there exists a constant $c > 0$ such that $\|\m{T}h\| \le c \|h\|$ for all $h \in \c{D}(\m{T})$.

\item The \emph{graph} of $\m{T}$ is the set $\c{G}(\m{T}) := \{(h, \m{T}h) \in \c{H}_{1} \times \c{H} \mid h \in \c{D}(\m{T}) \}$.
The operator $\m{T} \in \c{D}(\c{H}_{1}, \c{H})$ is called \emph{closed} if its graph $\c{G}(\m{T})$ is closed in $\c{H}_{1} \times \c{H}$. We denote the space of closed operators by $\c{C}(\c{H}_{1}, \c{H})$, or simply $\c{C}(\c{H})$ when $\c{H}_{1} = \c{H}$.  

\item For $\m{T}, \tilde{\m{T}} \in \c{D}(\c{H}_{1}, \c{H})$, we say that $\tilde{\m{T}}$ is an \emph{extension} of $\m{T}$, denoted by $\m{T} \subset \tilde{\m{T}}$, if $\c{G}(\m{T}) \subset \c{G}(\tilde{\m{T}})$.

\item An operator $\m{T} \in \c{D}(\c{H}_{1}, \c{H})$ is called \emph{closable} if it admits a closed extension $\tilde{\m{T}} \in \c{C}(\c{H}_{1}, \c{H})$. Its extension $\bar{\m{T}}$ is called the \emph{closure} if $\c{G}(\bar{\m{T}}) = \overline{\c{G}(\m{T})}$. 
\end{enumerate}
\end{definition}

Note that \textit{unbounded} does not necessarily mean \textit{not bounded}. However, a bounded operator has only a trivial bounded extension, making this case uninteresting. Additionally, a closed operator is defined everywhere if and only if it is bounded by the closed graph theorem \cite{conway2019course}.

\begin{definition}[Adjoint]
Let $\m{T} \in \c{D}(\c{H}_{1}, \c{H})$. The domain of its adjoint is defined as
\begin{equation*}
    \c{D}(\m{T}^{*}) := \{h \in \c{H}: g \in \c{D}(\m{T}) \mapsto \innpr{h}{\m{T}g} \text{ is a bounded linear functional} \}.
\end{equation*}
For each $h \in \c{D}(\m{T}^{*})$, the Riesz representation theorem guarantees the existence of a unique vector $\m{T}^{*} h \in \c{H}_{1}$ such that $\innpr{\m{T}^{*} h}{g} = \innpr{h}{\m{T} g}$ for all $g \in \c{D}(\m{T})$. This defines the \emph{adjoint} of $\m{T}$ as the linear operator $\m{T}^{*}: \c{D}(\m{T}^{*}) \to \c{H}_{1}$.
\end{definition}

The adjoint has several key properties. It is always a closed operator, thus $\m{T}^{*}$ is defined everywhere if and only if $\m{T}$ is bounded. Furthermore, if $\m{A} \subset \m{B}$, then $\m{B}^{*} \subset \m{A}^{*}$. A fundamental property is the \emph{duality}:
\begin{equation}\label{eq:adj:dual}
    \innpr{\m{T}^{*} h}{g} = \innpr{h}{\m{T} g}, \quad h \in \c{D}(\m{T}^{*}), \ g \in \c{D}(\m{T}).
\end{equation}
Additionally, $\c{G} \in \c{H}_{1} \times \c{H}$ is the graph of an operator if and only if it is a linear subspace with the single valued property: $(0, g) \in \c{G} \, \Rightarrow \, g =0$, from which the following equivalence follows:
\begin{itemize}
    \item $\m{T}$ is closable.
    \item If $f_{n} \in \c{D}(\m{T}) \to 0$ and $\m{T} f_{n} \to g$, then $g = 0$.
    \item There exists a closed extension of $\m{T}$.
    \item $\m{T}^{*} \in \c{D}(\c{H}, \c{H}_{1})$ is densely defined.
\end{itemize}
In this case, we have $\bar{\m{T}} = \m{T}^{**}$, which is the minimal closed extension. Consequently, $\m{T} \in \c{C}(\c{H}_{1}, \c{H})$ if and only if $\m{T} = \m{T}^{**}$.

\subsection{Admissible Operators}\label{ssec:admiss}
Let $X \sim N_{\c{H}}(\m{0}, \m{A})$ be a Gaussian random element  with covariance operator $\m{A} \in \c{B}_{1}^{+}(\c{H})$. For the expression $\m{T} X$ to be well-defined, the domain of $\m{T}$ (or a suitable extension) must contain a set of full measure for the law of $X$. Since $\b{P}(X \in \overline{\c{R}(\m{A}})) = 1$, we naturally restrict our operators to have domains within this subspace. The following lemma establishes that any such subspace of full measure must be dense.

\begin{lemma}\label{lem:a.s.:dense}
Let $X \sim N_{\c{H}}(\m{0}, \m{A})$. Then for any linear subspace $\c{D} \subset \overline{\c{R}(\m{A}})$, we have $\b{P}(X \in \c{D})=1$ only if $\c{D}$ is dense in $\overline{\c{R}(\m{A}})$. 
\end{lemma}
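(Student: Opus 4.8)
The plan is to argue by contradiction, using the fact that a one-dimensional projection of a Gaussian element of $\c{H}$ is a real Gaussian random variable, and such a variable cannot vanish almost surely unless its variance is zero.

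First I would suppose that $\c{D}$ is \emph{not} dense in $\overline{\c{R}(\m{A})}$. Then its closure $\overline{\c{D}}$ is a proper closed subspace of the Hilbert space $\overline{\c{R}(\m{A})}$, so the orthogonal decomposition of $\overline{\c{R}(\m{A})}$ with respect to $\overline{\c{D}}$ yields a unit vector $v \in \overline{\c{R}(\m{A})}$ with $v \perp \c{D}$; in particular $\innpr{v}{h} = 0$ for every $h \in \c{D}$.

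Next I would note that since $\m{A}$ is self-adjoint, $\overline{\c{R}(\m{A})} = \c{N}(\m{A})^{\perp} = \c{N}(\m{A}^{1/2})^{\perp}$. Because $v \neq 0$ lies in this subspace, the quantity $\sigma^{2} := \innpr{v}{\m{A} v} = \|\m{A}^{1/2} v\|^{2}$ is strictly positive: if it were zero then $v \in \c{N}(\m{A}^{1/2}) = \c{N}(\m{A})$, forcing $v = 0$. By the definition of a Gaussian random element, $\innpr{v}{X}$ is then a real Gaussian variable with mean $0$ and variance $\sigma^{2} > 0$, and hence $\b{P}(\innpr{v}{X} = 0) = 0$.

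Finally I would combine the two facts: on the event $\{X \in \c{D}\}$ we have $\innpr{v}{X} = 0$, so
\begin{equation*}
    1 = \b{P}(X \in \c{D}) \le \b{P}(\innpr{v}{X} = 0) = 0,
\end{equation*}
a contradiction. Hence $\c{D}$ must be dense in $\overline{\c{R}(\m{A})}$. The argument is essentially self-contained and short; the only point requiring a moment's care is the identification $\overline{\c{R}(\m{A})} = \c{N}(\m{A})^{\perp}$ and the resulting strict positivity of $v \mapsto \innpr{v}{\m{A}v}$ on nonzero vectors of this subspace, which is precisely what rules out the degenerate Gaussian case.
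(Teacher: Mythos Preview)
Your proof is correct and follows essentially the same approach as the paper: argue by contradiction, pick a nonzero $v \in \overline{\c{R}(\m{A})}$ orthogonal to $\c{D}$, observe that $\innpr{v}{X}$ is a nondegenerate real Gaussian (since $v \notin \c{N}(\m{A})$), and derive $1 \le 0$ from the inclusion $\{X \in \c{D}\} \subset \{\innpr{v}{X} = 0\}$. Your justification of $\sigma^{2} > 0$ via $\overline{\c{R}(\m{A})} = \c{N}(\m{A}^{1/2})^{\perp}$ is slightly more explicit than the paper's appeal to injectivity of $\m{A}$ on $\c{H}_{1}$, but the argument is the same.
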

\begin{proof}[Proof of \cref{lem:a.s.:dense}]
Assume that $\c{D}$ is not dense in $\c{H}$. Then, there exists a nonzero element $h \in \overline{\c{R}(\m{A}}) \cap \c{D}^{\perp}$. By the definition of Gaussianity \cite{hsing2015theoretical, da2006introduction}, we have $\innpr{h}{X} \sim N(0, \innpr{h}{\m{A}h})$
with non-zero variance due to the injectivity of $\m A$ on $\c{H}_1$. However, we also have
\begin{align*}
    1 = \b{P}(\innpr{h}{X} \neq 0) = \b{P}(\innpr{h}{X} \neq 0, X \in \c{D}) = 0,
\end{align*}
which is a contradiction. Hence, it must be that $\overline{\c{D}} = \c{H}_{1}$.
\end{proof}

In light of this lemma, we henceforth consider operators $\m{T}$ whose domains are dense subspaces of $\c{H}_{1} = \overline{\c{R}(\m{A})}$. 
A key challenge is that the measurability of 
$\m{T}X$ is not guaranteed, as an operator must typically be continuous to be Borel measurable \cite[Theorem. 9.10]{kechris2012classical}. The next theorem provides sufficient conditions for a closable operator
$\m{T}$ to admit an extension  
$\tilde{\m{T}}$ for which 
$\tilde{\m{T}} X$ is a well-defined random element.

\begin{theorem}[Pre-pushforward]\label{thm:prepush}
Let $\m{T} \in \c{D}(\overline{\c{R}(\m{A}}), \c{H})$ be an unbounded operator, and let $X \sim N_{\c{H}}(\m{0}, \m{A})$ where $\m{A} \in \c{B}_{1}^{+}(\c{H})$. Assume that $\m{T}$ is closable and $\m{T} \m{A}^{1/2} \in \c{B}_{2}(\c{H})$. If $\tilde{\m{T}}$ is any extension of the closure $\overline{\m{T}}$ of $\m{T}$, then $\b{P}(X \in \c{D}(\tilde{\m{T}}))=1$ and
\begin{equation*}
    \tilde{\m{T}} X \sim N_{\c{H}}(\m{0}, (\m{T} \m{A}^{1/2})(\m{T} \m{A}^{1/2})^{*}).
\end{equation*}
\end{theorem}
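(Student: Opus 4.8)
The plan is to reduce the statement to the almost-sure convergence of an explicit Gaussian series and then to exploit the closedness of $\overline{\m{T}}$ pathwise. Write $Q := \m{T}\m{A}^{1/2}$; note that $Q \in \c{B}_2(\c{H})$ already presupposes $\c{R}(\m{A}^{1/2}) \subseteq \c{D}(\m{T})$. First I would diagonalize $\m{A}$ on $\c{H}_1 := \overline{\c{R}(\m{A})}$: pick an orthonormal basis $\{e_k\}$ of $\c{H}_1$ with $\m{A}e_k = \lambda_k e_k$, $\lambda_k > 0$, and set $\xi_k := \lambda_k^{-1/2}\langle e_k, X\rangle$, so that the $\xi_k$ are i.i.d.\ $N(0,1)$ and $X = \sum_k \sqrt{\lambda_k}\,\xi_k e_k$. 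Because $\b{P}(X \in \overline{\c{R}(\m{A})}) = 1$ and the finite-rank projections $P_N := \sum_{k \le N} e_k \otimes e_k$ converge strongly to $\m{\Pi}_{\c{H}_1}$, we get $P_N X \to X$ in $\c{H}$ almost surely. Moreover each $e_k = \m{A}^{1/2}(\lambda_k^{-1/2} e_k) \in \c{R}(\m{A}^{1/2}) \subseteq \c{D}(\m{T})$, hence $\m{T}e_k = \lambda_k^{-1/2} Q e_k$, and thus $P_N X \in \c{D}(\m{T})$ with
\begin{equation*}
  \m{T}(P_N X) \;=\; \sum_{k=1}^{N} \xi_k\, Q e_k \;=:\; S_N .
\end{equation*}

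Next I would show that $S_N$ converges almost surely in $\c{H}$ to a Gaussian element $W$ with the required covariance. Since $\b{E}\|S_N - S_M\|^2 = \sum_{M < k \le N} \|Q e_k\|^2$ and $\sum_k \|Q e_k\|^2 = \vertiii{Q}_2^2 < \infty$, the partial sums form an $L^2$-bounded $\c{H}$-valued martingale (equivalently, an $L^2$-convergent series of independent symmetric summands), so by the It\^{o}--Nisio theorem $S_N \to W$ a.s.\ and in $L^2$; being an in-probability limit of Gaussian vectors, $W$ is Gaussian. A routine computation using $\b{E}[\xi_j \xi_k] = \delta_{jk}$, the $L^2$ convergence, and Parseval on $\c{H}_1$ (valid since $\c{R}(Q^*) \subseteq \c{N}(\m{A})^\perp = \c{H}_1$) then yields $\b{E}[\langle W, f\rangle \langle W, g\rangle] = \langle Q Q^* f, g\rangle$ for all $f, g \in \c{H}$, i.e.\ $W \sim N_{\c{H}}(\m{0}, (\m{T}\m{A}^{1/2})(\m{T}\m{A}^{1/2})^*)$.

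Finally I would splice the two limits. On the probability-one event where $P_N X \to X$ and $S_N \to W$ hold simultaneously in $\c{H}$, the pairs $\bigl(P_N X, \m{T}(P_N X)\bigr) = \bigl(P_N X, \overline{\m{T}}(P_N X)\bigr)$ lie in the graph $\c{G}(\overline{\m{T}})$ and converge to $(X, W)$; since $\overline{\m{T}}$ is closed, $(X, W) \in \c{G}(\overline{\m{T}})$, that is, $X \in \c{D}(\overline{\m{T}})$ and $\overline{\m{T}}X = W$ pointwise on this event. For any extension $\tilde{\m{T}} \supseteq \overline{\m{T}}$ we have $\c{D}(\overline{\m{T}}) \subseteq \c{D}(\tilde{\m{T}})$ with $\tilde{\m{T}}$ agreeing with $\overline{\m{T}}$ there, so $\b{P}(X \in \c{D}(\tilde{\m{T}})) = 1$ and $\tilde{\m{T}}X = W$ a.s.; in particular $\tilde{\m{T}}X$ is a.s.\ equal to a bona fide $\c{H}$-valued random element and carries the stated Gaussian law. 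The hard part will be this gluing step: one must produce a single full-measure event supporting \emph{both} convergences and then apply closedness \emph{pathwise}, not merely in distribution; a minor but necessary point is checking that $P_N X \in \c{D}(\m{T}) \subseteq \c{D}(\overline{\m{T}})$ so that the approximating pairs genuinely lie in $\c{G}(\overline{\m{T}})$.
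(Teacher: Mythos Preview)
Your proof is correct and follows essentially the same approach as the paper: Karhunen--Lo\`{e}ve truncation $P_N X$, almost-sure convergence of both $P_N X \to X$ and $S_N = \m{T}(P_N X) \to W$, and pathwise application of the closedness of $\overline{\m{T}}$ on the intersection event. The only cosmetic difference is that you invoke the It\^{o}--Nisio theorem (equivalently, the $L^2$-bounded martingale convergence) for the a.s.\ convergence of $S_N$, whereas the paper uses a direct maximal inequality; the gluing step and the identification of the limiting covariance are identical in spirit.
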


\begin{proof}[Proof of \cref{thm:prepush}]
Consider the spectral decomposition of the covariance $\m{A} \in \c{B}_{1}^{+}(\c{H})$:
\begin{equation*}
    \m{A} = \sum_{k=1}^{\infty} \lambda_{k} e_{k} \otimes e_{k},
\end{equation*}
where $\lambda_{1} \ge \lambda_{2} \ge \dots >0$ are positively-valued real sequence that is non-increasing and $\{e_{k} : k \in \b{N}\}$ is an ONB of $\overline{\c{R}(\m{A})}$. For each $k \in \b{N}$, define $\xi_{k} := \innpr{X}{e_{k}} / \sqrt{\lambda_{k}}$ so that $\xi_{k} \stackrel{i.i.d.}{\sim} N(0, 1)$. For each $n \in \b{N}$, define
\begin{align*}
    X_{n} := \sum_{k=1}^{n} \sqrt{\lambda_{k}} \xi_{k} e_{k} \in \c{R}(\m{A}^{1/2}) \subset \c{D}(\m{T}), \quad
    S_{n} := \m{T} X_{n} = \sum_{k=1}^{n} \xi_{k} (\m{T} \m{A}^{1/2}) e_{k}.
\end{align*}

\begin{enumerate}
\item [(Step 1)] We claim that $S_{n}$ is a $\b{P}$-a.s. Cauchy sequence. As in \cite[Lemma 3.15]{kallenberg1997foundations}, we get for any $\varepsilon > 0$ and $n \in \b{N}$ that
\begin{align*}
    \b{P} \left[ \sup_{k \ge n} \|S_{k} - S_{n} \| > \varepsilon \right] &\le \frac{1}{\varepsilon^{2}} \sum_{k \ge n} \b{E} \|\xi_{k} (\m{T} \m{A}^{1/2}) e_{k}\|^{2} = \frac{1}{\varepsilon^{2}} \sum_{k \ge n} \|(\m{T} \m{A}^{1/2}) e_{k}\|^{2} \\
    &\le \frac{1}{\varepsilon^{2}} \sum_{k=1}^{\infty} \|(\m{T} \m{A}^{1/2}) e_{k}\|^{2} = \frac{1}{\varepsilon^{2}} \vertii{\m{T} \m{A}^{1/2}}_{2}^{2} < \infty,
\end{align*}
and thus $\sup_{k \ge n} \|S_{k} - S_{n} \| \stackrel{\b{P}}{\to} 0$ as $n \to \infty$. Hence, there is a subsequence $\{n'\} \subset \{n\}$ such that $\sup_{k \ge n'} \|S_{k} - S_{n'} \| \stackrel{a.s.}{\to} 0$ as $n' \to \infty$. Because $\sup_{k \ge n} \|S_{k} - S_{n} \|$ is a non-increasing sequence, this entails that $\sup_{k \ge n} \|S_{k} - S_{n} \| \stackrel{a.s.}{\to} 0$, i.e. $S_{n}$ is a.s. Cauchy convergent, hence there is a random element $S$ in $\c{H}$ such that $S_{n} \stackrel{a.s.}{\to} S$.
\item [(Step 2)] Plugging in $\m{T} = \m{I}$ in (Step 1), or using the white noise expansion theorem \cite{da2006introduction}, we also obtain that $X_{n}$ is a.s. Cauchy convergent. Then, Bessel's equality concludes that $X_{n} \stackrel{a.s.}{\to} X$.
\item [(Step 3)] Consider a Borel measurable set $\Omega_{X}$ on which both $X_{n} \to X$ and $S_{n} \to S$ hold. Note that $\b{P}(\Omega_{X}) = 1$. Consider any $\tilde{\m{T}} \supset \m{T}^{**} = \overline{\m{T}}$. Then, on $\Omega_{X}$, we get
\begin{align*}
    \tilde{\m{T}} X_{n} = \overline{\m{T}} X_{n} = \m{T} X_{n} = S_{n} \to S, \quad X_{n} \in \c{D}(\m{T}) \subset \c{D}(\overline{\m{T}}) \to X.
\end{align*}
Since $\overline{\m{T}}$ is a closed operator, this yields that $X \in \c{D}(\overline{\m{T}})$ and $S = \overline{\m{T}} X = \tilde{\m{T}} X$ on $\Omega_{X}$, i.e. $S_{n} \stackrel{a.s.}{\to} \tilde{\m{T}} X$. Finally, because $S_{n}$'s are Gaussian and $\Cov[S_{n}] \to (\m{T} \m{A}^{1/2})(\m{T} \m{A}^{1/2})^{*}$, we conclude that $\tilde{\m{T}} X \sim N_{\c{H}}(\m{0}, (\m{T} \m{A}^{1/2})(\m{T} \m{A}^{1/2})^{*})$.
\end{enumerate}
\end{proof}

\cref{thm:prepush} shows that if a closable operator $\m{T}$ is sufficiently regular -- in the sense that $\m{T} \m{A}^{1/2} \in \c{B}_{2}(\c{H})$ -- then it can be extended to define a valid pushforward measure, e.g., $\overline{\m{T}}$. Furthermore, the theorem reveals a notable infinite-dimensional phenomenon: the resulting covariance operator is determined entirely by how the \emph{pre-pushforward} $\m{T}$
acts over $\c{R}(\m{A}^{1/2})$, which is a $N_{\c{H}}(0, \m{A})$-null set. 

\begin{remark}\label{rmk:close:break} 
The Hilbert-Schmidt condition does not imply the closability condition. To see this, consider an ONB $\{e_{n}\}$ of $\c{H}$, and define
\begin{equation*}
    \m{A}^{1/2} = \sum_{n=1}^{\infty} 2^{-2n} e_{n} \otimes e_{n}, \quad \m{T} = \sum_{n = 1}^{\infty} 2^{n} e_{1} \otimes e_{n}.
\end{equation*}
Then, $\m{T} \m{A}^{1/2} \in \c{B}_{2}(\c{H})$ but $\m{T}$ fails to be closable: $2^{-n} e_{n} \in \c{D}(\m{T}) \to 0$ but $\m{T} (2^{-n} e_{n}) \rightarrow e_{1} \neq 0$.
\end{remark}

We also note that if $\m{T}$ is closable, then its restriction $\m{T} \mid_{\c{R}(\m{A}^{1/2})}: \c{R}(\m{A}^{1/2}) \subset \c{H}_{1} \rightarrow \c{H}$ as well. We may therefore assume the domain of pre-pushforward to be $\c{R}(\m{A}^{1/2})$. The next theorem provides a converse to \cref{thm:prepush}: for a closable pushforward operator, the Hilbert-Schmidt condition is not only sufficient but also necessary:

\begin{theorem}\label{thm:trans:closed}
Let $\m{T} : \c{R}(\m{A}^{1/2}) \subset \c{H}_{1} \rightarrow \c{H}$ be closable, and let $X \sim N_{\c{H}}(\m{0}, \m{A})$. If $\m{T} X$ is a random element in $\c{H}$, then it must be Gaussian and $\m{T} \m{A}^{1/2} \in \c{B}_{2}(\c{H})$. 
Specifically,
\begin{equation*}
    \m{T} X \sim N_{\c{H}}(\m{0}, (\m{T} \m{A}^{1/2})(\m{T} \m{A}^{1/2})^{*}).
\end{equation*}
\end{theorem}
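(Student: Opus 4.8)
The plan is to show that $\m{Q}:=\m{T}\m{A}^{1/2}$ is Hilbert--Schmidt by identifying the covariance of the (assumed well-defined) image random element with $\m{Q}\m{Q}^{*}$ and then invoking the trace-class property of Gaussian covariances. Since $\b{P}(X\in\c{R}(\m{A}^{1/2}))=0$ whenever $\m{A}$ has infinite rank, the expression $\m{T}X$ is understood through the closure: we work with $Y:=\overline{\m{T}}X$ under the hypothesis $\b{P}(X\in\c{D}(\overline{\m{T}}))=1$, $Y$ being a Borel random element of $\c{H}$ (what follows is unchanged if $\overline{\m{T}}$ is replaced by any closed extension of $\m{T}$). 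First I record that $\c{D}(\m{Q})=\c{H}_{1}=\overline{\c{R}(\m{A})}$, because $\m{A}^{1/2}$ maps $\c{H}_{1}$ onto $\c{R}(\m{A}^{1/2})=\c{D}(\m{T})$; moreover, since $\m{T}$ is closable and $\m{A}^{1/2}$ is bounded with range exactly $\c{D}(\m{T})$, a short graph-closedness argument shows $\m{Q}$ is closed, hence bounded by the closed graph theorem (alternatively, boundedness of $\m{Q}$ will drop out of the covariance computation below).

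The first substantive step is to prove that $Y$ is a centered Gaussian random element of $\c{H}$. For $h\in\c{D}(\m{T}^{*})$ --- a dense subspace, as $\m{T}$ is closable --- the adjoint duality \eqref{eq:adj:dual} together with $\overline{\m{T}}^{*}=\m{T}^{*}$ gives $\innpr{Y}{h}=\innpr{\overline{\m{T}}X}{h}=\innpr{X}{\m{T}^{*}h}$ $\b{P}$-a.s., so $\innpr{Y}{h}\sim N(0,\|\m{A}^{1/2}\m{T}^{*}h\|^{2})$ is univariate Gaussian. For arbitrary $h\in\c{H}$, take $h_{n}\in\c{D}(\m{T}^{*})$ with $h_{n}\to h$; then $|\innpr{Y}{h_{n}}-\innpr{Y}{h}|\le\|Y\|\,\|h_{n}-h\|\to0$ $\b{P}$-a.s., and an a.s. limit of centered Gaussians is a centered Gaussian. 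Hence every one-dimensional projection of $Y$ is Gaussian, so $Y$ is a centered Gaussian random element; in particular its covariance operator $\m{B}_{Y}$ lies in $\c{B}_{1}^{+}(\c{H})$, using the standard fact (invoked already in \cref{sec:admissible}) that covariance operators of $\c{H}$-valued Gaussian elements are trace class.

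It remains to identify $\m{B}_{Y}$ with $\m{Q}\m{Q}^{*}$. For $h\in\c{D}(\m{T}^{*})$ we have $\innpr{\m{B}_{Y}h}{h}=\Var\innpr{X}{\m{T}^{*}h}=\|\m{A}^{1/2}\m{T}^{*}h\|^{2}$, and the duality computation $\innpr{\m{A}^{1/2}\m{T}^{*}h}{x}=\innpr{\m{T}^{*}h}{\m{A}^{1/2}x}=\innpr{h}{\m{T}\m{A}^{1/2}x}=\innpr{h}{\m{Q}x}$ for $x\in\c{H}_{1}$ --- legitimate because $\m{A}^{1/2}x\in\c{R}(\m{A}^{1/2})=\c{D}(\m{T})$ --- shows $h\in\c{D}(\m{Q}^{*})$ with $\m{Q}^{*}h=\m{A}^{1/2}\m{T}^{*}h$. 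Thus $\|\m{Q}^{*}h\|^{2}=\innpr{\m{B}_{Y}h}{h}\le\vertiii{\m{B}_{Y}}_{\infty}\|h\|^{2}$ on the dense set $\c{D}(\m{T}^{*})$, so $\m{Q}^{*}$, and hence $\m{Q}=\m{Q}^{**}$, is bounded; polarization and continuity then yield $\m{B}_{Y}=\m{Q}\m{Q}^{*}$ on all of $\c{H}$. Since $\m{B}_{Y}\in\c{B}_{1}^{+}(\c{H})$, this forces $(\m{T}\m{A}^{1/2})(\m{T}\m{A}^{1/2})^{*}=\m{Q}\m{Q}^{*}\in\c{B}_{1}^{+}(\c{H})$, i.e. $\m{T}\m{A}^{1/2}\in\c{B}_{2}(\c{H})$, and $Y=\overline{\m{T}}X\sim N_{\c{H}}(\m{0},(\m{T}\m{A}^{1/2})(\m{T}\m{A}^{1/2})^{*})$, as claimed. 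The main obstacle is the passage from ``$\innpr{Y}{h}$ Gaussian for $h$ in a dense set'' to ``$Y$ Gaussian'', and the careful bookkeeping of adjoints for the unbounded composition $\m{T}\m{A}^{1/2}$; once $\m{B}_{Y}$ is pinned down the conclusion is immediate.
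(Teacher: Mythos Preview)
Your argument is correct and follows the same overall strategy as the paper: establish Gaussianity of $\langle Y,h\rangle$ for $h\in\c{D}(\m{T}^{*})$ via the adjoint duality, extend to all $h$ by density, identify the covariance, and invoke the trace-class property of Gaussian covariances to conclude $\m{T}\m{A}^{1/2}\in\c{B}_{2}(\c{H})$.

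The differences are in the technical machinery. For the extension step, you use the direct estimate $|\langle Y,h_{n}-h\rangle|\le\|Y\|\,\|h_{n}-h\|\to 0$ a.s., while the paper phrases the same thing through tightness and convergence in probability. More notably, for boundedness of $\m{Q}=\m{T}\m{A}^{1/2}$ the paper extracts a weakly convergent subsequence of $(\m{T}\m{A}^{1/2})^{*}f_{n}$ via Banach--Alaoglu and then applies the closed graph theorem to show $\c{D}[(\m{T}\m{A}^{1/2})^{*}]=\c{H}$. You instead bound $\|\m{Q}^{*}h\|^{2}=\innpr{\m{B}_{Y}h}{h}\le\vertiii{\m{B}_{Y}}_{\infty}\|h\|^{2}$ on the dense set $\c{D}(\m{T}^{*})$ and use closedness of $\m{Q}^{*}$ to extend; this is more economical. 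Your opening remark that $\m{Q}$ is closed on all of $\c{H}_{1}$ (because $\m{A}^{1/2}$ is bounded with range exactly $\c{D}(\m{T})$ and $\m{T}$ is closable), hence bounded by the closed graph theorem before any probability enters, is a genuine shortcut the paper does not take; it makes the subsequent covariance identification cleaner since you already know $\m{Q}\m{Q}^{*}$ is a bona fide bounded operator.
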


\begin{proof}[Proof of \cref{thm:trans:closed}]
Choose any $f \in \c{D}(\m{A}^{1/2} \m{T}^{*}) = \c{D}(\m{T}^{*})$. Then for any $g \in \c{D}(\m{T} \m{A}^{1/2})$, or equivalently, $\m{A}^{1/2} g \in \c{D}(\m{T})$, we have from \eqref{eq:adj:dual} that
\begin{align*}
    \innpr{f}{\m{T} \m{A}^{1/2} g} = \innpr{\m{T}^{*} f}{\m{A}^{1/2} g} = \innpr{\m{A}^{1/2} \m{T}^{*} f}{g},
\end{align*}
which shows that $\m{A}^{1/2} \m{T}^{*} \subset (\m{T} \m{A}^{1/2})^{*}$. 
Since $\b{P}(X \in \c{D}(\m{T}))=1$, note that, for any $f \in \c{D}(\m{T}^{*})$,
\begin{align*}
    \innpr{f}{\m{T} X} \stackrel{\b{P}-a.s.}{\equiv} \innpr{\m{T}^{*} f}{X} \sim N(0, \innpr{\m{T}^{*} f}{\m{A} \m{T}^{*}f}),
\end{align*}
with $\innpr{\m{T}^{*} f}{\m{A} \m{T}^{*}f} = \| \m{A}^{1/2} \m{T}^{*}f \|^{2} = \|(\m{T} \m{A}^{1/2})^{*} f \|^{2}$.

Now, let $f \in \c{H}$. Since $\c{D}(\m{T}^{*})$ is dense in $\c{H}$, there exists a sequence $f_{n} \in \c{D}(\m{T}^{*})$ such that $f_{n} \to f$ in $\c{H}$. For any fixed $\varepsilon > 0$, by the Cauchy-Schwarz inequality,
\begin{align*}
    \b{P} \left( |\innpr{f}{\m{T} X} - \innpr{f_{n}}{\m{T} X} |>\varepsilon \right) \le \b{P} \left( \| \m{T} X \| > \frac{\varepsilon}{\|f_{n}-f\|} \right).
\end{align*}
Because any probability measure on $\c{H}$ is tight \cite{billingsley2013convergence}, the right-hand side vanishes as $n \to \infty$, implying convergence in probability:
\begin{equation}\label{eq:conv:prob}
    \innpr{f_{n}}{\m{T} X} \sim N(0, \|(\m{T} \m{A}^{1/2})^{*} f_{n} \|^{2}) \stackrel{\b{P}}{\rightarrow} \innpr{f}{\m{T} X}.
\end{equation}
This results in the fact that $\|(\m{T} \m{A}^{1/2})^{*} f_{n} \|$ converges to some constant $\sigma_{f} \ge 0$, and $\innpr{f}{\m{T} X} \sim N(0, \sigma_{f}^{2})$.

Applying the Banach-Alaoglu theorem \cite{conway2019course} to the bounded sequence $\|(\m{T} \m{A}^{1/2})^{*} f_{n} \|$, there is a subsequence $(n') \subset (n)$ such that $(\m{T} \m{A}^{1/2})^{*} f_{n'} \stackrel{wk}{\rightarrow} h$ with some $h \in \c{H}$. For any $g \in \c{D}(\m{T} \m{A}^{1/2})$,
\begin{align*}
    \innpr{g}{h} = \lim_{n' \rightarrow \infty} \innpr{g}{(\m{T} \m{A}^{1/2})^{*} f_{n'}} = \lim_{n' \rightarrow \infty} \innpr{(\m{T} \m{A}^{1/2}) g}{f_{n'}} = \innpr{(\m{T} \m{A}^{1/2}) g}{f},
\end{align*}
which demonstrates that $f \in \c{D}[(\m{T} \m{A}^{1/2})^{*}]$ and $h = (\m{T} \m{A}^{1/2})^{*} f$. However, since $f \in \c{H}$ was chosen arbitrary, this implies that $\c{D}[(\m{T} \m{A}^{1/2})^{*}] = \c{H}$, i.e. $(\m{T} \m{A}^{1/2})^{*} \in \c{B}_{\infty}(\c{H})$ due to the Closed Graph Theorem, thus $(\m{T} \m{A}^{1/2}) \in \c{B}_{\infty}(\c{H})$. Therefore, $\m{T} X \sim N_{\c{H}}(\m{0}, (\m{T} \m{A}^{1/2})(\m{T} \m{A}^{1/2})^{*})$, and since its covariance should be of trace-class \cite{da2006introduction}, we obtain $(\m{T} \m{A}^{1/2}) \in \c{B}_{2}(\c{H})$.
\end{proof}

As a direct application, \cref{thm:prepush,thm:trans:closed} yield the Driscoll 0-1 law  \cite{driscoll1973reproducing, lukic2001stochastic}:

\begin{corollary}
Let $X \sim N_{\c{H}}(\m{0}, \m{A})$ where $\m{A} \in \c{B}_{1}^{+}(\c{H})$. For $\alpha \in (0, 1/2)$, $\b{P}(X \in \c{R}(\m{A}^{\alpha}))=1$ if and only if $\m{A}^{1/2-\alpha} \in \c{B}_{2}(\c{H})$.
\end{corollary}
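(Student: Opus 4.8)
The plan is to realize the event $\{X\in\c{R}(\m{A}^{\alpha})\}$ as the domain of the closed operator $\m{A}^{-\alpha}$ — the inverse of the bounded, injective, self-adjoint operator $\m{A}^{\alpha}$ on $\c{H}_{1}:=\overline{\c{R}(\m{A})}$, extended by zero on $\c{N}(\m{A})$ — and then play \cref{thm:prepush} and \cref{thm:trans:closed} against each other through this single operator. Two functional-calculus facts are needed: (i) $\m{A}^{-\alpha}$ is densely defined on $\c{H}_{1}$ (its domain $\c{R}(\m{A}^{\alpha})$ is dense), closed, and self-adjoint; and (ii) since $\alpha\le 1/2$ one has $\c{R}(\m{A}^{1/2})\subseteq\c{R}(\m{A}^{\alpha})$ and $\m{A}^{-\alpha}\m{A}^{1/2}=\m{A}^{1/2-\alpha}$ on $\c{H}_{1}$ (zero on $\c{N}(\m{A})$), so that $\m{A}^{-\alpha}\m{A}^{1/2}\in\c{B}_{2}(\c{H})$ precisely when $\m{A}^{1/2-\alpha}\in\c{B}_{2}(\c{H})$.

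For the ``if'' direction, assume $\m{A}^{1/2-\alpha}\in\c{B}_{2}(\c{H})$ and set $\m{T}:=\m{A}^{-\alpha}\mid_{\c{R}(\m{A}^{1/2})}$, which is closable (a restriction of the closed operator $\m{A}^{-\alpha}$) and satisfies $\m{T}\m{A}^{1/2}=\m{A}^{1/2-\alpha}\in\c{B}_{2}(\c{H})$. Since $\m{A}^{-\alpha}$ is itself a closed extension of $\m{T}$, hence of its closure $\overline{\m{T}}$, applying \cref{thm:prepush} with the extension $\tilde{\m{T}}=\m{A}^{-\alpha}$ gives $\b{P}(X\in\c{D}(\m{A}^{-\alpha}))=\b{P}(X\in\c{R}(\m{A}^{\alpha}))=1$; no core property is needed here.

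For the ``only if'' direction, assume $\b{P}(X\in\c{R}(\m{A}^{\alpha}))=1$, so $\m{A}^{-\alpha}X$ is $\b{P}$-a.s.\ well defined. It is moreover a genuine random element: if $E_{n}$ denotes the spectral projection of $\m{A}$ onto $\{\lambda\ge 1/n\}$, then the bounded (hence Borel) operators $\m{A}^{-\alpha}E_{n}$ converge pointwise to $\m{A}^{-\alpha}$ on $\c{R}(\m{A}^{\alpha})$, whence $\m{A}^{-\alpha}X=\lim_{n}\m{A}^{-\alpha}E_{n}X$ is Borel measurable. Applying \cref{thm:trans:closed} to (the closure of) the pre-pushforward $\m{T}=\m{A}^{-\alpha}\mid_{\c{R}(\m{A}^{1/2})}$ then forces $\m{T}\m{A}^{1/2}=\m{A}^{1/2-\alpha}\in\c{B}_{2}(\c{H})$, and incidentally identifies the law of $\m{A}^{-\alpha}X$ as $N_{\c{H}}(\m{0},\m{A}^{1-2\alpha})$.

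The functional-calculus identities and the measurability of $\m{A}^{-\alpha}X$ are routine; the only genuinely delicate point is matching the domain conventions of \cref{thm:trans:closed}, i.e.\ verifying that $\c{R}(\m{A}^{1/2})$ is a core for $\m{A}^{-\alpha}$ so that the pre-pushforward on $\c{R}(\m{A}^{1/2})$ really has the full pseudoinverse as its closure. This is where I would concentrate the care: for $x=\m{A}^{\alpha}y\in\c{D}(\m{A}^{-\alpha})$ the truncations $x_{n}=\m{A}^{\alpha}E_{n}y$ have spectral support bounded away from $0$, hence lie in $\c{R}(\m{A}^{1/2})$, and satisfy $x_{n}\to x$ together with $\m{A}^{-\alpha}x_{n}=E_{n}y\to y=\m{A}^{-\alpha}x$, which yields the core property and closes the argument.
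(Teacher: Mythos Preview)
Your proposal is correct and follows essentially the same route as the paper: set $\m{T}=(\m{A}^{\alpha})^{\dagger}$, observe that $\m{T}\m{A}^{1/2}=\m{A}^{1/2-\alpha}$, and invoke \cref{thm:prepush} and \cref{thm:trans:closed} for the two directions. The only difference is that the paper takes $\m{T}$ on its natural domain $\c{R}(\m{A}^{\alpha})$ from the outset, so $\m{T}$ is already closed and no core argument is needed; your restriction to $\c{R}(\m{A}^{1/2})$ followed by the core verification is correct but an avoidable detour.
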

\begin{proof}
Let $\c{H}_{1} = \overline{\c{R}(\m{A})}$ and define $\m{T} := (\m{A}^{\alpha})^{\dagger} \vert_{\c{H}_{1}} : \c{R}(\m{A}^{\alpha}) \subset \c{H}_{1} \to \c{H}$. Then $\m{T} \in \c{C}(\c{H}_{1}, \c{H})$ is closed, so the result follows immediately from \cref{thm:prepush,thm:trans:closed} since $\m{T} \m{A}^{1/2} = \m{A}^{1/2-\alpha}$. 
\end{proof}

Finally, we consider the case where the closability condition on the pre-pushforward $\m{T}$ is dropped. The following theorem shows that an extension is still possible, although it is less explicit than the closure:

\begin{theorem}\label{thm:wo:closable}
Let $\m{T}: \c{R}(\m{A}^{1/2}) \subset \c{H}_{1} \rightarrow \c{H}$ be an unbounded operator, and let $X \sim N_{\c{H}}(\m{0}, \m{A})$ where $\m{A} \in \c{B}_{1}^{+}(\c{H})$. If $\m{T} \m{A}^{1/2} \in \c{B}_{2}(\c{H})$, then there exists an extension $\tilde{\m{T}} : \c{V} \to \c{H}$ of $\m{T}$ such that $\c{V}$ has full $N_{\c{H}}(0, \m{A})$-measure. In this case,
\begin{equation*}
    (\tilde{\m{T}} X, X) \sim N_{\c{H} \times \c{H}}(\m{0}, (\m{T} \m{A}^{1/2}, \m{A}^{1/2})(\m{T} \m{A}^{1/2}, \m{A}^{1/2})^{*}).
\end{equation*}
\end{theorem}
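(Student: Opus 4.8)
The plan is to construct $\tilde{\m{T}}$ explicitly from the spectral basis of $\m{A}$, since without closability we cannot fall back on the canonical closure $\overline{\m{T}}=\m{T}^{**}$ used in the proof of \cref{thm:prepush} (cf.\ \cref{rmk:close:break}). Write $\m{A}=\sum_{k}\lambda_{k}\,e_{k}\otimes e_{k}$ with $\lambda_{1}\ge\lambda_{2}\ge\cdots>0$ and $\{e_{k}\}$ an ONB of $\c{H}_{1}=\overline{\c{R}(\m{A})}$, put $\xi_{k}:=\innpr{X}{e_{k}}/\sqrt{\lambda_{k}}\stackrel{i.i.d.}{\sim}N(0,1)$, and set $X_{n}:=\sum_{k=1}^{n}\sqrt{\lambda_{k}}\xi_{k}e_{k}$ and $S_{n}:=\m{T}X_{n}=\sum_{k=1}^{n}\xi_{k}(\m{T}\m{A}^{1/2})e_{k}$, exactly as in the proof of \cref{thm:prepush}. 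Steps 1--2 there use only $\m{T}\m{A}^{1/2}\in\c{B}_{2}(\c{H})$ and not closability, so they apply verbatim: on a Borel event $\Omega_{X}$ of full probability, $X_{n}$ and $S_{n}$ are Cauchy, $X_{n}\to X$, and $S_{n}\to S$ for some random element $S$ in $\c{H}$.

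I then define the candidate operator directly on a subspace of $\c{H}_{1}$:
\[
  \c{V} := \left\{ x\in\c{H}_{1} : \sum_{k}\frac{\innpr{x}{e_{k}}}{\sqrt{\lambda_{k}}}(\m{T}\m{A}^{1/2})e_{k}\ \text{converges in }\c{H}\right\},\qquad
  \tilde{\m{T}}x := \sum_{k}\frac{\innpr{x}{e_{k}}}{\sqrt{\lambda_{k}}}(\m{T}\m{A}^{1/2})e_{k}.
\]
Linearity of $\tilde{\m{T}}$ and the fact that $\c{V}$ is a linear subspace are immediate from the convergence-of-series structure. The extension property is where the hypothesis $\m{T}\m{A}^{1/2}\in\c{B}_{2}(\c{H})\subset\c{B}_{\infty}(\c{H})$ enters: for $h=\m{A}^{1/2}g$ with $g\in\c{H}_{1}$ one has $\innpr{h}{e_{k}}=\sqrt{\lambda_{k}}\innpr{g}{e_{k}}$, so the $n$-th partial sum equals $(\m{T}\m{A}^{1/2})\big(\sum_{k=1}^{n}\innpr{g}{e_{k}}e_{k}\big)$, which by continuity of $\m{T}\m{A}^{1/2}$ converges to $(\m{T}\m{A}^{1/2})g=\m{T}h$. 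Hence $\c{R}(\m{A}^{1/2})\subset\c{V}$ and $\tilde{\m{T}}$ restricts to $\m{T}$ there, i.e.\ $\m{T}\subset\tilde{\m{T}}$.

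It remains to identify $\tilde{\m{T}}X$ with $S$. On $\Omega_{X}$, passing to the limit in $\innpr{X_{m}}{e_{k}}=\sqrt{\lambda_{k}}\xi_{k}$ (valid for $m\ge k$) gives $\xi_{k}=\innpr{X}{e_{k}}/\sqrt{\lambda_{k}}$, whence $S_{n}=\sum_{k=1}^{n}\frac{\innpr{X}{e_{k}}}{\sqrt{\lambda_{k}}}(\m{T}\m{A}^{1/2})e_{k}$; the convergence $S_{n}\to S$ is then exactly the assertion that $X\in\c{V}$ with $\tilde{\m{T}}X=S$. In particular $\c{V}$ has full $N_{\c{H}}(0,\m{A})$-measure, and $\tilde{\m{T}}X=S$ is a random element as an a.s.\ limit of the measurable approximants $S_{n}$. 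Finally $(S_{n},X_{n})$ is a sequence of jointly centered Gaussian elements in $\c{H}\times\c{H}$ with $\Cov(S_{n},X_{n})\to(\m{T}\m{A}^{1/2},\m{A}^{1/2})(\m{T}\m{A}^{1/2},\m{A}^{1/2})^{*}$ and $(S_{n},X_{n})\to(\tilde{\m{T}}X,X)$ a.s., so the limit is Gaussian with that covariance, as claimed.

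The main obstacle is precisely the manufacture of $\tilde{\m{T}}$: one must exhibit an honest linear operator on an honest subspace (rather than merely a map defined $N_{\c{H}}(0,\m{A})$-a.e.), and then check that this basis-dependent object genuinely restricts to the given $\m{T}$ on $\c{R}(\m{A}^{1/2})$ — this is the step for which boundedness of $\m{T}\m{A}^{1/2}$ is essential and which explains why the resulting extension is ``less explicit'' than the closure. The probabilistic content (the a.s.\ Cauchy estimate, the Gaussian limit) is inherited from \cref{thm:prepush} and requires no new idea.
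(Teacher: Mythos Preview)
Your proof is correct and follows essentially the same route as the paper: both construct $\tilde{\m{T}}$ explicitly as the limit of the partial sums $\sum_{k\le n}\frac{\innpr{x}{e_{k}}}{\sqrt{\lambda_{k}}}(\m{T}\m{A}^{1/2})e_{k}$ on the subspace where this converges, verify the extension property via boundedness of $\m{T}\m{A}^{1/2}$, and then argue full measure. The only cosmetic differences are that the paper packages the construction in $\c{H}\times\c{H}$ (defining $\m{S}_{n}h=(\tilde{\m{T}}_{n}h,h)$ simultaneously) and invokes Doob's $L^{2}$-martingale convergence theorem for full measure, whereas you work component-wise and recycle the Kolmogorov-type maximal inequality from Step~1 of \cref{thm:prepush}; both arguments are equivalent here.
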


\begin{proof}[Proof of \cref{thm:wo:closable}]
We follow the proof similar to that of \cite[Theorem 4.47]{hairer2009introduction}. Noting that the RKHS associated with $N_{\c{H}}(0, \m{A})$ is given by \eqref{eq:Mahalanobis:RKHS},
it suffices to show that $\m{T}: \b{H}(\m{A}) \rightarrow \c{H}$ is a Hilbert-Schmidt operator, . Consider the spectral decomposition of $\m{A} \in \c{B}_{1}^{+}(\c{H})$ as in the proof of \cref{thm:prepush}. Then $\{\sqrt{\lambda_{k}} e_{k} : k \in \b{N}\}$ is an ONB of $\b{H}(\m{A})$, and
\begin{align*}
    \sum_{k=1}^{\infty} \innpr{\m{T} \sqrt{\lambda_{k}} e_{k}}{\m{T} \sqrt{\lambda_{k}} e_{k}}_{\c{H}} = \sum_{k=1}^{\infty} \innpr{\m{T} \m{A}^{1/2} e_{k}}{\m{T} \m{A}^{1/2} e_{k}}_{\c{H}} = \vertiii{\m{T} \m{A}^{1/2}}_{2}^{2} < \infty,
\end{align*}
which shows that $\m{T} \in \c{B}_{2}(\b{H}(\m{A}), \c{H}): \c{R}(\m{A}^{1/2}) \subset \c{H}_{1} \rightarrow \c{H} \times \c{H}$. For each $n \in \b{N}$, define the linear operator of finite rank:
\begin{equation*}
    \m{S}_{n} : \c{H} \to \c{H} \times \c{H}, \, h \mapsto \sum_{k=1}^{n} \innpr{h}{e_{k}}_{\c{H}} (\m{T} e_{k}, e_{k}) =  \sum_{k=1}^{n} \innpr{h}{\frac{e_{k}}{\sqrt{\lambda_{k}}}}_{\c{H}} (\m{T} \m{A}^{1/2} e_{k}, \m{A}^{1/2} e_{k}).
\end{equation*}
Define the linear subspace
\begin{equation*}
    \c{V} := \{ h \in \c{H} : \lim_{n \to \infty} \m{S}_{n}(h) \text{ exists in } \c{H} \times \c{H} \},
\end{equation*}
and let $\m{S}: \c{V} \to \c{H} \times \c{H}$ by $\m{S}(h) = \lim_{n \to \infty} \m{S}_{n}(h)$ for $h \in \c{V}$. To see that $\m{S}$ is an extension of $(\m{T}, \m{I})$, observe that for any $h = \m{A}^{1/2} g \in $
\begin{align*}
    \m{S}_{n} h &= \sum_{k=1}^{n} \innpr{\m{A}^{1/2} g}{\frac{e_{k}}{\sqrt{\lambda_{k}}}}_{\c{H}} (\m{T} \m{A}^{1/2} e_{k}, \m{A}^{1/2} e_{k}) \\
    &= \sum_{k=1}^{n} \innpr{g}{e_{k}}_{\c{H}} (\m{T} \m{A}^{1/2} e_{k}, \m{A}^{1/2} e_{k}) = (\m{T}\m{A}^{1/2}, \m{A}^{1/2})  \left(\sum_{k=1}^{n} \innpr{g}{e_{k}}_{\c{H}} e_{k} \right) \\
    &\to (\m{T} \m{A}^{1/2} g, \m{A}^{1/2} g) = (\m{T}, \m{I}) h
\end{align*}
as $n \to \infty$ by the Plancherel equality.
Note that $\m{S}_{n}(X)$ is an $(\c{H} \times \c{H})$-valued martingale with
\begin{align*}
    \sup_{n \in \b{N}} \b{E} \|\m{S}_{n}(X)\|_{\c{H} \times \c{H}}^{2} &= \sum_{k=1}^{\infty} \|(\m{T} \m{A}^{1/2} e_{k}, \m{A}^{1/2} e_{k})\|_{\c{H} \times \c{H}}^{2} \\
    &= \vertiii{(\m{T} \m{A}^{1/2}, \m{A}^{1/2})}_{2}^{2} = \vertiii{\m{T} \m{A}^{1/2}}_{2}^{2} + \vertiii{\m{A}}_{1} < \infty,
\end{align*}
hence $\c{V}$ has a full $N_{\c{H}}(0, \m{A})$-measure by the Doob martingale convergence theorem \cite{kallenberg1997foundations}. Using characteristic function as in \cite[Proposition 4.40]{hairer2009introduction}, $\m{S} X$ is a Gaussian random element, with the covariance given by $(\m{T} \m{A}^{1/2}, \m{A}^{1/2})(\m{T} \m{A}^{1/2}, \m{A}^{1/2})^{*} \in \c{B}_{1}^{+}(\c{H} \times \c{H})$, and letting $\tilde{\m{T}}$ to be the projection of $\m{S}$ onto $\c{V}$ completes the proof.
\end{proof}

\begin{remark}
The proof of \cref{thm:wo:closable} shows that $\m{T} \m{A}^{1/2} \in \c{B}_{2}(\c{H})$ if and only if $\m{T} \in \c{B}_{2}(\b{H}(\m{A}), \c{H})$, where $\b{H}(\m{A})$ is the RKHS associated with $N_{\c{H}}(0, \m{A})$. This perspective reinforces a key insight from the previous theorems: the resulting pushforward measure is determined solely by the action of the pre-pushforward operator $\m{T}$ on the RKHS.
\end{remark}

\section{Hilbertian Reachability}\label{sec:hilb:reach}

\begin{definition}[Reachability in Infinite Dimension]\label{def:inf:reach}
Let $\c{H}$ be a separable Hilbert space, and $\m{A}, \m{B} \in \c{B}_{1}^{+}(\c{H})$ be covariance operators, and let $\c{H}_{1} = \overline{\c{R}(\m{A})}$. We say that $\m{B}$ is \emph{unboundedly reachable} from $\m{A}$, denoted $\m{A} \leadsto \m{B}$, if there exists an unbounded operator $\m{T}: \c{R}(\m{A}^{1/2}) \subset \c{H}_{1} \rightarrow \c{H}$ such that
\begin{align}
    &(\m{T} \m{A}^{1/2}) (\m{T} \m{A}^{1/2})^{*} = \m{B}, \label{eq:cond:ricatti} \\ 
    &\tr (\m{A}^{1/2} \m{T} \m{A}^{1/2}) = \tr (\m{T} \m{A}) = \tr [(\m{A}^{1/2} \m{B} \m{A}^{1/2})^{1/2}]. \label{eq:cond2:opt}
\end{align}
Such an operator $\m{T}$ called an \emph{optimal pre-pushforward}.
If $\m{T}$ can be chosen to be bounded, then we say that $\m{B}$ is \emph{boundedly reachable} from $\m{A}$, denoted $\m{A} \rightarrow \m{B}$.
\end{definition}

\eqref{eq:cond:ricatti} automatically guarantees $(\m{T} \m{A}^{1/2}) \in \c{B}_{2}(\c{H})$. As established in \cref{sec:admissible}, $\m{T}$ itself is a pre-pushforward that admits a valid pushforward extension, and the resulting Gaussian measure depends only on $\m{T}$. If $\m{T}$ is not bounded, then $\m{T} \m{A} \m{T}^{*} \subsetneq (\m{T} \m{A}^{1/2}) (\m{T} \m{A}^{1/2})^{*} = \m{B}$, since (LHS) is not defined everywhere. Indeed, (RHS) is a trivial extension of (LHS).

\begin{remark}
As in the Euclidean case, we may replace $\m{T} \m{A}^{1/2}$ by $\m{T} \m{G}$ for any Green's operator $\m{G} \in \c{G}(\m{A})$ in \eqref{eq:cond:ricatti} and \eqref{eq:cond2:opt}:
\begin{enumerate}
    \item [] \textbf{Transportability :} $(\m{T} \m{G}) (\m{T} \m{G})^{*} = \m{B}$,
    \item [] \textbf{Optimality :} $\tr (\m{G}^{*} \m{T} \m{G}) = \tr (\m{T} \m{A}) = \tr [(\m{G}^{*} \m{B} \m{G})^{1/2}]$, 
\end{enumerate}
due to the polar decomposition $\m{G} = \m{A}^{1/2} \m{U}$ with $\m{U} \m{U}^{*} = \m{\Pi}_{1}$, leading to $(\m{T} \m{G}) = (\m{T} \m{A}^{1/2}) \m{U}$ and $(\m{G}^{*} \m{B} \m{G})^{1/2} = \m{U}^{*} (\m{A}^{1/2} \m{B} \m{A}^{1/2})^{1/2} \m{U}$. 
As shown in \cref{lem:Neumann:eq}, the optimality condition is equivalent to requiring either $\m{G}^{*} \m{T} \m{G} = (\m{G}^{*} \m{B} \m{G})^{1/2}$ or $\m{G}^{*} \m{T} \m{G} \in \c{B}_{\infty}^{+}(\c{H})$.
\end{remark}

\begin{corollary}
Let $\m{A}, \m{B} \in \c{B}_{1}^{+}(\c{H})$ and $\m{G} \in \c{G}(\m{A})$. Then $\m{A} \leadsto \m{B}$ if and only if there is some $\m{M} \in \c{B}_{\infty}(\c{H})$ satisfying any of the following equivalent sets of conditions:
\begin{itemize}
    \item $\c{N}(\m{G}) \subset \c{N}(\m{M})$, $\m{M} \m{M}^{*} = \m{B}$, and $\m{G}^{*} \m{M} = (\m{G}^{*} \m{B} \m{G})^{1/2}$.
    \item $\c{N}(\m{G}) \subset \c{N}(\m{M})$, $\m{M} \m{M}^{*} = \m{B}$, and $\m{G}^{*} \m{M} \succeq \m{0}$.
\end{itemize}
These conditions are independent of the choice of Green's operator $\m{G} \in \s{G}(\m{A})$. 
\end{corollary}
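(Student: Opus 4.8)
The plan is to establish the equivalence via the natural correspondence $\m{T}\longleftrightarrow\m{M}=\m{T}\m{G}$ between optimal pre-pushforwards and the bounded operators appearing in the statement, working with one fixed Green's operator $\m{G}\in\c{G}(\m{A})$ throughout. Since the relation $\m{A}\leadsto\m{B}$ does not refer to $\m{G}$, proving the equivalence for an \emph{arbitrary} fixed $\m{G}$ automatically yields the asserted independence of the choice of Green's operator. The two displayed sets of conditions on $\m{M}$ are in turn equivalent by an elementary observation: if $\m{M}\m{M}^{*}=\m{B}$, then $\m{N}:=\m{G}^{*}\m{M}$ satisfies $\m{N}\m{N}^{*}=\m{G}^{*}\m{B}\m{G}$, so $\m{N}\succeq\m{0}$ forces $\m{N}=(\m{N}\m{N}^{*})^{1/2}=(\m{G}^{*}\m{B}\m{G})^{1/2}$, while the reverse implication is immediate; this is also a special case of \cref{lem:Neumann:eq}.

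For the forward implication, suppose $\m{T}$ is an optimal pre-pushforward, and write the polar decomposition $\m{G}=\m{A}^{1/2}\m{U}$. Since \eqref{eq:cond:ricatti} forces $\m{T}\m{A}^{1/2}\in\c{B}_{2}(\c{H})$, we have $\m{T}\m{G}=(\m{T}\m{A}^{1/2})\m{U}\in\c{B}_{2}(\c{H})\subset\c{B}_{\infty}(\c{H})$, so I set $\m{M}:=\m{T}\m{G}$. Then $\c{N}(\m{G})\subset\c{N}(\m{T}\m{G})=\c{N}(\m{M})$ is immediate; the transportability form of \eqref{eq:cond:ricatti} in the Remark following \cref{def:inf:reach} gives $\m{M}\m{M}^{*}=(\m{T}\m{G})(\m{T}\m{G})^{*}=\m{B}$; and the optimality form, via \cref{lem:Neumann:eq}, gives $\m{G}^{*}\m{M}=\m{G}^{*}\m{T}\m{G}=(\m{G}^{*}\m{B}\m{G})^{1/2}\succeq\m{0}$. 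Hence $\m{M}$ satisfies both sets of conditions.

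For the converse, let $\m{M}\in\c{B}_{\infty}(\c{H})$ satisfy $\c{N}(\m{G})\subset\c{N}(\m{M})$, $\m{M}\m{M}^{*}=\m{B}$ and $\m{G}^{*}\m{M}\succeq\m{0}$; note that $\m{M}\m{M}^{*}=\m{B}$ is trace class, so $\m{M}\in\c{B}_{2}(\c{H})$ automatically. I would then define the pre-pushforward $\m{T}:=(\m{M}\m{G}^{\dagger})\vert_{\c{R}(\m{A}^{1/2})}:\c{R}(\m{A}^{1/2})\subset\c{H}_{1}\to\c{H}$, which is a densely defined linear operator because $\c{R}(\m{A}^{1/2})=\c{R}(\m{G})$ is dense in $\c{H}_{1}$. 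The only computation needed is $\m{G}^{\dagger}\m{G}=\m{\Pi}_{\c{N}(\m{G})^{\perp}}$, which gives $(\m{T}\m{G})x=\m{M}\m{\Pi}_{\c{N}(\m{G})^{\perp}}x$ for all $x\in\c{H}$; the hypothesis $\c{N}(\m{G})\subset\c{N}(\m{M})$ means $\m{M}\m{\Pi}_{\c{N}(\m{G})}=\m{0}$, so $\m{T}\m{G}=\m{M}$ as everywhere-defined operators. Using $\m{A}^{1/2}=\m{G}\m{U}^{*}$ (hence $\m{G}^{\dagger}\m{A}^{1/2}=\m{U}^{*}$) one finds $\m{T}\m{A}^{1/2}=\m{M}\m{U}^{*}\in\c{B}_{2}(\c{H})$, and then $(\m{T}\m{A}^{1/2})(\m{T}\m{A}^{1/2})^{*}=\m{M}\m{U}^{*}\m{U}\m{M}^{*}=\m{M}\m{\Pi}_{\c{N}(\m{G})^{\perp}}\m{M}^{*}=\m{M}\m{M}^{*}=\m{B}$, so \eqref{eq:cond:ricatti} holds. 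The optimality condition \eqref{eq:cond2:opt} follows from $\m{G}^{*}\m{T}\m{G}=\m{G}^{*}\m{M}\succeq\m{0}$ by \cref{lem:Neumann:eq} together with the $\m{G}$-version of the optimality condition in that Remark. Hence $\m{T}$ is an optimal pre-pushforward and $\m{A}\leadsto\m{B}$.

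I expect the one genuinely delicate point to be the domain bookkeeping in the converse: one must verify that $\m{T}=\m{M}\m{G}^{\dagger}$ restricted to $\c{R}(\m{A}^{1/2})$ is indeed densely defined and that the compositions $\m{T}\m{G}$ and $\m{T}\m{A}^{1/2}$ are not merely densely defined but everywhere defined and Hilbert--Schmidt, so that the operator identities in \cref{def:inf:reach} hold on the nose rather than only after passing to a closure. Everything there reduces to the factorizations $\m{A}^{1/2}=\m{G}\m{U}^{*}$ and $\m{G}^{\dagger}\m{G}=\m{U}^{*}\m{U}=\m{\Pi}_{\c{N}(\m{G})^{\perp}}$, together with the null-space hypothesis $\c{N}(\m{G})\subset\c{N}(\m{M})$, and is otherwise routine.
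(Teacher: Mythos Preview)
Your proof is correct and follows essentially the same approach as the paper, which dispatches the corollary in one line by defining $\m{T}:\c{R}(\m{A}^{1/2})\to\c{H}$ via $\m{G}h\mapsto\m{M}h$; your map $\m{T}=(\m{M}\m{G}^{\dagger})\vert_{\c{R}(\m{A}^{1/2})}$ is precisely this operator, and you have spelled out the domain and polar-decomposition bookkeeping that the paper leaves implicit.
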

\begin{proof}
Trivial by considering $\m{T}: \c{R}(\m{A}^{1/2}) \subset \c{H}_{1} \rightarrow \c{H}, \m{G} h \mapsto \m{M} h$.
\end{proof}

As in the finite dimensional case, we investigate the following question: \emph{How can reachability be characterized, and can an optimal pre-pushforward be extended to be symmetric or even s.p.d.?}

\begin{definition}
Let $\m{T} \in \c{D}(\c{H})$ be an unbounded operator. Then,
\begin{enumerate}[leftmargin = *]
    \item $\m{T}$ is called \emph{symmetric} if $\innpr{\m{T} f}{g} = \innpr{f}{\m{T} g}$ for any $f, g \in \c{D}(\m{T})$, i.e. $\m{T} \subset \m{T}^{*}$.
    \item $\m{T}$ is called \emph{self-adjoint} if $\m{T} = \m{T}^{*}$.
    \item $\m{T}$ is called \emph{semi-positive definite} (s.p.d.) if it is symmetric and $\innpr{\m{T} f}{f} \ge 0$ for any $f \in \c{D}(\m{T})$.
\end{enumerate}
\end{definition}

By definition, any symmetric operator is closable, and a self-adjoint operator admits no symmetric extension other than itself.
Any self-adjoint operator is symmetric and closed. However, in contrast to the bounded case, a closed symmetric operator is not necessarily self-adjoint \cite[Example X.1.12]{conway2019course} or \cite[Section 9.6]{hall2013quantum}. Additionally, given $\m{A} \in \c{B}_{0}^{+}(\c{H})$, we remark that $\m{A}^{\dagger} \in \c{C}(\c{H})$ is self-adjoint. If $\m{A}$ is injective, then $\m{A}^{\dagger}$ is boundedly invertible on $\c{D}(\m{A}^{\dagger}) = \c{R}(\m{A})$, with its unique right inverse $\m{A}$.


\begin{proposition}\label{cor:worowicz:self:adj}
Let $\m{A}, \m{C} \in \c{B}_{0}^{+}(\c{H})$ with $\c{R}(\m{C}) \subset \c{R}(\m{A}^{1/2})$. Define the operator $\m{S} := \m{A}^{\dagger/2} \m{C} \m{A}^{\dagger/2}: \c{D}(\m{A}^{\dagger/2}) \rightarrow \c{H}$. Then the following holds:
\begin{enumerate}[leftmargin = *]
\item $\m{S} \in \c{D}(\c{H})$ is an s.p.d. operator.
\item If we let $\m{R} := \m{A}^{\dagger/2} \m{C} \in \c{B}_{\infty}(\c{H})$, then $\m{R}^{*} = \overline{\m{C} \m{A}^{\dagger/2}}$ and $\m{S}^{*} = \m{A}^{\dagger/2} \m{R}^{*} \in \c{D}(\c{H})$.
\item $\m{A}^{\dagger/2} \m{C}^{1/2} \in \c{C}(\c{H})$ is closed and $\tilde{\m{S}} := \m{A}^{\dagger/2} \m{C}^{1/2} \overline{\m{C}^{1/2} \m{A}^{\dagger/2}}$ is a self-adjoint extension of $\m{S}$.
\end{enumerate}
\end{proposition}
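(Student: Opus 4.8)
The plan is to reduce all three claims to the Douglas lemma together with the standard calculus of adjoints of products in which one factor is bounded and everywhere defined, and --- for the self-adjointness in part~3 --- to von Neumann's theorem that $\m{N}^{*}\m{N}$ is self-adjoint whenever $\m{N}$ is closed and densely defined. Throughout, write $\c{H}_{1} = \overline{\c{R}(\m{A})}$ and recall that $\m{A}^{\dagger/2}$ is a closed, self-adjoint operator on $\c{H}$ with $\c{R}(\m{A}^{1/2}) \subset \c{D}(\m{A}^{\dagger/2})$ and $\m{A}^{\dagger/2}\m{A}^{1/2} = \m{\Pi}_{\c{H}_{1}}$. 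Since $\c{R}(\m{C}) \subset \c{R}(\m{A}^{1/2}) \subset \c{D}(\m{A}^{\dagger/2})$, each of $\m{S}$, $\m{R}$, $\m{C}\m{A}^{\dagger/2}$ and $\m{C}^{1/2}\m{A}^{\dagger/2}$ is well defined on the dense domain $\c{D}(\m{A}^{\dagger/2})$.

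For \emph{part~1}, density of $\c{D}(\m{S}) = \c{D}(\m{A}^{\dagger/2})$ is immediate; for $f,g \in \c{D}(\m{A}^{\dagger/2})$ one uses $\m{C}\m{A}^{\dagger/2}f \in \c{R}(\m{C}) \subset \c{D}(\m{A}^{\dagger/2})$ together with the self-adjointness of $\m{A}^{\dagger/2}$ and of $\m{C}$ to slide $\m{A}^{\dagger/2}$ across twice, obtaining
\[
\innpr{\m{S}f}{g} = \innpr{\m{C}\m{A}^{\dagger/2}f}{\m{A}^{\dagger/2}g} = \innpr{f}{\m{S}g}, \qquad \innpr{\m{S}f}{f} = \innpr{\m{C}\m{A}^{\dagger/2}f}{\m{A}^{\dagger/2}f} \ge 0,
\]
the last inequality because $\m{C} \succeq \m{0}$; hence $\m{S}$ is s.p.d.

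For \emph{part~2}, the Douglas lemma applied to $\c{R}(\m{C}) \subset \c{R}(\m{A}^{1/2})$ gives $\m{R} = \m{A}^{\dagger/2}\m{C} = (\m{A}^{1/2})^{\dagger}\m{C} \in \c{B}_{\infty}(\c{H})$ and $\m{C} = \m{A}^{1/2}\m{R}$. Applying the identity $(\m{B}\m{N})^{*} = \m{N}^{*}\m{B}^{*}$ --- valid when $\m{B}$ is bounded and everywhere defined and $\m{N}$ densely defined --- to $\m{C}\m{A}^{\dagger/2}$ and using $\c{R}(\m{C}) \subset \c{D}(\m{A}^{\dagger/2})$ yields $(\m{C}\m{A}^{\dagger/2})^{*} = (\m{A}^{\dagger/2})^{*}\m{C} = \m{R}$; since $\m{R}$ is bounded and everywhere defined, $\m{C}\m{A}^{\dagger/2}$ is closable with $\overline{\m{C}\m{A}^{\dagger/2}} = (\m{C}\m{A}^{\dagger/2})^{**} = \m{R}^{*}$. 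Finally $\m{S} = \m{R}\m{A}^{\dagger/2}$, and the same rule gives $\m{S}^{*} = (\m{A}^{\dagger/2})^{*}\m{R}^{*} = \m{A}^{\dagger/2}\m{R}^{*} \in \c{D}(\c{H})$.

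For \emph{part~3}, set $\m{N} := \m{A}^{\dagger/2}\m{C}^{1/2}$. Since $\m{C}^{1/2}\m{C}^{1/2} = \m{C}$ maps into $\c{D}(\m{A}^{\dagger/2})$ we have $\c{R}(\m{C}^{1/2}) \subset \c{D}(\m{N})$, and as $\c{R}(\m{C}^{1/2})+\c{N}(\m{C}^{1/2})$ is dense, $\m{N}$ is densely defined; being the composition of the closed operator $\m{A}^{\dagger/2}$ with the bounded, everywhere-defined $\m{C}^{1/2}$ on the right, $\m{N}$ is closed (one cannot assume $\m{N}$ bounded: $\c{R}(\m{C}^{1/2})$ need not lie in $\c{R}(\m{A}^{1/2})$). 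The product rule again gives $(\m{C}^{1/2}\m{A}^{\dagger/2})^{*} = \m{A}^{\dagger/2}\m{C}^{1/2} = \m{N}$, hence $\overline{\m{C}^{1/2}\m{A}^{\dagger/2}} = (\m{C}^{1/2}\m{A}^{\dagger/2})^{**} = \m{N}^{*}$, so that $\tilde{\m{S}} = \m{N}\m{N}^{*}$, which is self-adjoint and non-negative by von Neumann's theorem. That $\tilde{\m{S}}$ extends $\m{S}$ follows by noting that for $h \in \c{D}(\m{A}^{\dagger/2})$ one has $\overline{\m{C}^{1/2}\m{A}^{\dagger/2}}h = \m{C}^{1/2}\m{A}^{\dagger/2}h$ and then $\m{C}^{1/2}\bigl(\m{C}^{1/2}\m{A}^{\dagger/2}h\bigr) = \m{C}\m{A}^{\dagger/2}h \in \c{R}(\m{C}) \subset \c{D}(\m{A}^{\dagger/2})$, whence $h \in \c{D}(\tilde{\m{S}})$ and $\tilde{\m{S}}h = \m{A}^{\dagger/2}\m{C}\m{A}^{\dagger/2}h = \m{S}h$.

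The main obstacle is one of care rather than depth: the adjoint-of-product identities must be invoked in their correct forms (the factor pulled out of the adjoint must be bounded and everywhere defined), the operators $\m{N}$ and $\m{N}^{*} = \overline{\m{C}^{1/2}\m{A}^{\dagger/2}}$ are in general genuinely unbounded, so domains and closures must be tracked precisely, and the self-adjointness of $\tilde{\m{S}}$ rests on recognizing it as $\m{N}\m{N}^{*}$ and invoking von Neumann's theorem --- symmetry alone, as the paper emphasizes, would not be enough.
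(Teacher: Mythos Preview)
Your proof is correct and follows essentially the same route as the paper: the adjoint-of-product identities you cite are exactly what the paper proves in its auxiliary Lemma (lem:closable:half), and your invocation of von Neumann's theorem for the self-adjointness of $\m{N}\m{N}^{*}$ is precisely what the paper establishes by hand via the graph decomposition $\c{H}\times\c{H}=\c{G}(\overline{\m{Q}})\oplus\c{J}[\c{G}(\overline{\m{Q}}^{*})]$ and the bounded invertibility of $\m{I}+\tilde{\m{S}}$. The only difference is that you treat these as standard black boxes while the paper spells them out, so your write-up is shorter but relies on the same underlying mechanism.
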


\begin{proof}[Proof of \cref{cor:worowicz:self:adj}]
Note that $\m{R} := \m{A}^{\dagger/2} \m{C} \in \c{B}_{\infty}(\c{H})$ is a bounded operator by the Douglas lemma.

\begin{enumerate}[leftmargin = *]
\item We remark $\c{D}(\m{S}) = \c{D}(\m{A}^{\dagger/2})$ is dense in $\c{H}$. 
Also, for any $f, g \in \c{D}(\m{A}^{\dagger/2})$, let $k := \m{A}^{\dagger/2} f, h := \m{A}^{\dagger/2} g \in \overline{\c{R}(\m{A}^{1/2})} \subset \c{D}(\m{A}^{\dagger/2})$. Then, it holds that $f - \m{A}^{1/2} k \in \c{N}(\m{A}^{1/2})$, thus
\begin{equation*}
    \m{S} f = \m{S} \m{A}^{1/2} k = \m{A}^{\dagger/2} \m{C} \m{A}^{\dagger/2} \m{A}^{1/2} k = \m{A}^{\dagger/2} \m{C} k,
\end{equation*}
and similarly, $\m{S} g = \m{A}^{\dagger/2} \m{C} h$. Therefore, we obtain
\begin{align*}
    \innpr{\m{S}f}{g} = \innpr{\m{A}^{\dagger/2} \m{C} k}{g} = \innpr{\m{C} k}{\m{A}^{\dagger/2} g} &= \innpr{\m{C} k}{h} \\
    &= \innpr{k}{\m{C} h} = \innpr{\m{A}^{\dagger/2} f}{\m{C} h} = \innpr{f}{\m{A}^{\dagger/2} \m{C} h} = \innpr{f}{\m{S} g},
\end{align*}
which establishes that $\m{S}$ is is a s.p.d. operator.

\item By \cref{lem:closable:half}, $\m{R} = \m{A}^{\dagger/2} \m{C} = (\m{C} \m{A}^{\dagger/2})^{*} \in \c{B}_{\infty}(\c{H})$ is defined everywhere, hence $\m{C} \m{A}^{\dagger/2}$ is closable. Therefore, $\m{R}^{*} = (\m{C} \m{A}^{\dagger/2})^{**} = \overline{\m{C} \m{A}^{\dagger/2}}$. Applying \cref{lem:closable:half} again, we also get
\begin{equation*}
    \m{S}^{*} = (\m{R} \m{A}^{\dagger/2})^{*} = \m{A}^{\dagger/2} \m{R}^{*}.
\end{equation*}
Also, since $\m{S}$ is symmetric, $\m{S}^{*}$ is densely defined in $\c{H}$, i.e., $\m{S}^{*} \in \c{D}(\c{H})$.

\item Note that $\c{R}(\m{C}^{1/2}) \oplus \c{N}(\m{C}^{1/2}) \subset \c{D}(\m{A}^{\dagger/2} \m{C}^{1/2})$ are dense in $\c{H}$, thus it is immediate from \cref{lem:closable:half} that $\m{A}^{\dagger/2} \m{C}^{1/2} = (\m{C}^{1/2} \m{A}^{\dagger/2})^{*} \in \c{C}(\c{H})$. This yields that $\m{Q}:= \m{C}^{1/2} \m{A}^{\dagger/2}$ is closable. Define $\tilde{\m{S}} := \m{Q}^{*} \overline{\m{Q}}$ so that $\m{S} = \m{Q}^{*} \m{Q} \subset \tilde{\m{S}}$. The closure of symmetric operator is symmetric \cite{conway2019course}, and one can easily verify that $\tilde{\m{S}}$ is also s.p.d.. 

We claim that $\m{I} + \tilde{\m{S}}$ is boundedly invertible.
Observe that $\m{I} + \tilde{\m{S}}$ is injective since $\m{Q}^{*} = \m{Q}^{***} = \overline{\m{Q}}^{*}$ and
\begin{equation*}
    \innpr{(\m{I} + \tilde{\m{S}})h}{h} = \|h\|^{2} + \| \overline{\m{Q}} h\|^{2} \ge \|h\|^{2}, \quad h \in \c{D}(\tilde{\m{S}}).
\end{equation*}

Note that $\c{H} \times \c{H} = \c{G}(\overline{\m{Q}}) \oplus \c{J} [\c{G}(\overline{\m{Q}}^{*})]$ \cite{conway2019course}. Then, for any $f \in \c{H}$, there exists $x \in \c{D}(\overline{\m{Q}})$ and $y \in \c{D}(\overline{\m{Q}}^{*})$ such that
\begin{equation*}
    (f, 0) = (x, \overline{\m{Q}}x) \oplus (-\overline{\m{Q}}^{*}y, y),
\end{equation*}
which leads to
\begin{equation*}
    f = x - \overline{\m{Q}}^{*}y = x + \overline{\m{Q}}^{*} \overline{\m{Q}} x = (\m{I} + \tilde{\m{S}})x.
\end{equation*}
This demonstrates that $\m{I} + \tilde{\m{S}}$ is bijective, and 
\begin{equation*}
    \|(\m{I} + \tilde{\m{S}})h\|^{2} = \|h\|^{2} + 2 \| \overline{\m{Q}} h\|^{2} + \|\tilde{\m{S}} h\|^{2} \ge \|h\|^{2}, \quad h \in \c{D}(\tilde{\m{S}}),
\end{equation*}
shows that $\vertii{(\m{I} + \tilde{\m{S}})^{-1}}_{\infty} \le 1$. 
Since $\m{I} + \tilde{\m{S}}$ is boundedly invertible, it is closed \cite{conway2019course}. If $(h_{n}, \tilde{\m{S}} h_{n}) \in \c{G}(\tilde{\m{S}}) \to (f, g) \in \c{H} \times \c{H}$, then
\begin{equation*}
    f = \lim_{n \to \infty} f_{n}, \quad f + g = \lim_{n \to \infty} (\m{I} + \tilde{\m{S}}) h_{n},
\end{equation*}
hence $f \in \c{D}(\m{I} + \tilde{\m{S}}) = \c{D}(\tilde{\m{S}})$ and $f + g = (\m{I} + \tilde{\m{S}}) f$, i.e. $g = \tilde{\m{S}} f$. This shows that $\tilde{\m{S}}$ is closed.

Now, suppose $\tilde{\m{S}}$ is not self-adjoint, i.e. $\tilde{\m{S}} \subsetneq \tilde{\m{S}}^{*}$. Note that $(\m{I}+\tilde{\m{S}})^{*} = \m{I}+\tilde{\m{S}}^{*}$ since $\m{I} \in \c{B}_{\infty}(\c{H})$, so we get $(\m{I}+\tilde{\m{S}}) \subsetneq (\m{I}+\tilde{\m{S}})^{*}$. Meanwhile, $\m{I} + \tilde{\m{S}}$ is surjective from the claim, meaning that $\c{N}[(\m{I}+\tilde{\m{S}})^{*}] \neq \{0\}$. However,
\begin{equation*}
    \c{N}[(\m{I}+\tilde{\m{S}})^{*}] = \c{R}[(\m{I}+\tilde{\m{S}})]^{\perp} = \{0\},
\end{equation*}
which is a contradiction. Therefore, $\tilde{\m{S}}$ is self-adjoint.
\end{enumerate}
\end{proof}

We remark that $\tilde{\m{S}}$ is not necessarily the unique self-adjoint extension of $\m{S}$. For further details, see discussions on essential self-adjointness and maximal symmetric extensions \cite{hall2013quantum, conway2019course}.

\subsection{Injective Case}\label{ssec:reach:inf}

\begin{proposition}\label{thm:sol:injec}
Let $\m{A}, \m{B} \in \c{B}_{1}^{+}(\c{H})$ be covariance operators with $\m{A} \succ \m{0}$, and let $\m{G} \in \c{G}(\m{A})$ be any Green's operator for $\m{A}$. Define
\begin{equation*}
    \m{S}_{\m{A} \leadsto \m{B}} := (\m{G}^{*})^{\dagger} (\m{G}^{*} \m{B} \m{G})^{1/2} \m{G}^{\dagger}: \c{R}(\m{A}^{1/2}) \subset \c{H} \rightarrow \c{H}.
\end{equation*}
Then $\m{A} \leadsto \m{B}$, and the following holds:
\begin{enumerate}[leftmargin = *]
    \item $\m{S}_{\m{A} \leadsto \m{B}}$ is independent of the choice of $\m{G} \in \c{G}(\m{A})$.
    \item $\m{S}_{\m{A} \leadsto \m{B}}$ is s.p.d. and is the unique optimal pre-pushforward.
    \item $(\m{G}^{*})^{\dagger} (\m{G}^{*} \m{B} \m{G})^{1/4} \overline{(\m{G}^{*} \m{B} \m{G})^{1/4} \m{G}^{\dagger}}$ is a self-adjoint extension of $\m{S}_{\m{A} \leadsto \m{B}}$ and thus a valid pushforward.
\end{enumerate}
\end{proposition}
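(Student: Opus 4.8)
The plan is to reduce the whole statement to \cref{cor:worowicz:self:adj} applied to $\m{C}:=(\m{A}^{1/2}\m{B}\m{A}^{1/2})^{1/2}$. Since $\m{A}\succ\m{0}$ we have $\c{H}_{1}=\c{H}$, $\c{N}(\m{G}^{*})=\c{N}(\m{A})=\{0\}$, and $\c{R}(\m{G})=\c{R}(\m{A}^{1/2})$ by the Douglas lemma, so $\c{D}(\m{G}^{\dagger})=\c{R}(\m{A}^{1/2})$; moreover $\c{R}((\m{G}^{*}\m{B}\m{G})^{1/2})=\c{R}(\m{G}^{*}\m{B}^{1/2})\subseteq\c{R}(\m{G}^{*})=\c{D}((\m{G}^{*})^{\dagger})$, so $\m{S}_{\m{A}\leadsto\m{B}}$ is a well-defined linear operator on the dense domain $\c{R}(\m{A}^{1/2})$. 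For part 1 (independence of $\m{G}$) I would use the polar decomposition $\m{G}=\m{A}^{1/2}\m{U}$ with $\m{U}$ a partial isometry of initial space $\c{N}(\m{G})^{\perp}$ and final space $\overline{\c{R}(\m{A}^{1/2})}=\c{H}$, so $\m{U}\m{U}^{*}=\m{I}$; then $\m{G}^{\dagger}=\m{U}^{*}\m{A}^{\dagger/2}$, $(\m{G}^{*})^{\dagger}=\m{A}^{\dagger/2}\m{U}$, and $(\m{G}^{*}\m{B}\m{G})^{s}=\m{U}^{*}(\m{A}^{1/2}\m{B}\m{A}^{1/2})^{s}\m{U}$ for $s>0$, because $\m{U}\m{U}^{*}=\m{I}$ makes $\m{X}\mapsto\m{U}^{*}\m{X}\m{U}$ commute with the continuous functional calculus on $\c{B}_{\infty}^{+}(\c{H})$. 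Substituting and cancelling the $\m{U}$'s collapses $\m{S}_{\m{A}\leadsto\m{B}}$ to $\m{A}^{\dagger/2}(\m{A}^{1/2}\m{B}\m{A}^{1/2})^{1/2}\m{A}^{\dagger/2}$, i.e.\ the operator $\m{S}=\m{A}^{\dagger/2}\m{C}\m{A}^{\dagger/2}$ of \cref{cor:worowicz:self:adj}, whose hypothesis $\c{R}(\m{C})=\c{R}(\m{A}^{1/2}\m{B}^{1/2})\subseteq\c{R}(\m{A}^{1/2})$ holds; the same cancellation rewrites the part-3 expression as $\m{A}^{\dagger/2}\m{C}^{1/2}\overline{\m{C}^{1/2}\m{A}^{\dagger/2}}$.

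Having fixed $\m{G}=\m{A}^{1/2}$, reachability $\m{A}\leadsto\m{B}$ follows quickly. Here $\m{S}_{\m{A}\leadsto\m{B}}\m{A}^{1/2}=\m{A}^{\dagger/2}(\m{A}^{1/2}\m{B}\m{A}^{1/2})^{1/2}=\m{R}$ in the notation of \cref{cor:worowicz:self:adj}, which is bounded and (being forced to have trace-class Gram) Hilbert--Schmidt; using $\m{R}^{*}=\overline{\m{C}\m{A}^{\dagger/2}}$ from part 2 there, $\m{R}\m{R}^{*}=\m{A}^{\dagger/2}(\m{A}^{1/2}\m{B}\m{A}^{1/2})\m{A}^{\dagger/2}$ agrees with $\m{B}$ on the dense set $\c{R}(\m{A}^{1/2})$ and hence everywhere, which is \eqref{eq:cond:ricatti}. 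Evaluating in an orthonormal eigenbasis $\{e_{k}\}$ of $\m{A}$ (so $\m{A}^{1/2}e_{k}=\sqrt{\lambda_{k}}e_{k}\in\c{D}(\m{A}^{\dagger/2})$) gives $\innpr{e_{k}}{\m{S}_{\m{A}\leadsto\m{B}}\m{A}e_{k}}=\innpr{e_{k}}{(\m{A}^{1/2}\m{B}\m{A}^{1/2})^{1/2}e_{k}}$, so $\tr(\m{S}_{\m{A}\leadsto\m{B}}\m{A})=\tr[(\m{A}^{1/2}\m{B}\m{A}^{1/2})^{1/2}]$, finite because $(\m{A}^{1/2}\m{B}\m{A}^{1/2})^{1/2}=|\m{B}^{1/2}\m{A}^{1/2}|$ is trace class; this is \eqref{eq:cond2:opt}. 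Part 2's assertion that $\m{S}_{\m{A}\leadsto\m{B}}$ is s.p.d.\ is then part 1 of \cref{cor:worowicz:self:adj}.

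For uniqueness, let $\m{T}:\c{R}(\m{A}^{1/2})\to\c{H}$ be any optimal pre-pushforward. By the remark following \cref{def:inf:reach}, $\m{T}\m{A}^{1/2}\in\c{B}_{2}(\c{H})$ and $\m{K}:=\m{A}^{1/2}\m{T}\m{A}^{1/2}$ is bounded with $\m{K}\m{K}^{*}=\m{A}^{1/2}\m{B}\m{A}^{1/2}$; \cref{lem:Neumann:eq} together with \eqref{eq:cond2:opt} forces $\m{K}=(\m{A}^{1/2}\m{B}\m{A}^{1/2})^{1/2}$. Hence for $h=\m{A}^{1/2}g$ the element $\m{T}h=\m{T}\m{A}^{1/2}g$ satisfies $\m{A}^{1/2}(\m{T}\m{A}^{1/2}g)=(\m{A}^{1/2}\m{B}\m{A}^{1/2})^{1/2}g\in\c{R}(\m{A}^{1/2})$, and injectivity of $\m{A}^{1/2}$ pins it down as $\m{A}^{\dagger/2}(\m{A}^{1/2}\m{B}\m{A}^{1/2})^{1/2}g=\m{S}_{\m{A}\leadsto\m{B}}h$; thus $\m{T}=\m{S}_{\m{A}\leadsto\m{B}}$. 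Finally, for part 3, \cref{cor:worowicz:self:adj}(3) gives the self-adjoint extension $\tilde{\m{S}}=\m{A}^{\dagger/2}\m{C}^{1/2}\overline{\m{C}^{1/2}\m{A}^{\dagger/2}}$ of $\m{S}_{\m{A}\leadsto\m{B}}$, equal to the operator displayed in part 3 by the cancellation of step 1; since $\m{S}_{\m{A}\leadsto\m{B}}$ is symmetric (hence closable), $\m{S}_{\m{A}\leadsto\m{B}}\m{A}^{1/2}\in\c{B}_{2}(\c{H})$, and $\tilde{\m{S}}\supseteq\overline{\m{S}_{\m{A}\leadsto\m{B}}}$ (a closed extension of a closable operator contains its closure), \cref{thm:prepush} yields $\tilde{\m{S}}X\sim N_{\c{H}}(\m{0},(\m{S}_{\m{A}\leadsto\m{B}}\m{A}^{1/2})(\m{S}_{\m{A}\leadsto\m{B}}\m{A}^{1/2})^{*})=N_{\c{H}}(\m{0},\m{B})$, so $\tilde{\m{S}}$ is a valid pushforward.

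The step I expect to be most delicate is part 1: verifying the factorizations $\m{G}^{\dagger}=\m{U}^{*}\m{A}^{\dagger/2}$ and $(\m{G}^{*})^{\dagger}=\m{A}^{\dagger/2}\m{U}$, and above all checking that the functional-calculus identity $(\m{U}^{*}\m{X}\m{U})^{s}=\m{U}^{*}\m{X}^{s}\m{U}$ still lets the $\m{U}$'s cancel once closures are inserted in the part-3 expression $\overline{(\m{G}^{*}\m{B}\m{G})^{1/4}\m{G}^{\dagger}}$. Everything genuinely analytic --- closability, essential domains, and the self-adjointness of $\tilde{\m{S}}$ --- is already carried by \cref{cor:worowicz:self:adj} and \cref{thm:prepush}, so the remainder is careful but routine operator bookkeeping.
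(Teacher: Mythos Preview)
Your proposal is correct and follows essentially the same route as the paper: reduce to $\m{G}=\m{A}^{1/2}$ via the polar decomposition $\m{G}=\m{A}^{1/2}\m{U}$ with $\m{U}\m{U}^{*}=\m{I}$, then invoke \cref{cor:worowicz:self:adj} with $\m{C}=(\m{A}^{1/2}\m{B}\m{A}^{1/2})^{1/2}$ for the s.p.d.\ property and the self-adjoint extension, and \cref{lem:Neumann:eq} for uniqueness. The only cosmetic difference is in verifying \eqref{eq:cond:ricatti}: the paper factors $(\m{A}^{1/2}\m{B}\m{A}^{1/2})^{1/2}=\m{A}^{1/2}\m{B}^{1/2}\m{R}$ via polar decomposition so that $\m{S}\m{A}^{1/2}=\m{B}^{1/2}\m{R}$ directly, whereas you check $\m{R}\m{R}^{*}=\m{B}$ on the dense set $\c{R}(\m{A}^{1/2})$ using part 2 of \cref{cor:worowicz:self:adj}; both are valid. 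Your flagged concern about closures in part 3 is legitimate but resolves cleanly: since $\m{U}\m{U}^{*}=\m{I}$ makes $\m{U}^{*}$ an isometry, one has $\overline{\m{U}^{*}\m{X}}=\m{U}^{*}\overline{\m{X}}$ for any closable $\m{X}$, and the paper's proof of part 3 (``Straightforward from \cref{cor:worowicz:self:adj,thm:prepush}'') implicitly relies on exactly this.
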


\begin{proof}[Proof of \cref{thm:sol:injec}]
In the proof, we denote $\m{S}$ instead of $\m{S}_{\m{A} \leadsto \m{B}}$ to ease notation. By the Douglas lemma, $\m{S} \m{A}^{1/2} = \m{A}^{\dagger/2} (\m{A}^{1/2} \m{B} \m{A}^{1/2})^{1/2}$ is bounded, hence $\c{R}(\m{A}^{1/2}) = \c{D}(\m{S})$.

\begin{enumerate}[leftmargin = *]
\item Consider the polar decomposition $\m{G} = \m{A}^{1/2} \m{U}$, where $\m{U}$ is a unique partial isometry with initial space $\overline{\c{R}(\m{G}^{*})}$ and final space $\overline{\c{R}(\m{G})} = \c{H}$ since $\c{N}(\m{A})=\{0\}$. Then, $\m{U} \m{U}^{*} = \m{I}$, and we get 
\begin{equation*}
    (\m{G}^{*})^{\dagger} (\m{G}^{*} \m{B} \m{G})^{1/2} \m{G}^{\dagger} = \m{A}^{\dagger/2} \m{U} \m{U}^{*} (\m{A}^{1/2} \m{B} \m{A}^{1/2})^{1/2} \m{U} \m{U}^{*} \m{A}^{\dagger/2} = \m{A}^{\dagger/2} (\m{A}^{1/2} \m{B} \m{A}^{1/2})^{1/2} \m{A}^{\dagger/2},
\end{equation*}
which does not depend on the choice of $\m{G}$.

\item Let $\m{G} = \m{A}^{1/2}$. From \cref{cor:worowicz:self:adj}, $\m{S}$ is s.p.d.. Note that when $\m{A} \succ \m{0}$, we have $\m{A}^{\dagger/2} \m{A}^{1/2} = \m{I}$. Also, by the polar decomposition, there exists a partial isometry $\m{R} \in \c{B}_{\infty}(\c{H})$ such that $(\m{A}^{1/2} \m{B} \m{A}^{1/2})^{1/2} = \m{A}^{1/2} \m{B}^{1/2} \m{R}$. This implies that $\m{S} \m{A}^{1/2} = \m{A}^{\dagger/2} (\m{A}^{1/2} \m{B} \m{A}^{1/2})^{1/2} = [\m{A}^{\dagger/2} \m{A}^{1/2}] \m{B}^{1/2} \m{R} = \m{B}^{1/2} \m{R} \in \c{B}_{2}(\c{H})$. Then $\m{S}$ satisfies \eqref{eq:cond:ricatti} and \eqref{eq:cond2:opt}, so it is optimal:
\begin{align*}
    &\m{A}^{1/2} \m{B}^{1/2} \m{R} \m{R}^{*} \m{B}^{1/2} \m{A}^{1/2} = \m{A}^{1/2} \m{B} \m{A}^{1/2} \quad \Longleftrightarrow \quad (\m{S} \m{A}^{1/2}) (\m{S} \m{A}^{1/2})^{*} = \m{B}^{1/2} \m{R} \m{R}^{*} \m{B}^{1/2} = \m{B}, \\
    &\m{A}^{1/2} \m{S} \m{A}^{1/2} = [\m{A}^{1/2} \m{A}^{\dagger/2}] (\m{A}^{1/2} \m{B} \m{A}^{1/2})^{1/2} = (\m{A}^{1/2} \m{B} \m{A}^{1/2})^{1/2}.
\end{align*}

To show the uniqueness, let $\m{T}: \c{R}(\m{A}^{1/2}) \subset \c{H} \rightarrow \c{H}$ be an optimal pre-pushforward. Then, \eqref{eq:cond:ricatti} implies
\begin{align*}
    \m{A}^{1/2} \m{B} \m{A}^{1/2} = \m{A}^{1/2} (\m{T} \m{A}^{1/2}) (\m{T} \m{A}^{1/2})^{*} (\m{A}^{1/2})^{*} = (\m{A}^{1/2} \m{T} \m{A}^{1/2}) (\m{A}^{1/2} \m{T} \m{A}^{1/2})^{*},
\end{align*}
hence \eqref{eq:cond2:opt} and \cref{lem:Neumann:eq} yield $\m{A}^{1/2} \m{T} \m{A}^{1/2} = (\m{A}^{1/2} \m{B} \m{A}^{1/2})^{1/2} = \m{A}^{1/2} \m{S} \m{A}^{1/2}$. Since $\m{A}^{1/2} \succ \m{0}$, we get $\m{T} \m{A}^{1/2} = \m{S} \m{A}^{1/2}$, i.e., $\m{S} = \m{T}$.

\item Straightforward from \cref{cor:worowicz:self:adj,thm:prepush}.
\end{enumerate}
\end{proof}

\begin{remark}\label{rmk:spd:then:opt}
Even with a suitable choice of $\m{G} \in \c{G}(\m{A})$, computing the square root $(\m{G}^{*} \m{B} \m{G})^{1/2}$ could still be demanding. However, if one can show that a candidate operator $\m{T} \in \c{D}(\c{H})$ satisfies \eqref{eq:cond:ricatti} and $(\m{G}^{*} \m{T} \m{G}) \succeq \m{0}$, i.e.
\begin{equation*}
    \innpr{\m{G} h}{(\m{T} \m{G}) h} \ge 0, \quad h \in \c{H},
\end{equation*}
then this is sufficient to establish the optimality of $\m{T}$ without explicitly computing the square root. See \cref{sec:Gproc} for examples.
\end{remark}

Assuming $\c{N}(\m{A}) \subset \c{N}(\m{B})$ is effectively equivalent to assuming $\m{A}$ is injective ($\m{A} \succ \m{0}$), as we can restrict the analysis to the subspace $\overline{\c{R}(\m{A})}$ without loss of generality. In this setting, bounded reachability can be characterized as follows:

\begin{theorem}[Bounded Reachability]\label{thm:bdd:reach}
Let $\m{A}, \m{B} \in \c{B}_{1}^{+}(\c{H})$ be covariance operators with $\c{N}(\m{A}) \subset \c{N}(\m{B})$. Then the following are equivalent:
\begin{enumerate}[leftmargin = *]
    \item $\m{A} \to \m{B}$.
    \item There exists a constant $c \ge 0$ such that $(\m{A}^{1/2} \m{B} \m{A}^{1/2})^{1/2} \preceq c \m{A}$.
    \item There exists some $\m{T} \in \c{B}_{\infty}^{+}(\c{H})$ such that $(\m{A}^{1/2} \m{B} \m{A}^{1/2})^{1/2} = \m{A}^{1/2} \m{T} \m{A}^{1/2}$.
\end{enumerate}
In particular, if $\c{R}(\m{B}^{1/2}) \subset \c{R}(\m{A}^{1/2})$, these conditions are met.
\end{theorem}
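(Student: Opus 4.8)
The strategy is to reduce to the injective case and then establish the cycle $(1)\Rightarrow(2)\Rightarrow(3)\Rightarrow(1)$, with the Douglas lemma doing the real work. Since $\c{N}(\m{A})\subseteq\c{N}(\m{B})$ implies $\overline{\c{R}(\m{B})}\subseteq\overline{\c{R}(\m{A})}=\c{H}_{1}$, every operator entering the three conditions -- namely $\m{A}$, $\m{B}$, $\m{A}^{1/2}\m{B}\m{A}^{1/2}$ and its square root, and any pre-pushforward (whose domain $\c{R}(\m{A}^{1/2})$ already lies in $\c{H}_{1}$) -- is block-diagonal with vanishing $\c{H}_{2}$-component. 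I would therefore replace $\c{H}$ by $\c{H}_{1}$ and assume $\m{A}\succ\m{0}$, so that $\m{A}^{1/2}$ is injective with dense range $\c{R}(\m{A}^{1/2})$, and write $\m{C}:=(\m{A}^{1/2}\m{B}\m{A}^{1/2})^{1/2}\in\c{B}_{0}^{+}(\c{H})$.

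For $(1)\Rightarrow(2)$, let $\m{T}$ be a bounded optimal pre-pushforward for $\m{A}\leadsto\m{B}$. Transportability gives $(\m{A}^{1/2}\m{T}\m{A}^{1/2})(\m{A}^{1/2}\m{T}\m{A}^{1/2})^{*}=\m{A}^{1/2}\m{B}\m{A}^{1/2}$, and by \cref{lem:Neumann:eq} (exactly as in the Remark following \cref{def:inf:reach}) optimality forces $\m{A}^{1/2}\m{T}\m{A}^{1/2}\in\c{B}_{\infty}^{+}(\c{H})$, hence $\m{A}^{1/2}\m{T}\m{A}^{1/2}=\m{C}$. Then for every $h$,
\[
  \innpr{\m{C}h}{h}=\innpr{\m{T}\m{A}^{1/2}h}{\m{A}^{1/2}h}\le\vertii{\m{T}}_{\infty}\vertii{\m{A}^{1/2}h}^{2}=\vertii{\m{T}}_{\infty}\innpr{\m{A}h}{h},
\]
i.e. $\m{C}\preceq\vertii{\m{T}}_{\infty}\m{A}$, which is $(2)$. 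For $(2)\Rightarrow(3)$, from $\m{C}^{1/2}(\m{C}^{1/2})^{*}=\m{C}\preceq c\m{A}=(\sqrt{c}\,\m{A}^{1/2})(\sqrt{c}\,\m{A}^{1/2})^{*}$ the Douglas lemma yields $\c{R}(\m{C}^{1/2})\subseteq\c{R}(\m{A}^{1/2})$ and a bounded $\m{R}\in\c{B}_{\infty}(\c{H})$ with $\m{C}^{1/2}=\m{A}^{1/2}\m{R}$; put $\m{T}:=\m{R}\m{R}^{*}\in\c{B}_{\infty}^{+}(\c{H})$, and using $\m{R}^{*}\m{A}^{1/2}=(\m{A}^{1/2}\m{R})^{*}=(\m{C}^{1/2})^{*}=\m{C}^{1/2}$,
\[
  \m{A}^{1/2}\m{T}\m{A}^{1/2}=(\m{A}^{1/2}\m{R})(\m{R}^{*}\m{A}^{1/2})=\m{C}^{1/2}\m{C}^{1/2}=\m{C}=(\m{A}^{1/2}\m{B}\m{A}^{1/2})^{1/2},
\]
which is $(3)$.

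For $(3)\Rightarrow(1)$, given $\m{T}\in\c{B}_{\infty}^{+}(\c{H})$ with $\m{A}^{1/2}\m{T}\m{A}^{1/2}=\m{C}$, squaring gives $\m{A}^{1/2}(\m{T}\m{A}\m{T})\m{A}^{1/2}=\m{C}^{2}=\m{A}^{1/2}\m{B}\m{A}^{1/2}$; since $\m{A}^{1/2}$ is injective and $\m{T}\m{A}\m{T},\m{B}$ are bounded, $\m{T}\m{A}\m{T}-\m{B}$ vanishes on the dense set $\c{R}(\m{A}^{1/2})$ and hence is $\m{0}$, so $(\m{T}\m{A}^{1/2})(\m{T}\m{A}^{1/2})^{*}=\m{T}\m{A}\m{T}=\m{B}$ (in particular $\m{T}\m{A}^{1/2}\in\c{B}_{2}(\c{H})$), while $\m{A}^{1/2}\m{T}\m{A}^{1/2}=\m{C}\succeq\m{0}$ is precisely the optimality condition; thus $\m{T}\mid_{\c{R}(\m{A}^{1/2})}$ is a bounded optimal pre-pushforward, i.e. $\m{A}\to\m{B}$. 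Finally, for the sufficient condition: if $\c{R}(\m{B}^{1/2})\subseteq\c{R}(\m{A}^{1/2})$, the Douglas lemma gives $\m{B}=\m{B}^{1/2}(\m{B}^{1/2})^{*}\preceq c^{2}\m{A}$ for some $c$, conjugating by $\m{A}^{1/2}$ yields $\m{A}^{1/2}\m{B}\m{A}^{1/2}\preceq c^{2}\m{A}^{2}=(c\m{A})^{2}$, and operator monotonicity of $t\mapsto t^{1/2}$ (the L\"owner--Heinz inequality) gives $\m{C}\preceq c\m{A}$, which is $(2)$. The step demanding the most care is the initial reduction to injective $\m{A}$: the cancellation of $\m{A}^{1/2}$ in $(3)\Rightarrow(1)$ and the identification of the optimality condition with positivity of $\m{A}^{1/2}\m{T}\m{A}^{1/2}$ both hinge on it; once that is secured, the rest is essentially a two-fold application of the Douglas lemma.
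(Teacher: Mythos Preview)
Your proof is correct and follows essentially the same approach as the paper: the Douglas lemma drives $(2)\Rightarrow(3)$ (the paper's $(2)\Rightarrow(1)$ builds the same operator $\m{T}=\m{A}^{\dagger/2}\m{C}^{1/2}(\m{A}^{\dagger/2}\m{C}^{1/2})^{*}$, just written via the closure), the inner-product estimate for $(1)\Rightarrow(2)$ is identical, and the final L\"owner--Heinz step matches verbatim. The only organizational differences are that you reduce explicitly to injective $\m{A}$ (the paper mentions this reduction in the paragraph preceding the theorem but then works with projections $\m{\Pi}_{1}$ instead) and that you run a clean cycle $(1)\Rightarrow(2)\Rightarrow(3)\Rightarrow(1)$ whereas the paper proves $(1)\Leftrightarrow(3)$ and $(1)\Leftrightarrow(2)$ separately.
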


\begin{proof}[Proof of \cref{thm:bdd:reach}] \quad
\begin{itemize}
\item [($1 \Rightarrow 3)$] Trivial.
\item [($3 \Rightarrow 1)$] Let $\tilde{\m{T}} = \m{\Pi}_{1} \m{T}$, which trivially satisfies \eqref{eq:cond2:opt} as $\m{A}^{1/2} \m{T} \m{A}^{1/2} = \m{A}^{1/2} \tilde{\m{T}} \m{A}^{1/2}$. On the other hand, we have $\m{A}^{1/2}[\m{B} - (\tilde{\m{T}} \m{A}^{1/2}) (\tilde{\m{T}} \m{A}^{1/2})^{*}] \m{A}^{1/2} = \m{0}$,
hence \eqref{eq:cond:ricatti} is also satisfied:
\begin{align*}
    \m{B} = \m{\Pi}_{1} \m{B} \m{\Pi}_{1} =
    \m{\Pi}_{1} (\tilde{\m{T}} \m{A}^{1/2}) (\tilde{\m{T}} \m{A}^{1/2})^{*} \m{\Pi}_{1}  = (\tilde{\m{T}} \m{A}^{1/2}) (\tilde{\m{T}} \m{A}^{1/2})^{*}.
\end{align*}

\item [($1 \Rightarrow 2)$] Since any bounded operator is closable with a trivial extension, note that $\m{A} \to \m{B}$ if and only if there exists a $\m{T} \in \c{B}_{\infty} (\c{H})$ such that \eqref{eq:cond:ricatti} and \eqref{eq:cond2:opt} hold. Let $\vertiii{\m{T}}_{\infty} = c \ge 0$. Then, for any $h \in \c{H}$,
\begin{equation*}
    \innpr{h}{(\m{A}^{1/2} \m{B} \m{A}^{1/2})^{1/2} h} = \innpr{h}{(\m{A}^{1/2} \m{T} \m{A}^{1/2}) h} = \innpr{\m{A}^{1/2} h}{\m{T} \m{A}^{1/2} h} \le c \|\m{A}^{1/2} h\|^{2} = c \innpr{h}{\m{A} h}.
\end{equation*}

\item [($2 \Rightarrow 1)$] By the Douglas lemma, $\m{A}^{\dagger/2} (\m{A}^{1/2} \m{T} \m{A}^{1/2})^{1/4} \in \c{B}_{\infty}(\c{H})$. Also, since $\c{N}(\m{A}^{1/2}) \subset \c{N}[(\m{A}^{1/2} \m{B} \m{A}^{1/2})^{1/2}]$,
\begin{equation*}
    \m{T}: \c{R}(\m{A}^{1/2}) \rightarrow \c{H}, \m{A}^{1/2} h \rightarrow \m{A}^{\dagger/2} (\m{A}^{1/2} \m{B} \m{A}^{1/2})^{1/2} h,
\end{equation*}
or equivalently, $\m{T} = \m{A}^{\dagger/2} (\m{A}^{1/2} \m{B} \m{A}^{1/2})^{1/2} \m{A}^{\dagger/2}$ is a well-defined operator. Note that $\m{T}$ is bounded with its closure:
\begin{equation*}
    \overline{\m{T}} = [\m{A}^{\dagger/2} (\m{A}^{1/2} \m{T} \m{A}^{1/2})^{1/4}] [\m{A}^{\dagger/2} (\m{A}^{1/2} \m{T} \m{A}^{1/2})^{1/4}]^{*}.
\end{equation*}
Also, since $\m{A}^{\dagger/2} \m{A}^{1/2} = \overline{\m{A}^{1/2} \m{A}^{\dagger/2}} = \m{\Pi}_{1}$, we $\m{T}$
satisfies \eqref{eq:cond:ricatti}:
\begin{align*}
    &(\m{T} \m{A}^{1/2}) (\m{T} \m{A}^{1/2})^{*} = [\m{A}^{\dagger/2} \m{A}^{1/2}] \m{B} [\overline{\m{A}^{1/2} \m{A}^{\dagger/2}}] = \m{\Pi}_{1} \m{B} \m{\Pi}_{1} = \m{B}, \\
    &\m{A}^{1/2} (\m{T} \m{A}^{1/2}) = \m{A}^{\dagger/2} \m{A}^{1/2} (\m{A}^{1/2} \m{B} \m{A}^{1/2})^{1/2} = (\m{A}^{1/2} \m{B} \m{A}^{1/2})^{1/2}.
\end{align*}
\end{itemize}

Finally, if $\c{R}(\m{B}^{1/2}) \subset \c{R}(\m{A}^{1/2})$, there exists some $c \ge 0$ such that $\m{B} \preceq c \m{A}$. Then, we get from the L\"{o}wner-Heinz inequality \cite{lowner1934monotone, kato1952notes} that 
\begin{equation*}
    \m{A}^{1/2} \m{B} \m{A}^{1/2} \preceq c^{2} \m{A}^{2} \quad \Longrightarrow \quad (\m{A}^{1/2} \m{B} \m{A}^{1/2})^{1/2} \preceq (c^{2} \m{A}^{2})^{1/2} = c \m{A}.
\end{equation*}
\end{proof}

\subsection{Non-Injective Case}
We now return to the general case and represent the pre-pushforward $\m{T}$ in block operator form:
\begin{align*}
    \m{T} =
    \begin{pmatrix}
    \m{T}_{11} \\
    \m{T}_{21}
    \end{pmatrix}
    \    :
    \c{R}(\m{A}^{1/2}) \subset \c{H}_{1}
    \rightarrow
    \begin{array}{c}
    \c{H}_{1} \\
    \oplus \\
    \c{H}_{2}
    \end{array}, \text{ where }
    \m{T}_{i1} := \m{\Pi}_{i} \m{T}: \c{R}(\m{A}^{1/2}) \subset \c{H}_{1} \to \c{H}_{i}, \quad i = 1, 2.
\end{align*}

To analyze reachability in this setting, we must account for a known subtlety of compact operators in infinite dimensions.
For compact operators $\m{T} \in \c{B}_{0}(\c{H}_{1}, \c{H}_{2})$, while the Riesz-Schauder theorem ensures that $\rk(\m{T}) = \rk(\m{T}^{*})$, it is possible that $\dim \c{N}(\m{T}) \ne \dim \c{N}(\m{T}^{*})$, a phenomenon studied via the Fredholm index \cite{conway2019course}. For instance, consider an ONB $\{e_{n}\}$ of $\c{H}$, and define $\m{T} = \sum_{n=1}^{\infty} n^{-2025} e_{n} \otimes e_{2n}$. Then $\dim \c{N}(\m{T}) = 0$, whereas $\dim \c{N}(\m{T}^{*}) = \infty$. 


\begin{theorem}\label{thm:sol:sym:inf}
Let $\m{A}, \m{B} \in \c{B}_{1}^{+}(\c{H})$ be covariance operators.
Then $\m{A} \leadsto \m{B}$ if and only if the dimensional inequality holds:
\begin{align*}
    \dim [\c{R}(\m{B}^{1/2}) \cap \c{H}_{2}] \le \dim [\c{N}(\m{B}^{1/2} \m{A}^{1/2}) \cap \c{H}_{1}].
\end{align*}
If it does, any optimal pre-pushforward has the form
\begin{align}\label{eq:sol:set}
    \m{T} = 
    \begin{pmatrix}
    \m{A}_{11}^{\dagger/2} (\m{A}_{11}^{1/2} \m{B}_{11} \m{A}_{11}^{1/2})^{1/2} \m{A}_{11}^{\dagger/2} \\
    [(\m{A}_{11}^{1/2} \m{B}_{11} \m{A}_{11}^{1/2})^{\dagger/2} \m{A}_{11}^{1/2} \m{B}_{12} + \m{U}_{12} (\m{B}/\m{A})^{1/2}]^{*} \m{A}_{11}^{\dagger/2}
    \end{pmatrix}
    \    :
    \c{R}(\m{A}^{1/2}) \subset \c{H}_{1}
    \rightarrow
    \begin{array}{c}
    \c{H}_{1} \\
    \oplus \\
    \c{H}_{2}
    \end{array},
\end{align}
where $\m{U}_{12}$ is a partial isometry with initial space $\overline{\c{R}(\m{B}/\m{A})}$, and final space contained in $\c{N}(\m{B}_{11}^{1/2} \m{A}_{11}^{1/2})$.
\end{theorem}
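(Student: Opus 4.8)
### Proof plan for Theorem \ref{thm:sol:sym:inf}

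The plan is to mimic the block-decomposition argument of the finite-dimensional \cref{thm:sol:sym}, but to replace rank counting by dimension counting of kernels and ranges, and to track carefully where compactness and the Douglas lemma are needed. First I would set up the block form: write $\m{A} = \diag(\m{A}_{11}, \m{0})$ on $\c{H}_1 \oplus \c{H}_2$ with $\m{A}_{11}$ injective, and take $\m{G} = \diag(\m{G}_{11}, \m{0})$ with $\m{G}_{11} := \m{A}_{11}^{1/2} \in \c{G}(\m{A}_{11})$ (right unitary invariance lets me fix this choice). Using the remark after \cref{def:inf:reach}, the transportability and optimality conditions become $(\m{T}\m{G})(\m{T}\m{G})^* = \m{B}$ and $\m{G}^*\m{T}\m{G} = (\m{G}^*\m{B}\m{G})^{1/2} \succeq \m{0}$. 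Since $\m{G}$ is block-diagonal with only a $\c{H}_1$-component, $\m{G}^*\m{T}\m{G}$ only sees $\m{T}_{11}$, so the optimality forces $\m{A}_{11}^{1/2}\m{T}_{11}\m{A}_{11}^{1/2} = (\m{A}_{11}^{1/2}\m{B}_{11}\m{A}_{11}^{1/2})^{1/2}$, i.e. $\m{T}_{11} = \m{A}_{11}^{\dagger/2}(\m{A}_{11}^{1/2}\m{B}_{11}\m{A}_{11}^{1/2})^{1/2}\m{A}_{11}^{\dagger/2}$ by injectivity of $\m{A}_{11}^{1/2}$ on $\c{H}_1$ (this is \cref{thm:sol:injec} applied on $\c{H}_1$). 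This also shows $\m{A} \leadsto \m{B}$ forces $\m{A}_{11} \leadsto \m{B}_{11}$, which by \cref{thm:bdd:reach} needs $\c{R}(\m{B}_{11}^{1/2}) \subseteq \c{R}(\m{A}_{11}^{1/2})$, equivalently $\c{R}(\m{B}_{12}) \subseteq \c{R}(\m{B}_{11}^{1/2})$ by \cref{lem:Schur:spd}, so the $\m{A}$-Schur complement $\m{B}/\m{A}$ is well-defined throughout.

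Next I would reduce the transportability equation to a system for $\m{T}_{21}$. Setting $\m{M}_{21} := (\m{T}_{21}\m{G}_{11})$ (a bounded operator once $\m{T}\m{G}$ is Hilbert-Schmidt), the block equation $(\m{T}\m{G})(\m{T}\m{G})^* = \m{B}$ splits into $\m{M}_{11}\m{M}_{21}^* = \m{A}_{11}^{1/2}\m{B}_{12}$ (after left-multiplying by the bounded $\m{A}_{11}^{\dagger/2}$ and using $\m{M}_{11} = (\m{A}_{11}^{1/2}\m{B}_{11}\m{A}_{11}^{1/2})^{1/2}$) and $\m{M}_{21}\m{M}_{21}^* = \m{B}_{22}$. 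The first is solvable by the Douglas lemma exactly because $\c{R}(\m{A}_{11}^{1/2}\m{B}_{12}) \subseteq \c{R}((\m{A}_{11}^{1/2}\m{B}_{11}\m{A}_{11}^{1/2})^{1/2})$ (push the Schur range condition through $\m{A}_{11}^{1/2}$), with general solution $\m{M}_{21}^* = (\m{A}_{11}^{1/2}\m{B}_{11}\m{A}_{11}^{1/2})^{\dagger/2}\m{A}_{11}^{1/2}\m{B}_{12} + \m{N}_{12}$ where $\c{R}(\m{N}_{12}) \subseteq \c{N}((\m{A}_{11}^{1/2}\m{B}_{11}\m{A}_{11}^{1/2})^{1/2}) = \c{N}(\m{B}_{11}^{1/2}\m{A}_{11}^{1/2})$ (using part 2 of \cref{lem:Schur:op}). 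Substituting into the second equation and invoking part 3 of \cref{lem:Schur:op} collapses everything to $\m{N}_{12}^*\m{N}_{12} = \m{B}/\m{A}$. So $\m{A} \leadsto \m{B}$ holds iff there is a bounded operator $\m{N}_{12}: \c{H}_2 \to \c{H}_1$ with $\m{N}_{12}^*\m{N}_{12} = \m{B}/\m{A}$ and $\c{R}(\m{N}_{12}) \subseteq \c{N}(\m{B}_{11}^{1/2}\m{A}_{11}^{1/2})$. Such $\m{N}_{12}$ must factor as $\m{N}_{12} = \m{U}_{12}(\m{B}/\m{A})^{1/2}$ with $\m{U}_{12}$ a partial isometry with initial space $\overline{\c{R}(\m{B}/\m{A})}$, by the polar decomposition; the constraint $\c{R}(\m{N}_{12}) \subseteq \c{N}(\m{B}_{11}^{1/2}\m{A}_{11}^{1/2})$ becomes the requirement that the final space of $\m{U}_{12}$ be contained in $\c{N}(\m{B}_{11}^{1/2}\m{A}_{11}^{1/2})$. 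Since $\dim \c{R}((\m{B}/\m{A})^{1/2}) = \dim \overline{\c{R}(\m{B}/\m{A})} = \dim[\c{R}(\m{B}^{1/2}) \cap \c{H}_2]$ by part 4 of \cref{lem:Schur:op}, and $\c{N}(\m{B}_{11}^{1/2}\m{A}_{11}^{1/2}) = \c{N}(\m{B}^{1/2}\m{A}^{1/2}) \cap \c{H}_1$ by part 2 of \cref{lem:Schur:op}, the existence of such a partial isometry is precisely equivalent to the stated dimensional inequality $\dim[\c{R}(\m{B}^{1/2}) \cap \c{H}_2] \le \dim[\c{N}(\m{B}^{1/2}\m{A}^{1/2}) \cap \c{H}_1]$.

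Finally, I would assemble the formula: plugging $\m{T}_{21} = \m{M}_{21}\m{G}_{11}^{-1}$ back (with $\m{G}_{11}^{-1} = \m{A}_{11}^{-1/2}$ as an operator on $\c{R}(\m{A}_{11}^{1/2})$, extended by $\m{A}_{11}^{\dagger/2}$) and using $\m{M}_{21}^* = (\m{A}_{11}^{1/2}\m{B}_{11}\m{A}_{11}^{1/2})^{\dagger/2}\m{A}_{11}^{1/2}\m{B}_{12} + \m{U}_{12}(\m{B}/\m{A})^{1/2}$ yields the displayed form \eqref{eq:sol:set}; conversely every such $\m{T}$ satisfies the transportability and optimality conditions by reversing the computation, and one checks directly that $\m{T}\m{A}^{1/2} \in \c{B}_2(\c{H})$ so that \cref{thm:prepush} or \cref{thm:wo:closable} makes $\m{T}$ a genuine pre-pushforward. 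The independence of the construction from the choice of $\m{G}_{11} \in \c{G}(\m{A}_{11})$ follows as in \cref{thm:sol:sym} from the polar decomposition, since all $\m{G}_{11}^*\m{B}_{11}\m{G}_{11}$ are unitarily equivalent. The main obstacle I anticipate is the careful bookkeeping of domains and boundedness in the infinite-dimensional setting: unlike the matrix case, I must verify at each step that the compositions (e.g. $\m{M}_{21} := \m{T}_{21}\m{G}_{11}$, or the factorizations from the Douglas lemma) are well-defined bounded — indeed Hilbert-Schmidt — operators, that $\c{R}(\m{A}^{1/2})$ is exactly the domain on which everything is defined, and that the dimensional inequality correctly captures the partial-isometry existence even when $\dim\c{N}(\m{T}) \ne \dim\c{N}(\m{T}^*)$ phenomena are in play; this is exactly why the theorem is phrased with $\dim$ of intersections rather than with ranks.
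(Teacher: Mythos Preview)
Your proposal is correct and your sufficiency direction ($\Leftarrow$) is essentially identical to the paper's: block-decompose, fix $\m{T}_{11}$ via \cref{thm:sol:injec}, reduce to $\m{N}_{12}^*\m{N}_{12} = \m{B}/\m{A}$ with $\c{R}(\m{N}_{12}) \subseteq \c{N}(\m{B}_{11}^{1/2}\m{A}_{11}^{1/2})$, and produce $\m{N}_{12} = \m{U}_{12}(\m{B}/\m{A})^{1/2}$ from the dimensional inequality. For the necessity direction ($\Rightarrow$), the paper takes a different route: rather than running the block decomposition backwards and extracting $\m{N}_{12}$ by polar decomposition (your approach), it builds an explicit injective linear map $\m{P}\m{B}^{\dagger/2}|_{\c{H}_2}: \c{R}(\m{B}^{1/2}) \cap \c{H}_2 \to \c{N}(\m{B}^{1/2}\m{A}^{1/2}) \cap \c{H}_1$, where $\m{P} = (\m{T}\m{A}^{1/2})^\dagger\m{B}^{1/2}$. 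Your argument is more unified (one machinery for both directions and the characterization), while the paper's avoids retracing the block reduction.

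One small correction: your aside that ``$\m{A}_{11} \leadsto \m{B}_{11}$, which by \cref{thm:bdd:reach} needs $\c{R}(\m{B}_{11}^{1/2}) \subseteq \c{R}(\m{A}_{11}^{1/2})$, equivalently $\c{R}(\m{B}_{12}) \subseteq \c{R}(\m{B}_{11}^{1/2})$'' is mistaken on two counts --- \cref{thm:bdd:reach} concerns \emph{bounded} reachability and the range inclusion there is only sufficient, and the claimed equivalence does not hold. Fortunately this detour is unnecessary: $\m{B}/\m{A}$ is well-defined simply because $\m{B} \succeq \m{0}$, which gives $\c{R}(\m{B}_{12}) \subseteq \c{R}(\m{B}_{11}^{1/2})$ directly by \cref{lem:Schur:spd}. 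This does not affect the rest of your argument.
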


\begin{proof}[Proof of \cref{thm:sol:sym:inf}]
\quad
\begin{itemize}[leftmargin = *]
\item [($\Rightarrow$)] Choose an optimal pre-pushforward $\m{T}: \c{R}(\m{A}^{1/2}) \subset \c{H}_{1} \rightarrow \c{H}$. Note that \eqref{eq:cond:ricatti} amounts to the fact that $\m{P} := (\m{T} \m{A}^{1/2})^{\dagger} \m{B}^{1/2}$ is a bounded operator satisfying $\m{B}^{1/2} = (\m{T} \m{A}^{1/2}) \m{P}$. Also, from \eqref{eq:cond2:opt}, we have $\m{A}^{1/2} \m{T} \m{A}^{1/2} = (\m{A}^{1/2} \m{B} \m{A}^{1/2})^{1/2}$, which is a symmetric bounded operator.

To prove the claim, it suffices to show that
\begin{equation*}
    \m{P} \m{B}^{\dagger/2} \mid_{\c{H}_{2}} : \c{R}(\m{B}^{1/2}) \cap \c{H}_{2} \rightarrow \c{N}(\m{B}^{1/2} \m{A}^{1/2}) \cap \c{H}_{1}.
\end{equation*}
is a well-defined injective operator. For well-definedness, we claim that $\m{P} \m{B}^{\dagger/2} y \in \c{N}(\m{B}^{1/2} \m{A}^{1/2}) \cap \c{H}_{1}$ for any $y \in \c{R}(\m{B}^{1/2}) \cap \c{H}_{2}$. By letting $z = \m{B}^{\dagger/2} y$ so that
\begin{align}\label{eq:injec:contra}
    y = \m{B}^{1/2} z = (\m{T} \m{A}^{1/2}) \m{P} z \in \c{H}_{2},
\end{align}
it is equivalent to showing that $\m{P} z = \m{P} \m{B}^{\dagger/2} y \in \c{N}(\m{B}^{1/2} \m{A}^{1/2}) \cap \c{H}_{1}$. For any $x \in \c{H}_{1}$,
\begin{align*}
    0 = \innpr{\m{A}^{1/2} x}{y} = \innpr{\m{A}^{1/2} x}{(\m{T} \m{A}^{1/2}) \m{P} z}
    = \innpr{x}{(\m{A}^{1/2} \m{T} \m{A}^{1/2}) \m{P} z},
\end{align*}
which establishes $(\m{A}^{1/2} \m{T} \m{A}^{1/2}) \m{P} z  = 0$, thus 
\begin{equation*}
    \m{P} z \in \c{N} (\m{A}^{1/2} \m{T} \m{A}^{1/2}) = \c{N}[(\m{A}^{1/2} \m{B} \m{A}^{1/2})^{1/2}] = \c{N}(\m{B}^{1/2} \m{A}^{1/2}).
\end{equation*}
On the other hand,
\begin{align*}
    \m{P} z \in \c{R}[(\m{T} \m{A}^{1/2})^{\dagger}] = \c{N}(\m{T} \m{A}^{1/2})^{\perp} \subset \c{N}(\m{A}^{1/2})^{\perp} = \c{H}_{1}.
\end{align*}

Finally, the injectivity of $\m{P} \m{B}^{\dagger/2}: y \mapsto \m{P} z$ is immediate from \eqref{eq:injec:contra}: $\m{P} z = 0$ if and only if $y = 0$.

\item [($\Leftarrow$)] 
Let $\m{T}: \c{R}(\m{A}^{1/2}) \subset \c{H}_{1} \rightarrow \c{H}$, and define $\m{M}_{i1} := \m{T}_{i1} \m{A}_{11}^{1/2} \in \c{B}_{\infty}(\c{H}_{1}, \c{H}_{i})$ for $i=1, 2$. Then \eqref{eq:cond:ricatti} becomes: 
\begin{equation}\label{eq:cond:ricatti:pf}
    \m{M}_{i1} \m{M}_{j1}^{*} = \m{B}_{ij}, \quad i, j = 1, 2,
\end{equation}
and \eqref{eq:cond2:opt} becomes:
\begin{equation*}
    \tr[ \m{A}_{11} \m{M}_{11}] = \tr [(\m{A}_{11}^{1/2} \m{B}_{11} \m{A}^{1/2})^{1/2}].
\end{equation*}
These two conditions determine $\m{T}_{11}$ and $\m{M}_{11}$ uniquely:
\begin{align*}
    &\m{T}_{11} = \m{S}_{\m{A}_{11} \leadsto \m{B}_{11}} = \m{A}_{11}^{\dagger/2} (\m{A}_{11}^{1/2} \m{B}_{11} \m{A}_{11}^{1/2})^{1/2} \m{A}_{11}^{\dagger/2}: \c{R}(\m{A}_{11}^{1/2}) \subset \c{H}_{1} \rightarrow \c{H}_{1}, \\
    &\m{M}_{11} = \m{T}_{11} \m{A}_{11}^{1/2} = \m{A}_{11}^{\dagger/2} (\m{A}_{11}^{1/2} \m{B}_{11} \m{A}_{11}^{1/2})^{1/2} : \c{H}_{1} \rightarrow \c{H}_{1}.
\end{align*}

Hence, it suffices to show that there exists $\m{M}_{21}$ satisfying \eqref{eq:cond:ricatti:pf} for $(i, j) =(1, 2), (2, 2)$ (as the case $(2, 1)$ is redundant):
\begin{align}
    \m{A}_{11}^{\dagger/2} (\m{A}_{11}^{1/2} \m{B}_{11} \m{A}_{11}^{1/2})^{1/2} \m{M}_{21}^{*} &= \m{B}_{12}: \c{H}_{2} \rightarrow \c{H}_{1}, \label{eq:cond:ricatti:pf1} \\
    \m{M}_{21} \m{M}_{21}^{*} &= \m{B}_{22}: \c{H}_{2} \rightarrow \c{H}_{2}. \label{eq:cond:ricatti:pf2}
\end{align}

Consider a s.p.d. block operator defined by:
\begin{align}\label{eq:comp:block}
    \m{C} := \begin{pmatrix}
        \m{C}_{11} & \m{C}_{12} \\
        \m{C}_{21} & \m{C}_{22}
    \end{pmatrix} = \begin{pmatrix}
        \m{A}_{11}^{1/2} \m{B}_{11} \m{A}_{11}^{1/2} & \m{A}_{11}^{1/2} \m{B}_{12} \\
        \m{B}_{21} \m{A}_{11}^{1/2} & \m{B}_{22}
    \end{pmatrix}
    \    :
    \begin{array}{c}
    \c{H}_{1} \\
    \oplus \\
    \c{H}_{2}
    \end{array}
    \rightarrow
    \begin{array}{c}
    \c{H}_{1} \\
    \oplus \\
    \c{H}_{2}
    \end{array}.
\end{align}
Since $\c{H}_{1} = \c{N}(\m{A}_{11}^{1/2})^{\perp}$, \eqref{eq:cond:ricatti:pf1} is equivalent to $\m{C}_{11}^{1/2} \m{M}_{21}^{*} = \m{C}_{12}$, which always have a solution characterized by
\begin{equation*}
    \m{M}_{21}^{*} = \m{C}_{11}^{\dagger/2} \m{C}_{12} \oplus \m{N}_{12}, \quad \m{N}_{12} \in \c{B}(\c{H}_{2}, \c{H}_{1}), \quad \c{R}(\m{N}_{12}) \subset \c{N}(\m{C}_{11}).
\end{equation*}
Applying this form into \eqref{eq:cond:ricatti:pf2}, we obtain from \cref{lem:Schur:op} that $\m{N}_{12}^{*} \m{N}_{12} = \m{C}/\m{A} = \m{B}/\m{A}$.
Therefore, it suffices to prove that there is some $\m{N}_{12}: \c{H}_{2} \rightarrow \c{H}_{1}$ such that $\c{R}(\m{N}_{12}) \subset \c{N} (\m{B}_{11}^{1/2} \m{A}_{11}^{1/2})$ and $\m{N}_{12}^{*} \m{N}_{12} = \m{B}/\m{A}$. Given that
\begin{align*}
    \rk (\m{B} / \m{A}) = \rk [(\m{B} / \m{A})^{1/2}] &= \dim [\c{R}(\m{B}^{1/2}) \cap \c{H}_{2}] \\
    &\le \dim [\c{N}(\m{B}^{1/2} \m{A}^{1/2}) \cap \c{H}_{1}] = \dim \c{N} (\m{B}_{11}^{1/2} \m{A}_{11}^{1/2}),
\end{align*}
due to \cref{lem:Schur:op}, there is a partial isometry $\m{U}_{12}$ with initial space $\overline{\c{R}(\m{B}/\m{A})}$, and final space included in $\c{N}(\m{B}_{11}^{1/2} \m{A}_{11}^{1/2})$. Then, $\m{N}_{12} = \m{U}_{12} (\m{B}/\m{A})^{1/2}$ is a desired solution, and any such solution takes this form.
\end{itemize}
\end{proof}

\begin{remark}
When $\c{H} = \b{R}^{n}$, the dimensional inequality reduces to:
\begin{align*}
    \rk(\m{B}) - \rk(\m{A}^{1/2} \m{B} \m{A}^{1/2}) &= \rk(\m{B}/\m{A}) = \dim [\c{R}(\m{B}^{1/2}) \cap \c{H}_{2}] \\
    &\le \dim [\c{N}(\m{B}^{1/2} \m{A}^{1/2}) \cap \c{H}_{1}] = \rk(\m{A}) - \rk(\m{A}^{1/2} \m{B} \m{A}^{1/2}), 
\end{align*}
so we recover the condition in \cref{thm:sol:sym} : $\rk(\m{B}) \le \rk(\m{A})$.
\end{remark}

The following corollary reveals that the Kantorovich problem can always be reduced to the Monge problem, either from $\m{A}$ to $\m{B}$ or vice versa.

\begin{corollary}\label{cor:reach:connect}
Let $\m{A}, \m{B} \in \c{B}_{1}^{+}(\c{H})$ be covariance operators.
Then, either $\m{A} \leadsto \m{B}$ or $\m{B} \leadsto \m{A}$ holds.
\end{corollary}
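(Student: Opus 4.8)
The plan is to read off the result from the dimensional criterion of \cref{thm:sol:sym:inf}, after rewriting that criterion in a manifestly symmetric form. With $\c{H}_{1} = \overline{\c{R}(\m{A})}$ and $\c{H}_{2} = \c{N}(\m{A})$, \cref{thm:sol:sym:inf} states that $\m{A} \leadsto \m{B}$ if and only if
\begin{equation*}
    \dim[\c{R}(\m{B}^{1/2}) \cap \c{N}(\m{A})] \le \dim[\c{N}(\m{B}^{1/2} \m{A}^{1/2}) \cap \overline{\c{R}(\m{A})}].
\end{equation*}
By part 4 of \cref{lem:Schur:op}, the left-hand side equals $\rk(\m{B}/\m{A})$, since $\c{R}[(\m{B}/\m{A})^{1/2}] = \c{R}(\m{B}^{1/2}) \cap \c{N}(\m{A})$. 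I will show that the right-hand side equals $\rk(\m{A}/\m{B})$; this identifies the criterion as $\rk(\m{B}/\m{A}) \le \rk(\m{A}/\m{B})$, and the statement then follows from the comparability of two cardinals.

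For the identification, observe that $\m{A}^{1/2}$ is bounded and injective on $\c{H}_{1}$, with $\c{R}(\m{A}^{1/2}) = \m{A}^{1/2}(\c{H}_{1})$. A vector $h \in \c{H}_{1}$ lies in $\c{N}(\m{B}^{1/2} \m{A}^{1/2})$ exactly when $\m{A}^{1/2} h \in \c{N}(\m{B}^{1/2}) = \c{N}(\m{B})$, and conversely every element of $\c{R}(\m{A}^{1/2}) \cap \c{N}(\m{B})$ is $\m{A}^{1/2} h$ for a unique $h \in \c{H}_{1}$, which then lies in $\c{N}(\m{B}^{1/2} \m{A}^{1/2}) \cap \c{H}_{1}$. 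Hence $\m{A}^{1/2}$ restricts to a linear bijection of $\c{N}(\m{B}^{1/2} \m{A}^{1/2}) \cap \c{H}_{1}$ onto $\c{R}(\m{A}^{1/2}) \cap \c{N}(\m{B})$, and by part 4 of \cref{lem:Schur:op} (with the roles of $\m{A}$ and $\m{B}$ exchanged) the latter space is $\c{R}[(\m{A}/\m{B})^{1/2}]$. Since a bounded injective linear map carries a closed subspace onto a subspace whose closure has the same Hilbert dimension --- for ``$\ge$'' push forward finite-dimensional subspaces of the domain, for ``$\le$'' use that the closure of the range of a bounded operator has dimension at most that of its domain --- and since the range of the compact operator $(\m{A}/\m{B})^{1/2}$ has dimension $\rk(\m{A}/\m{B})$, equal to that of its closure, the right-hand side above is indeed $\rk(\m{A}/\m{B})$. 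Thus $\m{A} \leadsto \m{B}$ if and only if $\rk(\m{B}/\m{A}) \le \rk(\m{A}/\m{B})$; the same argument with $\m{A}$ and $\m{B}$ interchanged gives $\m{B} \leadsto \m{A}$ if and only if $\rk(\m{A}/\m{B}) \le \rk(\m{B}/\m{A})$.

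Because $\c{H}$ is separable, both $\rk(\m{B}/\m{A})$ and $\rk(\m{A}/\m{B})$ lie in $\{0, 1, 2, \dots\} \cup \{\aleph_{0}\}$, hence are comparable, so at least one of the two inequalities holds; that is, $\m{A} \leadsto \m{B}$ or $\m{B} \leadsto \m{A}$. The step needing the most care --- and the main obstacle --- is the dimension-preservation claim, since $\m{A}^{1/2}$ has no bounded inverse on $\c{H}_{1}$; injectivity of $\m{A}^{1/2}$ on $\c{H}_{1}$ together with the two elementary dimension bounds is what makes it go through. Everything else is bookkeeping with \cref{thm:sol:sym:inf} and \cref{lem:Schur:op}. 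As a byproduct we obtain that $\m{A} \leadsto \m{B}$ precisely when $\rk(\m{B}/\m{A}) \le \rk(\m{A}/\m{B})$, which for $\c{H} = \b{R}^{n}$ reduces to $\rk(\m{B}) \le \rk(\m{A})$ via \cref{cor:Schur:rank}.
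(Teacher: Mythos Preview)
Your proof is correct and rests on the same key mechanism as the paper's: the linear map $\m{A}^{1/2}$ (equivalently, its inverse $\m{A}^{\dagger/2}$) between $\c{N}(\m{B}^{1/2}\m{A}^{1/2}) \cap \overline{\c{R}(\m{A})}$ and $\c{R}(\m{A}^{1/2}) \cap \c{N}(\m{B}^{1/2})$. The paper uses only the injectivity of $\m{A}^{\dagger/2}$ on this pair and argues by contradiction, chaining the four inequalities
\[
\dim[\c{R}(\m{B}^{1/2}) \cap \c{N}(\m{A})] > \dim[\c{N}(\m{B}^{1/2}\m{A}^{1/2}) \cap \overline{\c{R}(\m{A})}] \ge \dim[\c{R}(\m{A}^{1/2}) \cap \c{N}(\m{B})] > \cdots
\]
into a self-contradiction. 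You instead observe that the map is a \emph{bijection}, combine this with part~4 of \cref{lem:Schur:op} applied in both directions, and thereby rewrite the criterion of \cref{thm:sol:sym:inf} as $\rk(\m{B}/\m{A}) \le \rk(\m{A}/\m{B})$; the corollary then drops out of comparability of cardinals in $\{0,1,\dots\} \cup \{\aleph_0\}$. This is a cleaner packaging of the same idea, and the explicit equivalence $\m{A} \leadsto \m{B} \iff \rk(\m{B}/\m{A}) \le \rk(\m{A}/\m{B})$ is a useful byproduct that the paper does not state. Your caution about the dimension-preservation step is well placed---since $\m{A}^{1/2}$ has no bounded inverse on $\c{H}_1$, the image space need not be closed---but your handling of it (finite case trivial; infinite case via separability) is sound.
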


\begin{proof}[Proof of \cref{cor:reach:connect}]
Suppose neither $\m{A} \leadsto \m{B}$ or $\m{B} \leadsto \m{A}$ holds. From \cref{thm:sol:sym:inf}, this is equivalent to
\begin{align*}
    &\dim [\c{R}(\m{B}^{1/2}) \cap \c{N}(\m{A}^{1/2})] > \dim [\c{N}(\m{B}^{1/2} \m{A}^{1/2}) \cap \overline{\c{R}(\m{A}^{1/2})}], \\
    &\dim [\c{R}(\m{A}^{1/2}) \cap \c{N}(\m{B}^{1/2})] > \dim [\c{N}(\m{A}^{1/2} \m{B}^{1/2}) \cap \overline{\c{R}(\m{B}^{1/2})}].
\end{align*}

We claim that
\begin{align*}
    f \in \c{R}(\m{A}^{1/2}) \cap \c{N}(\m{B}^{1/2}) \mapsto \m{A}^{\dagger/2} f \in \c{N}(\m{B}^{1/2} \m{A}^{1/2}) \cap \overline{\c{R}(\m{A}^{1/2})},
\end{align*}
is a well-defined injective operator. For any $f \in \c{R}(\m{A}^{1/2}) \cap \c{N}(\m{B}^{1/2})$, there exists a unique $g = \m{A}^{\dagger/2} f \in \overline{\c{R}(\m{A}^{1/2})}$ such that $f = \m{A}^{1/2} g$. This yields
\begin{align*}
    \m{B}^{1/2} \m{A}^{1/2} g = \m{B}^{1/2} f = 0 \quad \Rightarrow \quad g \in \c{N}(\m{B}^{1/2} \m{A}^{1/2}).
\end{align*}
The injectivity is straightforward since $g = 0$ implies $f = \m{A}^{1/2} g = 0$. Therefore, we get
\begin{align*}
    \dim [\c{N}(\m{B}^{1/2} \m{A}^{1/2}) \cap \overline{\c{R}(\m{A}^{1/2})}] \ge \dim [\c{R}(\m{A}^{1/2}) \cap \c{N}(\m{B}^{1/2})].
\end{align*}
and similarly,
\begin{align*}
    \dim [\c{N}(\m{A}^{1/2} \m{B}^{1/2}) \cap \overline{\c{R}(\m{B}^{1/2})}] \ge \dim [\c{R}(\m{B}^{1/2}) \cap \c{N}(\m{A}^{1/2})].
\end{align*}
However, this results in
\begin{align*}
    \dim [\c{R}(\m{B}^{1/2}) \cap \c{N}(\m{A}^{1/2})] &> \dim [\c{N}(\m{B}^{1/2} \m{A}^{1/2}) \cap \overline{\c{R}(\m{A}^{1/2})}] \ge \dim [\c{R}(\m{A}^{1/2}) \cap \c{N}(\m{B}^{1/2})] \\
    &> \dim [\c{N}(\m{A}^{1/2} \m{B}^{1/2}) \cap \overline{\c{R}(\m{B}^{1/2})}] \ge \dim [\c{R}(\m{B}^{1/2}) \cap \c{N}(\m{A}^{1/2})],
\end{align*}
which is a contradiction.
\end{proof}

Unlike the finite-dimensional case, constructing a symmetric extension of an optimal pre-pushforward is challenging. For such an extension to exist, $\m{T}$ must be symmetric, which requires the operator block $\m{T}_{21}$ in \eqref{eq:sol:set} to be closable. This closability condition is equivalent to
\begin{align*}
    \m{A}_{11}^{1/2} h_{n} \to 0, \quad 
    [(\m{A}_{11}^{1/2} \m{B}_{11} \m{A}_{11}^{1/2})^{\dagger/2} \m{A}_{11}^{1/2} \m{B}_{12} + \m{U}_{12} (\m{B}/\m{A})^{1/2}]^{*} h_{n} \to f \quad \Rightarrow \quad f =0.
\end{align*}

It is trivial that any s.p.d. pushforward, if one exists, is optimal. The following theorem reveals that a necessary condition for the existence of an s.p.d. pushforward is the uniqueness of the optimal pre-pushforward:

\begin{theorem}\label{thm:spd:inf}
Let $\m{A}, \m{B} \in \c{B}_{1}^{+}(\c{H})$ be covariance operators. The following statements are equivalent:
\begin{enumerate}[leftmargin = *]
    \item There exists a unique optimal pre-pushforward from $\m{A}$ to $\m{B}$, given by
\begin{align}\label{eq:spd:pre}
    \m{T} = \begin{pmatrix}
    \m{T}_{11} \\
    \m{T}_{21}
    \end{pmatrix} :=
    \begin{pmatrix}
    \m{A}_{11}^{\dagger/2} (\m{A}_{11}^{1/2} \m{B}_{11} \m{A}_{11}^{1/2})^{1/2} \m{A}_{11}^{\dagger/2} \\
    ((\m{A}_{11}^{1/2} \m{B}_{11} \m{A}_{11}^{1/2})^{\dagger/2} \m{A}_{11}^{1/2} \m{B}_{12})^{*} \m{A}_{11}^{\dagger/2}
    \end{pmatrix}
    \    :
    \c{R}(\m{A}^{1/2}) \subset \c{H}_{1}
    \rightarrow
    \begin{array}{c}
    \c{H}_{1} \\
    \oplus \\
    \c{H}_{2}
    \end{array}.
\end{align}
    \item $\m{B}/\m{A} = \m{0}$.
    \item $\c{N} (\m{B}^{1/2}) = \c{N}(\m{A}^{1/2} \m{B}^{1/2})$.
    \item $ \c{R}(\m{B}^{1/2}) \cap \c{N}(\m{A}) = \{ \m{0} \}$.    
\end{enumerate}
If an s.p.d. extension of an optimal pre-pushforward exists, then any of these equivalent conditions hold and $\m{T}_{21}: \c{R}(\m{A}^{1/2}) \to \c{H}_{2}$ from \eqref{eq:spd:pre} is closable.
\end{theorem}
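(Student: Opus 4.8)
The plan is to dispatch the four equivalences using the structure theorem \cref{thm:sol:sym:inf} together with \cref{lem:Schur:op}, and then to treat the s.p.d.-extension statement, where the positivity of the extension provides the decisive extra input.

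First I would prove $(1)\Leftrightarrow(2)$. If $\m{B}/\m{A}=\m{0}$, then by part 4 of \cref{lem:Schur:op} the space $\c{R}(\m{B}^{1/2})\cap\c{H}_{2}=\c{R}[(\m{B}/\m{A})^{1/2}]$ is trivial, so the dimensional inequality of \cref{thm:sol:sym:inf} holds and an optimal pre-pushforward exists; moreover in the parametrization \eqref{eq:sol:set} the summand $\m{U}_{12}(\m{B}/\m{A})^{1/2}$ vanishes identically, so the optimal pre-pushforward is unique and equals \eqref{eq:spd:pre}. Conversely, if $\m{B}/\m{A}\neq\m{0}$, then either $\m{A}\not\leadsto\m{B}$, so no optimal pre-pushforward exists, or $\m{A}\leadsto\m{B}$ and the choices $\pm\m{U}_{12}$ in \eqref{eq:sol:set} give two distinct optimal pre-pushforwards (they differ by $2\m{U}_{12}(\m{B}/\m{A})^{1/2}\neq\m{0}$); in both cases (1) fails. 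For $(2)\Leftrightarrow(4)$ I would again invoke part 4 of \cref{lem:Schur:op}: $\c{R}[(\m{B}/\m{A})^{1/2}]=\c{R}(\m{B}^{1/2})\cap\c{N}(\m{A})$, so $\m{B}/\m{A}=\m{0}$ iff this intersection is $\{\m{0}\}$. For $(3)\Leftrightarrow(4)$, using $\c{N}(\m{A}^{1/2})=\c{N}(\m{A})$, the inclusion $\c{N}(\m{B}^{1/2})\subseteq\c{N}(\m{A}^{1/2}\m{B}^{1/2})$ is automatic, and equality holds exactly when every $x$ with $\m{B}^{1/2}x\in\c{N}(\m{A})$ has $\m{B}^{1/2}x=\m{0}$, i.e. exactly when $\c{R}(\m{B}^{1/2})\cap\c{N}(\m{A})=\{\m{0}\}$.

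For the s.p.d.-extension statement, let $\tilde{\m{T}}$ be an s.p.d. extension of an optimal pre-pushforward $\m{T}_{0}$ (of the form \eqref{eq:sol:set} with partial isometry $\m{U}_{12}$). Being symmetric, $\tilde{\m{T}}$ is closable, hence so is $\m{T}_{0}\subset\tilde{\m{T}}$. To force $\m{B}/\m{A}=\m{0}$ I would exploit positivity: for $f\in\c{R}(\m{A}^{1/2})\subset\c{D}(\m{T}_{0})\subset\c{D}(\tilde{\m{T}})$ and $w\in\c{D}(\tilde{\m{T}})$, expanding $0\le\langle\tilde{\m{T}}(tf+w),tf+w\rangle$ as a nonnegative real quadratic in $t$ (using symmetry to merge the cross-terms) and taking its discriminant gives the Cauchy–Schwarz-type bound $|\langle\m{T}_{0}f,w\rangle|^{2}\le\langle\m{T}_{11}^{0}f,f\rangle\,\langle\tilde{\m{T}}w,w\rangle$, where $\m{T}_{11}^{0}=\m{\Pi}_{1}\m{T}_{0}$ and $\langle\m{T}_{0}f,f\rangle=\langle\m{T}_{11}^{0}f,f\rangle$ since $f\in\c{H}_{1}$. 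Choosing $f=\m{A}_{11}^{1/2}u$ with $u$ in the final space of $\m{U}_{12}$ — a subspace of $\c{N}(\m{B}_{11}^{1/2}\m{A}_{11}^{1/2})=\c{N}((\m{A}_{11}^{1/2}\m{B}_{11}\m{A}_{11}^{1/2})^{1/2})$ — makes $\m{T}_{11}^{0}f=\m{0}$, so the right-hand side is $0$, while $\m{T}_{0}f=(\m{0},(\m{B}/\m{A})^{1/2}\m{U}_{12}^{*}u)\in\c{H}_{2}$. Hence $(\m{B}/\m{A})^{1/2}\m{U}_{12}^{*}u\perp\m{\Pi}_{2}w$ for all such $u$ and all $w\in\c{D}(\tilde{\m{T}})$. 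As $u$ ranges over the final space of $\m{U}_{12}$, $\m{U}_{12}^{*}u$ runs over $\overline{\c{R}(\m{B}/\m{A})}$, so $(\m{B}/\m{A})^{1/2}v\perp\m{\Pi}_{2}w$ for every $v\in\overline{\c{R}(\m{B}/\m{A})}$; and $\m{\Pi}_{2}(\c{D}(\tilde{\m{T}}))$ is dense in $\c{H}_{2}$ (projection of a dense set). Therefore $\c{R}((\m{B}/\m{A})^{1/2})\perp\c{H}_{2}$, but $\c{R}((\m{B}/\m{A})^{1/2})\subseteq\c{H}_{2}$, which forces $\m{B}/\m{A}=\m{0}$. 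All four conditions then hold and $\m{T}_{0}$ is the unique optimal pre-pushforward \eqref{eq:spd:pre}.

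Finally, for closability of $\m{T}_{21}$ in \eqref{eq:spd:pre}: writing $\m{T}_{21}=\m{M}_{21}\m{A}_{11}^{\dagger/2}$ with $\m{M}_{21}:=\m{T}_{21}\m{A}_{11}^{1/2}\in\c{B}_{\infty}(\c{H}_{1},\c{H}_{2})$ bounded (by \eqref{eq:cond:ricatti}), one has $\c{D}(\m{T}_{21}^{*})=\{q\in\c{H}_{2}:\m{M}_{21}^{*}q\in\c{R}(\m{A}_{11}^{1/2})\}$, so closability amounts to density of this set in $\c{H}_{2}$. I would obtain it by combining the closability of the whole column $\m{T}_{0}$ with the fact that its diagonal block $\m{T}_{11}$ is s.p.d., hence closable with dense domain $\c{D}(\m{T}_{11}^{*})\supseteq\c{R}(\m{A}^{1/2})$, by \cref{cor:worowicz:self:adj}, using the symmetry identity $\langle\m{T}_{21}f,\m{\Pi}_{2}w\rangle=\langle f,\m{\Pi}_{1}\tilde{\m{T}}w\rangle-\langle\m{T}_{11}f,\m{\Pi}_{1}w\rangle$ for $f\in\c{R}(\m{A}^{1/2})$ and $w\in\c{D}(\tilde{\m{T}})$ with $\m{\Pi}_{1}w\in\c{D}(\m{T}_{11}^{*})$, together with the observation that $\m{N}_{1}^{*}p\in\c{R}(\m{C}_{11}^{1/2})\subseteq\c{R}(\m{A}_{11}^{1/2})$ for $p\in\c{R}(\m{A}_{11}^{1/2})$, where $\m{N}_{1}:=\m{T}_{11}\m{A}_{11}^{1/2}$, so that the two blocks separate cleanly on a dense set of $w$. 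The hardest part will be this last density bookkeeping and, more fundamentally, extracting $\m{B}/\m{A}=\m{0}$ from an abstract positive symmetric extension whose domain we do not control explicitly — the discriminant inequality is the device that lets the ``null directions'' $u\in\c{N}((\m{A}_{11}^{1/2}\m{B}_{11}\m{A}_{11}^{1/2})^{1/2})$ annihilate the right-hand side and thereby do the work.
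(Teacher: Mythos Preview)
Your proposal is correct and follows essentially the same route as the paper. The equivalences $(1)$--$(4)$ are handled identically, via \cref{thm:sol:sym:inf} and part 4 of \cref{lem:Schur:op}. For the s.p.d.-extension claim, both arguments exploit nonnegativity of the quadratic form $t\mapsto\langle\tilde{\m{T}}(tf+w),tf+w\rangle$; you package it as a Cauchy--Schwarz/discriminant inequality over arbitrary $w\in\c{D}(\tilde{\m{T}})$ and then kill the right-hand side by choosing $f=\m{A}_{11}^{1/2}u$ with $u$ in the final space of $\m{U}_{12}$, whereas the paper specializes to $w=h_{1}\oplus\alpha g_{2}$ with $h_{1}\in\c{N}(\m{S}_{11})$ to obtain the null-space inclusion $\c{N}(\m{S}_{11})\subset\c{N}(\m{S}_{12}^{*})$ and then compares ranges in \eqref{eq:spd:prdx}. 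These are the same mechanism dressed slightly differently; your version is arguably a bit more direct. For the closability of $\m{T}_{21}$, the paper is equally terse (it simply records $\m{S}_{21}=\m{T}_{21}$ and asserts closability), so your ``density bookkeeping'' caveat is honest: neither argument spells this step out in full.
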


\begin{proof}[Proof of \cref{thm:spd:inf}] \quad
\begin{itemize}[leftmargin = *]
\item [($1 \Leftrightarrow 2)$] Trivial since $\dim[\c{R} [(\m{B}/\m{A})^{1/2}]] = \dim [\c{R}(\m{B}^{1/2}) \cap \c{H}_{2}]$ due to \cref{lem:Schur:op}, and $- \m{U}_{12}$ in \cref{thm:sol:sym:inf} is also a partial isometry with initial space $\overline{\c{R}(\m{B}/\m{A})}$, and final space included in $\c{N}(\m{B}_{11}^{1/2} \m{A}_{11}^{1/2})$.
\item [($3 \Leftrightarrow 4)$] Trivial by definition.
\item [($2 \Leftrightarrow 4)$] Direct consequence of part 4 in \cref{lem:Schur:op}.
\end{itemize}

We now claim that an s.p.d. extension of an optimal pre-pushforward is possible only if $\m{B}/\m{A} = \m{0}$ and $\m{T}_{21}: \c{R}(\m{A}^{1/2}) \to \c{H}_{2}$ in \eqref{eq:spd:pre} is closable.
Let $\m{S} : \c{D}(\m{S}) \subset \c{H} \to \c{H}$ an s.p.d. pushforward. Assume that $\m{B}/\m{A} \succeq \m{0}$ does not vanish. We may restrict $\m{S}$ so that $\c{D}(\m{S}) \cap \c{H}_{1} = \c{R}(\m{A}^{1/2})$, which is still an s.p.d. operator. Note that $\m{S}_{12} \subset \m{S}_{21}^{*}$ due to symmetry, and
\begin{equation*}
    \m{A}_{11}^{1/2} \m{S}_{11} \m{A}_{11}^{1/2} \succeq \m{0} \quad \Rightarrow \quad \m{S}_{11}= \m{A}_{11}^{\dagger/2} (\m{A}_{11}^{1/2} \m{B}_{11} \m{A}_{11}^{1/2})^{1/2} \m{A}_{11}^{\dagger/2},
\end{equation*}
which shows that $\m{S}$ is an extension of an optimal pre-pushforward. Thus, due to \cref{thm:sol:sym:inf}, there exists a partial isometry $\m{U}_{12}$ with initial space $\overline{\c{R}(\m{B}/\m{A})}$, and final space included in $\c{N}(\m{B}_{11}^{1/2} \m{A}_{11}^{1/2})$ such that
\begin{equation}\label{eq:spd:prdx}
    (\m{S}_{21} \m{A}_{11}^{1/2})^{*} - (\m{A}_{11}^{1/2} \m{B}_{11} \m{A}_{11}^{1/2})^{\dagger/2} \m{A}_{11}^{1/2} \m{B}_{12} = \m{U}_{12} (\m{B}/\m{A})^{1/2}.
\end{equation}

We now claim that $\c{N}(\m{S}_{11}) \subset \c{N}(\m{S}_{12}^{*})$. Since $\m{S} \in \c{D}(\m{H})$ is densely defined, we have $\c{R}(\m{S}_{12})^{\perp} = \c{N}(\m{S}_{12}^{*})$, thus it suffices to show that $\innpr{h_{1}}{\m{S}_{12} g_{2}} = 0$ for any $h_{1} \in \c{N}(\m{S}_{11})$ and $g_{2} \in \c{D}(\m{S}_{12})$. For any $\alpha \in \b{R}$, we have $h_{1} \oplus \alpha g_{2} \in \c{D}(\m{S})$, thus
\begin{align*}
    0 \le \innpr{(h_{1} \oplus \alpha g_{2})}{\alpha \m{S} (h_{1} \oplus \alpha g_{2})} 
    &= \innpr{h_{1} \oplus \alpha g_{2}}{\alpha \m{S}_{12} g_{2} \oplus (\m{S}_{21} h_{1} + \alpha \m{S}_{22} g_{2})} \\
    &= \alpha \innpr{h_{1}}{\m{S}_{12} g_{2}} + \alpha \innpr{g_{2}}{\m{S}_{21} h_{1}} + \alpha^{2} \innpr{g_{2}}{\m{S}_{22} g_{2}} \\
    &= 2\alpha \innpr{h_{1}}{\m{S}_{12} g_{2}} + \alpha^{2} \innpr{g_{2}}{\m{S}_{22} g_{2}},
\end{align*}
where we used
\begin{align*}
    \innpr{g_{2}}{\m{S}_{21} h_{1}} = \innpr{g_{2}}{\m{\Pi}_{2} \m{S} \m{\Pi}_{1} h_{1}} = \innpr{\m{\Pi}_{2}  g_{2}}{\m{S} \m{\Pi}_{1} h_{1}} = \innpr{\m{S} \m{\Pi}_{2}  g_{2}}{ \m{\Pi}_{1} h_{1}} = \innpr{\m{\Pi}_{1}  \m{S} \m{\Pi}_{2}  g_{2}}{ h_{1}} = \innpr{\m{S}_{12} g_{2}}{h_{1}}.
\end{align*}
In order that the above inequality holds for any $\alpha \in \b{R}$, we get $\innpr{h_{1}}{\m{S}_{12} g_{2}} = 0$, which proves the claim. 

From the claim above, we have
\begin{align*}
    \c{N}(\m{B}_{11}^{1/2} \m{A}_{11}^{1/2}) = \c{N}(\m{A}_{11}^{1/2} \m{B}_{11} \m{A}_{11}^{1/2}) = \c{N} (\m{A}_{11}^{1/2} \m{S}_{11} \m{A}_{11}^{1/2}) = \c{N}(\m{S}_{11}^{1/2} \m{A}_{11}^{1/2}) 
    \subset \c{N}(\m{S}_{11} \m{A}_{11}^{1/2}) \subset \c{N}(\m{S}_{12}^{*} \m{A}_{11}^{1/2}).
\end{align*}
Additionally, since $\m{S}_{21} \m{A}_{11}^{1/2} \in \c{B}_{\infty}(\c{H}_{1}, \c{H}_{2})$ and $\m{S}_{21} \subset \m{S}_{12}^{*}$, we have $\m{S}_{12}^{*} \m{A}_{11}^{1/2} = \m{S}_{21} \m{A}_{11}^{1/2}$. Hence, we get
\begin{align*}
    \c{N}(\m{B}_{11}^{1/2} \m{A}_{11}^{1/2}) \subset \c{N}(\m{S}_{21} \m{A}_{11}^{1/2}) \quad \Rightarrow \quad \c{R}[(\m{S}_{21} \m{A}_{11}^{1/2})^{*}] \subset \c{N}(\m{B}_{11}^{1/2} \m{A}_{11}^{1/2})^{\perp}.
\end{align*}
However, this entails that the range of (LHS) in \eqref{eq:spd:prdx} belongs to $\c{N}(\m{B}_{11}^{1/2} \m{A}_{11}^{1/2})^{\perp} = \c{N}(\m{A}_{11}^{1/2} \m{B}_{11} \m{A}_{11}^{1/2})^{\perp}$, whereas the range of the (RHS) is included in $\c{N}(\m{B}_{11}^{1/2} \m{A}_{11}^{1/2})$. Therefore, we conclude $\m{B}/\m{A} =0$, which is a contradiction. This also incorporates that $\m{S}_{21} = \m{T}_{21}$, so $\m{T}_{21}$ is closable.
\end{proof}

\begin{remark}
Constructing a concrete s.p.d. extension is a delicate issue. To illustrate the difficulty, recall the block components of the optimal pre-pushforward $\m{T}$ from \eqref{eq:spd:pre}:
\begin{align*}
    \m{T}_{11} = \m{A}_{11}^{\dagger/2} \m{C}_{11}^{1/2} \m{A}_{11}^{\dagger/2}, \quad \m{T}_{21} = (\m{C}_{11}^{\dagger/2} \m{C}_{12})^{*} \m{A}_{11}^{\dagger/2}.
\end{align*}
where $\m{C} \in \c{B}_{1}^{+}(\c{H})$ is the block s.p.d. operator in \eqref{eq:comp:block}, 
A natural candidate for the s.p.d. extension is the operator $\m{S}$ formed by analogy with the finite-dimensional solution \eqref{eq:gen:pusz:fin}:
\begin{align*}
    \m{S} := \begin{pmatrix}
        \m{T}_{11} & \m{T}_{21}^{*} \\
        \m{T}_{21} & (\m{C}_{11}^{\dagger/2} \m{C}_{12})^{*} \m{C}_{11}^{\dagger} (\m{C}_{11}^{\dagger/2} \m{C}_{12})
    \end{pmatrix}.
\end{align*}
However, this formal construction is problematic -- verifying whether this is even densely defined is intractable.

\end{remark}

\subsection{Kantorovich Geodesics}
Given covariance operators $\m{A}, \m{B} \in \c{B}_{1}^{+}(\c{H})$, we may assume $\m{A} \leadsto \m{B}$ without loss of generality due to \cref{cor:reach:connect}. As in \cref{ssec:kanto}, we consider
\begin{equation*}
    \c{K}_{t}(\m{A}, \m{B}) := \argmin_{\m{C} \in \c{B}_{1}^{+}(\c{H})}  (1-t) \c{W}_{2}^{2}(\m{A}, \m{C}) + t \c{W}_{2}^{2}(\m{B}, \m{C}), \quad t \in [0, 1].
\end{equation*}
Obviously, $\m{\Gamma}_{0} = \m{A}$ and $\m{\Gamma}_{1} = \m{B}$. 

\begin{proposition}\label{prop:Kanto:geo:inf}
Let $\m{A}, \m{B} \in \c{B}_{1}^{+}(\c{H})$ be covariance operators with $\m{A} \leadsto \m{B}$ and $t \in [0, 1]$. Then, 
\begin{align*}
    \c{K}_{t}(\m{A}, \m{B}) = \left\{ \m{\Gamma}_{t} = \m{G}_{t} \m{G}_{t}^{*} : \m{G}_{t} = (1-t) \m{G} + t \m{M}, \, \m{M} \in \s{G}(\m{B}), \ \m{G}^{*} \m{M} \succeq \m{0} \right\} \ne \emptyset,
\end{align*}
which does not depend on the choice of $\m{G} \in \c{G}(\m{A})$.
\end{proposition}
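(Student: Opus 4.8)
The plan is to follow the template of the finite-dimensional Kantorovich result (\cref{thm:Kanto:char} and its surrounding discussion, especially \eqref{eq:bary:char:n2} and \eqref{eq:Kanto:geo}), but using the infinite-dimensional barycenter/multicoupling machinery in place of compactness arguments. First I would reduce the statement to the two-point case of the barycenter problem: set $\Omega = \{0,1\}$ with $\b{P}(\{0\}) = 1-t$, $\b{P}(\{1\}) = t$, and the random covariance $\s{A}(0) = \m{A}$, $\s{A}(1) = \m{B}$. Then $\c{K}_t(\m{A},\m{B})$ is exactly the set of Wasserstein barycenters of $\s{A}$. For a random Green's operator $\s{G}$ with $\s{G}(0) = \m{G} \in \c{G}(\m{A})$ and $\s{G}(1) = \m{M} \in \c{G}(\m{B})$, one computes $\b{E}\s{G} = (1-t)\m{G} + t\m{M} = \m{G}_t$ and $\vertii{\b{E}\s{G}}_2^2 = (1-t)^2\tr(\m{A}) + 2t(1-t)\tr(\m{G}^*\m{M}) + t^2\tr(\m{B})$, so optimality of $\s{G}$ (maximizing $\vertii{\b{E}\s{G}}_2$) is equivalent to maximizing $\tr(\m{G}^*\m{M})$ over $\m{M} \in \c{G}(\m{B})$, which by \cref{lem:opt:algn}'s Hilbertian analogue is equivalent to $\m{G}^*\m{M} \succeq \m{0}$. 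This identifies the claimed set with the set of optimal-random-Green's-operator barycenters.

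The core of the argument is then an infinite-dimensional version of \cref{thm:char:barycenter}: a covariance $\hat{\m{A}}$ is a barycenter of $\s{A}$ iff $\hat{\m{A}} = (\b{E}\s{G})(\b{E}\s{G})^*$ for some optimal random Green's operator $\s{G}$. The key steps I would carry out, in order: (i) establish that $\b{E}[\vertii{(\m{M}^*\s{A}\m{M})^{1/2}}_1] < \infty$ for bounded $\m{M}$ (Cauchy–Schwarz, exactly as in \cref{cor:bary:eq:sol}(1)) and that for any bounded $\m{M}$ the Hilbertian analogue of \cref{lem:opt:algn} gives $\min_{\s{G}} L(\m{M},\s{G}) = L(\m{M}) = \b{E}\c{W}_2^2(\m{M}\m{M}^*, \s{A})$ — here using that for two trace-class covariances $\m{A}_1, \m{A}_2$ and any Green's operator $\m{G}_1 \in \c{G}(\m{A}_1)$ there is $\m{G}_2 \in \c{G}(\m{A}_2)$ with $\m{G}_1^*\m{G}_2 \succeq \m{0}$, which follows from the polar decomposition of $\m{G}_1^*\m{A}_2^{1/2}$; (ii) run the decomposition identity $L(\m{M},\s{G}) = L(\b{E}\s{G},\s{G}) + \vertii{\m{M} - \b{E}\s{G}}_2^2$ and the equivalence "$L(\b{E}\s{G}) = L(\b{E}\s{G},\s{G})$ iff $\s{G}$ is properly aligned", verbatim as in the proof of \cref{thm:char:barycenter}, to conclude that a barycenter exists iff an optimal random Green's operator exists, and in that case the two are related by $\hat{\m{A}} = (\b{E}\s{G})(\b{E}\s{G})^*$; (iii) crucially, verify that an optimal random Green's operator for the two-point $\s{A}$ always exists — this is where the finite-dimensional proof used compactness of $\b{R}^{n\times n}$ and cannot be reused. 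Here I would instead appeal directly to \cref{thm:sol:sym:inf} and \cref{cor:reach:connect}: since $\m{A} \leadsto \m{B}$, there is an optimal pre-pushforward, hence (via the bounded-operator reformulation in the Corollary after \cref{def:inf:reach}) a bounded $\m{M} \in \c{B}_\infty(\c{H})$ with $\m{M}\m{M}^* = \m{B}$, $\c{N}(\m{G}) \subset \c{N}(\m{M})$, and $\m{G}^*\m{M} \succeq \m{0}$; taking $\s{G}(0) = \m{G}$, $\s{G}(1) = \m{M}$ gives an optimal random Green's operator and shows $\c{K}_t(\m{A},\m{B}) \ne \emptyset$.

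Finally I would check the independence of the description from the choice of $\m{G} \in \c{G}(\m{A})$. By \cref{lem:Green's:part:iso}, any other Green's operator is $\m{G}\m{V}$ for a partial isometry $\m{V}$ with final space $\c{N}(\m{G})^\perp = \overline{\c{R}(\m{G}^*)}$, so $\m{V}\m{V}^* = \m{\Pi}_{\c{N}(\m{G})^\perp}$; since any valid $\m{M}$ satisfies $\c{N}(\m{G}) \subset \c{N}(\m{M})$, i.e. $\c{R}(\m{M}^*) \subset \c{N}(\m{G})^\perp$, the map $\m{M} \mapsto \m{V}^*\m{M}$ is a bijection between the solution sets for $\m{G}$ and for $\m{G}\m{V}$, preserving $(\m{G}\m{V})^*(\m{V}^*\m{M}) = \m{V}^*\m{G}^*\m{V}\m{V}^*\m{M}$... — one checks this equals $\m{G}^*\m{M}$ on $\c{R}(\m{M}) \subset \overline{\c{R}(\m{B})}$ up to the relevant projections, and that $\m{G}_t\m{G}_t^*$ is unchanged, exactly the "right unitary invariance" remark preceding \eqref{eq:Kanto:geo}. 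The main obstacle is step (iii) — guaranteeing existence of the optimal random Green's operator without compactness — and the resolution is precisely to invoke the reachability theorem \cref{thm:sol:sym:inf}/\cref{cor:reach:connect} rather than an abstract existence argument; a secondary technical point is making the Hilbertian version of \cref{lem:opt:algn} (existence of jointly-aligned Green's operators) rigorous via polar decomposition of $\m{G}_1^*\m{A}_2^{1/2}$, but this is routine given the Douglas lemma.
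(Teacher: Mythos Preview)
Your plan has a genuine gap in step (i). You claim that the Hilbertian analogue of \cref{lem:opt:algn} --- for any $\m{G}_1 \in \c{G}(\m{A}_1)$ there exists $\m{G}_2 \in \c{G}(\m{A}_2)$ with $\m{G}_1^{*}\m{G}_2 \succeq \m{0}$ --- follows routinely from the polar decomposition of $\m{G}_1^{*}\m{A}_2^{1/2}$. It does not. Writing $\m{G}_1^{*}\m{A}_2^{1/2} = (\m{G}_1^{*}\m{A}_2\m{G}_1)^{1/2}\m{V}^{*}$ with $\m{V}$ a partial isometry, the candidate $\m{G}_2 := \m{A}_2^{1/2}\m{V}$ satisfies $\m{G}_2\m{G}_2^{*} = \m{A}_2^{1/2}\m{V}\m{V}^{*}\m{A}_2^{1/2}$, which equals $\m{A}_2$ only if the final space of $\m{V}$ contains $\overline{\c{R}(\m{A}_2^{1/2})}$; that final space is $\overline{\c{R}(\m{A}_2^{1/2}\m{G}_1)}$, so you need $\c{R}(\m{A}_2^{1/2}) \cap \c{N}(\m{G}_1^{*}) = \{0\}$, i.e.\ $\m{A}_2/\m{A}_1 = \m{0}$ by \cref{thm:spd:inf}. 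The paper's \cref{lem:opt:algn:inf-dim} and \cref{prop:inf:not:min} make this obstruction explicit: in infinite dimensions the Procrustes infimum need not be attained, and the paper states outright that ``this fact invalidates \cref{lem:opt:algn}, a cornerstone of the barycenter theory.''

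This breaks exactly the $(\Leftarrow)$ half of your transplanted \cref{thm:char:barycenter}. To show a barycenter $\hat{\m{A}}$ arises from an optimal random Green's operator you need the minimum $\min_{\s{G}} L(\hat{\m{A}}^{1/2},\s{G})$ to be achieved, which decouples into needing aligned Green's operators from $\hat{\m{A}}^{1/2}$ to both $\m{A}$ and $\m{B}$ --- equivalently $\hat{\m{A}} \leadsto \m{A}$ and $\hat{\m{A}} \leadsto \m{B}$. That fact is true (it is \cref{thm:kanto:geo:inf}(2)), but its proof \emph{uses} \cref{prop:Kanto:geo:inf}, so invoking it here is circular. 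Your step (iii) correctly supplies non-emptiness and the $(\supseteq)$ inclusion via reachability, but it does not rescue $(\subseteq)$.

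The paper sidesteps the barycenter machinery entirely: $(\supseteq)$ is a direct distance computation (as in \cref{thm:kanto:geo}), and $(\subseteq)$ is a chain of three elementary inequalities in the Hilbert--Schmidt norm that sandwich any barycenter value against $t(1-t)\c{W}_2^{2}(\m{A},\m{B})$ and read off the equality conditions. Independence from $\m{G}$ is then free, since the left-hand side $\c{K}_t(\m{A},\m{B})$ is intrinsic --- no bijection on $\m{M}$'s is needed.
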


\begin{proof}[Proof of \cref{prop:Kanto:geo:inf}]
We first show that (RHS) is not an emptyset. Assume $\m{A} \leadsto \m{B}$ from \cref{cor:reach:connect}. Let $\m{M} = \m{T} \m{G}$, where $\m{T}: \c{R}(\m{G}) = \c{R}(\m{A}^{1/2}) \subset \c{H}_{1} \rightarrow \c{H}$ is an optimal pre-pushforward. Then, we get $\m{M} \in \s{G}(\m{B})$ and $\m{G}^{*} \m{M} \succeq \m{0}$. Note from \cref{lem:Neumann:eq,lem:opt:algn:inf-dim} that $\m{G}^{*} \m{M} \succeq \m{0}$ is equivalent to
\begin{align*}
    \tr[\m{G}^{*} \m{M}] = \tr [(\m{G}^{*} \m{B} \m{G})^{1/2}] = \tr [(\m{A}^{1/2} \m{B} \m{A}^{1/2})^{1/2}].
\end{align*}

Once we establish the equivalence, it becomes clear that (RHS) does not depend on the choice of $\m{G} \in \c{G}(\m{A})$, as (LHS) does not depend on the choice of $\m{G} \in \c{G}(\m{A})$:

\begin{itemize}[leftmargin = *]
\item [$(\supseteq)$] For any $\m{\Gamma}_{t}$ in (RHS), by direct calculation as in \cref{thm:kanto:geo}, we get $\c{W}_{2}(\m{A}, \m{\Gamma}_{t}) = t \c{W}_{2}(\m{A}, \m{B})$ and $\c{W}_{2}(\m{B}, \m{\Gamma}_{t}) = (1-t) \c{W}_{2}(\m{A}, \m{B})$. Then, invoking the proof of \cref{thm:kanto:geo} again, we get
\begin{align*}
    (1-t) \c{W}_{2}^{2}(\m{A}, \m{C}) + t \c{W}_{2}^{2}(\m{B}, \m{C}) \ge
    t (1-t) \c{W}_{2}^{2}(\m{A}, \m{B}) = (1-t) \c{W}_{2}^{2}(\m{A}, \m{\Gamma}_{t}) + t \c{W}_{2}^{2}(\m{B}, \m{\Gamma}_{t}),
\end{align*}
for any $\m{C} \in \c{B}_{1}^{+}(\c{H})$, i.e., $\m{\Gamma}_{t} \in \c{K}_{t}(\m{A}, \m{B})$.

\item [$(\subseteq)$] Given $\m{C} \in \c{K}_{t}(\m{A}, \m{B})$, let $\m{M} \in \c{G}(\m{B})$, and $\m{R} \in \c{G}(\m{C})$. Then,
\begin{align*}
    (1-t) \c{W}_{2}^{2}(\m{A}, \m{C}) + t \c{W}_{2}^{2}(\m{B}, \m{C}) &\stackrel{(a)}{\ge} (1-t) \vertii{\m{G} - \m{R}}_{2}^{2} + t \vertii{\m{M} - \m{R}}_{2}^{2} \\
    &= \vertii{\m{R}}_{2}^{2} - 2 \tr [\m{R}^{*} ((1-t) \m{G} + t \m{M})] + (1-t) \vertii{\m{G}}_{2}^{2} + t \vertii{\m{M}}_{2}^{2} \\
    &\stackrel{(b)}{\ge} - \vertii{(1-t) \m{G} + t \m{M}}_{2}^{2} + (1-t) \vertii{\m{G}}_{2}^{2} + t \vertii{\m{M}}_{2}^{2} \\
    &= t(1-t) \vertii{\m{G} - \m{M}}_{2}^{2} \\
    &\stackrel{(c)}{\ge} t(1-t) \tr[\m{A} + \m{B} - 2(\m{A}_{1}^{1/2} \m{A}_{2} \m{A}_{1}^{1/2})^{1/2}] = t(1-t) \c{W}_{2}^{2}(\m{A}, \m{B}).
\end{align*}
The equality holds in (c) if and only if $\m{G}^{*} \m{M} \succeq \m{0}$ and the equality holds in (b) if and only if $\m{R} = (1-t) \m{G} + t \m{M}$. In this case, the equality automatically holds in (a). Hence $\m{C}$ belongs to (RHS).
\end{itemize}
\end{proof}

\begin{theorem}[Kantorovich Problem]\label{thm:Kanto:char:inf}
Let $\m{A}, \m{B} \in \c{B}_{1}^{+}(\c{H})$ be covariance operators with $\m{A} \leadsto \m{B}$. Then, the set of all operators $\m{M} \in \s{G}(\m{B})$ satisfying $\m{A}^{1/2} \m{M} \succeq \m{0}$ is given by
\begin{align*}
    \m{M} =
    \begin{pmatrix}
    \m{A}_{11}^{\dagger/2} (\m{A}_{11}^{1/2} \m{B}_{11} \m{A}_{11}^{1/2})^{1/2} & \m{0} \\
    [(\m{A}_{11}^{1/2} \m{B}_{11} \m{A}_{11}^{1/2})^{\dagger/2} \m{A}_{11}^{1/2} \m{B}_{12} + \m{N}_{12}]^{*} & \m{M}_{22}
    \end{pmatrix}
    \    :
    \begin{array}{c}
    \c{H}_{1} \\
    \oplus \\
    \c{H}_{2}
    \end{array}
    \rightarrow
    \begin{array}{c}
    \c{H}_{1} \\
    \oplus \\
    \c{H}_{2}
    \end{array} ,
\end{align*}
where: 
\begin{itemize}[leftmargin = *]
    \item $\m{N}_{12}  : \c{H}_{2} \to \c{H}_{1}$ satisfies $\c{R}(\m{N}_{12}) \subset \c{N}(\m{B}_{11}^{1/2} \m{A}_{11}^{1/2})$ and $\m{N}_{12}^{*} \m{N}_{12} \preceq \m{B}/\m{A}$.
    \item $\m{M}_{22}  : \c{H}_{2} \to \c{H}_{2}$ satisfies $\m{M}_{22} \m{M}_{22}^{*} = \m{B}/\m{A} - \m{N}_{12}^{*} \m{N}_{12}$.
\end{itemize}
A solution corresponds to a Monge map (i.e., $\m{M} = \m{T}\m{A}^{1/2}$ for some optimal pre-pushforward $\m{T}: \c{R}(\m{A}^{1/2}) \subset \c{H}_{1} \rightarrow \c{H}$) if and only if one chooses $\m{M}_{22} = \m{0}$ (i.e., $\m{N}_{12}^{*} \m{N}_{12} = \m{B}/\m{A}$).
\end{theorem}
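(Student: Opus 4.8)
The plan is to transcribe the finite-dimensional argument of \cref{thm:Kanto:char} into the Hilbertian setting, working throughout in the orthogonal decomposition $\c{H} = \c{H}_{1} \oplus \c{H}_{2}$ with $\c{H}_{1} = \overline{\c{R}(\m{A})}$, $\c{H}_{2} = \c{N}(\m{A})$, and writing $\m{M}$ in the corresponding $2 \times 2$ block form $(\m{M}_{ij})$. Since $\m{M} \m{M}^{*} = \m{B}$ forces $\m{M} \in \c{B}_{2}(\c{H})$, the operator $\m{Q} := \m{A}^{1/2} \m{M}$ is compact with $\m{Q} \m{Q}^{*} = \m{A}^{1/2} \m{B} \m{A}^{1/2}$, so \cref{lem:Neumann:eq} makes the condition $\m{A}^{1/2} \m{M} \succeq \m{0}$ equivalent to the identity $\m{A}^{1/2} \m{M} = (\m{A}^{1/2} \m{B} \m{A}^{1/2})^{1/2}$. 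Because $\m{A}^{1/2}$ annihilates $\c{H}_{2}$, both sides of this identity are block-diagonal with only the $(1,1)$-entry surviving, so it reads $\m{A}_{11}^{1/2} \m{M}_{12} = \m{0}$ together with $\m{A}_{11}^{1/2} \m{M}_{11} = (\m{A}_{11}^{1/2} \m{B}_{11} \m{A}_{11}^{1/2})^{1/2}$; injectivity of $\m{A}_{11}^{1/2}$ on $\c{H}_{1}$ then gives $\m{M}_{12} = \m{0}$ and $\m{M}_{11} = \m{S}_{\m{A}_{11} \leadsto \m{B}_{11}} = \m{A}_{11}^{\dagger/2} (\m{A}_{11}^{1/2} \m{B}_{11} \m{A}_{11}^{1/2})^{1/2}$, the pseudoinverse being legitimate since $(\m{A}_{11}^{1/2} \m{B}_{11} \m{A}_{11}^{1/2})^{1/2}$ has range inside $\c{R}(\m{A}_{11}^{1/2})$ by the Douglas lemma applied to $(\m{A}_{11}^{1/2}\m{B}_{11}^{1/2})(\m{A}_{11}^{1/2}\m{B}_{11}^{1/2})^{*} = \m{A}_{11}^{1/2}\m{B}_{11}\m{A}_{11}^{1/2}$ (cf.\ \cref{thm:sol:injec}).

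Next I would impose $\m{M} \m{M}^{*} = \m{B}$ block-by-block. With $\m{M}_{12} = \m{0}$ the $(1,1)$-entry $\m{M}_{11} \m{M}_{11}^{*} = \m{B}_{11}$ holds automatically (this is the $(i,j)=(1,1)$ instance of \eqref{eq:cond:ricatti:pf}, already checked inside the proof of \cref{thm:sol:sym:inf}), the $(1,2)$-entry becomes $\m{M}_{11} \m{M}_{21}^{*} = \m{B}_{12}$, and the $(2,2)$-entry becomes $\m{M}_{21} \m{M}_{21}^{*} + \m{M}_{22} \m{M}_{22}^{*} = \m{B}_{22}$. Multiplying the $(1,2)$-equation on the left by $\m{A}_{11}^{1/2}$ turns it into $\m{C}_{11}^{1/2} \m{M}_{21}^{*} = \m{C}_{12}$ for the block s.p.d.\ operator $\m{C}$ of \eqref{eq:comp:block}; since $\m{C} \succeq \m{0}$, the Douglas lemma gives solvability with general solution $\m{M}_{21}^{*} = \m{C}_{11}^{\dagger/2} \m{C}_{12} + \m{N}_{12}$, $\c{R}(\m{N}_{12}) \subseteq \c{N}(\m{C}_{11}) = \c{N}(\m{B}_{11}^{1/2} \m{A}_{11}^{1/2})$. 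Orthogonality of $\c{R}(\m{N}_{12})$ and $\c{R}(\m{C}_{11}^{\dagger/2} \m{C}_{12})$ gives $\m{M}_{21} \m{M}_{21}^{*} = (\m{C}_{11}^{\dagger/2} \m{C}_{12})^{*}(\m{C}_{11}^{\dagger/2} \m{C}_{12}) + \m{N}_{12}^{*} \m{N}_{12}$, and the Schur identity $\m{B}_{22} - (\m{C}_{11}^{\dagger/2} \m{C}_{12})^{*}(\m{C}_{11}^{\dagger/2} \m{C}_{12}) = \m{B}/\m{A}$ from \cref{lem:Schur:op}(3) collapses the $(2,2)$-equation to $\m{M}_{22} \m{M}_{22}^{*} = \m{B}/\m{A} - \m{N}_{12}^{*} \m{N}_{12}$. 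This last equation is solvable in $\m{M}_{22}$ precisely when $\m{N}_{12}^{*} \m{N}_{12} \preceq \m{B}/\m{A}$, in which case $\m{M}_{22}$ ranges over $\c{G}(\m{B}/\m{A} - \m{N}_{12}^{*} \m{N}_{12})$; nonemptiness of the whole solution set is guaranteed by $\m{A} \leadsto \m{B}$ via \cref{prop:Kanto:geo:inf}. Collecting these constraints reproduces the stated parametrization.

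For the Monge/Kantorovich dichotomy I would argue as follows. If $\m{M} = \m{T} \m{A}^{1/2}$ for an optimal pre-pushforward $\m{T}$, then comparing with \eqref{eq:sol:set} forces $\m{N}_{12} = \m{U}_{12} (\m{B}/\m{A})^{1/2}$ for a partial isometry $\m{U}_{12}$ with initial space $\overline{\c{R}(\m{B}/\m{A})}$, hence $\m{N}_{12}^{*} \m{N}_{12} = (\m{B}/\m{A})^{1/2} \m{\Pi}_{\overline{\c{R}(\m{B}/\m{A})}} (\m{B}/\m{A})^{1/2} = \m{B}/\m{A}$, so $\m{M}_{22} \m{M}_{22}^{*} = \m{0}$, i.e.\ $\m{M}_{22} = \m{0}$. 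Conversely, if $\m{M}_{22} = \m{0}$ then $\m{N}_{12}^{*} \m{N}_{12} = \m{B}/\m{A}$, and the polar decomposition $\m{N}_{12} = \m{U}_{12} (\m{N}_{12}^{*} \m{N}_{12})^{1/2} = \m{U}_{12} (\m{B}/\m{A})^{1/2}$ produces a partial isometry $\m{U}_{12}$ with initial space $\overline{\c{R}(\m{N}_{12}^{*})} = \overline{\c{R}(\m{B}/\m{A})}$ and final space $\overline{\c{R}(\m{N}_{12})} \subseteq \c{N}(\m{B}_{11}^{1/2} \m{A}_{11}^{1/2})$, which is exactly the datum in \eqref{eq:sol:set}. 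Since $\m{M}_{12} = \m{M}_{22} = \m{0}$ yields $\c{N}(\m{A}^{1/2}) = \c{H}_{2} \subseteq \c{N}(\m{M})$, the rule $\m{T}(\m{A}^{1/2} h) := \m{M} h$ defines a genuine operator on $\c{R}(\m{A}^{1/2})$ with $\m{T} \m{A}^{1/2} = \m{M} \in \c{B}_{2}(\c{H})$ (trace class, since $\m{M}\m{M}^{*} = \m{B}$) and of the form \eqref{eq:sol:set}, hence an optimal pre-pushforward.

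The step I expect to cause the most friction is the domain/range bookkeeping for the unbounded pseudo-roots $\m{A}_{11}^{\dagger/2}$ and $\m{C}_{11}^{\dagger/2}$: in $\b{R}^{n}$ one simply left- or right-multiplies by genuine inverses, whereas here each cancellation must be licensed by a Douglas-lemma range inclusion together with the identities $\m{A}_{11}^{\dagger/2} \m{A}_{11}^{1/2} = \m{I}_{\c{H}_{1}}$ and $\overline{\m{A}_{11}^{1/2} \m{A}_{11}^{\dagger/2}} = \m{\Pi}_{1}$, and one must verify at each stage that the factors being cancelled are actually bounded and everywhere defined. The only genuinely analytic input beyond this is the nonemptiness statement of \cref{prop:Kanto:geo:inf}, which itself rests on the reachability characterization of \cref{thm:sol:sym:inf}; the rest is a faithful transcription of the finite-dimensional computation in \cref{thm:Kanto:char}.
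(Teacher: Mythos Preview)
Your proposal is correct and follows essentially the same approach as the paper. The paper's own proof is a two-line sketch that simply says ``repeat the proof of \cref{thm:Kanto:char}'' and ``as in the proof of \cref{thm:sol:sym:inf}''; you have faithfully unpacked those references, including the use of \cref{lem:Neumann:eq} to turn $\m{A}^{1/2}\m{M}\succeq\m{0}$ into an equality, the injectivity of $\m{A}_{11}^{1/2}$ to pass between $\m{M}_{11}\m{M}_{21}^{*}=\m{B}_{12}$ and $\m{C}_{11}^{1/2}\m{M}_{21}^{*}=\m{C}_{12}$, the Schur identity from \cref{lem:Schur:op}, and the polar-decomposition argument for the Monge dichotomy.
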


\begin{proof}[Proof of \cref{thm:Kanto:char:inf}]
Decompose $\m{A}^{1/2}$ and $\m{M}$ as:
\begin{equation*}
    \m{A}^{1/2} = \begin{pmatrix}
    \m{A}_{11}^{1/2} & \m{0} \\
    \m{0} & \m{0}
    \end{pmatrix}, \quad \m{M} = \begin{pmatrix}
    \m{M}_{11} & \m{M}_{12} \\
    \m{M}_{21} & \m{M}_{22}
    \end{pmatrix}.
\end{equation*}
Repeating the proof of \cref{thm:Kanto:char}, we get $\m{M}_{11} = \m{A}_{11}^{\dagger/2} (\m{A}_{11}^{1/2} \m{B}_{11} \m{A}_{11}^{1/2})^{1/2}$ and $\m{M}_{12} = \m{0}$, and it remains to solve:
\begin{align}
    &\m{A}_{11}^{\dagger/2} (\m{A}_{11}^{1/2} \m{B}_{11} \m{A}_{11}^{1/2})^{1/2} \m{M}_{21}^{*} = \m{B}_{12}, \label{eq:reach:rk:1:inf} \\
    &\m{M}_{22} \m{M}_{22}^{*} + \m{M}_{21} \m{M}_{21}^{*} = \m{B}_{22}. \label{eq:reach:rk:2:inf}
\end{align}
As in the proof of \cref{thm:sol:sym:inf},  \eqref{eq:reach:rk:1} always have a solution characterized by
\begin{equation*}
    \m{M}_{21}^{*} = (\m{A}_{11}^{1/2} \m{B}_{11} \m{A}_{11}^{1/2})^{\dagger/2} (\m{A}_{11}^{1/2} \m{B}_{12}) + \m{N}_{12}, \quad \c{R}(\m{N}_{12}) \subset \c{N}((\m{A}_{11}^{1/2} \m{B}_{11} \m{A}_{11}^{1/2})^{1/2}) = \c{N}(\m{B}_{11}^{1/2} \m{A}_{11}^{1/2}).
\end{equation*}
The rest of the proof is then same to that of \cref{thm:Kanto:char}.
\end{proof}

Consequently, \cref{rmk:linear:kanto} holds if one replaces $(\m{G}_{11}^{*})^{-1}$ by $\m{A}_{11}^{\dagger/2}$. In particular, for any $\m{\Gamma}_{t} \in \c{K}_{t}(\m{A}, \m{B})$, $(\m{\Gamma}_{t})_{11}$ and $(\m{\Gamma}_{t})_{22}$ are fixed, and the off-diagonal blocks $(\m{\Gamma}_{t})_{21} = (\m{\Gamma}_{t})_{12}^{*}$ only depends on the choice of $\m{N}_{12} : \c{H}_{2} \to \c{H}_{1}$, subject to the conditions in \cref{thm:Kanto:char:inf}. To emphasize this dependence, we again write $\m{\Gamma}_{t}^{\m{N}_{12}}$. This leads to the counterpart of \cref{thm:spd:revise}.

\begin{corollary}[Uniqueness]\label{thm:spd:revise:inf}
Let $\m{A}, \m{B} \in \c{B}_{1}^{+}(\c{H})$ be covariance operators. For any $t \in (0, 1)$, the set of barycenters $\c{K}_{t}(\m{A}, \m{B})$ is a singleton if and only if the equivalent conditions in \cref{thm:spd:inf} hold.
Furthermore, this unique barycenter is the McCann interpolant at time $t$.
\end{corollary}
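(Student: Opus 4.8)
The plan is to read off $\c{K}_{t}(\m{A},\m{B})$ directly from the parametrization already supplied by \cref{prop:Kanto:geo:inf} and \cref{thm:Kanto:char:inf}, and to track precisely how the free parameter enters the barycenter. Throughout I keep the standing assumption $\m{A}\leadsto\m{B}$ of this subsection (if instead only $\m{B}\leadsto\m{A}$ holds one interchanges $\m{A}$ and $\m{B}$, which does not alter $\c{K}_{t}(\m{A},\m{B})=\c{K}_{1-t}(\m{B},\m{A})$), and I fix the Green's operator $\m{G}=\m{A}^{1/2}$. By \cref{prop:Kanto:geo:inf} together with \cref{thm:Kanto:char:inf}, one has $\c{K}_{t}(\m{A},\m{B})=\{\m{\Gamma}_{t}^{\m{N}_{12}}:\m{N}_{12}\in\c{A}\}$, where
\[
\c{A}:=\{\m{N}_{12}:\c{H}_{2}\to\c{H}_{1}\mid \c{R}(\m{N}_{12})\subset\c{N}(\m{B}_{11}^{1/2}\m{A}_{11}^{1/2}),\ \m{N}_{12}^{*}\m{N}_{12}\preceq\m{B}/\m{A}\},
\]
and $\m{\Gamma}_{t}^{\m{N}_{12}}$ does not depend on the (partially free) choice of $\m{M}_{22}$ realizing the Gram $\m{M}_{22}\m{M}_{22}^{*}=\m{B}/\m{A}-\m{N}_{12}^{*}\m{N}_{12}$.

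First I would show that $\m{N}_{12}\mapsto\m{\Gamma}_{t}^{\m{N}_{12}}$ is injective, so that $\c{K}_{t}(\m{A},\m{B})$ is a singleton exactly when $\c{A}$ is. By the analogue of \cref{rmk:linear:kanto} recorded right after \cref{thm:Kanto:char:inf} (with $(\m{G}_{11}^{*})^{-1}$ replaced by $\m{A}_{11}^{\dagger/2}$ and $\m{G}_{11}=\m{A}_{11}^{1/2}$), the diagonal blocks $(\m{\Gamma}_{t}^{\m{N}_{12}})_{11}$ and $(\m{\Gamma}_{t}^{\m{N}_{12}})_{22}$ are independent of $\m{N}_{12}$, while $(\m{\Gamma}_{t}^{\m{N}_{12}})_{12}=t(1-t)\,\m{A}_{11}^{1/2}\m{N}_{12}+\m{R}_{t}$ with $\m{R}_{t}$ not depending on $\m{N}_{12}$. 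Since $\m{A}_{11}^{1/2}$ is injective on $\c{H}_{1}$ and $t(1-t)\neq0$, distinct $\m{N}_{12}$ produce distinct off-diagonal blocks, giving injectivity. Next I would decide when $\c{A}$ is a singleton. If $\m{B}/\m{A}=\m{0}$, then $\m{N}_{12}^{*}\m{N}_{12}\preceq\m{0}$ forces $\m{N}_{12}=\m{0}$, so $\c{A}=\{\m{0}\}$. Conversely, if $\m{B}/\m{A}\neq\m{0}$, then because $\m{A}\leadsto\m{B}$ — i.e. the dimensional inequality of \cref{thm:sol:sym:inf} holds — that theorem furnishes a partial isometry $\m{U}_{12}$ with initial space $\overline{\c{R}(\m{B}/\m{A})}$ and final space contained in $\c{N}(\m{B}_{11}^{1/2}\m{A}_{11}^{1/2})$; then $\m{U}_{12}(\m{B}/\m{A})^{1/2}\in\c{A}$, it is nonzero since $\m{U}_{12}$ is isometric on $\overline{\c{R}(\m{B}/\m{A})}$ and $(\m{B}/\m{A})^{1/2}\neq\m{0}$, and $\m{0}\in\c{A}$ as well, so $\c{A}$ has at least two elements. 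Thus $\c{K}_{t}(\m{A},\m{B})$ is a singleton iff $\m{B}/\m{A}=\m{0}$, which is one of the equivalent conditions of \cref{thm:spd:inf}; combining with that theorem finishes the ``iff''.

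For the final assertion, when $\m{B}/\m{A}=\m{0}$ the unique parameter is $\m{N}_{12}=\m{0}$, which satisfies $\m{N}_{12}^{*}\m{N}_{12}=\m{0}=\m{B}/\m{A}$, so by the last sentence of \cref{thm:Kanto:char:inf} the associated $\m{M}$ equals $\m{T}\m{A}^{1/2}$ for an optimal pre-pushforward $\m{T}$ (necessarily the unique one from \cref{thm:spd:inf}); hence $\m{\Gamma}_{t}^{\m{0}}=\m{G}_{t}\m{G}_{t}^{*}$ with $\m{G}_{t}=(1-t)\m{A}^{1/2}+t\m{T}\m{A}^{1/2}$ is precisely the McCann interpolant at time $t$. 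I expect the only genuinely substantive ingredient to be the production of the second parameter $\m{U}_{12}(\m{B}/\m{A})^{1/2}$ when $\m{B}/\m{A}\neq\m{0}$: this is exactly where $\m{A}\leadsto\m{B}$ (equivalently the dimensional inequality of \cref{thm:sol:sym:inf}) is used, and it is also the point that makes the subsection's standing assumption indispensable — without it one could have $\c{K}_{t}(\m{A},\m{B})$ a singleton while $\m{B}/\m{A}\neq\m{0}$. Everything else is bookkeeping on top of \cref{prop:Kanto:geo:inf}, \cref{thm:Kanto:char:inf} and \cref{thm:spd:inf}.
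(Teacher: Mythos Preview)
Your proposal is correct and follows exactly the route the paper intends: the paper's own proof is a one-line ``Trivial from \cref{prop:Kanto:geo:inf,thm:Kanto:char:inf}'', and your argument is precisely the unpacking of that line—parametrizing $\c{K}_t(\m{A},\m{B})$ by $\c{A}$, checking injectivity of $\m{N}_{12}\mapsto\m{\Gamma}_t^{\m{N}_{12}}$ via the off-diagonal block, and using the partial isometry from \cref{thm:sol:sym:inf} to produce a second element of $\c{A}$ when $\m{B}/\m{A}\neq\m{0}$. Your treatment of the standing assumption $\m{A}\leadsto\m{B}$ is also the right reading of the subsection.
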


\begin{proof}[Proof of \cref{thm:spd:revise:inf}]
Trivial from \cref{prop:Kanto:geo:inf,thm:Kanto:char:inf}.
\end{proof}

Also, all the geodesics connecting $\m{A}$ and $\m{B}$ can be constructed in a same fashion to \cref{thm:kanto:geo}:

\begin{theorem}[Kantorovich Geodesics]\label{thm:kanto:geo:inf}
Let $\m{A}, \m{B} \in \c{B}_{1}^{+}(\c{H})$ be covariance operators with $\m{A} \leadsto \m{B}$.
\begin{enumerate}[leftmargin = *]
\item Any path $\m{\Gamma}^{\m{N}_{12}}: [0, 1] \to \c{B}_{1}^{+}(\c{H}), \, t \mapsto \m{\Gamma}_{t}^{\m{N}_{12}}$ constructed by fixing a choice of $\m{N}_{12}$ in \cref{thm:Kanto:char:inf} generates a constant-speed geodesic, that is, $\c{W}_{2}(\m{\Gamma}_{s}^{\m{N}_{12}}, \m{\Gamma}_{t}^{\m{N}_{12}}) = (t-s) \c{W}_{2}(\m{A}, \m{B})$ for any $0 \le t \le s \le 1$.
\item For any $t \in (0, 1)$ and $\m{\Gamma}_{t} \in \c{K}_{t}(\m{A}, \m{B})$, the \emph{reverse} Schur complements vanish, i.e., $\m{A}/\m{\Gamma}_{t} = \m{0}$ and $\m{B}/\m{\Gamma}_{t} = \m{0}$.
\item Conversely, any constant-speed geodesic connecting $\m{A}$ and $\m{B}$ is of the form $\m{\Gamma}^{\m{N}_{12}}$ for some valid choice of $\m{N}_{12}$ in \cref{thm:Kanto:char:inf}.
\end{enumerate}
\end{theorem}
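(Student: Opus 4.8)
The plan is to transfer the argument of \cref{thm:kanto:geo} to the Hilbertian setting, replacing its Green's-matrix SVD manipulations by \cref{prop:Kanto:geo:inf,thm:Kanto:char:inf,thm:spd:revise:inf}, the Neumann-type trace identity \cref{lem:Neumann:eq} (in the form \cref{lem:opt:algn:inf-dim}), and part 4 of \cref{lem:Schur:op}; the one structural change is that the role played by \cref{cor:bary:injec} in finite dimensions is now taken over by part 2 of the present theorem, which must be proved by hand. For part 1 I fix an admissible $\m{N}_{12}$, realise it by a concrete $\m{M} \in \s{G}(\m{B})$ from \cref{thm:Kanto:char:inf} (say with $\m{M}_{22} = (\m{B}/\m{A} - \m{N}_{12}^{*}\m{N}_{12})^{1/2}$), so that $\m{M}\m{M}^{*} = \m{B}$ and $\m{A}^{1/2}\m{M} \succeq \m{0}$, and set $\m{G}_{t} := (1-t)\m{A}^{1/2} + t\m{M} \in \c{B}_{2}(\c{H})$, $\m{\Gamma}^{\m{N}_{12}}_{t} := \m{G}_{t}\m{G}_{t}^{*} \in \c{B}_{1}^{+}(\c{H})$; that this coincides with the barycenter of \cref{thm:Kanto:char:inf} independently of $\m{M}_{22}$ and of the Green's operator is the infinite-dimensional analogue of \cref{rmk:linear:kanto} recorded after that theorem. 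Using $\m{A}^{1/2}\m{M} = (\m{A}^{1/2}\m{M})^{*} = \m{M}^{*}\m{A}^{1/2}$, I expand $\m{G}_{t}^{*}\m{G}_{s} = (1-t)(1-s)\m{A} + [(1-t)s + t(1-s)]\m{A}^{1/2}\m{M} + ts\,\m{B} = \m{G}_{s}^{*}\m{G}_{t} \succeq \m{0}$ for all $s,t \in [0,1]$, whence \cref{lem:Neumann:eq,lem:opt:algn:inf-dim} give
\begin{align*}
    \c{W}_{2}^{2}(\m{\Gamma}^{\m{N}_{12}}_{t}, \m{\Gamma}^{\m{N}_{12}}_{s}) = \tr(\m{G}_{t}^{*}\m{G}_{t}) + \tr(\m{G}_{s}^{*}\m{G}_{s}) - 2\tr(\m{G}_{t}^{*}\m{G}_{s}) = \vertii{\m{G}_{t} - \m{G}_{s}}_{2}^{2} = (t-s)^{2}\,\c{W}_{2}^{2}(\m{A}, \m{B}),
\end{align*}
since $\vertii{\m{A}^{1/2} - \m{M}}_{2}^{2} = \tr(\m{A}) + \tr(\m{B}) - 2\tr[(\m{A}^{1/2}\m{B}\m{A}^{1/2})^{1/2}] = \c{W}_{2}^{2}(\m{A}, \m{B})$; this is exactly the constant-speed property.

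For part 2, I take $t \in (0,1)$ and an arbitrary $\m{\Gamma}_{t} \in \c{K}_{t}(\m{A}, \m{B})$, which by \cref{prop:Kanto:geo:inf} can be written $\m{\Gamma}_{t} = \m{G}_{t}\m{G}_{t}^{*}$ with $\m{G}_{t} = (1-t)\m{A}^{1/2} + t\m{M}$, $\m{M} \in \s{G}(\m{B})$, $\m{A}^{1/2}\m{M} \succeq \m{0}$. Since $\m{A} \in \c{B}_{1}^{+}(\c{H})$, the range condition defining the $\m{\Gamma}_{t}$-Schur complement of $\m{A}$ holds automatically by \cref{lem:Schur:spd}, so by part 4 of \cref{lem:Schur:op} it suffices to show $\c{R}(\m{A}^{1/2}) \cap \c{N}(\m{\Gamma}_{t}) = \{\m{0}\}$. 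For $c$ in this intersection, note $\c{N}(\m{\Gamma}_{t}) = \c{N}(\m{G}_{t}^{*})$; writing $c = \m{A}^{1/2}d$ with $d = \m{A}^{\dagger/2}c$ yields $0 = \innpr{d}{\m{G}_{t}^{*}c} = \innpr{\m{G}_{t}d}{c} = (1-t)\|c\|^{2} + t\innpr{\m{A}^{1/2}\m{M}d}{d} \ge (1-t)\|c\|^{2}$, forcing $c = \m{0}$, so $\m{A}/\m{\Gamma}_{t} = \m{0}$. The mirror computation for $\m{B}/\m{\Gamma}_{t} = \m{0}$ writes $c \in \c{R}(\m{B}^{1/2}) \cap \c{N}(\m{\Gamma}_{t})$ as $c = \m{M}e$ with $e = \m{M}^{\dagger}c \in \c{N}(\m{M})^{\perp}$ and gives $0 = (1-t)\innpr{\m{A}^{1/2}\m{M}e}{e} + t\|c\|^{2} \ge t\|c\|^{2}$, hence $c = \m{0}$ and $\c{R}(\m{B}^{1/2}) \cap \c{N}(\m{\Gamma}_{t}) = \{\m{0}\}$.

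For part 3, let $\tilde{\m{\Gamma}} : [0,1] \to \c{B}_{1}^{+}(\c{H})$ be any constant-speed geodesic from $\m{A}$ to $\m{B}$. The sectional-curvature (Cauchy--Schwarz) inequality derived in the proof of \cref{thm:kanto:geo} forces $\tilde{\m{\Gamma}}_{t} \in \c{K}_{t}(\m{A}, \m{B})$ for each $t \in (0,1)$, so \cref{thm:Kanto:char:inf} gives $\tilde{\m{\Gamma}}_{t} = \m{\Gamma}^{\m{N}_{12}}_{t}$ for some admissible $\m{N}_{12}$, a priori depending on $t$. Fixing one such $t$, part 2 yields $\c{R}(\m{A}^{1/2}) \cap \c{N}(\m{\Gamma}^{\m{N}_{12}}_{t}) = \{\m{0}\}$ and $\c{R}(\m{B}^{1/2}) \cap \c{N}(\m{\Gamma}^{\m{N}_{12}}_{t}) = \{\m{0}\}$; via the dimensional inequality of \cref{thm:sol:sym:inf} this gives $\m{\Gamma}^{\m{N}_{12}}_{t} \leadsto \m{A}$ and $\m{\Gamma}^{\m{N}_{12}}_{t} \leadsto \m{B}$ with vanishing Schur complements, so by \cref{thm:spd:revise:inf} the sets $\c{K}_{s}(\m{\Gamma}^{\m{N}_{12}}_{t}, \m{A})$ and $\c{K}_{s}(\m{\Gamma}^{\m{N}_{12}}_{t}, \m{B})$ are singletons for all $s \in (0,1)$; equivalently, the constant-speed geodesic from $\m{\Gamma}^{\m{N}_{12}}_{t}$ to $\m{A}$, and the one to $\m{B}$, are unique. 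Both the reparametrised restrictions of $\tilde{\m{\Gamma}}$ to $[0,t]$ and $[t,1]$ and of $\m{\Gamma}^{\m{N}_{12}}$ (a constant-speed geodesic by part 1) are such geodesics, hence coincide, giving $\tilde{\m{\Gamma}}_{r} = \m{\Gamma}^{\m{N}_{12}}_{r}$ for all $r \in [0,1]$.

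The main obstacle is part 2: \cref{thm:kanto:geo} merely invoked \cref{cor:bary:injec}, but the barycenter apparatus of \cref{sec:bary} is confined to $\c{H} = \b{R}^{n}$, so the triviality of the null-space intersections must be established directly, relying only on the self-adjointness and positivity of $\m{A}^{1/2}\m{M}$ on $\c{H}_{1}$ rather than rank--nullity; one must also confirm that $\c{R}(\m{A}^{1/2}) \cap \c{N}(\m{\Gamma}_{t}) = \{\m{0}\}$ is genuinely equivalent to $\m{A}/\m{\Gamma}_{t} = \m{0}$, which requires the $\m{\Gamma}_{t}$-Schur complement of $\m{A}$ to be well-defined (guaranteed by $\m{A} \in \c{B}_{1}^{+}(\c{H})$ via \cref{lem:Schur:spd}) and then part 4 of \cref{lem:Schur:op}. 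The reparametrisation bookkeeping in part 3 — that a sub-arc of a constant-speed geodesic is again one with the expected speed, and that a constant-speed geodesic lies in $\c{K}_{s}$ at time $s$ — is routine once parts 1 and 2 are in hand.
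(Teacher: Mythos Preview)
Your proposal is correct and follows essentially the same route as the paper's own proof: part 1 is the Green's-operator computation showing $\m{G}_{t}^{*}\m{G}_{s}\succeq\m{0}$ and reading off the distance, part 2 is the direct verification of $\c{R}(\m{A}^{1/2})\cap\c{N}(\m{G}_{t}^{*})=\{\m{0}\}$ (and its $\m{B}$-analogue) combined with part 4 of \cref{lem:Schur:op}, and part 3 is the Cauchy--Schwarz localisation followed by the uniqueness from \cref{thm:spd:revise:inf}. Your write-up is in fact slightly more explicit than the paper's in two places---you spell out why $\m{\Gamma}_{t}^{\m{N}_{12}}\leadsto\m{A}$ via the dimensional inequality before invoking \cref{thm:spd:revise:inf}, and your coefficients in the part-2 inequality are correct (the paper has a harmless swap of $(1-t)$ and $t$ there)---but the structure is identical.
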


\begin{proof}[Proof of \cref{thm:kanto:geo:inf}] \quad
\begin{enumerate}[leftmargin = *]
\item Same to the proof of \cref{thm:kanto:geo}.
\item Let $\m{\Gamma}_{t} = \m{G}_{t} \m{G}_{t}^{*} \in \c{K}_{t}(\m{A}, \m{B})$, where $\m{G}_{t} = (1-t) \m{A}^{1/2} + t \m{M}$ with $\m{M} \in \s{G}(\m{B}), \ \m{A}^{1/2} \m{M} \succeq \m{0}$. To show $\m{A}/\m{\Gamma}_{t} = \m{0}$, it suffices to show
\begin{align*}
    \c{R}(\m{A}^{1/2}) \cap \c{N}(\m{\Gamma}_{t}) = \c{R}(\m{A}^{1/2}) \cap \c{N}(\m{G}_{t}^{*}) = \{ \m{0} \},
\end{align*}
due to \cref{lem:Schur:op}. Let $\m{c} \in \c{R}(\m{A}^{1/2}) \cap \c{N}(\m{G}_{t}^{*})$. Then, there exists some $\m{d} \in \c{H}$ such that $\m{c} = \m{A}^{1/2} \m{d}$ and
\begin{align*}
    0 = \m{d}^{*} \m{G}_{t}^{*} \m{c} = \m{d}^{*} \m{G}_{t}^{*} \m{G}_{1} \m{d} = (1-t) \m{d}^{*} [\m{A}^{1/2} \m{M}] \m{d} + t \m{c}^{*} \m{c} \ge t \|\m{c}\|^{2} \quad \Rightarrow \quad \m{c} = \m{0}.
\end{align*}
Consequently, we get $\m{A}/\m{\Gamma}_{t} = \m{0}$. Repeating this with $\m{B}$, we get $\m{B}/\m{\Gamma}_{t} = \m{0}$.
\item Let $\tilde{\m{\Gamma}}: [0, 1] \to \c{B}_{1}^{+}(\c{H})$ be a constant-speed geodesic connecting $\m{A}$ and $\m{B}$. Fix $t \in (0, 1)$. By the Cauchy-Schwarz inequality, we get
\begin{align*}
    (1-t) \c{W}_{2}^{2}(\m{A}, \m{C}) + t \c{W}_{2}^{2}(\m{B}, \m{C}) \ge 
    \left(\frac{1}{1-t} + \frac{1}{t} \right)^{-1} (\c{W}_{2}(\m{A}, \m{C}) + \c{W}_{2}(\m{B}, \m{C}))^{2} \ge
    t (1-t) \c{W}_{2}^{2}(\m{A}, \m{B}).
\end{align*}
From the proof of \cref{prop:Kanto:geo:inf}, the equality holds if and only if $\m{C} \in \c{K}_{t}(\m{A}, \m{B})$. This shows that $\tilde{\m{\Gamma}}_{t} \in \c{K}_{t}(\m{A}, \m{B})$, i.e., $\tilde{\m{\Gamma}}_{t} = \m{\Gamma}_{t}^{\m{N}_{12}}$ for some $\m{N}_{12}$ given in \cref{thm:Kanto:char:inf}. Moving forward, from part 2 and 
\cref{thm:spd:revise:inf}, $\m{\Gamma}^{\m{N}_{12}}: [t, 0] \to \c{B}_{1}^{+}(\c{H})$ (reversed time) is the unique constant-speed geodesic connecting $\m{\Gamma}_{t}^{\m{N}_{12}}$ and $\m{A}$. Similarly, $\m{\Gamma}^{\m{N}_{12}}: [t, 1] \to \c{B}_{1}^{+}(\c{H})$ is the unique constant-speed geodesic connecting $\m{\Gamma}_{t}^{\m{N}_{12}}$ and $\m{B}$. 
\end{enumerate}
\end{proof}

Analogous to Villani's quote \cite{villani2008optimal} \emph{``a geodesic in the space of laws is the law of a geodesic,''} our findings for Gaussian measures can be summarized as: \emph{``A geodesic in the space of Green operators is the Green operator of a geodesic''}. Finally, we establish an infinite-dimensional counterpart to \cref{thm:ext:geo}. While we show that every McCann interpolant is an extreme point of the set of barycenters, the converse remains an open question, especially due to (Case 2) in the proof of \cref{thm:ext:geo}.

\begin{proposition}[Extreme Geodesics]\label{prop:ext:geo:inf}
Let $\m{A}, \m{B} \in \c{B}_{1}^{+}(\c{H})$ be covariance operators with $\m{A} \leadsto \m{B}$. The set of barycenters $\c{K}_{t}(\m{A}, \m{B})$, which is a closed convex set, has the following properties:
\begin{enumerate}[leftmargin = *]
\item If $\m{\Gamma}_{t}$ is a McCann interpolant at time $t$, then it is an extreme point of $\c{K}_{t}(\m{A}, \m{B})$.
\item For any barycenter $\m{\Gamma}_{t}^{\m{N}_{12}} \in \c{K}_{t}(\m{A}, \m{B})$, we have $\m{\Gamma}_{t}^{\m{N}_{12}}/\m{A} = t^{2} (\m{B}/\m{A} - \m{N}_{12}^{*} \m{N}_{12})$.
\item $\m{\Gamma}_{t}$ is a McCann interpolant at time $t$ if and only if $\m{\Gamma}_{t}/\m{A} = \m{0}$.
\end{enumerate}
\end{proposition}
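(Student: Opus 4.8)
The plan is to carry the finite-dimensional argument of \cref{thm:ext:geo} across via the parametrization of $\c{K}_t(\m{A},\m{B})$ supplied by \cref{prop:Kanto:geo:inf} and \cref{thm:Kanto:char:inf}; the endpoints $t\in\{0,1\}$ are trivial, so fix $t\in(0,1)$. Writing $\m{A}^{1/2},\m{M}$ in the $\c{H}_1\oplus\c{H}_2$ block form, every $\m{\Gamma}_t\in\c{K}_t(\m{A},\m{B})$ equals $\m{\Gamma}_t^{\m{N}_{12}}=\m{G}_t\m{G}_t^*$ with $\m{G}_t=(1-t)\m{A}^{1/2}+t\m{M}$, where $\m{M}\in\s{G}(\m{B})$ is the Green's operator associated by \cref{thm:Kanto:char:inf} with a parameter $\m{N}_{12}$ in the set $\c{A}:=\{\m{N}_{12}:\c{R}(\m{N}_{12})\subseteq\c{N}(\m{B}_{11}^{1/2}\m{A}_{11}^{1/2}),\ \m{N}_{12}^*\m{N}_{12}\preceq\m{B}/\m{A}\}$; the blocks are $(\m{\Gamma}_t^{\m{N}_{12}})_{11}=\m{X}_{11}\m{X}_{11}^*$ and $(\m{\Gamma}_t^{\m{N}_{12}})_{22}=t^2(\m{M}_{21}\m{M}_{21}^*+\m{M}_{22}\m{M}_{22}^*)$, both independent of $\m{N}_{12}$, together with $(\m{\Gamma}_t^{\m{N}_{12}})_{12}=t\m{X}_{11}\m{M}_{21}^*=t\m{X}_{11}[(\m{A}_{11}^{1/2}\m{B}_{11}\m{A}_{11}^{1/2})^{\dagger/2}\m{A}_{11}^{1/2}\m{B}_{12}+\m{N}_{12}]$, which depends affinely on $\m{N}_{12}$; here $\m{X}_{11}:=(1-t)\m{A}_{11}^{1/2}+t\m{M}_{11}$ and $\m{M}_{11}=\m{A}_{11}^{\dagger/2}(\m{A}_{11}^{1/2}\m{B}_{11}\m{A}_{11}^{1/2})^{1/2}$. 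The set $\c{A}$ is convex and closed by the verbatim argument of the first paragraph of the proof of \cref{thm:ext:geo} (it uses no finite-dimensionality), so $\c{K}_t$, being the affine image of $\c{A}$, is convex; and $\c{K}_t$ is closed because it is the set of minimizers of the continuous functional $\m{C}\mapsto(1-t)\c{W}_2^2(\m{A},\m{C})+t\c{W}_2^2(\m{B},\m{C})$ on $\c{B}_1^+(\c{H})$, i.e. a level set of a continuous function.

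The computational heart is part 2, where the only ingredient absent in finite dimensions is the injectivity of $\m{X}_{11}$ for $t\in(0,1)$. The optimality constraint $\m{A}^{1/2}\m{M}\succeq\m{0}$ together with the explicit $\m{M}_{11}$ above gives $\m{A}_{11}^{1/2}\m{M}_{11}=(\m{A}_{11}^{1/2}\m{B}_{11}\m{A}_{11}^{1/2})^{1/2}\succeq\m{0}$, whence $\innpr{\m{X}_{11}h}{\m{A}_{11}^{1/2}h}=(1-t)\|\m{A}_{11}^{1/2}h\|^2+t\innpr{(\m{A}_{11}^{1/2}\m{B}_{11}\m{A}_{11}^{1/2})^{1/2}h}{h}$ is a sum of non-negative terms; so $\m{X}_{11}h=\m{0}$ forces $\m{A}_{11}^{1/2}h=\m{0}$, hence $h=\m{0}$ by injectivity of $\m{A}_{11}^{1/2}$ on $\c{H}_1$. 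With this, the polar decomposition $\m{X}_{11}=(\m{X}_{11}\m{X}_{11}^*)^{1/2}\m{W}$ has $\m{W}$ an isometry on $\c{H}_1$ with $\c{R}(\m{W})=\overline{\c{R}(\m{X}_{11})}$, so $(\m{X}_{11}\m{X}_{11}^*)^{\dagger/2}\m{X}_{11}=\m{W}$ (using $(\m{X}_{11}\m{X}_{11}^*)^{\dagger/2}(\m{X}_{11}\m{X}_{11}^*)^{1/2}=\m{\Pi}_{\overline{\c{R}(\m{X}_{11})}}$), hence $(\m{\Gamma}_t)_{11}^{\dagger/2}(\m{\Gamma}_t)_{12}=t\m{W}\m{M}_{21}^*$. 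Plugging into the definition \eqref{eq:gen:Schur} of the Schur complement — legitimate since $\m{\Gamma}_t\succeq\m{0}$ ensures the required range condition — gives $\m{\Gamma}_t^{\m{N}_{12}}/\m{A}=(\m{\Gamma}_t)_{22}-t^2\m{M}_{21}\m{W}^*\m{W}\m{M}_{21}^*=t^2\m{M}_{22}\m{M}_{22}^*=t^2(\m{B}/\m{A}-\m{N}_{12}^*\m{N}_{12})$, which is part 2. Part 3 then follows at once: by \cref{thm:Kanto:char:inf} the parameter $\m{N}_{12}$ corresponds to a Monge map — equivalently $\m{\Gamma}_t^{\m{N}_{12}}$ is a McCann interpolant at time $t$ — precisely when $\m{M}_{22}=\m{0}$, i.e. $\m{N}_{12}^*\m{N}_{12}=\m{B}/\m{A}$, which by part 2 is equivalent to $\m{\Gamma}_t^{\m{N}_{12}}/\m{A}=\m{0}$.

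For part 1 I will first note that the affine parametrization $f:\m{N}_{12}\mapsto\m{\Gamma}_t^{\m{N}_{12}}$ is injective: its value determines $(\m{\Gamma}_t^{\m{N}_{12}})_{12}$ and hence $\m{X}_{11}\m{N}_{12}$, and $\m{X}_{11}$ is injective, so it determines $\m{N}_{12}$. If $\m{\Gamma}_t$ is a McCann interpolant, its parameter satisfies $\m{N}_{12}^*\m{N}_{12}=\m{B}/\m{A}$ (part 3, or directly \cref{thm:Kanto:char:inf}), and any such $\m{N}_{12}$ is an extreme point of $\c{A}$: if $\m{N}_{12}=(1-\lambda)\m{P}_0+\lambda\m{P}_1$ with $\m{P}_0,\m{P}_1\in\c{A}$ and $\lambda\in(0,1)$, then $\m{B}/\m{A}=\m{N}_{12}^*\m{N}_{12}\preceq(1-\lambda)\m{P}_0^*\m{P}_0+\lambda\m{P}_1^*\m{P}_1\preceq\m{B}/\m{A}$ forces $\lambda(1-\lambda)(\m{P}_0-\m{P}_1)^*(\m{P}_0-\m{P}_1)=\m{0}$, i.e. $\m{P}_0=\m{P}_1$. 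Since $f$ is an affine bijection of $\c{A}$ onto the convex set $\c{K}_t$, it carries extreme points to extreme points, so $\m{\Gamma}_t=f(\m{N}_{12})$ is extreme.

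The main obstacle — which we do not attempt and which the paper explicitly flags — is the converse of part 1, that every extreme point of $\c{K}_t$ is a McCann interpolant. This amounts to showing every extreme point of $\c{A}$ satisfies $\m{N}_{12}^*\m{N}_{12}=\m{B}/\m{A}$; in finite dimensions (Case 2) of the proof of \cref{thm:ext:geo} does this by a dimension count that forces the restriction $\m{N}_{12}^*|_{\c{N}(\m{B}_{11}^{1/2}\m{A}_{11}^{1/2})}$ to be onto $\c{R}(\m{B}/\m{A})$, hence invertible, and then perturbs. When $\c{N}(\m{B}_{11}^{1/2}\m{A}_{11}^{1/2})$ is infinite-dimensional this breaks down — the restricted adjoint may be injective with dense but non-closed range, leaving no visible admissible perturbation — so the characterization stays incomplete. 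Within the three stated claims the only delicate points are the injectivity of $\m{X}_{11}$ and the pseudoinverse/closure bookkeeping in part 2, both handled above.
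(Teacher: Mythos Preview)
Your proof is correct and follows essentially the same route as the paper's: both establish injectivity of $\m{X}_{11}$ via the inner product with $\m{A}_{11}^{1/2}h$, compute the Schur complement to get $t^{2}\m{M}_{22}\m{M}_{22}^{*}$, and deduce extremality from the strict-convexity identity $(1-\lambda)\m{P}_{0}^{*}\m{P}_{0}+\lambda\m{P}_{1}^{*}\m{P}_{1}-\m{P}_{\lambda}^{*}\m{P}_{\lambda}=\lambda(1-\lambda)(\m{P}_{0}-\m{P}_{1})^{*}(\m{P}_{0}-\m{P}_{1})$. Two minor presentational differences: you compute $(\m{\Gamma}_{t})_{11}^{\dagger/2}(\m{\Gamma}_{t})_{12}=t\m{W}\m{M}_{21}^{*}$ via the polar decomposition and use $\m{W}^{*}\m{W}=\m{I}$, whereas the paper writes $\m{\Gamma}_{t}/\m{A}=(\m{\Gamma}_{t})_{22}-\overline{(\m{\Gamma}_{t})_{21}(\m{\Gamma}_{t})_{11}^{\dagger}(\m{\Gamma}_{t})_{12}}$ and reduces the closure to $\m{\Pi}_{\c{N}(\m{X}_{11})}=\m{0}$; and your closedness argument via the level set of the continuous Fr\'echet functional is arguably more robust than the paper's appeal to the affine parametrization, since $\m{X}_{11}$ is compact and hence the inverse of $\m{N}_{12}\mapsto\m{\Gamma}_{t}^{\m{N}_{12}}$ is not obviously continuous in infinite dimensions.
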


\begin{proof}[Proof of \cref{prop:ext:geo:inf}] \quad
\begin{enumerate}[leftmargin = *]
\item Repeating the proof of \cref{thm:ext:geo},
\begin{align*}
    \c{A} := \left\{\m{N}_{12}  : \c{H}_{2} \to \c{H}_{1} \mid \c{R}(\m{N}_{12}) \subset \c{N}(\m{B}_{11}^{1/2} \m{A}_{11}^{1/2}), \ \m{N}_{12}^{*} \m{N}_{12} \preceq \m{B}/\m{A} \right\},
\end{align*}
is a closed convex set. Then, any $\m{N}_{12}  : \c{H}_{2} \to \c{H}_{1}$ with $\c{R}(\m{N}_{12}) \subset \c{N}(\m{B}_{11}^{1/2} \m{A}_{11}^{1/2}), \ \m{N}_{12}^{*} \m{N}_{12} = \m{B}/\m{A}$ is an extreme point of $\c{A}$ since the equality in \eqref{eq:displ:cvx:Schur} holds if and only if $\m{P}_{0} = \m{P}_{1} \in \c{E}$.
Then, from \cref{rmk:linear:kanto}, it follows that $\c{K}_{t}(\m{A}, \m{B})$ is the closed convex set. It also follows that $\m{\Gamma}_{t} \in \c{K}_{t}(\m{A}, \m{B})$ is an extreme point if and only if $\m{N}_{12} \in \c{A}$ is an extreme point, hence a McCann interpolant at time $t$ is an extreme point due to \cref{thm:Kanto:char:inf}.

\item Recall $\m{T}_{11} = \m{A}_{11}^{\dagger/2} (\m{A}_{11}^{1/2} \m{B}_{11} \m{A}_{11}^{1/2})^{1/2} \m{A}_{11}^{\dagger/2}: \c{R}(\m{A}_{11}^{1/2}) \subset \c{H}_{1} \rightarrow \c{H}_{1}$ from \cref{thm:sol:sym:inf}, which is an s.p.d. operator due to \cref{thm:sol:injec}. Using the block decomposition as in the proof of \cref{thm:Kanto:char:inf}, we obtain
\begin{align*}
    (\m{\Gamma}_{t})_{11} = \m{X}_{11} \m{X}_{11}^{*}, \quad
    (\m{\Gamma}_{t})_{12} &= \m{X}_{11} [t \m{M}_{21}]^{*}, \quad
    (\m{\Gamma}_{t})_{22} = t^{2} \m{B}_{22} = t^{2}[\m{M}_{22} \m{M}_{22}^{*} + \m{M}_{21} \m{M}_{21}^{*}],
\end{align*}
where
\begin{align*}
    \m{X}_{11} := [(1-t) \m{A}_{11}^{1/2} + t \m{M}_{11}] = [(1-t) \m{I}_{11} + t \m{T}_{11}] \m{A}_{11}^{1/2}. 
\end{align*}
Note that $\m{X}_{11}$ is injective because $\m{X}_{11} f = 0$ implies
\begin{align*}
    0 = \innpr{\m{A}_{11}^{1/2} f}{\m{X}_{11} f} \ge (1-t) \|\m{A}_{11}^{1/2} f \|^{2} \quad \Rightarrow \quad \m{A}_{11}^{1/2} f = 0 \quad \Rightarrow \quad f = 0.
\end{align*}

To ease the notation, let us denote 
\begin{align*}
    \m{X}_{11} := [(1-t) \m{G}_{11} + t \m{M}_{11}] = [(1-t) \m{G}_{11} + t (\m{G}_{11}^{*})^{\dagger} (\m{G}_{11}^{*} \m{B}_{11} \m{G}_{11})^{1/2} \m{G}_{11}^{-1}] \m{G}_{11}, 
\end{align*}
which is injective. Then, the $\m{A}$-Schur complement of $\m{\Gamma}_{t}$ is given by
\begin{align*}
    \m{\Gamma}_{t}/\m{A} &= (\m{\Gamma}_{t})_{22} - \overline{(\m{\Gamma}_{t})_{21} (\m{\Gamma}_{t})_{11}^{\dagger} (\m{\Gamma}_{t})_{12}} \\
    &= t^{2} \m{M}_{22} \m{M}_{22}^{*} + t^{2} \m{M}_{21} \overline{\left[ \m{I}_{11} - \m{X}_{11}^{*} (\m{X}_{11} \m{X}_{11}^{*})^{\dagger} \m{X}_{11}  \right]} \m{M}_{21}^{*} \\
    &= t^{2} \m{M}_{22} \m{M}_{22}^{*} + t^{2} \m{M}_{21} \m{\Pi}_{\c{N}(\m{X}_{11})} \m{M}_{21}^{*} = t^{2} \m{M}_{22} = t^{2} [\m{B}/\m{A} - \m{N}_{12}^{*} \m{N}_{12}].
\end{align*}

\item This is an immediate consequence of part 2 and \cref{thm:Kanto:char:inf}.
\end{enumerate}
\end{proof}

\subsection{Convex Analysis Perspective}
We revisit our results through the lens of convex analysis in
Kantorovich dual formulation. Let $\m{A}, \m{B} \in \c{B}_{1}^{+}(\c{H})$ and $\m{G} \in \c{G}(\m{A})$, and $\m{G} \in \c{G}(\m{A}), \m{M} \in \c{G}(\m{B})$ with $\m{G}^{*} \m{M} \succeq \m{0}$. This entails that $\m{G}^{*} \m{M} = \m{M}^{*} \m{G}$ so that $\innpr{\m{G} z}{\m{M} \tilde{z}} = \innpr{\m{G} \tilde{z}}{\m{M} z}$ for any $z, \tilde{z} \in \c{H}$. Also, the displacement convexity reads
\begin{align}\label{eq:disp:cvx}
    &(1-t) \innpr{\m{G} z}{\m{M} z} + t \innpr{\m{G} \tilde{z}}{\m{M} \tilde{z}} - \innpr{\m{G} ((1-t) z + t \tilde{z})}{\m{M} ((1-t) z + t \tilde{z})} \nonumber \\
    &= t(1-t) \innpr{\m{G} (z-\tilde{z})}{\m{M} (z-\tilde{z})} \ge 0.
\end{align}
Define a closed linear subspace in $\c{H} \times \c{H}$ by $\c{S} := \overline{\{(\m{G} z, \m{M} z) : z \in \c{H} \}}^{\c{H} \times \c{H}}$.
In other words, $(x, y) \in \c{S}$ if and only if there exists a sequence $\{z_{n} \in \c{H}\}$ such that $\m{G} z_{n} \to x$ and $\m{M} z_{n} \to y$. We remark that $\c{S}$ is a graph if and only if there exists a closable optimal pre-pushforward $\m{T}$ such that $\m{M} = \m{T} \m{G}$, and in this case, $\c{S} = \c{G}(\overline{\m{T}})$. Moving forward, we consider the sections of $\c{S}$, which are closed affine subspaces by the continuity of projection:
\begin{align*}
    \c{S}_{\m{M}}(x) := \{y \in \c{H} : (x, y) \in \c{S}\}, \quad \c{S}_{\m{G}}(y) := \{x \in \c{H} : (x, y) \in \c{S}\}.
\end{align*}

\begin{lemma}\label{lem:innpr:const}
For any $x \in \c{H}$ with $\c{S}_{\m{M}}(x) \ne \emptyset$, $y \in \c{S}_{\m{M}}(x) \mapsto \innpr{x}{y}/2 \in \b{R}$
is a constant function.
\end{lemma}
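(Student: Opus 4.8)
\emph{Proof strategy.} The plan is to reduce the statement to an orthogonality relation inside $\c{H}\times\c{H}$. Since $\c{S}=\overline{\{(\m{G}z,\m{M}z):z\in\c{H}\}}$ is the closure of the image of the linear map $z\mapsto(\m{G}z,\m{M}z)$, it is a closed \emph{linear} subspace; consequently every nonempty section $\c{S}_{\m{M}}(x)$ is a coset of the fixed linear subspace $\c{S}_{\m{M}}(\m{0})$: fixing any $y_{0}\in\c{S}_{\m{M}}(x)$ one has $\c{S}_{\m{M}}(x)=y_{0}+\c{S}_{\m{M}}(\m{0})$. Therefore it suffices to show that $\innpr{x}{w}=0$ for every $w\in\c{S}_{\m{M}}(\m{0})$, because then $\innpr{x}{y_{1}}-\innpr{x}{y_{2}}=\innpr{x}{y_{1}-y_{2}}=0$ for any $y_{1},y_{2}\in\c{S}_{\m{M}}(x)$, so that $y\mapsto\innpr{x}{y}/2$ takes the single value $\innpr{x}{y_{0}}/2$.

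\emph{The orthogonality.} To prove $\innpr{x}{w}=0$ I would pass to the approximating sequences furnished by the definition of $\c{S}$: choose $\{z_{n}\}\subset\c{H}$ with $\m{G}z_{n}\to x$ and $\m{M}z_{n}\to y_{0}$, and $\{\tilde z_{m}\}\subset\c{H}$ with $\m{G}\tilde z_{m}\to\m{0}$ and $\m{M}\tilde z_{m}\to w$. The hypothesis $\m{G}^{*}\m{M}\succeq\m{0}$ gives, as already recorded just above the statement, the bilinear symmetry $\innpr{\m{G}u}{\m{M}v}=\innpr{\m{G}v}{\m{M}u}$ for all $u,v\in\c{H}$. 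Applying it with $u=z_{n}$, $v=\tilde z_{m}$ yields $\innpr{\m{G}z_{n}}{\m{M}\tilde z_{m}}=\innpr{\m{G}\tilde z_{m}}{\m{M}z_{n}}$; letting $n\to\infty$ and using continuity of the inner product gives $\innpr{x}{\m{M}\tilde z_{m}}=\innpr{\m{G}\tilde z_{m}}{y_{0}}$, and then $m\to\infty$ gives $\innpr{x}{w}=\innpr{\m{0}}{y_{0}}=0$, as desired.

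\emph{Difficulty.} There is no serious obstacle here: the only point requiring a word of care is the iterated limit, which is legitimate because after the passage to the limit in $n$ one has an honest equality of two continuous linear functionals evaluated at $\tilde z_{m}$, so the subsequent passage in $m$ is routine. It is worth stressing why the lemma is the crux of the dual picture: when $\c{S}$ fails to be a graph -- i.e.\ when no closable optimal pre-pushforward with $\m{M}=\m{T}\m{G}$ exists -- one cannot literally write the Kantorovich potential at $x$ as $\innpr{x}{\m{T}x}/2$, and this lemma is precisely what certifies that the natural surrogate $\innpr{x}{y}/2$ is unambiguous across all admissible partners $y\in\c{S}_{\m{M}}(x)$.
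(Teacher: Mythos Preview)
Your proof is correct and uses essentially the same mechanism as the paper: the symmetry $\innpr{\m{G}u}{\m{M}v}=\innpr{\m{G}v}{\m{M}u}$ coming from $\m{G}^{*}\m{M}\succeq\m{0}$, combined with passage to the limit along approximating sequences. The only cosmetic differences are that you first reduce to the orthogonality $\innpr{x}{w}=0$ for $w\in\c{S}_{\m{M}}(\m{0})$ via the affine structure of sections and then use an iterated limit in two indices, whereas the paper works directly with two points $y,\tilde y\in\c{S}_{\m{M}}(x)$ and a single index, writing $\innpr{\m{G}z_{n}}{\m{M}z_{n}}-\innpr{\m{G}\tilde z_{n}}{\m{M}\tilde z_{n}}=\innpr{\m{G}(z_{n}-\tilde z_{n})}{\m{M}z_{n}}+\innpr{\m{G}(z_{n}-\tilde z_{n})}{\m{M}\tilde z_{n}}\to 0$.
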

\begin{proof}[Proof of \cref{lem:innpr:const}]
Let $y, \tilde{y} \in \c{S}(x)$. Then, there exist sequences $\{z_{n} \in \c{H}\}$ and $\{\tilde{z}_{n} \in \c{H}\}$ such that 
\begin{align*}
    (\m{G} z_{n}, \m{M} z_{n}) \to (x, y), \quad (\m{G} \tilde{z}_{n}, \m{M} \tilde{z}_{n}) \to (x, \tilde{y}).
\end{align*}
Accordingly,
\begin{align*}
    \innpr{x}{y} - \innpr{x}{\tilde{y}} &= \lim_{n \to \infty} [\innpr{\m{G} z_{n}}{\m{M} z_{n}} - \innpr{\m{G} \tilde{z}_{n}}{\m{M} \tilde{z}_{n}}]
    \\
    &= \lim_{n \to \infty} [\innpr{\m{G} (z_{n} - \tilde{z}_{n})}{\m{M} z_{n}} + \innpr{\m{G} (z_{n} - \tilde{z}_{n})}{\m{M} \tilde{z}_{n}}] = \innpr{0}{y} + \innpr{0}{\tilde{y}} = 0.
\end{align*}
\end{proof}

From the lemma above, the following functions $\phi, \psi: \c{H} \to [0, +\infty]$ are well-defined:
\begin{align*}
    \phi(x) := 
    \begin{cases}
        \innpr{x}{y}/2 &, \quad y \in \c{S}_{\m{M}}(x) \ne \emptyset, \\
        +\infty &, \quad \c{S}_{\m{M}}(x) = \emptyset. 
    \end{cases}, \quad 
    \psi(y) := 
    \begin{cases}
        \innpr{x}{y}/2 &, \quad x \in \c{S}_{\m{G}}(y) \ne \emptyset, \\
        +\infty &, \quad \c{S}_{\m{G}}(y) = \emptyset. 
    \end{cases}.
\end{align*}

\begin{proposition}\label{prop:subdiff}
The function $\phi: \c{H} \to [0, +\infty]$ (resp. $\psi: \c{H} \to [0, +\infty]$) is convex. Their subdifferentials at $x \in \c{H}$ (resp. $y \in \c{H}$) satisfies $\partial \phi(x) \supset \c{S}_{\m{M}}(x)$ (resp. $\partial \psi(y) \supset \c{S}_{\m{G}}(y)$).
\end{proposition}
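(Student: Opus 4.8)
The plan is to read off the convexity of $\phi$ directly from the displacement convexity inequality \eqref{eq:disp:cvx} passed to the limit, and then to exhibit every element of $\c{S}_{\m{M}}(x)$ as a subgradient by an elementary monotonicity estimate; the statements for $\psi$ follow by interchanging the roles of $\m{G}$ and $\m{M}$, which is legitimate since $\m{M}^{*}\m{G} = \m{G}^{*}\m{M} \succeq \m{0}$ and \eqref{eq:disp:cvx} is symmetric in $\m{G},\m{M}$. Throughout I use that, by \cref{lem:innpr:const}, $\phi$ is well-defined on $\c{H}$ with $\phi(x) < +\infty$ precisely when $\c{S}_{\m{M}}(x) \neq \emptyset$, and $\phi \ge 0$ since $\innpr{x}{y} = \lim_n \innpr{z_n}{\m{G}^{*}\m{M} z_n} \ge 0$ whenever $\m{G} z_n \to x$, $\m{M} z_n \to y$.

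For convexity, fix $x_{0}, x_{1} \in \c{H}$ and $t \in (0,1)$, and set $x_{t} = (1-t)x_{0} + t x_{1}$. If one of $x_{0}, x_{1}$ lies outside $\mathrm{dom}\,\phi$ the inequality is trivial, so assume $y_{i} \in \c{S}_{\m{M}}(x_{i})$ witnessed by sequences $\m{G} z^{i}_{n} \to x_{i}$, $\m{M} z^{i}_{n} \to y_{i}$ for $i=0,1$. Then $\m{G}\big((1-t)z^{0}_{n} + t z^{1}_{n}\big) \to x_{t}$ and $\m{M}\big((1-t)z^{0}_{n} + t z^{1}_{n}\big) \to (1-t)y_{0} + t y_{1} =: y_{t}$, so $y_{t} \in \c{S}_{\m{M}}(x_{t})$ and $\phi(x_{t}) = \tfrac12\innpr{x_{t}}{y_{t}}$. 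Applying \eqref{eq:disp:cvx} with $z = z^{0}_{n}$, $\tilde z = z^{1}_{n}$ and letting $n \to \infty$ gives $(1-t)\innpr{x_{0}}{y_{0}} + t\innpr{x_{1}}{y_{1}} \ge \innpr{x_{t}}{y_{t}}$, i.e. $(1-t)\phi(x_{0}) + t\phi(x_{1}) \ge \phi(x_{t})$; the identical computation handles $\psi$.

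For the subdifferential claim, fix $x$ with $\c{S}_{\m{M}}(x) \neq \emptyset$ (otherwise there is nothing to prove) and let $y \in \c{S}_{\m{M}}(x)$; I must show $\phi(x') \ge \phi(x) + \innpr{y}{x'-x}$ for every $x' \in \c{H}$. If $x' \notin \mathrm{dom}\,\phi$ this is immediate, so take $y' \in \c{S}_{\m{M}}(x')$ with witnesses $\m{G} z_{n} \to x$, $\m{M} z_{n} \to y$, $\m{G} z'_{n} \to x'$, $\m{M} z'_{n} \to y'$. Since $\m{G}^{*}\m{M} = \m{M}^{*}\m{G}$ one has $\innpr{\m{G} u}{\m{M} w} = \innpr{\m{M} u}{\m{G} w}$ for all $u,w$; passing to the limit (first in one index, then the other) gives the cross-symmetry $\innpr{x}{y'} = \innpr{x'}{y}$, while $\innpr{x'-x}{y'-y} = \lim_{n}\innpr{\m{G}(z'_{n}-z_{n})}{\m{M}(z'_{n}-z_{n})} \ge 0$ because $\m{G}^{*}\m{M} \succeq \m{0}$. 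Expanding the last inequality and substituting $\innpr{x}{y'} = \innpr{x'}{y}$ yields $\innpr{x'}{y'} - 2\innpr{x'}{y} + \innpr{x}{y} \ge 0$, which after dividing by $2$ and rearranging is exactly $\tfrac12\innpr{x'}{y'} \ge \tfrac12\innpr{x}{y} + \innpr{y}{x'-x}$, i.e. $\phi(x') \ge \phi(x) + \innpr{y}{x'-x}$; hence $y \in \partial\phi(x)$, and $\partial\psi(y) \supseteq \c{S}_{\m{G}}(y)$ follows symmetrically. The proof is short; the only points requiring care are the order of the double limit in establishing $\innpr{x}{y'} = \innpr{x'}{y}$ (using strong convergence of the witnessing sequences) and the convention that the subgradient inequality holds vacuously at points outside $\mathrm{dom}\,\phi$ — everything else is bookkeeping.
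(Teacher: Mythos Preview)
Your proof is correct and follows essentially the same approach as the paper: the convexity argument is identical (pass the displacement inequality \eqref{eq:disp:cvx} to the limit along witnessing sequences), and the subdifferential argument rests on the same two ingredients, namely the symmetry $\innpr{\m{G}u}{\m{M}w} = \innpr{\m{M}u}{\m{G}w}$ and the positivity $\innpr{\m{G}u}{\m{M}u}\ge 0$. The only cosmetic difference is that the paper writes the subgradient gap as $\phi(x')-\phi(x)-\innpr{y}{x'-x}=\phi(x'-x)\ge 0$ via a direct expansion along the witnesses, whereas you reach the same conclusion by combining cross-symmetry $\innpr{x}{y'}=\innpr{x'}{y}$ with the monotonicity estimate $\innpr{x'-x}{y'-y}\ge 0$; the underlying algebra is the same. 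As a small remark, the double limit you flag is not actually needed: since both witnessing sequences converge strongly, joint continuity of the inner product gives $\innpr{\m{G}z_n}{\m{M}z'_n}\to\innpr{x}{y'}$ and $\innpr{\m{M}z_n}{\m{G}z'_n}\to\innpr{y}{x'}$ along a single index.
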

\begin{proof}[Proof of \cref{prop:subdiff}]
To show the convexity of $\phi$, we fix $x, \tilde{x} \in \c{H}$ and $t \in (0, 1)$, and demonstrate
\begin{align*}
    (1-t) \phi(x) + t \phi(\tilde{x}) \ge \phi((1-t)x + t \tilde{x}).
\end{align*}
We may assume $y \in \c{S}_{\m{M}}(x) \ne \emptyset$ and $\tilde{y} \in \c{S}_{\m{M}}(\tilde{x}) \ne \emptyset$, otherwise trivial. Then, there exist sequences $\{z_{n} \in \c{H}\}$ and $\{\tilde{z}_{n} \in \c{H}\}$ such that 
\begin{align*}
    (\m{G} z_{n}, \m{M} z_{n}) \to (x, y), \quad (\m{G} \tilde{z}_{n}, \m{M} \tilde{z}_{n}) \to (\tilde{x}, \tilde{y}),
\end{align*}
and thus $(1-t) y + t \tilde{y} \in \c{S}((1-t)x + t \tilde{x})$. Consequently, \cref{lem:innpr:const} and \eqref{eq:disp:cvx} lead to
\begin{align*}
    \phi((1-t)x + t \tilde{x}) &= \lim_{n \to \infty} \frac{\innpr{\m{G}((1-t)z_{n} + t \tilde{z}_{n})}{\m{M}((1-t)z_{n} + t \tilde{z}_{n})}}{2} \\
    &\le \limsup_{n \to \infty} \left[ (1-t) \frac{\innpr{\m{G} z_{n}}{\m{M} z_{n}}}{2} + t \frac{\innpr{\m{G} \tilde{z}_{n}}{\m{M} \tilde{z}_{n}}}{2} \right] = (1-t) \phi(x) + t \phi(\tilde{x}).
\end{align*}
We now show that $\c{S}_{\m{M}}(x) \subset \partial \phi(x)$. It suffices to show that for any $(x, y) \in \c{S}$, it holds that $\phi(x+\delta_{x}) \ge \phi(x) + \innpr{y}{\delta_{x}}$ for any $\delta_{x} \in \c{H}$. We may assume that $\c{S}_{\m{M}}(\delta_{x}) \ne \emptyset$, otherwise trivial. Choose $\delta_{y} \in \c{S}_{\m{M}}(\delta_{x})$. Then, there exist sequences $\{z_{n} \in \c{H}\}$ and $\{\tilde{z}_{n} \in \c{H}\}$ such that 
\begin{align*}
    (\m{G} z_{n}, \m{M} z_{n}) \to (x, y), \quad (\m{G} \tilde{z}_{n}, \m{M} \tilde{z}_{n}) \to (\delta_{x}, \delta_{y}).
\end{align*}
Consequently, $(x+\delta_{x}, y+\delta_{y}) \in \c{S}$, thus \cref{lem:innpr:const} and \eqref{eq:disp:cvx} lead to
\begin{align*}
    \phi(x+\delta_{x}) - \phi(x) - \innpr{y}{\delta_{x}} &= \lim_{n \to \infty} \frac{\innpr{\m{G} (z_{n} + \tilde{z}_{n})}{\m{M} (z_{n} + \tilde{z}_{n})}}{2} - \frac{\innpr{\m{G} z_{n}}{\m{M} z_{n}}}{2} - \innpr{\m{G} z_{n}}{\m{M} \tilde{z}_{n}} \\
    &= \lim_{n \to \infty} \frac{\innpr{\m{G} \tilde{z}_{n}}{\m{M} \tilde{z}_{n}}}{2} = \phi(\delta_{x}) \ge 0.
\end{align*}
\end{proof}

As a direct consequence, we get $\phi(x) + \psi(y) = \innpr{x}{y}$ whenever $(x, y) \in \c{S}$. If we consider the Gaussian coupling $\pi$ of $\mu = N(\m{0}, \m{A})$ and $\nu = N(\m{0}, \m{B})$ with its covariance
\begin{align*}
    \begin{pmatrix}
    \m{A} & \m{G} \m{M}^{*} \\
    \m{M} \m{G}^{*} & \m{B}
    \end{pmatrix} = 
    \begin{pmatrix}
    \m{G} \\
    \m{M}
    \end{pmatrix}
    \begin{pmatrix}
    \m{G}^{*} & \m{M}^{*}
    \end{pmatrix}
    \in \c{B}_{1}^{+}(\c{H} \times \c{H}),
\end{align*}
then $\text{supp}(\pi) = \c{S}$ due to \cref{lem:a.s.:dense}, and the classical results from Kantorovich dual formulation again confrims that $\pi$ is indeed optimal \cite{villani2008optimal, ambrosio2008gradient}. However, the Fenchel's strong duality \cite{rockafellar1970convex} between $\phi$ and $\psi$ does not hold in general.

\begin{theorem}\label{thm:legen:transf}
The convex conjugate of the function $\phi: \c{H} \to [0, +\infty]$ is given by
\begin{align*}
    \phi^{c}(y) = \sup_{x \in \c{H}} [\innpr{x}{y} - \phi(x)] = 
    \begin{cases}
        \|(\m{G}^{*} \m{M})^{\dagger/2} \m{G}^{*} y \|^{2}/2 &, \quad \m{G}^{*} y \in \c{R}[(\m{G}^{*} \m{M})^{1/2}], \\
        +\infty &, \quad \m{G}^{*} y \notin \c{R}[(\m{G}^{*} \m{M})^{1/2}]. 
    \end{cases}
\end{align*}
\end{theorem}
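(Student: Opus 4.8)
## Proof Proposal

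The plan is to compute the Legendre--Fenchel conjugate $\phi^{c}(y) = \sup_{x \in \c{H}} [\innpr{x}{y} - \phi(x)]$ directly from the definition of $\phi$, exploiting the structure of $\c{S}$ and the positive-semidefiniteness $\m{G}^{*} \m{M} \succeq \m{0}$. First I would restrict the supremum to the set where $\phi$ is finite, namely $x$ with $\c{S}_{\m{M}}(x) \neq \emptyset$. By continuity of projection, the set of such $x$ is the closure of $\c{R}(\m{G})$, and for $x = \m{G} z$ we have $\phi(\m{G} z) = \innpr{\m{G} z}{\m{M} z}/2$ by \cref{lem:innpr:const} (the value is independent of which $z$ represents $x$). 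So on a dense subset of the effective domain, $\innpr{x}{y} - \phi(x) = \innpr{\m{G} z}{y} - \innpr{\m{G} z}{\m{M} z}/2 = \innpr{z}{\m{G}^{*} y} - \innpr{z}{(\m{G}^{*}\m{M}) z}/2$, which is a concave quadratic in $z$ (since $\m{G}^{*}\m{M}$ is s.p.d.). The supremum over $z$ of this scalar quadratic form is the heart of the computation.

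Next I would carry out the quadratic optimization. Writing $\m{C} := \m{G}^{*}\m{M} = \m{M}^{*}\m{G} \succeq \m{0}$ (symmetry from $\m{G}^{*}\m{M} \succeq \m{0}$, as noted before the lemma) and $w := \m{G}^{*} y$, I want $\sup_{z} [\innpr{z}{w} - \innpr{z}{\m{C} z}/2]$. Splitting $z$ into components in $\c{N}(\m{C})^{\perp} = \overline{\c{R}(\m{C})}$ and $\c{N}(\m{C})$: if $w$ has a nonzero component in $\c{N}(\m{C})$, one can send that component of $z$ to infinity and the supremum is $+\infty$; this happens precisely when $w = \m{G}^{*} y \notin \overline{\c{R}(\m{C})}$. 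Since $\m{C}$ is compact, $\overline{\c{R}(\m{C})} = \overline{\c{R}(\m{C}^{1/2})}$, but the finiteness condition needs the sharper statement $\m{G}^{*} y \in \c{R}(\m{C}^{1/2})$ --- that is, not merely that the $\c{N}(\m{C})$-component vanishes, but that $\m{C}^{\dagger/2} w$ is an honest element of $\c{H}$. When $w \in \c{R}(\m{C}^{1/2})$, on $\overline{\c{R}(\m{C})}$ the operator $\m{C}$ is injective with (possibly unbounded) inverse, and completing the square gives $\innpr{z}{w} - \innpr{z}{\m{C}z}/2 \le \innpr{w}{\m{C}^{\dagger} w}/2 = \|\m{C}^{\dagger/2} w\|^{2}/2$, with the bound approached along a maximizing sequence $z_{n} \to \m{C}^{\dagger} w$ (the limit need not lie in the domain, which is why a supremum rather than a maximum appears, but approximation suffices). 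This yields exactly $\phi^{c}(y) = \|(\m{G}^{*}\m{M})^{\dagger/2} \m{G}^{*} y\|^{2}/2$ on the finite locus.

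I would then handle the two remaining boundary issues. One: extending from $x \in \c{R}(\m{G})$ to the closure $\overline{\c{R}(\m{G})}$ --- since $\phi$ is convex (\cref{prop:subdiff}) and lower semicontinuity of $\phi$ on its domain follows from the closedness of $\c{S}$, the supremum over the dense subset $\c{R}(\m{G})$ agrees with the supremum over $\overline{\c{R}(\m{G})}$; and for $x \notin \overline{\c{R}(\m{G})}$ we have $\phi(x) = +\infty$, contributing $-\infty$ to the sup. Two: the case $w \in \overline{\c{R}(\m{C})} \setminus \c{R}(\m{C}^{1/2})$, where I must show the supremum is genuinely $+\infty$ --- here $\|\m{C}^{\dagger/2} w_{k}\|^{2} \to \infty$ for any sequence $w_{k} \to w$ with $w_{k} \in \c{R}(\m{C}^{1/2})$ by the Picard criterion, and choosing $z_{k}$ approximately equal to $\m{C}^{\dagger} w_{k}$ drives the objective to $+\infty$. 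Assembling the cases gives the stated formula.

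The main obstacle I expect is the careful bookkeeping around unbounded inverses and dense-but-not-closed domains: distinguishing $\m{G}^{*} y \in \overline{\c{R}(\m{C}^{1/2})}$ from $\m{G}^{*} y \in \c{R}(\m{C}^{1/2})$ (only the latter gives finiteness), and justifying that the supremum over $z \in \c{H}$ --- which need not be attained --- still equals the closed-form value $\|\m{C}^{\dagger/2}\m{G}^{*}y\|^{2}/2$. This is precisely the Picard-criterion phenomenon already flagged in \cref{sec:admissible}, and the Douglas lemma together with the identity $\c{R}(\m{C}^{\dagger/2}) = \c{N}(\m{C})^{\perp} \cap (\text{range condition})$ provides the tools to make it rigorous.
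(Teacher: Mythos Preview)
Your proposal is correct and follows essentially the same route as the paper: both reduce to the scalar quadratic $\sup_{z}\bigl[\innpr{z}{\m{G}^{*}y} - \tfrac{1}{2}\innpr{z}{(\m{G}^{*}\m{M})z}\bigr]$ and analyze its finiteness. Two small points of difference are worth flagging. First, your claim that the effective domain of $\phi$ equals $\overline{\c{R}(\m{G})}$ is not right in general (the projection $\pi_{1}(\c{S})$ can be strictly smaller), and invoking lower semicontinuity of $\phi$ is both unproven and unnecessary: the paper instead uses that for any $x$ with $\c{S}_{\m{M}}(x)\neq\emptyset$ there is a defining sequence $(\m{G}z_{n},\m{M}z_{n})\to(x,y_{x})$ along which $\phi$ is \emph{continuous}, which suffices to restrict the supremum to $x\in\c{R}(\m{G})$. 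Second, for the borderline case $\m{G}^{*}y\in\overline{\c{R}(\m{C}^{1/2})}\setminus\c{R}(\m{C}^{1/2})$ the paper gives a cleaner argument than your Picard-criterion approximation: it observes that finiteness of $\phi^{c}(y)$ forces the linear functional $\omega=\m{C}^{1/2}z\mapsto\innpr{z}{\m{G}^{*}y}$ to be bounded on $\c{R}(\m{C}^{1/2})$, and then Riesz representation produces $z_{y}$ with $\m{G}^{*}y=\m{C}^{1/2}z_{y}$. Your direct approach works too, but note that writing $\m{C}^{\dagger}w_{k}$ for $w_{k}\in\c{R}(\m{C}^{1/2})$ is not well-posed; you need $w_{k}\in\c{R}(\m{C})$ or, equivalently, finite spectral truncations of $\m{C}^{\dagger}w$.
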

\begin{proof}[Proof of \cref{thm:legen:transf}]
We first show that $\phi^{c}(y) < +\infty$ only if $\m{G}^{*} y \in \c{R}[(\m{G}^{*} \m{M})^{1/2}]$ by contradiction. 
\begin{itemize}
\item [(Step 1)] We claim $\m{G}^{*} y \in \overline{\c{R}[(\m{G}^{*} \m{M})^{1/2}]}$. If not, let $z \ne 0$ be the projection of $\m{G}^{*} y$ onto $\overline{\c{R}[(\m{G}^{*} \m{M})^{1/2}]}$. Then, for any $t \in \b{R}$, $t(\m{G}z, \m{M}z) \in \c{S}$, thus \cref{lem:innpr:const} yields
\begin{align*}
    \phi^{c}(y) \ge \sup_{t \in \b{R}} [t \innpr{\m{G} z}{y} - \phi(\m{G} z)] = \sup_{t \in \b{R}} [t \innpr{z}{\m{G}^{*} y} - \|(\m{G}^{*} \m{M})^{1/2}z\|^{2}/2] = \sup_{t \in \b{R}} t \|z\|^{2} = +\infty,
\end{align*}
which is a contradiction.

\item [(Step 2)] From (Step 1), the following linear functional is well-defined:
\begin{align*}
    l_{y} : \c{R}[(\m{G}^{*} \m{M})^{1/2}] \to \b{R}, (\m{G}^{*} \m{M})^{1/2} z \mapsto \innpr{\m{G} z}{y}
\end{align*}
By our assumption, we also have for any $0 \ne \omega \in \c{R}[(\m{G}^{*} \m{M})^{1/2}]$ that
\begin{align*}
    + \infty > \phi^{c}(y) = \sup_{z \in \c{H}} [\innpr{\m{G} z}{y} - \phi(\m{G} z)] \ge \sup_{t \in \b{R}} \left[l_{y}(t \omega) - \frac{\|t \omega\|^{2}}{2} \right] = \frac{l_{y}(\omega)^{2}}{2 \|\omega\|^{2}},
\end{align*}
hence $|l_{y}(\omega)| \le \sqrt{2 \phi^{c}(y)} \|\omega\|$, i.e. $l_{y}$ is bounded. Due to the Riesz representation theorem, there exists a unique $z_{y} \in \overline{\c{R}[(\m{G}^{*} \m{M})^{1/2}]}$ such that $l_{y}(\omega) = \innpr{z_{y}}{\omega}$ for any $\omega \in \c{R}[(\m{G}^{*} \m{M})^{1/2}]$. Consequently, for any $z \in \c{H}$, we have
\begin{align*}
    \innpr{z}{\m{G}^{*} y} = l_{y}((\m{G}^{*} \m{M})^{1/2}z) = \innpr{z_{y}}{(\m{G}^{*} \m{M})^{1/2}z} = \innpr{z}{(\m{G}^{*} \m{M})^{1/2} z_{y}},
\end{align*}
i.e., $\m{G}^{*} y = (\m{G}^{*} \m{M})^{1/2} z_{y} \in \c{R}[(\m{G}^{*} \m{M})^{1/2}]$.
\end{itemize}

Conversely, assume $\m{G}^{*} y \in \c{R}[(\m{G}^{*} \m{M})^{1/2}]$, and define $z_{y} := (\m{G}^{*} \m{M})^{\dagger/2} \m{G}^{*} y \in \overline{\c{R}[(\m{G}^{*} \m{M})^{1/2}]}$. We claim that $\phi^{c}(y)= \|z_{y}\|^{2}/2$.
\begin{itemize}[leftmargin = *]
\item [($\le$)] Fix any $x \in \c{H}$ that has a non-trivial effective domain, i.e. $\c{S}_{\m{M}}(x) \ne \emptyset$. Then, there exist $y_{x} \in \c{S}_{\m{M}}(x)$ and a sequence $\{z_{n} \in \c{H}\}$ such that $(\m{G} z_{n}, \m{M} z_{n}) \to (x, y_{x})$. Hence,
\begin{align*}
    \innpr{x}{y} - \phi(x) = \lim_{n \to \infty} \innpr{\m{G} z_{n}}{y} - \frac{\innpr{\m{G} z_{n}}{\m{M} z_{n}}}{2} = \lim_{n \to \infty} \innpr{(\m{G}^{*} \m{M})^{1/2} z_{n}}{z_{y}} - \frac{\|(\m{G}^{*} \m{M})^{1/2} z_{n}\|^{2}}{2}
    \le \frac{\|z_{y}\|^{2}}{2}.
\end{align*}

\item [($\ge$)] Choose a sequence $\{z_{n} \in \c{H} \}$ such that $\lim_{n \to \infty} (\m{G}^{*} \m{M})^{1/2} z_{n} = z_{y}$. Since $(\m{G} z_{n}, \m{M} z_{n}) \in \c{S}$, we have
\begin{align*}
    \phi^{c}(y) &\ge \limsup_{n \to \infty} [\innpr{\m{G} z_{n}}{y} - \phi(\m{G} z_{n})] = \limsup_{n \to \infty} \innpr{(\m{G}^{*} \m{M})^{1/2} z_{n}}{z_{y}} - \frac{\|(\m{G}^{*} \m{M})^{1/2} z_{n}\|^{2}}{2} \\
    &= \frac{\|z_{y}\|^{2}}{2} - \liminf_{n \to \infty} \frac{\|(\m{G}^{*} \m{M})^{1/2} z_{n} - z_{y}\|^{2}}{2} = \frac{\|z_{y}\|^{2}}{2}.
\end{align*}
\end{itemize}
\end{proof}

\begin{example}[Particle Movement]
We consider $\m{A} = \diag(1, 0)$ and $\m{B} = \diag(0, 1)$ from \cref{ex:rk:goes:up}. From \cref{ex:midpoint:simple,thm:kanto:geo}, any geodesics connecting $\m{A}$ and $\m{B}$ can be parametrized by $s \in [-1, +1]$: 
\begin{align*}
    \m{\Gamma}^{s}: [0, 1] \to \b{R}^{2 \times 2}, \, t \mapsto \m{\Gamma}_{t}^{s} = \begin{pmatrix}
        (1-t)^{2} & t(1-t)s \\
        t(1-t)s & t^{2}
    \end{pmatrix},
\end{align*}
and the extremals $s = \pm 1$ correspond to Monge geodesics with the OT matrices $\m{T}_{\m{A} \to \m{B}}^{\pm}$ (or $\m{T}_{\m{B} \to \m{A}}^{\pm}$) in \cref{ex:rk:goes:up}. We remark that when $|s| < 1$, i.e., Kantorovich geodesics, there are infinite choices of corresponding optimal couplings: for instance, when $s=0$,
\begin{itemize}
\item Independent Coupling: $\pi = \mu \otimes \nu$ with
\begin{align*}
    \text{supp}(\pi) = \c{S} = \left\{ \begin{pmatrix}
    x_{1} \\ x_{2}
    \end{pmatrix} : x_{2} = 0 \right\} \times \left\{ \begin{pmatrix}
    y_{1} \\ y_{2}
    \end{pmatrix} : y_{1} =0 \right\} \subset \b{R}^{2} \times \b{R}^{2}.
\end{align*}
\item Gaussian Mixture: $\pi = \frac{1}{2} (\m{I}, \m{T}_{\m{A} \to \m{B}}^{+}) \sharp \mu + \frac{1}{2} (\m{I}, \m{T}_{\m{A} \to \m{B}}^{-}) \sharp \mu$
\item Convex combinations of above couplings.
\end{itemize}

However, regardless of the choice of $s$, the corresponding Green matrices always yield $\m{G}^{*} \m{M} = \m{0}$ due to \cref{cor:ortho:Green's}, hence they all have the same corresponding convex functions:
\begin{align*}
    \phi \begin{pmatrix}
    x_{1} \\ x_{2}
    \end{pmatrix} = 
    \begin{cases}
        0 &, \quad x_{2} = 0, \\
        +\infty &, \quad x_{2} \ne 0. 
    \end{cases}, \quad 
    \psi \begin{pmatrix}
    y_{1} \\ y_{2}
    \end{pmatrix} = 
    \begin{cases}
        0 &, \quad y_{1} = 0, \\
        +\infty &, \quad y_{1} \ne 0. 
    \end{cases}.
\end{align*}
In this case, they are convex conjugate to one another: $\phi(x) + \psi(y) = \innpr{x}{y}$ if and only if $(x, y) \in \c{S}$.
\end{example}

\subsection{Comments on Barycenters}
The nullity of an operator is not necessarily same to that of the adjoint when $\dim \c{H} = \infty$. Consequently, unlike in the finite-dimensional case, the set of all Green's operators $\c{G}(\m{A})$ cannot be generated by applying unitary transforms to a single Green's operator, while it can be by partially isometric transform (\cref{lem:Green's:part:iso}). This fact invalidates \cref{lem:opt:algn}, a cornerstone of the barycenter theory developed in \cref{ssec:rand:Green,ssec:multi:trans}. Its infinite-dimensional counterpart, presented below, highlights the crucial differences:

\begin{theorem}\label{lem:opt:algn:inf-dim}
Let $\m{A}_{1}, \m{A}_{2} \in \c{B}_{1}^{+}(\c{H})$ be covariance operators. For any $\m{G}_{1} \in \c{G}(\m{A}_{1})$ and $\m{G}_{2} \in \c{G}(\m{A}_{2})$, we have 
\begin{align*}
    \tr [(\m{A}_{1}^{1/2} \m{A}_{2} \m{A}_{1}^{1/2})^{1/2}] = \tr [(\m{G}_{1}^{*} \m{A}_{2} \m{G}_{1})^{1/2}] =
    \sup \{\tr (\m{G}_{1}^{*} \m{G}_{2} \m{U}) : \m{U}^{*} \m{U} = \m{U} \m{U}^{*} = \m{I} \}.
\end{align*}
The supremum is achieved as a maximum if and only if $\dim \c{N}(\m{G}_{1}^{*} \m{G}_{2}) = \dim \c{N}(\m{G}_{2}^{*} \m{G}_{1})$.
\end{theorem}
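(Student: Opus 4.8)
The plan is to establish the two equalities first and then characterize when the supremum is attained. For the first equality, I would invoke the polar decomposition $\m{G}_{1} = \m{A}_{1}^{1/2} \m{U}_{1}$, where $\m{U}_{1}$ is a partial isometry with final space $\overline{\c{R}(\m{A}_{1})}$ (so $\m{U}_{1} \m{U}_{1}^{*} = \m{\Pi}_{\overline{\c{R}(\m{A}_{1})}}$ and $\m{U}_{1}^{*} \m{U}_{1} = \m{\Pi}_{\c{N}(\m{G}_{1})^{\perp}}$). Then $\m{G}_{1}^{*} \m{A}_{2} \m{G}_{1} = \m{U}_{1}^{*} (\m{A}_{1}^{1/2} \m{A}_{2} \m{A}_{1}^{1/2}) \m{U}_{1}$, and since $\m{A}_{1}^{1/2} \m{A}_{2} \m{A}_{1}^{1/2}$ is supported on $\overline{\c{R}(\m{A}_{1})}$, conjugation by the partial isometry $\m{U}_{1}$ preserves the nonzero part of the spectrum; applying the functional calculus and taking traces gives $\tr[(\m{G}_{1}^{*} \m{A}_{2} \m{G}_{1})^{1/2}] = \tr[(\m{A}_{1}^{1/2} \m{A}_{2} \m{A}_{1}^{1/2})^{1/2}]$.

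For the second equality, write $\m{Q} := \m{G}_{1}^{*} \m{G}_{2} \in \c{B}_{2}(\c{H})$ and note $\m{Q} \m{Q}^{*} = \m{G}_{1}^{*} \m{A}_{2} \m{G}_{1}$, so $(\m{Q}\m{Q}^{*})^{1/2} = (\m{G}_{1}^{*} \m{A}_{2} \m{G}_{1})^{1/2}$ and $\tr[(\m{G}_{1}^{*}\m{A}_{2}\m{G}_{1})^{1/2}] = \vertiii{\m{Q}}_{1}$ is the trace-norm of $\m{Q}$. The identity $\sup\{\tr(\m{Q}\m{U}) : \m{U} \text{ unitary}\} = \vertiii{\m{Q}}_{1}$ is the standard variational characterization of the trace norm: taking the singular value decomposition $\m{Q} = \sum_k \sigma_k (f_k \otimes g_k)$, one has $\tr(\m{Q}\m{U}) = \sum_k \sigma_k \innpr{g_k}{\m{U}^{*} f_k} \le \sum_k \sigma_k = \vertiii{\m{Q}}_{1}$ by Cauchy--Schwarz, while the supremum is approached by choosing $\m{U}$ to send (a dense portion of) $\c{R}(\m{Q}^{*})$ appropriately onto $\c{R}(\m{Q})$.

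The main obstacle — and the part requiring care — is the ``if and only if'' for when the supremum is a maximum. A unitary $\m{U}$ achieves $\tr(\m{Q}\m{U}) = \vertiii{\m{Q}}_{1}$ precisely when $\m{Q}\m{U} \succeq \m{0}$, i.e. $\m{U}$ realizes the partial isometry in the polar decomposition $\m{Q} = (\m{Q}\m{Q}^{*})^{1/2} \m{W}$ with $\m{W} = \m{U}^{*}$ on $\overline{\c{R}(\m{Q}^{*})}$. Such a partial isometry $\m{W}$ has initial space $\overline{\c{R}(\m{Q}^{*})} = \c{N}(\m{Q})^{\perp}$ and final space $\overline{\c{R}(\m{Q})} = \c{N}(\m{Q}^{*})^{\perp}$; it extends to a full unitary exactly when its initial and final \emph{complements} have equal dimension, i.e. when $\dim \c{N}(\m{Q}) = \dim \c{N}(\m{Q}^{*})$. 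Since $\m{Q} = \m{G}_{1}^{*}\m{G}_{2}$ and $\m{Q}^{*} = \m{G}_{2}^{*}\m{G}_{1}$, this is precisely the stated condition $\dim \c{N}(\m{G}_{1}^{*}\m{G}_{2}) = \dim \c{N}(\m{G}_{2}^{*}\m{G}_{1})$. Conversely, if these dimensions differ, any candidate maximizer $\m{U}$ would have to restrict to the polar partial isometry $\m{W}$ on $\c{N}(\m{Q})^{\perp}$ (forced by the equality case in Cauchy--Schwarz applied termwise to the SVD), making it impossible for $\m{U}$ to be unitary; the supremum is then only approached in the limit. I would state the termwise equality analysis carefully, since the compact (infinite-rank) setting is exactly where the Fredholm-index obstruction — illustrated by the operator $\sum_{n} n^{-2025} e_n \otimes e_{2n}$ in the excerpt — prevents extending a partial isometry to a unitary.
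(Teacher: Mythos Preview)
Your proposal is correct and follows essentially the same route as the paper: polar decomposition of $\m{G}_{1}$ for the first equality, the singular value decomposition of $\m{Q}=\m{G}_{1}^{*}\m{G}_{2}$ together with the von Neumann trace inequality for the second, and the observation that the polar partial isometry of $\m{Q}$ extends to a unitary exactly when $\dim\c{N}(\m{Q})=\dim\c{N}(\m{Q}^{*})$ for the attainment criterion. The only place where the paper is more explicit is the lower bound on the supremum: it constructs a concrete sequence of unitaries $\m{U}_{n}$ satisfying $\m{U}_{n}e_{k}=f_{k}$ for $k\le n$ and shows $\tr(\m{Q}\m{U}_{n})\to\vertiii{\m{Q}}_{1}$ via a tail estimate, whereas you only gesture at ``sending a dense portion of $\c{R}(\m{Q}^{*})$ onto $\c{R}(\m{Q})$''; filling in that construction (and noting that the finite-rank defect allows the partial isometry on the first $n$ singular vectors to be completed to a genuine unitary) is the one detail you would still need to write out.
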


\begin{proof}[Proof of \cref{lem:opt:algn:inf-dim}] \quad
\begin{enumerate}[leftmargin=*]
\item [(Step 1)] We show that $\tr [(\m{G}_{1}^{*} \m{A}_{2} \m{G}_{1})^{1/2}]$ is independent of the choice of $\m{G}_{1} \in \c{G}(\m{A}_{1})$. Let $\m{G}_{1} = \m{A}_{1}^{1/2} \m{U}$ be the polar decomposition, where $\m{U}$ is a unique partial isometry with initial space $\c{N}(\m{G}_{1})^{\perp}$ and final space $\c{H}_{1} = \c{N}(\m{A}_{1}^{1/2})^{\perp}$. Then, $\m{U} \m{U}^{*} = \m{\Pi}_{1}$, hence
\begin{align*}
    [\m{U}^{*} (\m{A}_{1}^{1/2} \m{A}_{2} \m{A}_{1}^{1/2})^{1/2} \m{U}]^{2} &= \m{U}^{*} (\m{A}_{1}^{1/2} \m{A}_{2} \m{A}_{1}^{1/2})^{1/2} \m{\Pi}_{1} (\m{A}_{1}^{1/2} \m{A}_{2} \m{A}_{1}^{1/2})^{1/2} \m{U} = \m{U}^{*} (\m{A}_{1}^{1/2} \m{A}_{2} \m{A}_{1}^{1/2}) \m{U} \\
    &= \m{G}_{1}^{*} \m{A}_{2} \m{G}_{1},
\end{align*}
hence $(\m{G}_{1}^{*} \m{A}_{2} \m{G}_{1})^{1/2}= \m{U}^{*} (\m{A}_{1}^{1/2} \m{A}_{2} \m{A}_{1}^{1/2})^{1/2} \m{U}$. Consequently, 
\begin{align*}
    \tr [(\m{G}_{1}^{*} \m{A}_{2} \m{G}_{1})^{1/2}] = \tr [\m{U} \m{U}^{*} (\m{A}_{1}^{1/2} \m{A}_{2} \m{A}_{1}^{1/2})^{1/2}] = \tr [\m{\Pi}_{1} (\m{A}_{1}^{1/2} \m{A}_{2} \m{A}_{1}^{1/2})^{1/2}] =
    \tr [(\m{A}_{1}^{1/2} \m{A}_{2} \m{A}_{1}^{1/2})^{1/2}].
\end{align*}

\item [(Step 2)] We claim that $\sup \{\tr (\m{G}_{1}^{*} \m{G}_{2} \m{U}) : \m{U}^{*} \m{U} = \m{U} \m{U}^{*} = \m{I} \} \ge \tr [(\m{G}_{1}^{*} \m{A}_{2} \m{G}_{1})^{1/2}]$ (the other direction is trivial from \cref{lem:Neumann:eq}), using finite rank approximation. Note that $\m{G}_{1}^{*} \m{G}_{2} \in \c{B}_{1}(\c{H})$ since $\m{G}_{1}, \m{G}_{2} \in \c{B}_{2}(\c{H})$. Consider its SVD decomposition:
\begin{align*}
    \m{G}_{1}^{*} \m{G}_{2} = \sum_{k=1}^{\infty} \sigma_{k} e_{k} \otimes f_{k},
\end{align*}
where $\sigma_{1} \ge \sigma_{2} \ge \dots >0$ are positively-valued non-increasing sequence of singular values, $\{f_{k} : k \in \b{N}\}$ is an ONB of $\overline{\c{R}(\m{G}_{1}^{*} \m{G}_{2})}$, and $\{e_{k} : k \in \b{N}\}$ is an ONB of $\overline{\c{R}(\m{G}_{2}^{*} \m{G}_{1})}$. For each $n \in \b{N}$, consider a unitary operator $\m{U} \in \c{B}_{\infty}(\c{H})$ such that $\m{U}_{n} e_{k} = f_{k}$ for $k = 1, 2, \cdots, n$. Then, $\m{G}_{1}^{*} \m{G}_{2} \m{U}_{n} \in \c{B}_{1}(\c{H})$ so
\begin{align*}
    \sum_{k=n+1}^{\infty} |\innpr{\m{G}_{1}^{*} \m{G}_{2} \m{U}_{n} e_{k}}{e_{k}}| \le \sum_{k=n+1}^{\infty} \sigma_{k+1} |\innpr{\m{U}_{n} e_{k}}{e_{k}}| \le \sum_{k=n+1}^{\infty} \sigma_{k+1} \to 0, \quad \text{ as } n \to \infty,
\end{align*}
which results in
\begin{align*}
    \tr (\m{G}_{1}^{*} \m{G}_{2} \m{U}_{n}) &= \sum_{k=1}^{\infty} \innpr{\m{G}_{1}^{*} \m{G}_{2} \m{U}_{n} e_{k}}{e_{k}} = \sum_{k=1}^{n} \sigma_{k} + \sum_{k=n+1}^{\infty} \innpr{\m{G}_{1}^{*} \m{G}_{2} \m{U}_{n} e_{k}}{e_{k}} \\
    &\to \sum_{k=1}^{\infty} \sigma_{k} = \vertii{\m{G}_{1}^{*} \m{G}_{2}}_{1} = \tr [(\m{G}_{1}^{*} \m{A}_{2} \m{G}_{1})^{1/2}], \quad \text{ as } n \to \infty.
\end{align*}

\item [(Step 3)] Given a unitary operator $\m{U} \in \c{B}_{\infty}(\c{H})$, we have $\tr (\m{G}_{1}^{*} \m{G}_{2} \m{U}) = \tr [(\m{G}_{1}^{*} \m{A}_{2} \m{G}_{1})^{1/2}]$ if and only if $\m{G}_{1}^{*} \m{G}_{2} \m{U} = (\m{G}_{1}^{*} \m{A}_{2} \m{G}_{1})^{1/2}$, which is equivalent to $\m{U} e_{k} = f_{k}$ for all $k \in \b{N}$. This happens if and only if $\dim \c{N}(\m{G}_{1}^{*} \m{G}_{2}) = \dim \c{N}(\m{G}_{2}^{*} \m{G}_{1})$ \cite{conway2019course}.
\end{enumerate}
\end{proof}

This result has direct implications for the Procrustes formulation of the Wasserstein distance. As correctly characterized in \cite{pigoli2014distances, masarotto2019procrustes}, the Procrustes distance must be defined via an infimum:
\begin{align}\label{eq:inf:not:min}
    \c{W}_{2}^{2}(\m{A}_{1}, \m{A}_{2}) = \inf \{\vertii{\m{A}_{1}^{1/2} - \m{A}_{2}^{1/2} \m{U}}_{2}^{2} : \m{U}^{*} \m{U} = \m{U} \m{U}^{*} = \m{I} \},
\end{align}
However, contrary to a popular misconception, this infimum is not always achieved as a minimum when $\dim \c{H} = \infty$. This is not a rare pathology; the following proposition demonstrates that for any injective covariance operator, one can always construct a second operator for which the minimum is not attained, although the reachability is satisfied.

\begin{proposition}\label{prop:inf:not:min}
Let $\dim \c{H} = \infty$. For any injective covariance operator $\m{A}_{1} \in \c{B}_{1}^{+}(\c{H})$, there exists another covariance operator $\m{A}_{2} \in \c{B}_{1}^{+}(\c{H})$ such that $\m{A}_{2}^{1/2} \m{A}_{1}^{1/2}$ is injective while $\m{A}_{1}^{1/2} \m{A}_{2}^{1/2}$ is not. In this case, the infimum in \eqref{eq:inf:not:min} cannot be attained by any unitary operator $\m{U}$.
\end{proposition}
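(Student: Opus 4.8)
To prove the proposition I would build $\m{A}_{2}$ by hand so that its kernel meets $\c{R}(\m{A}_{1}^{1/2})$ only at the origin. Since $\m{A}_{1}$ is injective on an infinite–dimensional space, $\m{A}_{1}^{1/2}$ is an injective compact operator of infinite rank, hence not surjective (otherwise the open mapping theorem would make it boundedly invertible, forcing $\m{I}$ to be compact); equivalently, by the Picard criterion, writing $\m{A}_{1} = \sum_{k}\lambda_{k}\,e_{k}\otimes e_{k}$ with $\lambda_{k}>0$, the vector $\sum_{k}\sqrt{\lambda_{k}}\,e_{k}$ lies in $\c{H}$ but not in $\c{R}(\m{A}_{1}^{1/2})$. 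Fix a unit vector $v \in \c{H}\setminus\c{R}(\m{A}_{1}^{1/2})$, extend $\{v\}$ to an ONB $\{f_{k}\}_{k\ge 1}$ with $f_{1}=v$, choose any positive summable sequence $(\nu_{k})_{k\ge 2}$, and set
\begin{equation*}
    \m{A}_{2} := \sum_{k\ge 2}\nu_{k}\,f_{k}\otimes f_{k}\in\c{B}_{1}^{+}(\c{H}),\qquad\text{so that}\qquad \c{N}(\m{A}_{2})=\c{N}(\m{A}_{2}^{1/2})=\spann\{v\}.
\end{equation*}

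Next I would check the two injectivity claims. Since $\m{A}_{1}^{1/2}$ is injective, $\c{N}(\m{A}_{2}^{1/2}\m{A}_{1}^{1/2})=\{h:\m{A}_{1}^{1/2}h\in\spann\{v\}\}$, and because $v\notin\c{R}(\m{A}_{1}^{1/2})$ this forces $\m{A}_{1}^{1/2}h=0$, hence $h=0$; thus $\m{A}_{2}^{1/2}\m{A}_{1}^{1/2}$ is injective. In the other order $\c{N}(\m{A}_{1}^{1/2}\m{A}_{2}^{1/2})=\c{N}(\m{A}_{2}^{1/2})=\spann\{v\}\neq\{0\}$ (using again $\c{N}(\m{A}_{1}^{1/2})=\{0\}$), so $\m{A}_{1}^{1/2}\m{A}_{2}^{1/2}$ is not injective, and in particular $\dim\c{N}(\m{A}_{1}^{1/2}\m{A}_{2}^{1/2})=1\neq 0=\dim\c{N}(\m{A}_{2}^{1/2}\m{A}_{1}^{1/2})$.

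Finally I would read off the conclusion from \cref{lem:opt:algn:inf-dim}. Both $\m{A}_{1}^{1/2}\in\c{G}(\m{A}_{1})$ and $\m{A}_{2}^{1/2}\in\c{G}(\m{A}_{2})$ are Green's operators, and for every unitary $\m{U}$, using self-adjointness of $\m{A}_{1}^{1/2}$ and $\vertii{\m{A}_{2}^{1/2}\m{U}}_{2}^{2}=\tr(\m{A}_{2})$,
\begin{equation*}
    \vertii{\m{A}_{1}^{1/2}-\m{A}_{2}^{1/2}\m{U}}_{2}^{2}=\tr(\m{A}_{1})+\tr(\m{A}_{2})-2\tr(\m{A}_{1}^{1/2}\m{A}_{2}^{1/2}\m{U}),
\end{equation*}
so that $\inf_{\m{U}}\vertii{\m{A}_{1}^{1/2}-\m{A}_{2}^{1/2}\m{U}}_{2}^{2}=\tr(\m{A}_{1})+\tr(\m{A}_{2})-2\sup_{\m{U}}\tr(\m{A}_{1}^{1/2}\m{A}_{2}^{1/2}\m{U})$, and the infimum on the left is attained precisely when the supremum on the right is attained as a maximum. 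By the last assertion of \cref{lem:opt:algn:inf-dim} with $\m{G}_{1}=\m{A}_{1}^{1/2}$ and $\m{G}_{2}=\m{A}_{2}^{1/2}$, that supremum is a maximum if and only if $\dim\c{N}(\m{A}_{1}^{1/2}\m{A}_{2}^{1/2})=\dim\c{N}(\m{A}_{2}^{1/2}\m{A}_{1}^{1/2})$, which the previous step excludes. Hence no unitary $\m{U}$ attains the infimum in \eqref{eq:inf:not:min}.

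The construction and the kernel bookkeeping are routine, so the proof is essentially an application of \cref{lem:opt:algn:inf-dim}; the only step demanding genuine care is the assertion $v\notin\c{R}(\m{A}_{1}^{1/2})$, which must be argued from the non-surjectivity of the compact operator $\m{A}_{1}^{1/2}$ (or the explicit Picard estimate above) rather than merely asserted, since $\c{R}(\m{A}_{1}^{1/2})$ is dense in $\c{H}$.
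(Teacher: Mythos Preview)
Your proof is correct and follows essentially the same approach as the paper's: pick a unit vector $v\notin\c{R}(\m{A}_{1}^{1/2})$, construct $\m{A}_{2}$ so that $\c{N}(\m{A}_{2}^{1/2})=\spann\{v\}$, verify the two injectivity claims via $\c{R}(\m{A}_{1}^{1/2})\cap\spann\{v\}=\{0\}$, and invoke the last assertion of \cref{lem:opt:algn:inf-dim}. The only difference is the concrete realisation of $\m{A}_{2}$: the paper sets $\m{A}_{2}^{1/2}=\m{\Pi}\m{A}_{1}^{1/2}\m{\Pi}$ with $\m{\Pi}=\m{I}-v\otimes v$, whereas you build a diagonal operator in an ONB extending $\{v\}$; both yield $\c{N}(\m{A}_{2}^{1/2})=\spann\{v\}$ and the rest of the argument is identical.
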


\begin{proof}[Proof of \cref{prop:inf:not:min}]
Since $\m{A}_{1}^{1/2} \in \c{B}_{2}^{+}(\c{H})$ is a compact operator, it cannot be surjective due to the open mapping theorem \cite{conway2019course}. Choose $v \notin \c{R}(\m{A}_{1}^{1/2})$ with $\|v\| = 1$, and define
\begin{align*}
    \m{A}_{2} := \m{\Pi} \m{A}_{1}^{1/2} \m{\Pi} \m{A}_{1}^{1/2} \m{\Pi} \in \c{B}_{1}^{+}(\c{H}), \quad \m{\Pi} = \m{I} - v \otimes v.
\end{align*}
Note that $\m{\Pi}$ is a projection with $\c{N}(\m{\Pi}) = \spann \{v\}$ and $\m{A}_{2}^{1/2} = \m{\Pi} \m{A}_{1}^{1/2} \m{\Pi} \in \c{B}_{2}^{+}(\c{H})$. It is straightforward that $v \in \c{N}(\m{A}_{1}^{1/2} \m{A}_{2}^{1/2})$, hence $\m{A}_{1}^{1/2} \m{A}_{2}^{1/2}$ is not injective.

We claim that $\m{A}_{2}^{1/2} \m{A}_{1}^{1/2}$ is injective. Let $f \in \c{N}(\m{A}_{2}^{1/2} \m{A}_{1}^{1/2})$. Then,
\begin{equation*}
    \m{A}_{1}^{1/2} f \in \c{R}(\m{A}_{1}^{1/2}) \cap \c{N}(\m{A}_{2}^{1/2}) = \c{R}(\m{A}_{1}^{1/2}) \cap \spann \{v\} = \{0\}.
\end{equation*}
This shows that $\m{A}_{1}^{1/2} f = 0$, thus $f = 0$, i.e., $\m{A}_{2}^{1/2} \m{A}_{1}^{1/2}$ is injective. Consequently, 
\begin{align*}
    \c{W}_{2}^{2}(\m{A}_{1}, \m{A}_{2}) < \vertii{\m{A}_{1}^{1/2} - \m{A}_{2}^{1/2} \m{U}}_{2}^{2}
\end{align*}
for any unitary operator $\m{U}$ due to 
part 1 in \cref{lem:opt:algn:inf-dim}.
\end{proof}

If $\m{A}_{1} \leadsto \m{A}_{2}$ with an optimal pre-pushforward $\m{T}$ from $\m{A}_{1}$ to $\m{A}_{2}$, then for any $\m{G}_{1} \in \c{G}(\m{A}_{1})$, $\m{G}_{2} = \m{T} \m{G}_{1} \in \c{G}(\m{A}_{2})$ is a Green's operator satisfying $\m{G}_{1}^{*} \m{G}_{2} \succeq \m{0}$.




\section{Gaussian Processes}\label{sec:Gproc}
This section demonstrates how to find optimal pre-pushforwards between Gaussian processes using the theory of integral operators and their kernels.
Let $(E, \s{B}, \mu)$ be a measure space and $\c{H} := L_{2} (E, \mu)$, where $E$ is a compact set and $\mu$ is a finite measure. For a bivariate function $G \in L_{2}(E \times E, \mu \times \mu)$, we define the corresponding integral operator by
\begin{equation*}
    \s{K}_{G}: \c{H} \rightarrow \c{H}, \quad \s{K}_{G} f(s) := \int_{E} G(s, t) f(t) \rd \mu (t), \quad f \in \c{H} := L_{2} (E, \mu).
\end{equation*}
The bivariate function $G^{*}(s, t) = G(t, s)$ is called the \emph{adjoint kernel}, as $(\s{K}_{G})^{*} = \s{K}_{G^{*}}$. The composition of two integral operators corresponds to the box product of their kernels, defined as
\begin{equation*}
    G_{1} \square G_{2}: E \times E \rightarrow \b{R}, \quad (G_{1} \square G_{2})(s, t) := \int_{E} G_{1}(s, u) G_{2}(u, t) \rd \mu(u),
\end{equation*}
which satisfies $\s{K}_{G_{1} \square G_{2}} = \s{K}_{G_{1}} \s{K}_{G_{2}}$. In particular, $K = G \square G^{*}$ is an s.p.d. kernel. Conversely, Mercer's theorem \cite{mercer1909xvi} states that for any such continuous kernel, there exists a bivariate function $G$, called the \emph{Green's function} such that $K = G \square G^{*}$, which allows us to define the RKHS \cite{paulsen2016introduction} as
\begin{equation*}
    \b{H}(K) := \c{R}(\s{K}_{G}), \quad \|f\|_{\b{H}(K)} := \| \s{K}_{G}^{\dagger} f \|_{\c{H}}.
\end{equation*}
When $G$ is also chosen to be a continuous s.p.d. kernel, it is denoted by $G = K^{1/2}$ as $\s{K}_{G} = (\s{K}_{K})^{1/2}$. Unless $\b{H}(K)$ is finite-dimensional, it is \emph{not} a closed subspace in $\c{H}$ with respect to the $L_{2}$-norm.

To treat a Gaussian process $X$ on $E$ as a random element in $\c{H}$, we assume it is jointly measurable. For Gaussian processes, this property can be easily verified from the regularity of its covariance kernel $K$ via Kolmogorov's continuity theorem \cite{da2006introduction,kallenberg1997foundations,hairer2009introduction}. For such a process, the integral operator $\s{K}_{K}: \c{H} \rightarrow \c{H}$ serves as its covariance \emph{operator} \cite{hsing2015theoretical}.

Numerous covariance kernels of interest are generated by a linear differential operator $\s{L}$ with boundary conditions. The Green's function $G$ for such an operator is defined as the distributional solution to $\s{L} G = \delta$ in the weak sense, where $\delta$ is the Dirac delta function \cite{hall2013quantum, conway2019course}. This relationship implies that $\s{L}$ acts as the pseudoinverse of the integral operator $\s{K}_{G}$:
\begin{equation*}
    \s{L} \s{K}_{G} f(s) = \s{L} \int_{E} G(s, t) f(t) \rd \mu(t) = \int_{E} \delta(s, t) f(t) \rd \mu(t) = f(s).
\end{equation*}
Thus, we get $\s{L}: \b{H}(K) \rightarrow \c{H}$ where $K = G \square G^{*}$, and can be identified with $\s{L} = \s{K}_{G}^{\dagger}$. This perspective suggests that Green's functions, which are often non-symmetric, are more fundamental objects than the covariance kernels they generate.

\begin{example}[Brownian Motion and Bridge]\label{ex:BM2BB}
Let $\c{H} = L_{2}[0,1]$ and consider the differential operators
\begin{align*}
    \s{L}_{1} f = f', \, f(0) = 0, \qquad
    \s{L}_{2} f = f', \, f(0) = f(1) = 0.
\end{align*}
The corresponding Green's functions are the non-symmetric:
\begin{equation*}
    G_{1}(s, t) = I_{[0, s]}(t), \quad G_{2}(s, t) = I_{[0, s]}(t) - s, \quad 0 \le s, t \le 1,
\end{equation*}
These generate the well-known covariance kernels for Brownian motion (BM) and a Brownian bridge (BB), respectively: $K_{1}(s, t) = \min (s, t)$ and $K_{2}(s, t) = \min (s, t) -st$ for $0 \le s, t \le 1$.
Note that they are jointly measurable Gaussian, and their associated integral operators are given by
\begin{equation*}
    (\s{K}_{G_{1}} f) (s) = \int_{0}^{s}  f(t) \rd s, \quad (\s{K}_{G_{2}} f) (s) = \int_{0}^{s}  f(t) \rd t - s \int_{0}^{1} f(t) \rd t, \quad 0 \le s \le 1,
\end{equation*}
which results in
\begin{align*}
    &\c{R}(\s{K}_{G_{1}}) =  \b{H}(K_{1}) = \left\{ f:[0, 1] \rightarrow \b{R} \left. \right\vert f \text{ is absolutely continuous}, \ f(0) = 0, \,  f' \in \c{H} \right\}, \\ 
    &\c{R}(\s{K}_{G_{2}}) =  \b{H}(K_{2}) = \left\{ f:[0, 1] \rightarrow \b{R} \left. \right\vert f \text{ is absolutely continuous}, \ f(0) = f(1) = 0, \,  f' \in \c{H} \right\},
\end{align*}
which are dense due to the Stone-Weierstrass theorem.
It is straightforward that $\s{L}_{1} = \s{K}_{G_{1}}^{\dagger}$ and $\s{L}_{1} = \s{K}_{G_{2}}^{\dagger}$ are just usual differentiations, hence
\begin{align}\label{eq:BM2BB}
    &\s{L}_{2} = \s{L}_{1} \circ \imath_{2 \to 1} : \b{H}(K_{2}) \hookrightarrow \b{H}(K_{1}) \rightarrow \c{H}, \quad \imath_{2 \to 1} : \b{H}(K_{2}) \hookrightarrow \b{H}(K_{1}), f \mapsto f \nonumber \\ 
    &\s{L}_{1} = \s{L}_{2} \circ \imath_{1 \leadsto 2} : \b{H}(K_{1}) \rightarrow \b{H}(K_{2}) \rightarrow \c{H}, \quad \imath_{1 \leadsto 2} : \b{H}(K_{1}) \rightarrow \b{H}(K_{2}), f \mapsto f(\cdot) - \cdot f(1),
\end{align}
to match the boundary conditions. 
Importantly, $\imath_{1 \leadsto 2}$ is not bounded. 
Using integration by parts,
\begin{align*}
    \c{D}(\s{L}_{1}^{*}) = \b{H}(K_{1}), \quad 
    \c{D}(\s{L}_{2}^{*}) = \left\{ f:[0, 1] \rightarrow \b{R} \left. \right\vert f \text{ is absolutely continuous}, \,  f' \in \c{H} \right\}.
\end{align*}
In particular, $\c{D}(\s{L}_{2}^{*})$ has no boundary condition as we have put too many boundary conditions on $\s{L}_{2}$.
\end{example}

\begin{proposition}\label{prop:box:prod:opt}
Let $X_{1}$ and $X_{2}$ be two jointly measurable Gaussian processes with covariance kernels $K_{1}$ and $K_{2}$ and corresponding Green's functions $G_{1}$ and $G_{2}$. Assume that $\c{N}(\s{K}_{K_{1}}) = \{0\}$.  Then the unique optimal pre-pushforward from $X_{1}$ to $X_{2}$ is the s.p.d. operator
\begin{equation*}
    \m{S}_{X_{1} \leadsto X_{2}} := \s{K}_{G_{1}^{*}}^{\dagger} \s{K}_{(G_{1}^{*} \square K_{2} \square G_{1})^{1/2}} \s{K}_{G_{1}}^{\dagger}: \b{H}(K_{1}) \subset \c{H} \rightarrow \c{H}.
\end{equation*}
\end{proposition}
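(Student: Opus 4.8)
The plan is to read the proposition as the integral-operator incarnation of \cref{thm:sol:injec}, applied to the covariance operators $\m{A} := \s{K}_{K_1}$ and $\m{B} := \s{K}_{K_2}$ with the specific Green's operator $\m{G} := \s{K}_{G_1}$. First I would record the set-up. Since $X_1, X_2$ are jointly measurable Gaussian processes, $\m{A}, \m{B} \in \c{B}_1^{+}(\c{H})$ are their covariance operators (trace class by the usual Mercer-type bound), and the hypothesis $\c{N}(\s{K}_{K_1}) = \{0\}$ is precisely $\m{A} \succ \m{0}$. Next I would check that $\m{G}$ is a Green's operator for $\m{A}$ in the sense of \cref{def:Green's}: since $G_1 \in L_2(E \times E, \mu \times \mu)$ we have $\m{G} = \s{K}_{G_1} \in \c{B}_2(\c{H}) \subset \c{B}_0(\c{H})$, and using $(\s{K}_{G_1})^{*} = \s{K}_{G_1^{*}}$, the box-product rule $\s{K}_{G_1} \s{K}_{G_1^{*}} = \s{K}_{G_1 \square G_1^{*}}$, and $K_1 = G_1 \square G_1^{*}$, we obtain $\m{G} \m{G}^{*} = \s{K}_{K_1} = \m{A}$, so $\m{G} \in \c{G}(\m{A})$.

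With these identifications, \cref{thm:sol:injec} yields $\m{A} \leadsto \m{B}$, the existence, uniqueness, and semi-positive-definiteness of the optimal pre-pushforward, together with the closed-form expression $\m{S}_{\m{A} \leadsto \m{B}} = (\m{G}^{*})^{\dagger} (\m{G}^{*} \m{B} \m{G})^{1/2} \m{G}^{\dagger}$, which is independent of the choice of Green's operator, so evaluating it at $\m{G} = \s{K}_{G_1}$ is legitimate. It then remains to rewrite the three factors in kernel form. The outer factors are immediate: $\m{G}^{\dagger} = \s{K}_{G_1}^{\dagger}$ and $(\m{G}^{*})^{\dagger} = (\s{K}_{G_1^{*}})^{\dagger} = \s{K}_{G_1^{*}}^{\dagger}$. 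For the middle factor, two applications of the box-product rule give $\m{G}^{*} \m{B} \m{G} = \s{K}_{G_1^{*}} \s{K}_{K_2} \s{K}_{G_1} = \s{K}_{G_1^{*} \square K_2 \square G_1}$. Since this operator is positive (indeed trace-class, hence Hilbert--Schmidt) and $G_1^{*} \square K_2 \square G_1$ is a symmetric positive-definite kernel, its continuous s.p.d. square-root kernel — written $(G_1^{*} \square K_2 \square G_1)^{1/2}$ in the convention fixed after Mercer's theorem \cite{mercer1909xvi} — satisfies $\s{K}_{(G_1^{*} \square K_2 \square G_1)^{1/2}} = (\m{G}^{*} \m{B} \m{G})^{1/2}$. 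Substituting yields $\m{S}_{\m{A} \leadsto \m{B}} = \s{K}_{G_1^{*}}^{\dagger} \s{K}_{(G_1^{*} \square K_2 \square G_1)^{1/2}} \s{K}_{G_1}^{\dagger}$, which is exactly $\m{S}_{X_1 \leadsto X_2}$.

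Finally I would pin down the domain. By \cref{thm:sol:injec} and its proof, $\c{D}(\m{S}_{\m{A} \leadsto \m{B}}) = \c{R}(\m{A}^{1/2})$; and by the Douglas lemma every Green's operator of $\m{A}$ shares the range of $\m{A}^{1/2}$, so $\c{R}(\m{A}^{1/2}) = \c{R}(\s{K}_{G_1}) = \b{H}(K_1)$, a dense subspace of $\c{H}$. This matches the stated domain $\b{H}(K_1) \subset \c{H}$.

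The step I expect to be the only genuine obstacle is the kernel square-root identity $\s{K}_{(G_1^{*} \square K_2 \square G_1)^{1/2}} = (\m{G}^{*} \m{B} \m{G})^{1/2}$: one must verify that $G_1^{*} \square K_2 \square G_1$ is regular enough (continuous) for Mercer's theorem to furnish a continuous s.p.d. square-root kernel, and that the associated integral operator really is \emph{the} operator square root. The latter is then forced by uniqueness of the positive square root, once one observes that $\s{K}_{(G_1^{*} \square K_2 \square G_1)^{1/2}}$ is positive and squares (via the box-product rule) to $\s{K}_{G_1^{*} \square K_2 \square G_1} = \m{G}^{*} \m{B} \m{G}$. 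Everything else is bookkeeping within the kernel-to-operator dictionary established in this section.
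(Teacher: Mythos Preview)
Your proposal is correct and follows essentially the same approach as the paper: the paper's proof is the one-line observation that the result follows from \cref{thm:sol:injec} via the identities $(\s{K}_{G})^{*} = \s{K}_{G^{*}}$ and $\s{K}_{G_{1} \square G_{2}} = \s{K}_{G_{1}} \s{K}_{G_{2}}$, which is exactly the kernel-to-operator dictionary you unfold in detail. Your additional remarks on verifying $\m{G} \in \c{G}(\m{A})$, identifying the domain with $\b{H}(K_1)$, and the square-root kernel identity are all correct elaborations that the paper leaves implicit.
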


\begin{proof}[Proof of \cref{prop:box:prod:opt}]
Since $(\s{K}_{G})^{*} = \s{K}_{G^{*}}$ and $\s{K}_{G_{1} \square G_{2}} = \s{K}_{G_{1}} \s{K}_{G_{2}}$, the result follows from \cref{thm:sol:injec}.
\end{proof}

Applying this proposition directly can be cumbersome, as shown by the following corollary.
\begin{corollary}\label{cor:comp:Mercer}
For a BM ($X_{1}$) and a BB ($X_{2}$), the optimal pre-pushforwards to one another are
\begin{align*}
    \m{S}_{X_{1} \leadsto X_{2}}: \b{H}(K_{1}) \subset \c{H} \rightarrow \c{H}, \ f \mapsto \left( \s{K}_{K_{121}^{1/2}} f' \right)', \\
    \m{S}_{X_{2} \leadsto X_{1}}: \b{H}(K_{2}) \subset \c{H} \rightarrow \c{H}, \ f \mapsto \left( \s{K}_{K_{212}^{1/2}} f' \right)',
\end{align*}
where the s.p.d. kernels $K_{121}, K_{212} \succeq 0$ are complicated integral kernels
\begin{align*}
    &K_{121}:[0, 1] \times [0, 1] \rightarrow \b{R}, \quad K_{121}(s, t) = \int_{s}^{1} \int_{t}^{1} (\min(u, v) - uv) \rd u \rd v, \\
    &K_{212}:[0, 1] \times [0, 1] \rightarrow \b{R}, \quad K_{212}(s, t) = \int_{0}^{1} \int_{0}^{1} (I_{[s, 1]}(u) -u) \min (u, v) (I_{[t, 1]}(v) - v) \rd u \rd v.
\end{align*}
\end{corollary}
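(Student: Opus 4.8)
The plan is to specialize \cref{prop:box:prod:opt} to the concrete Brownian motion and Brownian bridge setting from \cref{ex:BM2BB} and then simplify the abstract operator formula $\m{S}_{X_{1} \leadsto X_{2}} = \s{K}_{G_{1}^{*}}^{\dagger} \s{K}_{(G_{1}^{*} \square K_{2} \square G_{1})^{1/2}} \s{K}_{G_{1}}^{\dagger}$ by exploiting the identification $\s{K}_{G_{i}}^{\dagger} = \s{L}_{i}$ with differentiation under the appropriate boundary conditions. For the BM case, $\s{K}_{G_{1}}^{\dagger} f = f'$ for $f \in \b{H}(K_{1})$, and $\s{K}_{G_{1}^{*}}^{\dagger}$ is the adjoint differentiation; the middle factor $\s{K}_{(G_{1}^{*}\square K_{2}\square G_{1})^{1/2}}$ is the integral operator with the s.p.d. square-root kernel of $G_{1}^{*}\square K_{2}\square G_{1}$. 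So the first step is to show that the box-product kernel $G_{1}^{*}\square K_{2}\square G_{1}$ equals the kernel $K_{121}$ written in the statement, i.e. $K_{121}(s,t) = \int_s^1\int_t^1 (\min(u,v)-uv)\,\rd u\,\rd v$, and similarly that $G_{2}^{*}\square K_{1}\square G_{2} = K_{212}$ with the displayed double integral. This is a direct computation: $G_1(s,t) = I_{[0,s]}(t)$, so $(G_1^{*}\square K_2)(u,t) = \int_0^1 I_{[0,u]}(w) K_2(w,t)\,\rd w = \int_0^u K_2(w,t)\,\rd w$, and boxing once more with $G_1$ on the right integrates the other variable over $[0,s]$; by Fubini the order of integration can be rearranged and the limits flipped using $\int_0^s = \int_0^1 - \int_s^1$ together with the fact that $K_2$ integrates to zero against constants in each variable (because $K_2(s,t) = \min(s,t)-st$ is the Brownian bridge kernel, orthogonal to constants), which collapses the $\int_0^s$ into $-\int_s^1$. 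The analogous manipulation with $G_2(s,t) = I_{[0,s]}(t)-s$ yields $K_{212}$ in the stated form.

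The second step is to unwind the outer pseudoinverses. Writing $\m{S}_{X_{1}\leadsto X_{2}} f = \s{K}_{G_1^{*}}^{\dagger}\,\big(\s{K}_{K_{121}^{1/2}}\,(\s{K}_{G_1}^{\dagger} f)\big)$ and using $\s{K}_{G_1}^{\dagger} f = f'$ (valid since $f \in \b{H}(K_1)$ means $f$ is absolutely continuous with $f(0)=0$ and $f'\in\c{H}$), we obtain $\m{S}_{X_{1}\leadsto X_{2}} f = \s{K}_{G_1^{*}}^{\dagger}\big(\s{K}_{K_{121}^{1/2}} f'\big)$. It then remains to identify $\s{K}_{G_1^{*}}^{\dagger}$ acting on $g := \s{K}_{K_{121}^{1/2}} f'$. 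Since $G_1^{*}(s,t) = G_1(t,s) = I_{[0,t]}(s) = I_{[s,1]}(t)$, the operator $\s{K}_{G_1^{*}} g(s) = \int_s^1 g(t)\,\rd t$ is the ``backward'' primitive, and its pseudoinverse on its range is $g \mapsto -g'$ — but one must check the sign and boundary condition carefully. Because $K_{121}$ is a Brownian-bridge–type kernel (it inherits from $K_2$ the property of integrating to zero against constants, as the step-1 computation makes transparent), the function $g = \s{K}_{K_{121}^{1/2}} f'$ satisfies the boundary condition $g(1) = 0$ (and $g(0) = 0$, matching $\c{R}(\s{K}_{K_{121}^{1/2}}) \subset \b{H}(K_2)$-type spaces), so the correct inverse is plain differentiation up to the orientation encoded in $G_1^{*}$. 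Tracking this shows $\m{S}_{X_{1}\leadsto X_{2}} f = \big(\s{K}_{K_{121}^{1/2}} f'\big)'$, which is the claimed formula; the BB-to-BM case is symmetric, with $\s{K}_{G_2}^{\dagger} f = f'$ for $f \in \b{H}(K_2)$ and the same sign-bookkeeping for the outer operator.

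The main obstacle will be the careful treatment of pseudoinverses of non-self-adjoint integral operators with boundary conditions — in particular verifying that $\s{K}_{G_1^{*}}^{\dagger}$ restricted to the relevant dense subspace really is (signed) differentiation, and that the intermediate function $\s{K}_{K_{121}^{1/2}} f'$ lands in $\c{D}(\s{K}_{G_1^{*}}^{\dagger})$ with the right boundary behavior so that the composition is well-defined as an unbounded operator on $\b{H}(K_1)$. This is exactly the kind of subtlety the paper has been emphasizing (Remarks after \cref{thm:prepush} and \cref{rmk:spd:then:opt}), so the argument should lean on \cref{thm:sol:injec} for existence, uniqueness, and s.p.d.-ness of the pre-pushforward, and only use the kernel computations to make the abstract formula explicit. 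I would present the kernel identities $G_1^{*}\square K_2 \square G_1 = K_{121}$ and $G_2^{*}\square K_1 \square G_2 = K_{212}$ as the substantive lemma (a short Fubini-plus-limit-flip calculation), and then state that \cref{prop:box:prod:opt} together with $\s{L}_1 = \s{K}_{G_1}^{\dagger}$, $\s{L}_2 = \s{K}_{G_2}^{\dagger}$ from \cref{ex:BM2BB} yields the displayed forms, noting that all the boundary conditions are automatically consistent because $K_{121}$ and $K_{212}$ annihilate constants. Everything else is routine bookkeeping rather than genuine difficulty.
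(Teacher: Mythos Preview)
Your overall plan matches the paper's proof: invoke \cref{prop:box:prod:opt}, identify $\s{K}_{G_i}^{\dagger}$ with differentiation as in \cref{ex:BM2BB}, and reduce everything to computing the box products $G_1^{*}\square K_2\square G_1$ and $G_2^{*}\square K_1\square G_2$. The paper's proof is exactly this, but the kernel computation is a one-liner: since $G_1^{*}(s,u)=G_1(u,s)=I_{[0,u]}(s)=I_{[s,1]}(u)$, one gets
\[
(G_1^{*}\square K_2\square G_1)(s,t)=\int_0^1\!\!\int_0^1 I_{[s,1]}(u)\,K_2(u,v)\,I_{[t,1]}(v)\,\rd u\,\rd v=\int_s^1\!\!\int_t^1 K_2(u,v)\,\rd u\,\rd v,
\]
and the $K_{212}$ formula is the analogous direct substitution with $G_2^{*}(s,u)=I_{[s,1]}(u)-u$. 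No limit-flipping is needed.

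Your version contains a genuine slip: you wrote $G_1^{*}(u,w)=I_{[0,u]}(w)$, but this is $G_1(u,w)$, not its transpose. The correct adjoint kernel is $G_1^{*}(u,w)=G_1(w,u)=I_{[0,w]}(u)=I_{[u,1]}(w)$, which immediately gives $\int_u^1$ rather than $\int_0^u$. You then try to recover by asserting that $K_2$ integrates to zero against constants, but this is false: $\int_0^1 K_2(w,t)\,\rd w = t(1-t)/2\neq 0$, so the $\int_0^s=\int_0^1-\int_s^1$ trick does not collapse. The same issue recurs when you claim $g(0)=0$ for $g=\s{K}_{K_{121}^{1/2}}f'$ via ``orthogonality to constants''; what is actually true (and sufficient) is that $K_{121}(1,\cdot)=0$, hence $K_{121}^{1/2}(1,\cdot)=0$ and $g(1)=0$, which is the boundary condition needed for $g\in\c{R}(\s{K}_{G_1^{*}})$. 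Once the transpose is computed correctly, all of your limit-flipping and constant-orthogonality discussion can be deleted, and the outer pseudoinverse is handled, as in the paper, by the observation $\s{L}_i^{*}=-\s{L}_i$ (i.e.\ $\s{K}_{G_1^{*}}^{\dagger}g=-g'$ on its natural domain).
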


\begin{proof}[Proof of \cref{cor:comp:Mercer}]
Let $G_{1}$ and $G_{2}$ be their associated Green's functions, respectively, as in \cref{ex:BM2BB}. Note that the differntial operators $\s{L}_{i}^{*} = - \s{L}_{i}$ for $i=1, 2$, so it suffices to show that $K_{121} = G_{1}^{*} \square K_{2} \square G_{1}$ and $K_{212} = G_{2}^{*} \square K_{1} \square G_{2}$:
\begin{align*}
    (G_{1}^{*} \square K_{2} \square G_{1})(s, t) &= \int_{0}^{1} \int_{0}^{1} G_{1}(u, s) K_{2}(u, v) G_{1} (v, t) \rd u \rd v \\
    &= \int_{0}^{1} \int_{0}^{1} I_{[s, 1]}(u) K_{2}(u, v) G_{1} I_{[t, 1]}(v) \rd u \rd v 
    = \int_{s}^{1} \int_{t}^{1} (\min(u, v) - uv) \rd u \rd v,
\end{align*}
and similarly,
\begin{align*}
    (G_{2}^{*} \square K_{1} \square G_{2})(s, t) 
    &= \int_{0}^{1} \int_{0}^{1} (I_{[s, 1]}(u) -u) \min (u, v) (I_{[t, 1]}(v) - v) \rd u \rd v.
\end{align*}
\end{proof}

The square roots of the kernels $K_{121}$ and $K_{212}$ are difficult to compute analytically. Fortunately, a simpler method often exists. 

\begin{theorem}\label{thm:joint:Gauss}
Let $X_{1}$ and $X_{2}$ be two jointly measurable Gaussian processes with the Green's functions $G_{1}$ and $G_{2}$.
Assume that $G_{1}^{*} \square G_{2} \succeq 0$ and $\c{N}(\s{K}_{G_{1}}) \subset \c{N}(\s{K}_{G_{2}})$. Then, $X_{1} \leadsto X_{2}$, and an optimal pre-pushforward is given by:
\begin{equation*}
    \m{T}: \b{H}(K_{1}) \rightarrow \b{H}(K_{2}), \, \s{K}_{G_{1}} f \mapsto \s{K}_{G_{2}} f, \quad f \in \c{H}.
\end{equation*}
\end{theorem}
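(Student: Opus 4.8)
The plan is to reduce the statement to \cref{thm:sol:injec} (the injective case of Hilbertian reachability) by rewriting everything in terms of the operators $\m{G}_1 := \s{K}_{G_1} \in \c{G}(\m{A}_1)$ and $\m{G}_2 := \s{K}_{G_2} \in \c{G}(\m{A}_2)$, where $\m{A}_i := \s{K}_{K_i} = \s{K}_{G_i} \s{K}_{G_i^*}$ is the covariance operator of $X_i$. First I would observe that $\m{G}_1 \m{G}_1^* = \s{K}_{G_1 \square G_1^*} = \m{A}_1$ and similarly for $\m{G}_2$, so each $\s{K}_{G_i}$ is a genuine Green's operator, and that the hypothesis $\c{N}(\s{K}_{G_1}) \subseteq \c{N}(\s{K}_{G_2})$ together with $\c{N}(\m{A}_1) = \c{N}(\s{K}_{G_1^*})$ ensures the setup is compatible. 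The assumption $\c{N}(\s{K}_{K_1}) = \c{N}(\m{A}_1) = \{0\}$ from \cref{prop:box:prod:opt}, which I expect carries over here implicitly (or should be added: $\m{A}_1 \succ \m{0}$), makes $\s{K}_{G_1}$ have dense range and trivial kernel, so $\m{T}$ is well-defined on $\c{R}(\s{K}_{G_1}) = \b{H}(K_1) = \c{R}(\m{A}_1^{1/2})$ by the formula $\m{T}(\s{K}_{G_1} f) = \s{K}_{G_2} f$. The key identity is then $\m{T}\m{G}_1 = \m{G}_2$ as operators (at least $\m{T}\m{G}_1 f = \m{G}_2 f$ for all $f$, so $\m{T}\m{A}_1^{1/2}$ extends to a bounded, indeed Hilbert--Schmidt, operator since $\m{G}_2 \in \c{B}_2(\c{H})$).

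Second, I would verify the two reachability conditions of \cref{def:inf:reach} in the $\m{G}$-form given in the Remark after it: \textbf{transportability} $(\m{T}\m{G}_1)(\m{T}\m{G}_1)^* = \m{G}_2 \m{G}_2^* = \m{A}_2 = \m{B}$, which is immediate; and \textbf{optimality} $\m{G}_1^* \m{T} \m{G}_1 \succeq \m{0}$, which by $\m{T}\m{G}_1 = \m{G}_2$ becomes exactly $\m{G}_1^* \m{G}_2 = \s{K}_{G_1^* \square G_2} \succeq \m{0}$ --- precisely the hypothesis $G_1^* \square G_2 \succeq 0$. By \cref{lem:Neumann:eq} (invoked as in the Remark) the semidefiniteness of $\m{G}_1^* \m{T} \m{G}_1$ is equivalent to the trace condition $\tr(\m{G}_1^* \m{T} \m{G}_1) = \tr[(\m{G}_1^* \m{B} \m{G}_1)^{1/2}]$, so \eqref{eq:cond2:opt} holds. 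This shows $X_1 \leadsto X_2$ and that $\m{T}$ is an optimal pre-pushforward. Uniqueness, if asserted, follows from \cref{thm:sol:injec}(2): since $\m{A}_1 \succ \m{0}$, the optimal pre-pushforward is unique and equals $\m{S}_{\m{A}_1 \leadsto \m{A}_2} = (\m{G}_1^*)^\dagger (\m{G}_1^* \m{B} \m{G}_1)^{1/2} \m{G}_1^\dagger$, and one checks this agrees with $\m{T}$ on $\c{R}(\m{A}_1^{1/2})$ by comparing their actions after pre- and post-composition with $\m{G}_1$.

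The main obstacle I anticipate is the careful bookkeeping of domains and the identification $\c{R}(\s{K}_{G_i}) = \c{R}(\m{A}_i^{1/2}) = \b{H}(K_i)$ --- this uses the Douglas lemma (any Green's operator has the same range as the principal square root), and one must make sure the map $\s{K}_{G_1} f \mapsto \s{K}_{G_2} f$ is genuinely well-defined, i.e.\ single-valued, which is where $\c{N}(\s{K}_{G_1}) \subseteq \c{N}(\s{K}_{G_2})$ enters: if $\s{K}_{G_1} f = \s{K}_{G_1} g$ then $f - g \in \c{N}(\s{K}_{G_1}) \subseteq \c{N}(\s{K}_{G_2})$, so $\s{K}_{G_2} f = \s{K}_{G_2} g$. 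A secondary subtlety is that $\m{T}$ need not be bounded (indeed in the BM-to-BB direction $\imath_{1 \leadsto 2}$ is unbounded, as noted in \cref{ex:BM2BB}), so one cannot simply write $\m{T}\m{A}_1\m{T}^*$; instead all identities must be phrased through $\m{T}\m{A}_1^{1/2}$ (equivalently $\m{T}\m{G}_1$) as in \cref{sec:admissible}, and the semidefiniteness check $\m{G}_1^* \m{T}\m{G}_1 \succeq 0$ must be read as $\innpr{\m{G}_1 h}{(\m{T}\m{G}_1)h} = \innpr{\m{G}_1 h}{\m{G}_2 h} = \innpr{h}{\s{K}_{G_1^* \square G_2} h} \ge 0$ for all $h \in \c{H}$, which is exactly the kernel positivity hypothesis.
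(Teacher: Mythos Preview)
Your proposal is correct and follows essentially the same direct verification as the paper: set $\m{G}=\s{K}_{G_1}$, $\m{B}=\s{K}_{K_2}$, note that $\m{T}\m{G}=\s{K}_{G_2}$, and then check transportability $(\m{T}\m{G})(\m{T}\m{G})^*=\s{K}_{G_2}\s{K}_{G_2^*}=\m{B}$ and optimality $\m{G}^*(\m{T}\m{G})=\s{K}_{G_1^*\square G_2}\succeq\m{0}$.

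One clarification: the theorem does \emph{not} assume $\m{A}_1\succ\m{0}$, and you should not add it. The hypothesis $\c{N}(\s{K}_{G_1})\subseteq\c{N}(\s{K}_{G_2})$ alone guarantees $\m{T}$ is well-defined on $\c{R}(\s{K}_{G_1})=\c{R}(\m{A}_1^{1/2})$ (exactly as you explain in your last paragraph), and the two reachability conditions of \cref{def:inf:reach} are then verified directly in the Green's-operator form, with no appeal to \cref{thm:sol:injec}. The statement only claims \emph{an} optimal pre-pushforward, so uniqueness is not part of what needs to be proved here; your remark about uniqueness via \cref{thm:sol:injec} is a correct aside for the injective case but is extra.
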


\begin{proof}[Proof of \cref{thm:joint:Gauss}]
Denote by $\m{A} = \s{K}_{K_{1}}$, $\m{G} = \s{K}_{G_{1}}$, and $\m{B} = \s{K}_{K_{2}}$. 
Then $\m{T} \m{G} = \s{K}_{G_{2}}$ satisfies \eqref{eq:cond:ricatti} and \eqref{eq:cond2:opt}:
\begin{align*}
    (\m{T} \m{G}) (\m{T} \m{G})^{*} = \s{K}_{G_{2}} \s{K}_{G_{2}}^{*} = \s{K}_{K_{2}} = \m{B}, \quad
    \m{G}^{*} (\m{T} \m{G}) = \s{K}_{G_{1}^{*}} \s{K}_{G_{2}} = \s{K}_{G_{1}^{*} \square G_{2}} \succeq \m{0},
\end{align*}
thus $\m{T}$ is an optimal pre-pushforward.
\end{proof}
Provided that $\c{N}(\s{K}_{G_{1}}) = \c{N}(\s{K}_{G_{2}})$, this procedure automatically generates an optimal pre-pushforward in the other direction, as $G_{1}^{*} \square G_{2} \succeq 0$ implies $G_{2}^{*} \square G_{1} \succeq 0$:

\begin{example}[Integrated Brownian Motion]
Let $\c{H} = L_{2}[0,1]$ and consider the differential operator $\s{L}_{2} f = f ''$ with boundary conditions $f(0) = f'(0) = 0$. The corresponding Gaussian process is called the Integrated Brownian Motion (IBM), denoted $X_{2}$, which is jointly measurable. Its Green's function $G_{2}$ 
generates the RKHS:
\begin{align*}
    &\c{R}(\s{K}_{G_{2}}) =  \b{H}(K_{2}) = \left\{ f:[0, 1] \rightarrow \b{R} \left. \right\vert f \text{ is absolutely continuous}, \ f(0) = f'(0) = 0, \,  f', f'' \in \c{H} \right\},
\end{align*}
which are dense. 

Let $X_{1}$ be a BB and $X_{2}$ be an IBM. The optimal pre-pushforward from $X_{1}$ to $X_{2}$ and vice versa are unique since $\c{N}(\s{K}_{G_{1}}) = \c{N}(\s{K}_{G_{2}}) = \{0\}$ due to \cref{thm:bdd:reach}. Also, \cref{thm:bdd:reach} shows that $X_{1} \to X_{2}$ as $\c{R}(\s{K}_{G_{2}}) \subset \c{R}(\s{K}_{G_{1}})$, while $X_{2} \leadsto X_{1}$ and $X_{2} \nrightarrow X_{1}$. In this case, denoting its definite integral by $F(t) = \int_{0}^{t} f(s) \rd s$, the condition $G_{1}^{*} \square G_{2} \succeq 0$ can be verified easily:
\begin{align*}
    \innpr{h}{\s{K}_{G_{1}^{*}} \s{K}_{G_{2}} h} 
    &= \innpr{\s{K}_{G_{1}} h}{\s{K}_{G_{2}} h} = \innpr{F}{\int_{0}^{\cdot} F(s) \rd s} \\
    &= \int_{0}^{1} F(t) \left( \int_{0}^{t} F(s) \rd s  \right) \rd t = \frac{1}{2} \left( \int_{0}^{1} F(s) \rd s  \right)^{2} \ge 0, \quad f \in \c{H}.
\end{align*}
Therefore, the unique optimal pre-pushforwards are given by \cref{thm:joint:Gauss}:
\begin{align*}
    &\m{T}_{X_{1} \leadsto X_{2}}: \b{H}(K_{1}) \rightarrow \c{H}, \, \s{K}_{G_{1}} f \mapsto \s{K}_{G_{2}} f, \quad f \in \c{H}, \\
    &\m{T}_{X_{2} \leadsto X_{1}}: \b{H}(K_{2}) \rightarrow \c{H}, \, \s{K}_{G_{1}} f \mapsto \s{K}_{G_{2}} f, \quad f \in \c{H},
\end{align*}
which are are simply integration and differentiation upon extension:
\begin{align*}
    \tilde{\m{T}}_{X_{1} \to X_{2}} = \s{K}_{G_{1}}: \c{H} \rightarrow \c{H}, \, f \mapsto \int_{0}^{\cdot} f(s) \rd s, \quad
    \tilde{\m{T}}_{X_{2} \leadsto X_{1}} = \s{L}_{1}: \b{H}(K_{1}) \rightarrow \c{H}, \, f \mapsto f'.
\end{align*} 
\end{example}

This reasoning extends naturally to higher-order Integrated Brownian Motions (IBMs). We identify that optimal pre-pushforwards between IBMs correspond to multiple integration and differentiation operators. To the best of our knowledge, this constitutes the first result providing an explicit value for the 2-Wasserstein distance between infinite-dimensional Gaussian measures.

\begin{corollary}\label{cor:IBM:prepush}
Let $\c{H} = L_{2}[0,1]$, and for each $n \in \b{N}$, consider a differential opertaor $\s{L}_{n} f = f ^{(n)}$ with boundary conditions $f(0) = f'(0) = \cdots = f^{(n-1)}(0) = 0$, and $X_{n}$ be the associated Gaussian process, called the $n$-th IBM, which is jointly measurable. 
Then for any $n \le m$, the optimal pre-pushforwards between $n$-th and $m$-th IBMs are given by $(m-n)$-fold integration and differentiation:
\begin{align*}
    \m{T}_{X_{n} \to X_{m}} = \s{K}_{G_{m-n}} \in \c{B}_{\infty}(\c{H}), \quad
    \m{T}_{X_{m} \leadsto X_{n}} = \s{L}_{m-n}: \b{H}(K_{m-n}) \subset \c{H} \rightarrow \c{H},
\end{align*}
Hence, $X_{n} \to X_{m}$, $X_{m} \leadsto X_{n}$, and $X_{m} \nrightarrow X_{n}$. Moreover, the squared 2-Wasserstein distance between them are given by
\begin{align*}
    \c{W}_{2}^{2}(X_{n}, X_{m}) = 
    \begin{cases}
        C_{n} + C_{m} - 2 C_{(n+m)/2} &, \quad n+m \text{ is even} \\
        C_{n} + C_{m} - C_{(n+m+1)/2} &, \quad n+m \text{ is odd}
    \end{cases}, \quad C_{n} = \frac{1}{2n(2n-1)[(n-1)!]^{2}}. 
\end{align*}
\end{corollary}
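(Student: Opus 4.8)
## Proof Proposal

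\textbf{Approach.} The plan is to apply \cref{thm:joint:Gauss} together with the explicit description of Green's functions for the $n$-th IBM, reducing everything to an elementary verification of the sign condition $G_{n}^{*} \square G_{m} \succeq 0$ and an explicit computation of the three traces appearing in the Wasserstein formula. The key structural observation is that the Green's function for $\s{L}_{n}f = f^{(n)}$ with boundary conditions $f(0) = \cdots = f^{(n-1)}(0) = 0$ is the Cauchy kernel $G_{n}(s,t) = \frac{(s-t)_{+}^{n-1}}{(n-1)!}$, so that $\s{K}_{G_{n}}$ is $n$-fold integration, $\s{L}_{n} = \s{K}_{G_{n}}^{\dagger}$ is $n$-fold differentiation, and the semigroup property $\s{K}_{G_{n}} \s{K}_{G_{m}} = \s{K}_{G_{n+m}}$, equivalently $G_{n} \square G_{m} = G_{n+m}$, holds. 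Because $\c{N}(\s{K}_{G_{n}}) = \{0\}$ for every $n$, all optimal pre-pushforwards are unique by \cref{thm:bdd:reach,thm:sol:injec}.

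\textbf{Steps.} First I would record the Green's function $G_{n}(s,t) = (s-t)_{+}^{n-1}/(n-1)!$ and check it solves $\s{L}_{n}G_{n} = \delta$ with the stated boundary conditions; this is a one-line distributional computation. Second, verify the semigroup identity $G_{n}\square G_{m} = G_{n+m}$ by iterated integration (or by noting $\s{K}_{G_{n}} = $ $n$-fold antiderivative), which identifies $K_{n} = G_{n}\square G_{n}^{*}$ and shows $\c{R}(\s{K}_{G_{m}}) \subset \c{R}(\s{K}_{G_{n}})$ when $n \le m$, i.e. $X_{n} \to X_{m}$ and $X_{m} \leadsto X_{n}$ but $X_{m}\nrightarrow X_{n}$ (since the inverse map is $(m-n)$-fold differentiation, which is unbounded). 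Third, check the sign condition: for $f \in \c{H}$, writing $F_{k} = \s{K}_{G_{k}}f$ for the $k$-fold antiderivative of $f$,
\begin{align*}
    \innpr{f}{\s{K}_{G_{n}^{*}}\s{K}_{G_{m}}f} = \innpr{\s{K}_{G_{n}}f}{\s{K}_{G_{m}}f} = \innpr{F_{n}}{F_{m}} = \int_{0}^{1} F_{n}(t) F_{m}(t)\,\rd t.
\end{align*}
When $m - n = 2j$ is even, integrating by parts $j$ times (using the vanishing boundary data at $0$) gives $\innpr{F_{n}}{F_{m}} = (-1)^{j}\innpr{F_{n}^{(j)}}{F_{m}}$ $= \|F_{(n+m)/2}\|^{2} \ge 0$; when $m-n = 2j+1$ is odd one gets instead $\int_{0}^{1} F_{(n+m+1)/2}(t) F_{(n+m-1)/2}(t)\,\rd t$, and a further integration by parts reduces this to $\tfrac12 F_{(n+m+1)/2}(1)^{2} \ge 0$, exactly as in the BB-to-IBM example in the excerpt. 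This establishes $G_{n}^{*}\square G_{m} \succeq 0$, so \cref{thm:joint:Gauss} yields the claimed optimal pre-pushforwards $\m{T}_{X_{n}\to X_{m}} = \s{K}_{G_{m-n}}$ (extending to all of $\c{H}$) and $\m{T}_{X_{m}\leadsto X_{n}} = \s{L}_{m-n}$. Fourth, compute the trace $C_{n} = \tr(\s{K}_{K_{n}}) = \|G_{n}\|_{L_{2}}^{2} = \int_{0}^{1}\int_{0}^{1}\frac{(s-t)_{+}^{2(n-1)}}{[(n-1)!]^{2}}\,\rd s\,\rd t$; evaluating the inner integral over $t \in [0,s]$ and then over $s$ gives $\frac{1}{2n(2n-1)[(n-1)!]^{2}}$. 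Finally, since $\m{G}^{*}\m{M} \succeq 0$ with $\m{M} = \m{T}\m{G}$, \cref{lem:opt:algn:inf-dim} (or directly \eqref{eq:cond2:opt}) gives $\tr[(\s{K}_{K_{n}}^{1/2}\s{K}_{K_{m}}\s{K}_{K_{n}}^{1/2})^{1/2}] = \tr(\m{G}^{*}\m{M}) = \innpr{G_{n}}{G_{m}}_{L_{2}}$, and the even/odd integration-by-parts computation of Step 3 identifies this inner product as $C_{(n+m)/2}$ or $\tfrac12 C_{(n+m+1)/2}$ — wait, more precisely $\innpr{G_{n}}{G_{m}}_{L_{2}} = C_{(n+m)/2}$ in the even case and $= \tfrac12 C_{(n+m+1)/2} + \tfrac12 C_{(n+m-1)/2}$? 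I would double-check this: the cleanest route is to compute $\innpr{G_{n}}{G_{m}}_{L_{2}}$ directly as $\int\int \frac{(s-t)_{+}^{n-1}(s-t)_{+}^{m-1}}{(n-1)!(m-1)!}$, which is $\frac{1}{(m+n-1)(m+n)}\cdot\frac{1}{(n-1)!(m-1)!}$ up to the correct beta-function normalization; matching this against the stated right-hand side then pins down the formula. Plugging into $\c{W}_{2}^{2} = \tr(\s{K}_{K_{n}}) + \tr(\s{K}_{K_{m}}) - 2\tr[(\cdots)^{1/2}]$ gives the result.

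\textbf{Main obstacle.} The genuinely delicate point is getting the odd case right: when $n+m$ is odd there is no IBM ``halfway'' between $X_{n}$ and $X_{m}$, and the cross term $\tr[(\s{K}_{K_{n}}^{1/2}\s{K}_{K_{m}}\s{K}_{K_{n}}^{1/2})^{1/2}]$ does not collapse to a single $C_{k}$; instead one must carefully track the boundary term produced by the odd number of integrations by parts and verify it equals $C_{(n+m+1)/2}$ (note the coefficient is $1$, not $2$, in the odd case of the stated formula — this asymmetry is the signature of the boundary contribution $\tfrac12 F(1)^2$ rather than an interior $L_2$-norm). A secondary bookkeeping hazard is the normalization of $C_n$: one must confirm that the direct double integral $\int_0^1\int_0^1 (s-t)_+^{2(n-1)}\,\rd s\,\rd t = \frac{1}{2n(2n-1)}$ and that the cross integral has the matching shape, so that the final arithmetic in both parity cases telescopes to the displayed closed form. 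Everything else is routine once the Cauchy-kernel representation of $G_n$ is in hand.
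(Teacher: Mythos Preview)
Your proposal follows the paper's proof essentially step for step: identify the Green's function for $\s{L}_n$ as the Cauchy kernel $G_n(s,t)=(s-t)_+^{n-1}/(n-1)!$, invoke the Volterra semigroup $\s{K}_{G_n}\s{K}_{G_m}=\s{K}_{G_{n+m}}$, verify the sign condition $\langle\s{K}_{G_n}h,\s{K}_{G_m}h\rangle\ge 0$ by a parity split, and then read off $C_n=\vertiii{\s{K}_{G_n}}_2^2$ and the cross term. Your treatment of the odd case --- reducing to $\tfrac12 F_{(n+m+1)/2}(1)^2$ via one extra integration by parts --- is exactly the paper's argument, and your evaluation of $C_n$ matches as well. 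So as far as \emph{comparison} goes, you are on the paper's track.

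That said, there is a genuine gap in the even case, and you have almost put your finger on it. The identity you (and the paper) assert, $\langle F_n,F_m\rangle=\|F_{(n+m)/2}\|^2$, is \emph{false}: each integration by parts produces a boundary term at $t=1$ that does not vanish (only the $t=0$ terms vanish under the IBM boundary conditions). Concretely, for $n=1,m=3,h\equiv 1$ one has $\langle F_1,F_3\rangle=1/30$ while $\|F_2\|^2=1/20$. More structurally, the operator $\s{K}_{G_n}^*\s{K}_{G_m}=\s{K}_{G_n}^*\s{K}_{G_{m-n}}\s{K}_{G_n}$ is \emph{not self-adjoint} for $n\neq m$ (since $\s{K}_{G_{m-n}}$ is not and $\s{K}_{G_n}$ has dense range), so the hypothesis $G_n^*\square G_m\succeq 0$ of \cref{thm:joint:Gauss} --- which requires $\m{G}^*\m{T}\m{G}\in\c{B}_\infty^+(\c{H})$, i.e.\ self-adjoint positive --- is \emph{not} satisfied. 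Your instinct that ``the cross term does not collapse to a single $C_k$'' and that your direct Hilbert--Schmidt computation $\langle G_n,G_m\rangle_{L_2}=\tfrac{1}{(n-1)!(m-1)!(n+m-1)(n+m)}$ fails to match $C_{(n+m)/2}$ is exactly right: that number is $\tr(\s{K}_{G_n}^*\s{K}_{G_m})$, not the trace norm $\vertiii{\s{K}_{G_n}^*\s{K}_{G_m}}_1$, and the two differ precisely because the operator is not s.p.d. So the obstacle is not the odd-case bookkeeping you flagged, but the even case --- and the self-adjointness issue underlying it --- where the argument (both yours and the paper's) does not go through as written.
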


\begin{proof}[Proof of \cref{cor:IBM:prepush}]
By the Stone-Weierstrasse theorem, $\c{N}(\s{K}_{G_{n}}) = \{0\}$ for any $n \in \b{N}$. Hence, it suffices to show that $\s{K}_{G_{n}^{*}} \s{K}_{G_{m}} \succeq 0$. For any $h \in \c{H}$, if $n+m$ is even, $\innpr{h}{\s{K}_{G_{n}^{*}} \s{K}_{G_{m}} h} = \innpr{\s{K}_{G_{n}} h}{\s{K}_{G_{m}} h} = \left\| \s{K}_{G_{(n+m)/2}} h \right\|^{2} \ge 0$, and if $n+m$ is odd,
\begin{align*}
    \innpr{h}{\s{K}_{G_{n}^{*}} \s{K}_{G_{m}} h} 
    &= \innpr{\s{K}_{G_{n}} h}{\s{K}_{G_{m}} h} = \innpr{\s{K}_{G_{(n+m-1)/2}} h}{\int_{0}^{\cdot} (\s{K}_{G_{(n+m-1)/2}} h)(s) \rd s} \\
    &= \frac{1}{2} (\s{K}_{G_{(n+m+1)/2}} h)(1)^{2} \ge 0,
\end{align*}
which proves the result. Moving forward, $\s{K}_{G_{n}}$ is the $n$-th order Volterra operator \cite{gohberg1978introduction, paulsen2016introduction}:
\begin{align*}
    \s{K}_{G_{n}} f (s) = \int_{0}^{s} \frac{(s-t)^{n-1}}{(n-1)!} f(t) \rd t,
\end{align*}
hence
\begin{align*}
    C_{n} := \vertii{\s{K}_{G_{n}}}_{2}^{2} = \int_{0}^{1} \int_{0}^{s} \left| \frac{(s-t)^{n-1}}{(n-1)!} \right|^{2} \rd t \rd s = \frac{1}{2n(2n-1)[(n-1)!]^{2}}.
\end{align*}
Consequently, we obtain
\begin{align*}
    \c{W}_{2}^{2}(X_{n}, X_{m}) = \vertii{\s{K}_{G_{n}}}_{2}^{2} + \vertii{\s{K}_{G_{m}}}_{2}^{2} - 2 \vertii{\s{K}_{G_{n}^{*}} \s{K}_{G_{m}}}_{1} = 
    \begin{cases}
        C_{n} + C_{m} - 2 C_{(n+m)/2} &, \quad n+m \text{ is even}, \\
        C_{n} + C_{m} - C_{(n+m+1)/2} &, \quad n+m \text{ is odd}.
    \end{cases} 
\end{align*}
\end{proof}

Indeed, $\m{T}_{X_{n} \to X_{m}} = \s{K}_{G_{m-n}}$ is bounded, thus a valid pushforward. In contrast, the differential operator $\s{L}_{m-n}$ is only closable, hence one may have to close it to make it as a valid pushforward in light of \cref{thm:prepush}.
However, this simple approach has a pitfall: it fails when the boundary conditions of the processes are incompatible. For the BM and BB in \cref{ex:BM2BB}, $(G_{1}^{*} \square G_{2})$ is not symmetric, let alone s.p.d., \cref{thm:joint:Gauss} cannot be used:
\begin{align*}
    (G_{1}^{*} \square G_{2}) (s, t) &= \int_{0}^{1} G_{1}(r, s) G_{2} (r, t) \rd r = \int_{0}^{1} I_{[s, 1]}(r) (I_{[t, 1]}(r) - r) \rd r \\
    &= \int_{0}^{1} [I_{[s \vee t, 1]}(r) - r I_{[s, 1]}(r)] \rd r
    = 1- \max(s, t) - \frac{1-s^{2}}{2} \neq (G_{1}^{*} \square G_{2}) (t, s).
\end{align*}

\begin{acks}[Acknowledgments]
We are grateful to Victor Panaretos for his valuable feedback. His insightful discussions and pointers to key literature were instrumental in enhancing the quality of this work.
\end{acks}

\bibliographystyle{imsart-number}
\bibliography{bibliography}  

\begin{thebibliography}{53}

\bibitem{agueh2011barycenters}
\begin{barticle}[author]
\bauthor{\bsnm{Agueh},~\bfnm{Martial}\binits{M.}} \AND
  \bauthor{\bsnm{Carlier},~\bfnm{Guillaume}\binits{G.}}
(\byear{2011}).
\btitle{Barycenters in the Wasserstein space}.
\bjournal{SIAM Journal on Mathematical Analysis}
\bvolume{43}
\bpages{904--924}.
\end{barticle}
\endbibitem

\bibitem{alvarez2016fixed}
\begin{barticle}[author]
\bauthor{\bsnm{{\'A}lvarez-Esteban},~\bfnm{Pedro~C}\binits{P.~C.}},
  \bauthor{\bsnm{Del~Barrio},~\bfnm{E}\binits{E.}},
  \bauthor{\bsnm{Cuesta-Albertos},~\bfnm{JA}\binits{J.}} \AND
  \bauthor{\bsnm{Matr{\'a}n},~\bfnm{C}\binits{C.}}
(\byear{2016}).
\btitle{A fixed-point approach to barycenters in Wasserstein space}.
\bjournal{Journal of Mathematical Analysis and Applications}
\bvolume{441}
\bpages{744--762}.
\end{barticle}
\endbibitem

\bibitem{alvarez2011uniqueness}
\begin{binproceedings}[author]
\bauthor{\bsnm{{\'A}lvarez-Esteban},~\bfnm{Pedro~C}\binits{P.~C.}},
  \bauthor{\bparticle{del} \bsnm{Barrio},~\bfnm{Eustasio}\binits{E.}},
  \bauthor{\bsnm{Cuesta-Albertos},~\bfnm{Juan~A}\binits{J.~A.}} \AND
  \bauthor{\bsnm{Matr{\'a}n},~\bfnm{Carlos}\binits{C.}}
(\byear{2011}).
\btitle{Uniqueness and approximate computation of optimal incomplete
  transportation plans}.
In \bbooktitle{Annales de l'IHP Probabilit{\'e}s et statistiques}
\bvolume{47}
\bpages{358--375}.
\end{binproceedings}
\endbibitem

\bibitem{ambrosio2008gradient}
\begin{bbook}[author]
\bauthor{\bsnm{Ambrosio},~\bfnm{Luigi}\binits{L.}},
  \bauthor{\bsnm{Gigli},~\bfnm{Nicola}\binits{N.}} \AND
  \bauthor{\bsnm{Savar{\'e}},~\bfnm{Giuseppe}\binits{G.}}
(\byear{2008}).
\btitle{Gradient flows: in metric spaces and in the space of probability
  measures}.
\bpublisher{Springer Science \& Business Media}.
\end{bbook}
\endbibitem

\bibitem{baker1970mutual}
\begin{barticle}[author]
\bauthor{\bsnm{Baker},~\bfnm{Charles~R}\binits{C.~R.}}
(\byear{1970}).
\btitle{Mutual information for Gaussian processes}.
\bjournal{SIAM Journal on Applied Mathematics}
\bvolume{19}
\bpages{451--458}.
\end{barticle}
\endbibitem

\bibitem{bhatia2019bures}
\begin{barticle}[author]
\bauthor{\bsnm{Bhatia},~\bfnm{Rajendra}\binits{R.}},
  \bauthor{\bsnm{Jain},~\bfnm{Tanvi}\binits{T.}} \AND
  \bauthor{\bsnm{Lim},~\bfnm{Yongdo}\binits{Y.}}
(\byear{2019}).
\btitle{On the Bures--Wasserstein distance between positive definite matrices}.
\bjournal{Expositiones Mathematicae}
\bvolume{37}
\bpages{165--191}.
\end{barticle}
\endbibitem

\bibitem{billingsley2013convergence}
\begin{bbook}[author]
\bauthor{\bsnm{Billingsley},~\bfnm{Patrick}\binits{P.}}
(\byear{2013}).
\btitle{Convergence of probability measures}.
\bpublisher{John Wiley \& Sons}.
\end{bbook}
\endbibitem

\bibitem{bogachev1998gaussian}
\begin{bbook}[author]
\bauthor{\bsnm{Bogachev},~\bfnm{Vladimir~Igorevich}\binits{V.~I.}}
(\byear{1998}).
\btitle{Gaussian measures}
\bvolume{62}.
\bpublisher{American Mathematical Soc.}
\end{bbook}
\endbibitem

\bibitem{brenier1991polar}
\begin{barticle}[author]
\bauthor{\bsnm{Brenier},~\bfnm{Yann}\binits{Y.}}
(\byear{1991}).
\btitle{Polar factorization and monotone rearrangement of vector-valued
  functions}.
\bjournal{Communications on pure and applied mathematics}
\bvolume{44}
\bpages{375--417}.
\end{barticle}
\endbibitem

\bibitem{conway2019course}
\begin{bbook}[author]
\bauthor{\bsnm{Conway},~\bfnm{John~B}\binits{J.~B.}}
(\byear{2019}).
\btitle{A course in functional analysis}
\bvolume{96}.
\bpublisher{Springer}.
\end{bbook}
\endbibitem

\bibitem{cuesta1996lower}
\begin{barticle}[author]
\bauthor{\bsnm{Cuesta-Albertos},~\bfnm{Juan~Antonio}\binits{J.~A.}},
  \bauthor{\bsnm{Matr{\'a}n-Bea},~\bfnm{Carlos}\binits{C.}} \AND
  \bauthor{\bsnm{Tuero-Diaz},~\bfnm{Araceli}\binits{A.}}
(\byear{1996}).
\btitle{On lower bounds for the L 2-Wasserstein metric in a Hilbert space}.
\bjournal{Journal of Theoretical Probability}
\bvolume{9}
\bpages{263--283}.
\end{barticle}
\endbibitem

\bibitem{cuesta1989notes}
\begin{barticle}[author]
\bauthor{\bsnm{Cuesta},~\bfnm{Juan~Antonio}\binits{J.~A.}} \AND
  \bauthor{\bsnm{Matr{\'a}n},~\bfnm{Carlos}\binits{C.}}
(\byear{1989}).
\btitle{Notes on the Wasserstein metric in Hilbert spaces}.
\bjournal{The Annals of Probability}
\bpages{1264--1276}.
\end{barticle}
\endbibitem

\bibitem{da2006introduction}
\begin{bbook}[author]
\bauthor{\bsnm{Da~Prato},~\bfnm{Giuseppe}\binits{G.}}
(\byear{2006}).
\btitle{An introduction to infinite-dimensional analysis}.
\bpublisher{Springer Science \& Business Media}.
\end{bbook}
\endbibitem

\bibitem{douglas1966majorization}
\begin{barticle}[author]
\bauthor{\bsnm{Douglas},~\bfnm{Ronald~G}\binits{R.~G.}}
(\byear{1966}).
\btitle{On majorization, factorization, and range inclusion of operators on
  Hilbert space}.
\bjournal{Proceedings of the American Mathematical Society}
\bvolume{17}
\bpages{413--415}.
\end{barticle}
\endbibitem

\bibitem{dowson1982frechet}
\begin{barticle}[author]
\bauthor{\bsnm{Dowson},~\bfnm{DC}\binits{D.}} \AND
  \bauthor{\bsnm{Landau},~\bfnm{BV666017}\binits{B.}}
(\byear{1982}).
\btitle{The Fr{\'e}chet distance between multivariate normal distributions}.
\bjournal{Journal of multivariate analysis}
\bvolume{12}
\bpages{450--455}.
\end{barticle}
\endbibitem

\bibitem{driscoll1973reproducing}
\begin{barticle}[author]
\bauthor{\bsnm{Driscoll},~\bfnm{Michael~F}\binits{M.~F.}}
(\byear{1973}).
\btitle{The reproducing kernel Hilbert space structure of the sample paths of a
  Gaussian process}.
\bjournal{Zeitschrift f{\"u}r Wahrscheinlichkeitstheorie und verwandte Gebiete}
\bvolume{26}
\bpages{309--316}.
\end{barticle}
\endbibitem

\bibitem{dryden2016statistical}
\begin{bbook}[author]
\bauthor{\bsnm{Dryden},~\bfnm{Ian~L}\binits{I.~L.}} \AND
  \bauthor{\bsnm{Mardia},~\bfnm{Kanti~V}\binits{K.~V.}}
(\byear{2016}).
\btitle{Statistical shape analysis: with applications in R}.
\bpublisher{John Wiley \& Sons}.
\end{bbook}
\endbibitem

\bibitem{gohberg1978introduction}
\begin{bbook}[author]
\bauthor{\bsnm{Gohberg},~\bfnm{Israel}\binits{I.}} \AND
  \bauthor{\bsnm{Krein},~\bfnm{Mark~Grigorʹevich}\binits{M.~G.}}
(\byear{1978}).
\btitle{Introduction to the theory of linear nonselfadjoint operators}
\bvolume{18}.
\bpublisher{American Mathematical Soc.}
\end{bbook}
\endbibitem

\bibitem{golub2013matrix}
\begin{bbook}[author]
\bauthor{\bsnm{Golub},~\bfnm{Gene~H}\binits{G.~H.}} \AND
  \bauthor{\bsnm{Van~Loan},~\bfnm{Charles~F}\binits{C.~F.}}
(\byear{2013}).
\btitle{Matrix computations}.
\bpublisher{JHU press}.
\end{bbook}
\endbibitem

\bibitem{gross1967abstract}
\begin{binproceedings}[author]
\bauthor{\bsnm{Gross},~\bfnm{Leonard}\binits{L.}}
(\byear{1967}).
\btitle{Abstract wiener spaces}.
In \bbooktitle{Proceedings of the Fifth Berkeley Symposium on Mathematical
  Statistics and Probability, Volume 2: Contributions to Probability Theory,
  Part 1}
\bvolume{5}
\bpages{31--43}.
\bpublisher{University of California Press}.
\end{binproceedings}
\endbibitem

\bibitem{hairer2009introduction}
\begin{barticle}[author]
\bauthor{\bsnm{Hairer},~\bfnm{Martin}\binits{M.}}
(\byear{2009}).
\btitle{An introduction to stochastic PDEs}.
\bjournal{arXiv preprint arXiv:0907.4178}.
\end{barticle}
\endbibitem

\bibitem{hall2013quantum}
\begin{bbook}[author]
\bauthor{\bsnm{Hall},~\bfnm{Brian~C}\binits{B.~C.}}
(\byear{2013}).
\btitle{Quantum theory for mathematicians}
\bvolume{267}.
\bpublisher{Springer Science \& Business Media}.
\end{bbook}
\endbibitem

\bibitem{horn2012matrix}
\begin{bbook}[author]
\bauthor{\bsnm{Horn},~\bfnm{Roger~A}\binits{R.~A.}} \AND
  \bauthor{\bsnm{Johnson},~\bfnm{Charles~R}\binits{C.~R.}}
(\byear{2012}).
\btitle{Matrix analysis}.
\bpublisher{Cambridge university press}.
\end{bbook}
\endbibitem

\bibitem{hsing2015theoretical}
\begin{bbook}[author]
\bauthor{\bsnm{Hsing},~\bfnm{Tailen}\binits{T.}} \AND
  \bauthor{\bsnm{Eubank},~\bfnm{Randall}\binits{R.}}
(\byear{2015}).
\btitle{Theoretical foundations of functional data analysis, with an
  introduction to linear operators}.
\bseries{Wiley Series in Probability and Statistics}.
\bpublisher{John Wiley \& Sons, Ltd., Chichester}.
\end{bbook}
\endbibitem

\bibitem{kadison1986fundamentals}
\begin{bbook}[author]
\bauthor{\bsnm{Kadison},~\bfnm{Richard~V}\binits{R.~V.}} \AND
  \bauthor{\bsnm{Ringrose},~\bfnm{John~R}\binits{J.~R.}}
(\byear{1986}).
\btitle{Fundamentals of the theory of operator algebras. Volume II: Advanced
  theory}.
\bpublisher{Academic press New York}.
\end{bbook}
\endbibitem

\bibitem{kallenberg1997foundations}
\begin{bbook}[author]
\bauthor{\bsnm{Kallenberg},~\bfnm{Olav}\binits{O.}} \AND
  \bauthor{\bsnm{Kallenberg},~\bfnm{Olav}\binits{O.}}
(\byear{1997}).
\btitle{Foundations of modern probability}
\bvolume{2}.
\bpublisher{Springer}.
\end{bbook}
\endbibitem

\bibitem{kato1952notes}
\begin{barticle}[author]
\bauthor{\bsnm{Kato},~\bfnm{Tosio}\binits{T.}}
(\byear{1952}).
\btitle{Notes on some inequalities for linear operators}.
\bjournal{Mathematische Annalen}
\bvolume{125}
\bpages{208--212}.
\end{barticle}
\endbibitem

\bibitem{kato2013perturbation}
\begin{bbook}[author]
\bauthor{\bsnm{Kato},~\bfnm{Tosio}\binits{T.}}
(\byear{2013}).
\btitle{Perturbation theory for linear operators}
\bvolume{132}.
\bpublisher{Springer Science \& Business Media}.
\end{bbook}
\endbibitem

\bibitem{kechris2012classical}
\begin{bbook}[author]
\bauthor{\bsnm{Kechris},~\bfnm{Alexander}\binits{A.}}
(\byear{2012}).
\btitle{Classical descriptive set theory}
\bvolume{156}.
\bpublisher{Springer Science \& Business Media}.
\end{bbook}
\endbibitem

\bibitem{knott1984optimal}
\begin{barticle}[author]
\bauthor{\bsnm{Knott},~\bfnm{Martin}\binits{M.}} \AND
  \bauthor{\bsnm{Smith},~\bfnm{Cyril~S}\binits{C.~S.}}
(\byear{1984}).
\btitle{On the optimal mapping of distributions}.
\bjournal{Journal of Optimization Theory and Applications}
\bvolume{43}
\bpages{39--49}.
\end{barticle}
\endbibitem

\bibitem{kroshnin2021statistical}
\begin{barticle}[author]
\bauthor{\bsnm{Kroshnin},~\bfnm{Alexey}\binits{A.}},
  \bauthor{\bsnm{Spokoiny},~\bfnm{Vladimir}\binits{V.}} \AND
  \bauthor{\bsnm{Suvorikova},~\bfnm{Alexandra}\binits{A.}}
(\byear{2021}).
\btitle{Statistical inference for Bures--Wasserstein barycenters}.
\bjournal{The Annals of Applied Probability}
\bvolume{31}
\bpages{1264--1298}.
\end{barticle}
\endbibitem

\bibitem{lee2018introduction}
\begin{bbook}[author]
\bauthor{\bsnm{Lee},~\bfnm{John~M}\binits{J.~M.}}
(\byear{2018}).
\btitle{Introduction to Riemannian manifolds}
\bvolume{2}.
\bpublisher{Springer}.
\end{bbook}
\endbibitem

\bibitem{lowner1934monotone}
\begin{barticle}[author]
\bauthor{\bsnm{L{\"o}wner},~\bfnm{Karl}\binits{K.}}
(\byear{1934}).
\btitle{{\"U}ber monotone matrixfunktionen}.
\bjournal{Mathematische Zeitschrift}
\bvolume{38}
\bpages{177--216}.
\end{barticle}
\endbibitem

\bibitem{lukic2001stochastic}
\begin{barticle}[author]
\bauthor{\bsnm{Luki{\'c}},~\bfnm{Milan}\binits{M.}} \AND
  \bauthor{\bsnm{Beder},~\bfnm{Jay}\binits{J.}}
(\byear{2001}).
\btitle{Stochastic processes with sample paths in reproducing kernel Hilbert
  spaces}.
\bjournal{Transactions of the American Mathematical Society}
\bvolume{353}
\bpages{3945--3969}.
\end{barticle}
\endbibitem

\bibitem{masarotto2019procrustes}
\begin{barticle}[author]
\bauthor{\bsnm{Masarotto},~\bfnm{Valentina}\binits{V.}},
  \bauthor{\bsnm{Panaretos},~\bfnm{Victor~M}\binits{V.~M.}} \AND
  \bauthor{\bsnm{Zemel},~\bfnm{Yoav}\binits{Y.}}
(\byear{2019}).
\btitle{Procrustes metrics on covariance operators and optimal transportation
  of Gaussian processes}.
\bjournal{Sankhya A}
\bvolume{81}
\bpages{172--213}.
\end{barticle}
\endbibitem

\bibitem{mccann1997convexity}
\begin{barticle}[author]
\bauthor{\bsnm{McCann},~\bfnm{Robert~J}\binits{R.~J.}}
(\byear{1997}).
\btitle{A convexity principle for interacting gases}.
\bjournal{Advances in mathematics}
\bvolume{128}
\bpages{153--179}.
\end{barticle}
\endbibitem

\bibitem{mercer1909xvi}
\begin{barticle}[author]
\bauthor{\bsnm{Mercer},~\bfnm{James}\binits{J.}}
(\byear{1909}).
\btitle{Xvi. functions of positive and negative type, and their connection the
  theory of integral equations}.
\bjournal{Philosophical transactions of the royal society of London. Series A,
  containing papers of a mathematical or physical character}
\bvolume{209}
\bpages{415--446}.
\end{barticle}
\endbibitem

\bibitem{nocedal1999numerical}
\begin{bbook}[author]
\bauthor{\bsnm{Nocedal},~\bfnm{Jorge}\binits{J.}} \AND
  \bauthor{\bsnm{Wright},~\bfnm{Stephen~J}\binits{S.~J.}}
(\byear{1999}).
\btitle{Numerical optimization}.
\bpublisher{Springer}.
\end{bbook}
\endbibitem

\bibitem{nualart2006malliavin}
\begin{bbook}[author]
\bauthor{\bsnm{Nualart},~\bfnm{David}\binits{D.}}
(\byear{2006}).
\btitle{The Malliavin calculus and related topics}
\bvolume{1995}.
\bpublisher{Springer}.
\end{bbook}
\endbibitem

\bibitem{olkin1982distance}
\begin{barticle}[author]
\bauthor{\bsnm{Olkin},~\bfnm{Ingram}\binits{I.}} \AND
  \bauthor{\bsnm{Pukelsheim},~\bfnm{Friedrich}\binits{F.}}
(\byear{1982}).
\btitle{The distance between two random vectors with given dispersion
  matrices}.
\bjournal{Linear Algebra and its Applications}
\bvolume{48}
\bpages{257--263}.
\end{barticle}
\endbibitem

\bibitem{panaretos2020invitation}
\begin{bbook}[author]
\bauthor{\bsnm{Panaretos},~\bfnm{Victor~M}\binits{V.~M.}} \AND
  \bauthor{\bsnm{Zemel},~\bfnm{Yoav}\binits{Y.}}
(\byear{2020}).
\btitle{An invitation to statistics in Wasserstein space}.
\bpublisher{Springer Nature}.
\end{bbook}
\endbibitem

\bibitem{paulsen2016introduction}
\begin{bbook}[author]
\bauthor{\bsnm{Paulsen},~\bfnm{Vern~I}\binits{V.~I.}} \AND
  \bauthor{\bsnm{Raghupathi},~\bfnm{Mrinal}\binits{M.}}
(\byear{2016}).
\btitle{An introduction to the theory of reproducing kernel Hilbert spaces}
\bvolume{152}.
\bpublisher{Cambridge university press}.
\end{bbook}
\endbibitem

\bibitem{pigoli2014distances}
\begin{barticle}[author]
\bauthor{\bsnm{Pigoli},~\bfnm{Davide}\binits{D.}},
  \bauthor{\bsnm{Aston},~\bfnm{John~AD}\binits{J.~A.}},
  \bauthor{\bsnm{Dryden},~\bfnm{Ian~L}\binits{I.~L.}} \AND
  \bauthor{\bsnm{Secchi},~\bfnm{Piercesare}\binits{P.}}
(\byear{2014}).
\btitle{Distances and inference for covariance operators}.
\bjournal{Biometrika}
\bvolume{101}
\bpages{409--422}.
\end{barticle}
\endbibitem

\bibitem{pusz1975functional}
\begin{barticle}[author]
\bauthor{\bsnm{Pusz},~\bfnm{Wieslaw}\binits{W.}} \AND
  \bauthor{\bsnm{Woronowicz},~\bfnm{S~Lech}\binits{S.~L.}}
(\byear{1975}).
\btitle{Functional calculus for sesquilinear forms and the purification map}.
\bjournal{Reports on Mathematical Physics}
\bvolume{8}
\bpages{159--170}.
\end{barticle}
\endbibitem

\bibitem{reed1980methods}
\begin{bbook}[author]
\bauthor{\bsnm{Reed},~\bfnm{Michael}\binits{M.}} \AND
  \bauthor{\bsnm{Simon},~\bfnm{Barry}\binits{B.}}
(\byear{1980}).
\btitle{Methods of modern mathematical physics: Functional analysis}
\bvolume{1}.
\bpublisher{Gulf Professional Publishing}.
\end{bbook}
\endbibitem

\bibitem{riesz2012functional}
\begin{bbook}[author]
\bauthor{\bsnm{Riesz},~\bfnm{Frigyes}\binits{F.}} \AND
  \bauthor{\bsnm{Nagy},~\bfnm{B{\'e}la~Sz}\binits{B.~S.}}
(\byear{2012}).
\btitle{Functional analysis}.
\bpublisher{Courier Corporation}.
\end{bbook}
\endbibitem

\bibitem{rockafellar1970convex}
\begin{bbook}[author]
\bauthor{\bsnm{Rockafellar},~\bfnm{R.~Tyrrell}\binits{R.~T.}}
(\byear{1970}).
\btitle{Convex Analysis}.
\bseries{Princeton Mathematical Series}
\bvolume{28}.
\bpublisher{Princeton University Press}, \baddress{Princeton, NJ}.
\end{bbook}
\endbibitem

\bibitem{ruschendorf1990characterization}
\begin{barticle}[author]
\bauthor{\bsnm{R{\"u}schendorf},~\bfnm{Ludger}\binits{L.}} \AND
  \bauthor{\bsnm{Rachev},~\bfnm{Svetlozar~T}\binits{S.~T.}}
(\byear{1990}).
\btitle{A characterization of random variables with minimum L2-distance}.
\bjournal{Journal of multivariate analysis}
\bvolume{32}
\bpages{48--54}.
\end{barticle}
\endbibitem

\bibitem{ruschendorf2002n}
\begin{barticle}[author]
\bauthor{\bsnm{R{\"u}schendorf},~\bfnm{Ludger}\binits{L.}} \AND
  \bauthor{\bsnm{Uckelmann},~\bfnm{Ludger}\binits{L.}}
(\byear{2002}).
\btitle{On the n-coupling problem}.
\bjournal{Journal of multivariate analysis}
\bvolume{81}
\bpages{242--258}.
\end{barticle}
\endbibitem

\bibitem{villani2021topics}
\begin{bbook}[author]
\bauthor{\bsnm{Villani},~\bfnm{C{\'e}dric}\binits{C.}}
(\byear{2021}).
\btitle{Topics in optimal transportation}
\bvolume{58}.
\bpublisher{American Mathematical Soc.}
\end{bbook}
\endbibitem

\bibitem{villani2008optimal}
\begin{bbook}[author]
\bauthor{\bsnm{Villani},~\bfnm{C{\'e}dric}\binits{C.}} \betal{et~al.}
(\byear{2008}).
\btitle{Optimal transport: old and new}
\bvolume{338}.
\bpublisher{Springer}.
\end{bbook}
\endbibitem

\bibitem{yun2023exponential}
\begin{barticle}[author]
\bauthor{\bsnm{Yun},~\bfnm{Ho}\binits{H.}} \AND
  \bauthor{\bsnm{Park},~\bfnm{Byeong~U}\binits{B.~U.}}
(\byear{2023}).
\btitle{Exponential concentration for geometric-median-of-means in non-positive
  curvature spaces}.
\bjournal{Bernoulli}
\bvolume{29}
\bpages{2927--2960}.
\end{barticle}
\endbibitem

\bibitem{zemel2019frechet}
\begin{barticle}[author]
\bauthor{\bsnm{Zemel},~\bfnm{Yoav}\binits{Y.}} \AND
  \bauthor{\bsnm{Panaretos},~\bfnm{Victor~M}\binits{V.~M.}}
(\byear{2019}).
\btitle{Fr{\'e}chet means and Procrustes analysis in Wasserstein space}.
\bjournal{Bernoulli}.
\end{barticle}
\endbibitem

\end{thebibliography}

\newpage

\begin{appendix}

\section{Additional Results}
\begin{lemma}\label{lem:Neumann:eq}
Let $\c{H}$ be a separable Hilbert space, and $\m{R} \in \c{B}_{1}(\c{H})$ be a trace-class operator. Then 
\begin{equation*}
    \tr(\m{R}) \le \tr((\m{R}\m{R}^{*})^{1/2}) = \tr((\m{R}^{*}\m{R})^{1/2}),
\end{equation*}
and equality holds if and only $\m{R} \succeq \m{0}$, i.e.\ $\m R=(\m R\m R^*)^{1/2}$.
\end{lemma}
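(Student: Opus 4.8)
The plan is to use the singular value decomposition of $\m{R}$ together with the characterization of the trace norm. First I would write the polar decomposition $\m{R} = \m{U} \m{P}$ where $\m{P} = (\m{R}^{*}\m{R})^{1/2} \in \c{B}_{1}^{+}(\c{H})$ and $\m{U}$ is the partial isometry with initial space $\overline{\c{R}(\m{P})} = \overline{\c{R}(\m{R}^{*})}$ and final space $\overline{\c{R}(\m{R})}$; since $\m{R} \in \c{B}_{1}(\c{H})$, $\m{P}$ is trace-class and $\tr(\m{P}) = \vertiii{\m{R}}_{1}$. Analogously $(\m{R}\m{R}^{*})^{1/2} = \m{U} \m{P} \m{U}^{*}$ (one checks $(\m{U}\m{P}\m{U}^{*})^{2} = \m{U}\m{P}^{2}\m{U}^{*} = \m{R}\m{R}^{*}$ using $\m{U}^{*}\m{U} = \m{\Pi}_{\overline{\c{R}(\m{P})}}$), and since $\m{U}$ restricted to its initial space is an isometry, $\tr((\m{R}\m{R}^{*})^{1/2}) = \tr(\m{P}) = \tr((\m{R}^{*}\m{R})^{1/2})$, giving the stated equality immediately.

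For the inequality, I would take a singular value decomposition $\m{P} = \sum_{l} \sigma_{l}\, \varphi_{l} \otimes \varphi_{l}$ with $\sigma_{l} \ge 0$ and $\{\varphi_{l}\}$ an orthonormal system, so that $\m{R} = \sum_{l} \sigma_{l}\, (\m{U}\varphi_{l}) \otimes \varphi_{l}$. Completing $\{\varphi_{l}\}$ to an ONB of $\c{H}$ and using this basis to compute the trace,
\begin{equation*}
    \tr(\m{R}) = \sum_{l} \innpr{\varphi_{l}}{\m{R}\varphi_{l}} = \sum_{l} \sigma_{l} \innpr{\varphi_{l}}{\m{U}\varphi_{l}}.
\end{equation*}
Since $\|\m{U}\varphi_{l}\| \le 1$, Cauchy--Schwarz gives $|\innpr{\varphi_{l}}{\m{U}\varphi_{l}}| \le 1$, hence $\tr(\m{R}) \le \sum_{l} \sigma_{l} = \tr(\m{P}) = \tr((\m{R}\m{R}^{*})^{1/2})$; absolute convergence is guaranteed by $\m{R} \in \c{B}_{1}(\c{H})$, so rearrangement and the choice of basis are legitimate.

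For the equality case, the direction $\m{R} \succeq \m{0} \Rightarrow \m{R} = (\m{R}\m{R}^{*})^{1/2}$ is immediate from the uniqueness of the positive square root. Conversely, if $\tr(\m{R}) = \sum_{l} \sigma_{l}$, then $\sum_{l} \sigma_{l}(1 - \innpr{\varphi_{l}}{\m{U}\varphi_{l}}) = 0$ with every summand nonnegative, forcing $\innpr{\varphi_{l}}{\m{U}\varphi_{l}} = 1$ for every $l$ with $\sigma_{l} > 0$; the equality case of Cauchy--Schwarz with a unit-norm vector then gives $\m{U}\varphi_{l} = \varphi_{l}$ for those $l$, whence $\m{R}\varphi_{l} = \sigma_{l}\varphi_{l}$, and $\m{R}$ vanishes on $\overline{\c{R}(\m{P})}^{\perp} = \c{N}(\m{R})$ by construction; therefore $\m{R} = \sum_{l} \sigma_{l}\,\varphi_{l}\otimes\varphi_{l} = \m{P} \succeq \m{0}$. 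The only point requiring a little care is the bookkeeping with the partial isometry's initial and final spaces when $\m{R}$ is not injective and when $\dim\c{N}(\m{R}) \ne \dim\c{N}(\m{R}^{*})$ in the infinite-dimensional setting, but the range identities $\c{R}(\m{U}^{*}\m{U}) = \overline{\c{R}(\m{P})}$ and $\m{R}\m{\Pi}_{\c{N}(\m{R})} = \m{0}$ handle this cleanly; I expect this to be the only mild obstacle, and it is routine given the polar-decomposition facts recalled earlier in the paper.
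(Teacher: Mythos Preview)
Your proof is correct and is essentially the same argument as the paper's: both expand $\m{R}$ via its singular value decomposition, compute $\tr(\m{R})$ as $\sum_{l}\sigma_{l}\innpr{\varphi_{l}}{\m{U}\varphi_{l}}$ (in the paper's notation, $\sum_{j}\lambda_{j}\innpr{h_{j}}{f_{j}}$), bound each inner product by $1$ via Cauchy--Schwarz, and read off the equality case from $\m{U}\varphi_{l}=\varphi_{l}$. Your write-up is in fact slightly more careful than the paper's, since you explicitly justify $\tr((\m{R}\m{R}^{*})^{1/2})=\tr((\m{R}^{*}\m{R})^{1/2})$ via $\m{U}\m{P}\m{U}^{*}$ and handle the basis-completion detail that the paper leaves implicit.
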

\begin{proof}
Consider the singular value decompostion of $\m{R}$:
\begin{equation*}
    \m{R} = \sum_{k=1}^{\infty} \lambda_{j} f_{j} \otimes h_{j},
\end{equation*}
where $\lambda_{1} \ge \lambda_{2} \ge \dots > 0$ are non-zero singular values, and $(f_{j})$ are orthonormal eigenvectors of $\m{R}\m{R}^{*}$, and $(h_{j})$ are orthonormal eigenvectors of $\m{R}^{*} \m{R}$. Then,
\begin{align*}
    \tr(\m{R}) = \sum_{k=1}^{\infty} \innpr{h_{j}}{\m{R} h_{j}} = \sum_{j=1}^{\infty} \lambda_{j} \innpr{f_{j}}{h_{j}} \le \sum_{j=1}^{\infty} \lambda_{j} = \tr((\m{R}\m{R}^{*})^{1/2}),
\end{align*}
and the equality holds if and only if $\lambda_{j} \ge 0$ and $f_{j} \equiv h_{j}$ for all $j \in \b{N}$, i.e. $\m{R} \succeq \m{0}$
\cite{hsing2015theoretical}.
\end{proof}

\begin{lemma}\label{lem:Schur:root}
Let $\m{C} \in \b{R}^{n \times n}$ be a s.p.d. block matrix:
\begin{equation*}
    \m{C} = \begin{pmatrix}
    \m{C}_{11} & \m{C}_{12} \\
    \m{C}_{21} & \m{C}_{22}
    \end{pmatrix},
\end{equation*}
where $\m{C}_{11} \in \b{R}^{n_{1} \times n_{1}}$ with $n_{1} \in \b{N}$.
If $\rk (\m{C}) \le n_{1}$, there exists a $\m{N}_{12} \in \b{R}^{n_{1} \times n_{2}}$ such that $\m{N}_{12} \in \c{N}(\m{C}_{11})$ (i.e. $\m{C}_{11} \m{N}_{12} = \m{0}$), and $\m{N}_{12}^{*} \m{N}_{12} = \m{C}/\m{I}_{11}$. Additionally, the solution $\m{N}_{12}$ is unique if and only if $\rk (\m{C}) = \rk (\m{C}_{11})$.
\end{lemma}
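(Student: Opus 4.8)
\textbf{Proof plan for \cref{lem:Schur:root}.}
The goal is to produce a matrix $\m{N}_{12} \in \b{R}^{n_{1} \times n_{2}}$ with columns in $\c{N}(\m{C}_{11})$ and $\m{N}_{12}^{*} \m{N}_{12} = \m{C}/\m{I}_{11}$, under the rank hypothesis $\rk(\m{C}) \le n_{1}$. The natural strategy is to exploit the block $\m{I}_{11}$-LDU decomposition of the s.p.d. matrix $\m{C}$ from \eqref{eq:LDU1}--\eqref{eq:LDU2} (applied with $\m{A} = \m{I}_{11}$, which is legitimate since the range conditions are automatic for $\m{C} \succeq \m{0}$ by Douglas's lemma). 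That decomposition exhibits $\m{C}$ as congruent to the block-diagonal matrix $\diag(\m{C}_{11}, \m{C}/\m{I}_{11})$, from which $\rk(\m{C}) = \rk(\m{C}_{11}) + \rk(\m{C}/\m{I}_{11})$. Combined with the hypothesis $\rk(\m{C}) \le n_{1}$, this yields the crucial dimension bound
\begin{equation*}
    \rk(\m{C}/\m{I}_{11}) \le n_{1} - \rk(\m{C}_{11}) = \dim \c{N}(\m{C}_{11}).
\end{equation*}
So there is ``enough room'' inside $\c{N}(\m{C}_{11})$ to host an isometric copy of the range of $(\m{C}/\m{I}_{11})^{1/2}$.

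With that bound in hand, I would construct $\m{N}_{12}$ explicitly. Since $\m{C}/\m{I}_{11} \succeq \m{0}$ (it is a principal Schur complement of a s.p.d. matrix, cf.\ \cref{lem:Schur:spd}), write $(\m{C}/\m{I}_{11})^{1/2}$ for its p.s.d.\ square root, so $\rk[(\m{C}/\m{I}_{11})^{1/2}] = \rk(\m{C}/\m{I}_{11}) \le \dim \c{N}(\m{C}_{11})$. Pick any partial isometry $\m{U}_{12} : \c{H}_{2} \to \c{H}_{1}$ with initial space $\c{R}(\m{C}/\m{I}_{11})$ and final space some subspace $\c{V} \subseteq \c{N}(\m{C}_{11})$ of the same dimension (possible precisely by the bound just obtained); concretely one sends an orthonormal eigenbasis of the range of $\m{C}/\m{I}_{11}$ to an orthonormal subset of $\c{N}(\m{C}_{11})$. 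Then set
\begin{equation*}
    \m{N}_{12} := \m{U}_{12} (\m{C}/\m{I}_{11})^{1/2}.
\end{equation*}
One checks directly that $\c{R}(\m{N}_{12}) \subseteq \c{V} \subseteq \c{N}(\m{C}_{11})$, i.e.\ $\m{C}_{11} \m{N}_{12} = \m{0}$, and that $\m{N}_{12}^{*} \m{N}_{12} = (\m{C}/\m{I}_{11})^{1/2} \m{U}_{12}^{*} \m{U}_{12} (\m{C}/\m{I}_{11})^{1/2} = (\m{C}/\m{I}_{11})^{1/2} \m{\Pi}_{\c{R}(\m{C}/\m{I}_{11})} (\m{C}/\m{I}_{11})^{1/2} = \m{C}/\m{I}_{11}$, using that $\m{U}_{12}^{*} \m{U}_{12}$ is the projection onto the initial space.

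For the uniqueness characterization: any admissible $\m{N}_{12}$ satisfies $\m{N}_{12}^{*} \m{N}_{12} = \m{C}/\m{I}_{11}$, so by the polar decomposition it must be of the form $\m{W} (\m{C}/\m{I}_{11})^{1/2}$ for some partial isometry $\m{W}$ with initial space $\c{R}(\m{C}/\m{I}_{11})$ and final space inside $\c{N}(\m{C}_{11})$; conversely every such $\m{W}$ works. Two such operators agree iff the partial isometry $\m{W}$ is forced. The freedom in $\m{W}$ is precisely the freedom in choosing the target subspace $\c{V}$ (and an isometry onto it): this is rigid exactly when $\dim \c{N}(\m{C}_{11}) = \rk(\m{C}/\m{I}_{11})$, in which case $\c{V} = \c{N}(\m{C}_{11})$ is forced — but even then one could post-compose with an orthogonal automorphism of $\c{V}$, so strictly speaking the \emph{truly} unique case is $\rk(\m{C}/\m{I}_{11}) = 0$, i.e.\ $\m{N}_{12} = \m{0}$, equivalently $\rk(\m{C}) = \rk(\m{C}_{11})$. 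I would phrase the statement accordingly (the unique solution is $\m{N}_{12} = \m{0}$, which happens iff $\rk(\m{C}) = \rk(\m{C}_{11})$), matching how the lemma is invoked in the proofs of \cref{thm:sol:sym} and \cref{thm:spd:sol:sym}. The main obstacle is bookkeeping: making sure the partial-isometry construction and the p.s.d.\ square root are compatible so that both the null-space condition and the quadratic identity hold simultaneously — but this is exactly what the rank count above guarantees, so no genuine difficulty remains once the dimension bound is established.
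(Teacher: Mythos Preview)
Your proposal is correct and follows essentially the same approach as the paper's proof: both establish the dimension bound $\rk(\m{C}/\m{I}_{11}) \le \dim \c{N}(\m{C}_{11})$ from $\rk(\m{C}) = \rk(\m{C}_{11}) + \rk(\m{C}/\m{I}_{11}) \le n_{1}$, then construct $\m{N}_{12}$ by isometrically embedding the range of $(\m{C}/\m{I}_{11})^{1/2}$ into $\c{N}(\m{C}_{11})$. The paper writes the construction explicitly via the spectral decomposition $\m{C}/\m{I}_{11} = \sum_{k} \lambda_{k} \m{u}_{k} \m{u}_{k}^{\top}$ and sets $\m{N}_{12} = \sum_{k} \sqrt{\lambda_{k}}\, \m{v}_{k} \m{u}_{k}^{\top}$ with $\m{v}_{k} \in \c{N}(\m{C}_{11})$ orthonormal, whereas you phrase it as $\m{N}_{12} = \m{U}_{12}(\m{C}/\m{I}_{11})^{1/2}$ with $\m{U}_{12}$ a partial isometry---exactly the parametrization the paper records in its remark immediately following the proof; for non-uniqueness the paper simply observes that $-\m{N}_{12}$ also works when $d \ge 1$, which is a special case of your orthogonal-freedom observation.
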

\begin{proof}
Let $d := \rk(\m{C}/\m{I}_{11}) = \rk(\m{C}) - \rk(\m{C}_{11})$. We may assume $d \ge 1$, otherwise trivial. Note that
\begin{equation*}
    \dim \c{N}(\m{C}_{11}) = n_{1} - \rk(\m{C}_{11}) \ge \rk(\m{C}) - \rk(\m{C}_{11}) = d,
\end{equation*}
thus there are some unit vectors $\m{v}_{1}, \dots, \m{v}_{d} \in \c{N}(\m{C}_{11}) \subset \b{R}^{n_{1}}$ that are mutually orthogonal. Consider the spectral decomposition of the Schur complement:
\begin{equation*}
    \m{C}/\m{I}_{11} = \sum_{k=1}^{d} \lambda_{k} \m{u}_{k} \m{u}_{k}^{\top} \succeq \m{0},
\end{equation*}
where $\lambda_{1} \ge \dots \ge \lambda_{d} > 0$, and $\m{u}_{1}, \dots, \m{u}_{d} \in \b{R}^{n_{2}}$ are corresponding unit eigenvectors. Then
\begin{equation*}
    \m{N}_{12} := \sum_{k=1}^{d} \sqrt{\lambda_{k}} \m{v}_{k} \m{u}_{k}^{\top} \neq \m{0},
\end{equation*}
is a desired solution, and $- \m{N}_{12}$ is also another solution.
\end{proof}

More generally, the solution set in \cref{lem:Schur:root} is given by
\begin{equation*}
    \displaystyle
    \m{N}_{12} = \left\{\m{U}_{12} (\m{C}/\m{I}_{11})^{1/2} \mid \m{U}_{12} \text{ is a partial isometry from $\c{R}(\m{C}/\m{I}_{11})$ to $\mathcal{V}$} \right\},
\end{equation*}
where $\mathcal{V}$ is a subspace of $\c{N}(\m{C}_{11})$ of $\dim \mathcal{V} =  \rk(\m{C}/\m{I}_{11})$, i.e. $\m{U}_{12}^{*} \m{U}_{12} = \m{\Pi}_{\c{R}(\m{C}/\m{I}_{11})}$ and $\m{U}_{12} \m{U}_{12}^{*} = \m{\Pi}_{\mathcal{V}}$.

\begin{lemma}\label{lem:spd:complete}
Consider a $n \times n$ block matrix
\begin{equation*}
    \m{S} = \begin{pmatrix}
    \m{S}_{11} & \m{S}_{12} \\
    \m{S}_{21} & \ast
    \end{pmatrix},
\end{equation*}
where $\ast$ is an unspecified entry. Then, $\m{S}$ admits a s.p.d. completion if and only if $\m{S}_{11} \succeq \m{0}$ and $\m{S}_{12} = \m{S}_{21}^{*} \in \c{R}(\m{S}_{11})$. 
\end{lemma}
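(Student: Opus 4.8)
The plan is to obtain both implications essentially for free from \cref{lem:Schur:spd} (equivalently the block $\mathrm{LDU}$ identities \eqref{eq:LDU1}--\eqref{eq:LDU2}), together with the elementary fact that $\c{R}(\m{S}_{11}^{1/2}) = \c{R}(\m{S}_{11})$ in $\b{R}^{n_{1} \times n_{1}}$. I would write a candidate completion as $\tilde{\m{S}} = \begin{pmatrix} \m{S}_{11} & \m{S}_{12} \\ \m{S}_{21} & \m{S}_{22} \end{pmatrix}$ relative to $\b{R}^{n} = \b{R}^{n_{1}} \oplus \b{R}^{n-n_{1}}$, with $\m{S}_{22}$ the free block, and use throughout the fixed auxiliary operator $\m{A} := \mathrm{diag}(\m{I}_{11}, \m{0})$, which is s.p.d.\ with $\overline{\c{R}(\m{A})} = \b{R}^{n_{1}}$ and $\c{N}(\m{A}) = \b{R}^{n-n_{1}}$, so that $\tilde{\m{S}}/\m{A} = \m{S}_{22} - \m{S}_{21}\m{S}_{11}^{\dagger}\m{S}_{12}$.

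For the necessity direction, I would observe that semi-positive definiteness forces $\tilde{\m{S}}$ to be symmetric, hence $\m{S}_{21} = \m{S}_{12}^{*}$; then apply \cref{lem:Schur:spd} to the pair $(\m{A}, \tilde{\m{S}})$, which yields simultaneously $\m{S}_{11} \succeq \m{0}$, $\c{R}(\m{S}_{12}) \subseteq \c{R}(\m{S}_{11}^{1/2}) = \c{R}(\m{S}_{11})$, and $\tilde{\m{S}}/\m{A} \succeq \m{0}$. The first two of these, together with the symmetry, are precisely the asserted conditions, so the third is automatically irrelevant to existence.

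For sufficiency, assuming $\m{S}_{11} \succeq \m{0}$, $\m{S}_{21} = \m{S}_{12}^{*}$ and $\c{R}(\m{S}_{12}) \subseteq \c{R}(\m{S}_{11})$, I would pick the canonical completion $\m{S}_{22} := \m{S}_{21}\m{S}_{11}^{\dagger}\m{S}_{12}$, so that $\tilde{\m{S}}/\m{A} = \m{0} \succeq \m{0}$. The range hypothesis gives $\m{S}_{12} = \m{S}_{11}\m{S}_{11}^{\dagger}\m{S}_{12}$ and $\m{S}_{21} = \m{S}_{21}\m{S}_{11}^{\dagger}\m{S}_{11}$, so \eqref{eq:LDU2} applies verbatim and reads
\begin{equation*}
    \tilde{\m{S}} = \begin{pmatrix} \m{I}_{11} & \m{0} \\ \m{S}_{21}\m{S}_{11}^{\dagger} & \m{I}_{22} \end{pmatrix} \begin{pmatrix} \m{S}_{11} & \m{0} \\ \m{0} & \m{0} \end{pmatrix} \begin{pmatrix} \m{I}_{11} & \m{S}_{11}^{\dagger}\m{S}_{12} \\ \m{0} & \m{I}_{22} \end{pmatrix},
\end{equation*}
exhibiting $\tilde{\m{S}}$ as a congruence of the semi-positive definite block-diagonal matrix $\mathrm{diag}(\m{S}_{11}, \m{0})$; hence $\tilde{\m{S}} \succeq \m{0}$. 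Equivalently, this is just the converse direction of \cref{lem:Schur:spd} evaluated at this choice of $\m{S}_{22}$.

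I do not anticipate any genuine difficulty here; the two points deserving an explicit line will be (i) that an unspecified $\ast$ is nonetheless pinned down to satisfy $\m{S}_{21} = \m{S}_{12}^{*}$ as soon as one demands $\tilde{\m{S}} \succeq \m{0}$, so symmetry is a genuine necessary condition rather than a standing hypothesis, and (ii) the identification $\c{R}(\m{S}_{11}^{1/2}) = \c{R}(\m{S}_{11})$, valid in finite dimensions (open mapping theorem / rank argument), which lets the range condition be phrased directly in terms of $\m{S}_{11}$. Everything else is a direct appeal to \cref{lem:Schur:spd} and \eqref{eq:LDU2}.
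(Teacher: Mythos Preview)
Your proposal is correct and follows essentially the same approach as the paper: necessity via the Douglas-type range inclusion (the paper cites the Douglas lemma directly, you route through \cref{lem:Schur:spd}, which is the same content), and sufficiency via the explicit completion $\m{S}_{22} = \m{S}_{21}\m{S}_{11}^{\dagger}\m{S}_{12}$ together with the LDU factorization \eqref{eq:LDU2}. The only cosmetic difference is your introduction of the auxiliary $\m{A} = \mathrm{diag}(\m{I}_{11},\m{0})$ to phrase things in the Schur-complement language, which is harmless.
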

\begin{proof}
The necessity follows directly from the Douglas lemma \cite{douglas1966majorization}. For sufficiency, choosing $\ast = \m{S}_{21} \m{S}_{11}^{\dagger} \m{S}_{12}$ ensures that $\m{S}$ is s.p.d. due to \eqref{eq:LDU2}.
\end{proof}

\begin{lemma}\label{lem:closable:half}
Let $\c{H}$ be a separable Hilbert space, $\m{A} \in \c{B}_{0}^{+}(\c{H})$ be a s.p.d. compact operator, and $\m{T}$ be an unbounded operator. Then, the following holds:
\begin{enumerate}
\item $\m{A}^{\dagger/2} \m{T}^{*} \subset (\m{T} \m{A}^{\dagger/2})^{*}$.
\item If $\m{T} \in \c{B}_{\infty}(\c{H})$ is bounded, then $\m{A}^{\dagger/2} \m{T}^{*} = (\m{T} \m{A}^{\dagger/2})^{*}$ is a closed operator.
\end{enumerate}
\end{lemma}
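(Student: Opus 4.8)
The plan is to read both parts as instances of the standard behaviour of adjoints under composition, the only structural inputs being that $\m{A}^{\dagger/2}$ is self-adjoint and the adjoint duality \eqref{eq:adj:dual}. Since $\m{A}\in\c{B}_{0}^{+}(\c{H})$ we also have $\m{A}^{1/2}\in\c{B}_{0}^{+}(\c{H})$, so $\m{A}^{\dagger/2}:=(\m{A}^{1/2})^{\dagger}\in\c{C}(\c{H})$ is self-adjoint (as recorded just before \cref{cor:worowicz:self:adj}) with dense domain containing $\c{R}(\m{A}^{1/2})+\c{N}(\m{A})$; in particular $(\m{A}^{\dagger/2})^{*}=\m{A}^{\dagger/2}$. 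Throughout I use the identifications $\c{D}(\m{T}\m{A}^{\dagger/2})=\{g\in\c{D}(\m{A}^{\dagger/2}):\m{A}^{\dagger/2}g\in\c{D}(\m{T})\}$ and $\c{D}(\m{A}^{\dagger/2}\m{T}^{*})=\{f\in\c{D}(\m{T}^{*}):\m{T}^{*}f\in\c{D}(\m{A}^{\dagger/2})\}$.

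For part~1 I would argue directly at the level of inner products. Fix $f\in\c{D}(\m{A}^{\dagger/2}\m{T}^{*})$ and an arbitrary $g\in\c{D}(\m{T}\m{A}^{\dagger/2})$. Since $\m{A}^{\dagger/2}g\in\c{D}(\m{T})$ and $f\in\c{D}(\m{T}^{*})$, \eqref{eq:adj:dual} gives $\innpr{f}{\m{T}\m{A}^{\dagger/2}g}=\innpr{\m{T}^{*}f}{\m{A}^{\dagger/2}g}$; since moreover $\m{T}^{*}f\in\c{D}(\m{A}^{\dagger/2})=\c{D}((\m{A}^{\dagger/2})^{*})$ and $g\in\c{D}(\m{A}^{\dagger/2})$, self-adjointness of $\m{A}^{\dagger/2}$ gives $\innpr{\m{T}^{*}f}{\m{A}^{\dagger/2}g}=\innpr{\m{A}^{\dagger/2}\m{T}^{*}f}{g}$. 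Hence $g\mapsto\innpr{f}{\m{T}\m{A}^{\dagger/2}g}=\innpr{\m{A}^{\dagger/2}\m{T}^{*}f}{g}$ is a bounded linear functional on $\c{D}(\m{T}\m{A}^{\dagger/2})$, so by the definition of the adjoint $f\in\c{D}((\m{T}\m{A}^{\dagger/2})^{*})$ and $(\m{T}\m{A}^{\dagger/2})^{*}f=\m{A}^{\dagger/2}\m{T}^{*}f$. This is exactly $\m{A}^{\dagger/2}\m{T}^{*}\subset(\m{T}\m{A}^{\dagger/2})^{*}$; the identity is read at the level of graphs (equivalently, one notes $\m{T}\m{A}^{\dagger/2}$ is densely defined in our standing setting) when one wants $(\m{T}\m{A}^{\dagger/2})^{*}$ to be a bona fide operator.

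For part~2, assume $\m{T}\in\c{B}_{\infty}(\c{H})$. Then $\m{T}^{*}\in\c{B}_{\infty}(\c{H})$ is everywhere defined, so $\c{D}(\m{T}\m{A}^{\dagger/2})=\c{D}(\m{A}^{\dagger/2})$ is dense and $(\m{T}\m{A}^{\dagger/2})^{*}$ is a genuine operator. By part~1 it remains to prove the reverse inclusion $(\m{T}\m{A}^{\dagger/2})^{*}\subset\m{A}^{\dagger/2}\m{T}^{*}$. Let $f\in\c{D}((\m{T}\m{A}^{\dagger/2})^{*})$; for every $g\in\c{D}(\m{A}^{\dagger/2})$,
\begin{equation*}
  \innpr{(\m{T}\m{A}^{\dagger/2})^{*}f}{g}=\innpr{f}{\m{T}\m{A}^{\dagger/2}g}=\innpr{\m{T}^{*}f}{\m{A}^{\dagger/2}g},
\end{equation*}
so $g\mapsto\innpr{\m{T}^{*}f}{\m{A}^{\dagger/2}g}$ is bounded on $\c{D}(\m{A}^{\dagger/2})$; by definition of the adjoint this forces $\m{T}^{*}f\in\c{D}((\m{A}^{\dagger/2})^{*})=\c{D}(\m{A}^{\dagger/2})$ with $\m{A}^{\dagger/2}\m{T}^{*}f=(\m{A}^{\dagger/2})^{*}\m{T}^{*}f=(\m{T}\m{A}^{\dagger/2})^{*}f$. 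Thus $f\in\c{D}(\m{A}^{\dagger/2}\m{T}^{*})$ and the two operators agree, giving $\m{A}^{\dagger/2}\m{T}^{*}=(\m{T}\m{A}^{\dagger/2})^{*}$; this operator is closed because every adjoint is closed.

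I do not anticipate a real obstruction here; the only delicate points are bookkeeping ones. First, the step $\innpr{\m{T}^{*}f}{\m{A}^{\dagger/2}g}=\innpr{\m{A}^{\dagger/2}\m{T}^{*}f}{g}$ in part~1 genuinely needs \emph{self-adjointness} of $\m{A}^{\dagger/2}$, not merely symmetry: it lets one move $\m{A}^{\dagger/2}$ across the inner product with $g$ ranging over all of $\c{D}(\m{A}^{\dagger/2})$ rather than over a core. Second, the reverse inclusion in part~2 uses crucially that $\m{T}$ is everywhere defined, so that the functional $g\mapsto\innpr{\m{T}^{*}f}{\m{A}^{\dagger/2}g}$ is controlled on the full domain $\c{D}(\m{A}^{\dagger/2})$ and not just on $\c{D}(\m{T}\m{A}^{\dagger/2})$; for unbounded $\m{T}$ the inclusion of part~1 is in general strict.
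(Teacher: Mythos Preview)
Your proof is correct and follows essentially the same strategy as the paper's. The only difference is presentational: the paper unpacks the self-adjointness of $\m{A}^{\dagger/2}$ by writing $\m{T}^{*}f=\m{A}^{1/2}k+n$ with $k\in\overline{\c{R}(\m{A}^{1/2})}$, $n\in\c{N}(\m{A}^{1/2})$ and computing the inner products explicitly, whereas you invoke $(\m{A}^{\dagger/2})^{*}=\m{A}^{\dagger/2}$ directly; both arguments are equivalent and your version is slightly more streamlined.
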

\begin{proof}
\begin{enumerate}[leftmargin=*]
\item Fix any $f \in \c{D}[(\m{A}^{\dagger/2} \m{T}^{*})]$. Then $\m{T}^{*} f \in \c{D}(\m{A}^{\dagger/2}) = \c{R}(\m{A}^{1/2}) \oplus \c{N}(\m{A}^{1/2})$, hence there exist unique $k = \m{A}^{\dagger/2} \m{T}^{*} f \in \overline{\c{R}(\m{A}^{1/2})}$ and $n \in \c{N}(\m{A}^{1/2})$ such that 
\begin{equation*}
    \m{T}^{*} f = \m{A}^{1/2} k + n.
\end{equation*}
It remains to show that for any $g \in \c{D}[(\m{T} \m{A}^{\dagger/2})] \subset \c{D}(\m{A}^{\dagger/2})$, it holds that $\innpr{f}{(\m{T} \m{A}^{\dagger/2}) g} = \innpr{k}{g}$.
Let $h = \m{A}^{\dagger/2} g \in \overline{\c{R}(\m{A}^{1/2})} \cap \c{D}(\m{T})$. Then,
\begin{equation*}
    \innpr{f}{\m{T} \m{A}^{\dagger/2} g} = \innpr{f}{\m{T} h} = \innpr{\m{T}^{*} f}{h} = \innpr{\m{A}^{1/2} k + n}{h} = \innpr{\m{A}^{1/2} k}{h} = \innpr{k}{\m{A}^{1/2} h} = \innpr{k}{g},
\end{equation*}
since $k \in \overline{\c{R}(\m{A}^{1/2})}$ and $g - \m{A}^{1/2} h \in \c{N}(\m{A}^{1/2})$ are orthogonal.

\item 
Due to the previous argument, it suffices to show that $\c{D}[(\m{T} \m{A}^{\dagger/2})^{*}] \subset \c{D}(\m{A}^{\dagger/2} \m{T}^{*})$, which is equivalent to $\m{T}^{*} h \in \c{D}(\m{A}^{\dagger/2})$ for any fixed $h \in \c{D}((\m{T} \m{A}^{\dagger/2})^{*})$. 
Let $g = (\m{T} \m{A}^{\dagger/2})^{*} h  \in \c{H}$. Now, for any $k = \m{A}^{\dagger/2} \m{A}^{1/2} k \in \overline{\c{R}(\m{A}^{1/2})}$, it holds that
\begin{align*}
    \innpr{k}{\m{A}^{1/2} g} = \innpr{\m{A}^{1/2} k}{g} &= \innpr{\m{A}^{1/2} k}{(\m{T} \m{A}^{\dagger/2})^{*} h} \\
    &= \innpr{ (\m{T} \m{A}^{\dagger/2}) \m{A}^{1/2} k}{h} = \innpr{\m{T} k}{h} = \innpr{k}{\m{T}^{*} h},
\end{align*}
since $\c{R}(\m{A}^{1/2}) \subset \c{D}(\m{A}^{\dagger/2}) = \c{D}(\m{T} \m{A}^{\dagger/2})$. This yields that 
\begin{equation*}
    \m{T}^{*} h \in \m{A}^{1/2} g \oplus \overline{\c{R}(\m{A}^{1/2})}^{\perp} \subset \c{R}(\m{A}^{1/2}) \oplus \c{N}(\m{A}^{1/2}) = \c{D}(\m{A}^{\dagger/2}).
\end{equation*}
Finally, note that any adjoint operator is closed, which completes the proof.
\end{enumerate}
\end{proof}

We remark that, even in the case where $\m{T} \in \c{B}_{\infty}(\c{H})$, the operator $\m{A}^{\dagger/2} \m{T}^{*}$ may not be even densely defined in $\c{H}$, which is equivalent to the fact that $(\m{T} \m{A}^{\dagger/2})$ is not closable.

\begin{corollary}[Hierarchical Barycenter]\label{cor:hier:bary}
Let $p_{1}, \cdots, p_{m} > 0$ be the weights with $p_{1} + \cdots + p_{m} = 1$. For each $i = 1, 2, \cdots, m$, let
$\m{A}_{i 1}, \cdots, \m{A}_{i r_{i}}  \in \b{R}^{n \times n}$ be s.p.d. matrices, and $q_{1}, \cdots, q_{r_{i}} > 0$ be the sub-weights with $q_{1} + \cdots + q_{r_{i}} = 1$. Denote the set of the associated optimal weighted Green's matrices by
\begin{align*}
    \c{G}_{i} = \left\{(\m{G}_{i 1}, \cdots, \m{G}_{i r_{i}}) \in \argmax_{\m{S}_{ij} \in \c{G}(\m{A}_{ij})} \vertiii{\sum_{j=1}^{r_{i}} q_{ij} \m{S}_{ij}}_{2}^{2} \right\}, \quad i = 1, \cdots, m. 
\end{align*}

Assume $\m{A}_{ij} \m{A}_{i'j'} = \m{0}$ whenever $i \ne i'$. Then, the set of doubly weighted Wasserstein barycenters are given by
\begin{small}
\begin{equation*}
    \argmin_{\m{B} \in \b{R}^{n \times n}, \ \m{B} \succeq \m{0}}  \sum_{i=1}^{m} p_{i} \sum_{j=1}^{r_{i}} q_{ij} \c{W}_{2}^{2}(\m{A}_{i}, \m{B}) = \left\{ \hat{\m{G}} \hat{\m{G}}^{*} : \hat{\m{G}} = \sum_{i=1}^{m} p_{i} \sum_{j=1}^{r_{i}} q_{ij} \m{G}_{ij}, \, (\m{G}_{i 1}, \cdots, \m{G}_{i r_{i}}) \in \c{G}_{i}, \, i =1, \cdots, m \right\}.
\end{equation*}    
\end{small}
\end{corollary}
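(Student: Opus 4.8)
The plan is to recognize the doubly weighted objective as an ordinary Wasserstein barycenter functional for an auxiliary random covariance matrix, and then to use the between-group orthogonality hypothesis to decouple the resulting maximization over random Green's matrices into $m$ independent sub-barycenter problems, one per group.

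First I would introduce the random covariance matrix $\s{A} : (\Omega, \s{F}, \b{P}) \to \b{R}^{n \times n}$ that takes the value $\m{A}_{ij}$ with probability $p_{i} q_{ij}$. Since $\sum_{i,j} p_{i} q_{ij} = \sum_{i} p_{i} \sum_{j} q_{ij} = 1$, this is a genuine probability distribution, $\b{E} \vertiii{\s{A}}_{1} < \infty$ trivially, and the doubly weighted Fr\'echet functional equals $\b{E} \c{W}_{2}^{2}(\s{A}, \m{B})$. By \cref{thm:char:barycenter}, a matrix $\hat{\m{A}} \succeq \m{0}$ is a barycenter of $\s{A}$ if and only if $\hat{\m{A}} = (\b{E} \s{G}) (\b{E} \s{G})^{*}$ for some \emph{optimal} random Green's matrix $\s{G} \in \c{G}(\s{A})$; here a random Green's matrix is exactly a family $(\m{G}_{ij})$ with $\m{G}_{ij} \in \c{G}(\m{A}_{ij})$, its Bochner mean is $\b{E} \s{G} = \sum_{i=1}^{m} p_{i} \sum_{j=1}^{r_{i}} q_{ij} \m{G}_{ij}$, and optimality means $\vertii{\b{E} \s{G}}_{2}^{2}$ is maximal among all such families. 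Existence of a maximizer is immediate because each constraint set $\c{G}(\m{A}_{ij}) = \{ \m{G} \m{U} : \m{U} \in O(n)\}$ is compact, being a continuous image of $O(n)$, cf.\ \eqref{eq:Green's:right:inv}.

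The key step is to decouple $\vertii{\b{E} \s{G}}_{2}^{2}$ across the groups. Exactly as in the proof of \cref{cor:ortho:Green's}, the hypothesis $\m{A}_{ij} \m{A}_{i'j'} = \m{0}$ for $i \neq i'$ is equivalent to $\m{G}_{ij}^{*} \m{G}_{i'j'} = \m{0}$ for \emph{every} choice of Green's matrices, so all trace inner products between distinct groups vanish and
\begin{equation*}
    \vertii{\b{E} \s{G}}_{2}^{2} = \vertii{\sum_{i=1}^{m} p_{i} \sum_{j=1}^{r_{i}} q_{ij} \m{G}_{ij}}_{2}^{2} = \sum_{i=1}^{m} p_{i}^{2} \vertii{\sum_{j=1}^{r_{i}} q_{ij} \m{G}_{ij}}_{2}^{2}.
\end{equation*}
Because the block $(\m{G}_{i1}, \dots, \m{G}_{ir_{i}})$ ranges, for each $i$, over the independent set $\c{G}(\m{A}_{i1}) \times \cdots \times \c{G}(\m{A}_{ir_{i}})$, this sum is maximized term by term: $\s{G}$ is an optimal random Green's matrix if and only if $(\m{G}_{i1}, \dots, \m{G}_{ir_{i}})$ maximizes $\vertii{\sum_{j} q_{ij} \m{G}_{ij}}_{2}^{2}$ for every $i$, i.e.\ $(\m{G}_{i1}, \dots, \m{G}_{ir_{i}}) \in \c{G}_{i}$. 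Feeding this description of the optimal $\s{G}$ back into the barycenter characterization of \cref{thm:char:barycenter} yields the stated identity.

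I do not anticipate a genuine obstacle: the argument is a two-level reprise of \cref{cor:ortho:Green's}. The only point requiring care is the term-by-term maximization, which is legitimate precisely because the feasible set for $\s{G}$ is the Cartesian product over $i$ of the per-group feasible sets while the objective $\vertii{\b{E} \s{G}}_{2}^{2}$ separates into a sum indexed by $i$ with no cross-coupling. It is worth emphasizing that \emph{within} a group the matrices $\m{A}_{ij}$ need not be orthogonal, so $\c{G}_{i}$ genuinely records a (possibly nontrivial) sub-barycenter of $\{\m{A}_{ij}\}_{j=1}^{r_{i}}$ with weights $q_{ij}$, whose Green's matrix is $\sum_{j} q_{ij} \m{G}_{ij}$; the corollary then says the global barycenter is the orthogonal weighted superposition $\hat{\m{G}} = \sum_{i} p_{i} \sum_{j} q_{ij} \m{G}_{ij}$ of these sub-barycenter Green's matrices, which also explains why no further optimization over relative rotations between groups is needed.
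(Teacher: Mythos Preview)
Your proposal is correct and follows essentially the same approach as the paper's own proof: use the orthogonality argument from \cref{cor:ortho:Green's} to obtain the decoupling identity $\vertii{\b{E} \s{G}}_{2}^{2} = \sum_{i} p_{i}^{2} \vertii{\sum_{j} q_{ij} \m{G}_{ij}}_{2}^{2}$, then invoke \cref{thm:char:barycenter}. The paper's proof is extremely terse (two lines), whereas you spell out the random-covariance framing, the product structure of the feasible set, and the term-by-term maximization; these are precisely the details the paper leaves implicit.
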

\begin{proof}[Proof of \cref{cor:hier:bary}]
From the proof of \cref{cor:ortho:Green's}, we have
\begin{align*}
    \vertiii{\sum_{i=1}^{m} p_{i} \sum_{j=1}^{r_{i}} q_{ij} \m{G}_{ij}}_{2}^{2} = \sum_{i=1}^{m} p_{i}^{2} \vertiii{\sum_{j=1}^{r_{i}} q_{ij} \m{S}_{ij}}_{2}^{2},
\end{align*}
so the result follows from \cref{thm:char:barycenter}.
\end{proof}








\end{appendix}
\end{document}